\documentclass[a4paper,reqno]{amsart}

\calclayout

\usepackage{quiver}
\usepackage{csquotes}
\usepackage{mathtools}
\usepackage{scalerel}
\usepackage{amssymb}
\usepackage{wasysym}
\usepackage{stmaryrd}
\usepackage{mathrsfs}
\usepackage{amsthm}
\usepackage{enumitem}
\usepackage{graphicx}
\usepackage{tikz-cd}
\usetikzlibrary{calc}
\usepackage{wrapfig}
\usepackage{DotArrow}
\usepackage[%
backend=biber, %
style=alphabetic, %
maxnames=99, %
maxalphanames=4, %
natbib=false, %
isbn=false, 
doi=false, 
url=false, 
eprint=true, 
giveninits=true, 
backref=false, 
]{biblatex} 
\AtEveryBibitem{%
	\ifentrytype{misc}{\renewbibmacro*{date}{}}{}%
}
\AtEveryBibitem{%
	\ifentrytype{unpublished}{}{}%
} \renewbibmacro{in:}{} 
\usepackage{doi}%
\usepackage{hyperref}
\hypersetup
{
	colorlinks,
	linkcolor = {red},
	citecolor = {red},
	urlcolor  = {red}
}

\usepackage{adjustbox}

\usepackage[capitalise, noabbrev]{cleveref}

\DeclareMathOperator{\Hom}{Hom}
\DeclareMathOperator{\REnd}{REnd}

\DeclareMathOperator{\Ext}{Ext}
\DeclareMathOperator{\id}{id}
\DeclareMathOperator{\Mod}{mod}
\DeclareMathOperator{\End}{End}
\DeclareMathOperator{\Ker}{ker}

\DeclareMathOperator{\thick}{thick} 
\DeclareMathOperator{\cone}{cone}
\DeclareMathOperator{\cocone}{cocone}

\DeclareMathOperator{\hker}{hker}
\DeclareMathOperator{\hcoker}{hcoker}
\DeclareMathOperator{\Filt}{Filt}

\DeclareMathOperator{\add}{add}
\DeclareMathOperator{\smd}{smd}

\newcommand{\Dproj}{\D\mathrm{Proj}}

\newcommand{\Cat}[1]{\mathcal{#1}}
\newcommand{\D}{\Cat{D}}
\newcommand{\T}{\Cat{T}}
\newcommand{\A}{\Cat{A}}
\newcommand{\B}{\Cat{B}}
\newcommand{\C}{\Cat{C}}
\newcommand{\E}{\Cat{E}}
\newcommand{\F}{\Cat{F}}
\newcommand{\W}{\Cat{W}}
\newcommand{\Sc}{\Cat{S}}
\newcommand{\ob}{\mathrm{ob}}
\newcommand{\Hqe}{\mathrm{Hqe}}

\newcommand{\real}{\mathcal{R}}
\newcommand{\repr}[1]{{{#1}}^{\wedge}}

\newcommand{\Ht}{\mathcal{H}_{3t}}
\newcommand{\Hc}[2]{\Cat{H}_{{#1}}^{{#2}}}
\newcommand{\He}[2]{\mathbb{H}_{{#1}}^{{#2}}}
\newcommand{\Proj}{\Cat{P}}
\newcommand{\Extr}{\mathfrak{C}}

\newcommand{\extrs}{\mathfrak{s}}
\newcommand{\Double}[1]{\mathbb{#1}}
\newcommand{\NN}{\Double{N}}
\newcommand{\ZZ}{\Double{Z}}
\newcommand{\EE}{\Double{E}}

\newcommand{\DP}{\Double{P}}
\newcommand{\DL}{\Double{L}}

\newcommand{\Mat}[1]{\mathrm{Mat}(\ZZ{#1})}
\newcommand{\KoszulDual}[1]{{#1}^{!}}
\newcommand{\per}{\mathrm{per}} 
\newcommand{\fd}{\D^{\mathrm{fd}}}
\newcommand{\KST}{\D}
\newcommand{\definef}[1]{\emph{{#1}}}
\newcommand{\define}[1]{{#1}}
\newcommand{\heart}{\heartsuit}
\newcommand{\tw}{\mathrm{tw}}
\newcommand{\pretr}{\mathrm{tw}}
\newcommand{\tr}{\mathrm{tr}}
\newcommand{\trunc}{\mathrm{t}}
\newcommand{\fib}{\mathrm{fib}}
\newcommand{\cofib}{\mathrm{cofib}}

\newcommand{\SW}{\mathcal{SW}}
\newcommand{\colim}{\mathrm{colim}}
\newcommand{\St}{\mathcal{H}^{\mathrm{st}}_{\infty}}
\newcommand{\infl}{\rightarrowtail}
\newcommand{\defl}{\twoheadrightarrow}
\newcommand{\ext}{\dashrightarrow}
\newcommand{\dis}[1]{{#1}^{\mathrm{discr.}}}
\newcommand{\Loewy}{\ell\ell}
\newcommand{\BigLoewy}{\mathcal{LL}}
\newcommand{\height}{\mathrm{ht}}

\newcommand{\topd}{\mathrm{coh.deg}}
\newcommand{\Simples}{\mathfrak{S}}
\newcommand{\topf}{\mathrm{top}}

\newcommand{\rad}{\mathrm{rad}}
\newcommand{\leaveout}[1]{}
\newcommand{\Einfty}{\mathcal{E}}

\newcommand\scalemath[2]{\scalebox{#1}{\mbox{\ensuremath{\displaystyle #2}}}}

\theoremstyle{plain} 
 
\newtheorem{theorem}{Theorem}[section]
\newtheorem{prop}[theorem]{Proposition}
\newtheorem{lemma}[theorem]{Lemma}
\newtheorem{coro}[theorem]{Corollary} \theoremstyle{definition}

\newtheorem{defi}[theorem]{Definition}
 \theoremstyle{remark}
\newtheorem{rema}[theorem]{Remark}
\newtheorem{ex}[theorem]{Example}
\newtheorem{nota}[theorem]{Notation}

\setlist[enumerate,1]{label=(\arabic*)}
\setlist[enumerate,2]{label=(\alph*)}

\makeatletter
\renewcommand{\l@subsection}{\@tocline{2}{0pt}{2.5pc}{5pc}{}} 
\makeatother


\title{Complicial simple-minded collections}

\author[M.~Plogmann]{Marvin Plogmann}%
\address{%
	Mathematisches Institut, %
	Universität zu Köln, %
	Weyertal 86-90, %
	50931 Köln, %
	Germany%
}%
\email{plogmann@math.uni-koeln.de}%
\urladdr{https://sites.google.com/view/marvinplogmann}%

\keywords{proper connective differential graded algebras; exact categories; length categories; t-structures; Koszul duality}%

\subjclass[2020]{Primary: 18G80. Secondary: 18G35}%

\begin{document}
	
	\begin{abstract}
		We consider the problem of characterizing derived endomorphism algebras of simple objects in length categories up to quasi-isomorphism. We give such a characterization for module categories, abelian categories, exact categories, as well as, for certain differential graded analogues of them. It turns out that the property of being $d$-complicial ($d\geq 1$), in the sense of Lurie, of the involved simple-minded collections plays a central role. We also explain how this characterization can be interpreted as a coherent generation property for any minimal $A_{\infty}$-model of the derived endomorphism algebra. Along the way, we propose a notion of length exact differential graded categories and explain how they relate to length abelian $d$-truncated differential graded categories, generalizing results of Enomoto. 
	\end{abstract}
	
	\maketitle
	
	\setcounter{tocdepth}{1}
	\tableofcontents
	
	\section{Introduction}
	
	Let $k$ be a field. It is well-known that a module category or, more generally an abelian (or exact) length category $\A$ can be recovered from the derived endomorphism algebra of the simple objects in the sense that
	\[\D^b(\A)\simeq \per(\REnd_{\A}(S)),\]
	where $S$ is the direct sum of all simple objects. Moreover, one can describe the image of $\A$ under the above functor as the twisted modules over $\REnd_{\A}(S)$. This motivates the question of characterizing differential graded (=dg) endomorphism algebras of simple objects in such length categories.
	
	The study of such correspondences is known as Koszul duality. Koszul duality originated in the study of Koszul algebras, which appear in representation theory of algebras, Lie theory, algebraic geometry, and other areas of mathematics, see for example \cite{BGS88, BGS96}. By considering derived endomorphism algebras (instead of only their cohomology) Koszul duality has been generalized to dg and $A_{\infty}$-algebras, see \cite{Kel94,LH03,LPWZ08,Av13,VdB15,F25}. Koszul duality is also central in the one-to-one correspondence between silting collections and simple-minded collections, see \cite{KY14,SHYD19,Z23,F24,Bon24}.
	
	For a locally finite (cohomologically) connective dg algebra $A$, that is $H^{i}(A)$ is finite-dimensional for all $i\in\ZZ$ and $H^{i}(A)=0$ for $i>0$, we define $\define{S_A}\coloneqq\topf(H^0(A))$. The \definef{Koszul dual of $A$} is defined as \[\define{\KoszulDual{A}}\coloneqq \REnd_A(S_A).\] 
	Dually, for a locally finite coconnective dg algebra $E$, that is $H^{i}(E)$ is finite-dimensional for all $i\in\ZZ$ and $H^{i}(E)=0$ for $i<0$, and such that $H^0(E)$ is semisimple, we define $S_E\coloneqq H^0(E)$. The \definef{Koszul dual of $E$} is defined as 
	\[\define{\KoszulDual{E}}\coloneqq \REnd_E(S_E).\] 
	In this article, we study Koszul duality for proper connective dg algebras $A$, that is the graded vector space $H^{\ast}(A)$ is finite-dimensional and concentrated in non-positive degrees. In particular, we are interested to characterize the property that the cohomology of $A$ is concentrated in degree $0$ or, more generally, in degrees $1-d$ to $0$ for some $d\geq 1$, in terms of $\KoszulDual{A}$.
	
	One central object of interest for this purpose are the extended hearts associated to a proper connective dg algebra or, more generally, a $t$-structure on a pretriangulated dg category. For $d\in\NN_{>0}$ and a pretriangulated dg category $\C$ equipped with a $t$-structure $(\C^{\leq 0},\C^{\geq 0})$, we define the \definef{$d$-extended heart} by
	\[\define{\Hc{\C}{d}}:=\define{\C^{(-d,0]}}\coloneqq \tau^{\leq 0}\left(\C^{\leq 0}\cap \C^{>-d}\right),\]
	where $\tau^{\leq 0}$ denotes the \definef{connective cover} (see section \ref{section:Terminology}). Observe that $\Hc{\C}{d}$ is an extension-closed subcategory of a pretriangulated dg category and hence an exact dg category in the sense of \cite{Chen23}. For a proper connective dg algebra $A$ we can consider $\C\coloneqq\fd_{dg}(A)$, the canonical dg enhancement of
	\[\define{\fd(A)}\coloneqq \left\{M\in\D(A): \dim_k H^*(M)<\infty \right\}\subseteq\D(A)\]
	equipped with the standard $t$-structure defined by 
	\[\fd(A)^{\leq 0}\coloneqq\left\{M:H^{i}(M)=0 \;\; \forall i\geq 0\right\} \text{ and } \fd(A)^{\geq 0}\coloneqq\left\{M:H^{i}(M)=0 \;\; \forall i\leq 0\right\}.\]
	In that case, we also use the notation $\define{\Hc{A}{d}}$ to refer to the $d$-extended heart. The starting point is the basic observation that $\Hc{A}{d}$ is $1$-generated in $\D^b(\Hc{A}{d})$ in the following sense; here $\D^b(\Hc{A}{d})$ denotes the bounded derived category of the exact dg category $\Hc{A}{d}$ in the sense of \cite[Thm.~6.1]{Chen23}.
	
	\begin{defi}
		Let $\T$ be a triangulated category and $\Sc\subseteq \T$ a subcategory. We say that $\Sc$ is \definef{$1$-generated} in $\T$ if for all $X,Y\in\Sc$ and morphisms $f:X\to Y[n]$ in $\T$ for an integer $n\geq 1$, there exist
		\begin{itemize}
			\item $X=X_0,X_1,\ldots,X_{n-1},X_n=Y\in\Sc$,
			\item and morphisms $f_i:X_i[i]\to X_{i+1}[i+1]$ in $\T$,
		\end{itemize}
		such that $f=f_{m-1}\cdots f_1f_0$.
	\end{defi}
	
	Using this observation, we prove the following characterization of dg algebras arising as $\REnd_{A}(S_A)$ for a finite-dimensional algebra $A$ or more generally a proper connective dg algebra $A$. Here, for a coconnective locally-finite dg algebra $E$ with $H^0(E)$ semisimple, we denote by $\per(E)^{(-d,0]}\subseteq \per(E)$ the $d$-extended heart of the $t$-structure associated to the simple-minded collection consisting of the direct summands of $E$.
	
	\begin{theorem}[\cref{theorem:KoszulDperfect}]\label{theorem:KoszulDperfectIntro}
		Let $k$ be a perfect field and $d\in\NN_{>0}$. Koszul duality yields a bijective correspondence between quasi-isomorphism classes of the following two classes of dg algebras:
		\begin{enumerate}
			\item Connective proper dg algebras $A$ such that $H^*(A)$ is concentrated in degrees $(-d,0]$.
			\item Coconnective locally-finite homologically smooth dg algebras $E$ such that $H^0(E)$ is semisimple and $\per(E)^{(-d,0]}\subseteq\per(E)$ is $1$-generated.
		\end{enumerate}
	\end{theorem}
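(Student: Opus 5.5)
We will deduce the statement from the general Koszul duality between proper connective dg algebras and coconnective locally finite smooth dg algebras, and then identify the extra condition on each side. Over a perfect field, the standard theory (Keller, Koszul duality for simple-minded collections as in \cite{KY14,SHYD19,F24}) gives a bijection between two classes of dg algebras up to quasi-isomorphism. On one side are connective proper dg algebras $A$. On the other are coconnective locally finite homologically smooth dg algebras $E$ with $H^0(E)$ semisimple. The bijection is $A\mapsto \KoszulDual{A}$ and $E\mapsto\KoszulDual{E}$, with $\KoszulDual{\KoszulDual{A}}\simeq A$.

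Perfectness of $k$ is used to ensure that $A$ proper implies $\KoszulDual{A}$ smooth and conversely. This works because $H^0(A)/\rad$ is separable, so $S_A$ is a separable semisimple algebra. Under this correspondence there is a triangle equivalence $\fd(A)\simeq\per(E)$ sending $S_A$ to $E$ and the standard $t$-structure to the $t$-structure of the simple-minded collection given by the summands of $E$. Hence it identifies $\Hc{A}{d}$ with $\per(E)^{(-d,0]}$. So it remains to show that $H^*(A)$ is concentrated in $(-d,0]$ if and only if $\Hc{A}{d}\subseteq\fd(A)$ is $1$-generated.

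For the forward direction, assume $H^*(A)$ lies in $(-d,0]$. Then $A\in\Hc{A}{d}$ (note $A$ is proper). We use the observation from the introduction that $\Hc{A}{d}$ is $1$-generated in $\D^b(\Hc{A}{d})$. We also use that the realization functor $\D^b(\Hc{A}{d})\to\fd(A)$ is an equivalence. For the latter, one checks that it is fully faithful on the generating exact dg category and that its image is thick. Full faithfulness should follow by a dévissage argument: $\Ext$ in $\D^b$ versus $\Hom(X,Y[n])$ in $\fd(A)$ can be compared using that $A$ lies in $\Hc{A}{d}$ and is projective there, i.e.\ $\Hom(A,Y[n])=H^n(Y)=0$ for $n\ge1$ and $Y\in\Hc{A}{d}$. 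Each object of $\Hc{A}{d}$ then admits a resolution by objects of $\add A$ inside $\Hc{A}{d}$. Such resolutions are built by truncating cones of maps from $\add A$, using $H^*(A)\subseteq(-d,0]$. By a Baer--Yoneda type argument, every map $X\to Y[n]$ then factors through the syzygies, giving the required decomposition $f=f_{n-1}\cdots f_0$.

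For the converse, assume $\Hc{A}{d}$ is $1$-generated, and suppose $H^{-m}(A)\ne0$ for some $m\ge d$. Pick $m$ maximal, which exists since $A$ is proper. We use that $\Hom_{\fd(A)}(A,-)$ is computed via $S_A$. Nonzero $H^{-m}(A)$ yields a nonzero morphism $f\colon S\to S'[m]$ for some simples $S,S'$ (elements of $H^{-m}$ of $\KoszulDual{\KoszulDual{A}}$, equivalently nonzero $\Ext^m$ classes detected by the top of the dual). This $f$ is not decomposable into degree-one maps between objects of $\Hc{A}{d}$ in the required way. The reason is that any composite $X_0\to X_1[1]\to\cdots\to X_m[m]$ with $X_i\in\Hc{A}{d}$ can be realized via a $d$-extended heart object of length $>d$ in cohomological amplitude. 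Such a realization forces the corresponding class to vanish via truncation $\tau^{>-d}$. Concretely, we would first test $1$-generation on the pair $(A,S)$ after replacing $A$ by $\tau^{>-d}A\in\Hc{A}{d}$. The canonical map $\tau^{>-d}A\to \tau^{\le -d}A[1]$ gives a nonzero class of degree $\ge 1$ into something whose lowest piece lies outside the heart. We would then show that no factorization through $\Hc{A}{d}$ exists, because $\Hom(\tau^{>-d}A, Y[1])$ for $Y\in\Hc{A}{d}$ only sees $\tau^{>-d}$.

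The main obstacle is this converse direction. One must pick the right pair of objects and the right morphism witnessing failure of $1$-generation, and prove that it cannot factor. I would first try the morphism $S_A\to S_A[d+1]$ (or to $[m+1]$) dual to the top nonvanishing $H^{-m}(A)$ with $m\ge d$. I would then show that $1$-generation of $\Hc{A}{d}$ implies that $\fd(A)\simeq\D^b(\Hc{A}{d})$ via the realization functor. This is because $1$-generation is exactly effaceability of higher $\Ext$s, which is the criterion for the realization functor to be fully faithful. We would then deduce that $\tau^{>-d}A$ is a projective generator of $\Hc{A}{d}$ with $\REnd(\tau^{>-d}A)\simeq A$. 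That forces $H^{-m}(A)=\Hom(\tau^{>-d}A,\tau^{>-d}A[-m])=0$ for $m\ge d$.
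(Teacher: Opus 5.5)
Your reduction to ``$H^*(A)\subseteq(-d,0]$ if and only if $\Hc{A}{d}\subseteq\fd(A)$ is $1$-generated'' matches the paper. It rests on the proper/smooth Koszul bijection, which you assume rather than prove, and on the $t$-exact equivalence $\fd(A)\simeq\per(E)$.

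Your forward direction is sound. Since $H^*(A)\subseteq(-d,0]$, the cocone of any $g\colon A^m\to X$ with $H^0(g)$ surjective already lies in $\Hc{A}{d}$, so no truncation is needed. Then $fg=0$ because $\Hom(A,Y[n])=H^n(Y)=0$, and induction on $n$ gives the factorization. This is exactly the argument of \cref{coro:RealisationEnoughProj}, so your detour through the realization functor is unnecessary. The paper instead goes through \cref{CorollarySilting}: silting object in the extended heart, hence $(d-1)$-complicial, hence the realization is a quasi-equivalence, hence $1$-generated.

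The converse, however, is not proved. The decisive step, ``$\tau^{>-d}A$ is a projective generator with $\REnd(\tau^{>-d}A)\simeq A$'', is asserted without argument, and it is equivalent to what you want. Once the realization functor is an equivalence, $\REnd(\tau^{>-d}A)$ automatically has cohomology in $(-d,0]$, because $\EE^n(P,P)=0$ for $n\geq 1$ and $P$ is $d$-truncated. So identifying it with $A$ is precisely the claim $H^*(A)\subseteq(-d,0]$.

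Your earlier sketches do not fill this gap. A class in $H^{-m}(A)$ does not give a morphism $S\to S'[m]$: for $A=k[x]/(x^2)$ with $|x|=-m$ one has $\KoszulDual{A}=k[y]$ with $|y|=m+1$. The non-factorization claims are also never argued.

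The missing idea is short. Put $P=\tau^{>-d}A\in\Hc{A}{d}$.
\begin{enumerate}
\item $P$ is projective in $\Hc{A}{d}$. For $Y\in\Hc{A}{d}$, the group $\Hom(P,Y[1])$ sits in an exact sequence between $\Hom(\tau^{\leq -d}A,Y)=0$ and $\Hom(A,Y[1])=H^1(Y)=0$.
\item Hence $1$-generation forces $\Hom(P,X[n])=0$ for all $X\in\heart$ and $n\geq 1$. Any factorization begins with a map $P\to X_1[1]$ with $X_1\in\Hc{A}{d}$, and such a map vanishes.
\item By d\'evissage, $\Hom(P,\tau^{\leq -d}A[1])=0$, since $\tau^{\leq -d}A[1]$ is an iterated extension of objects $X[n]$ with $X\in\heart$ and $n\geq d+1$.
\item So the triangle $\tau^{\leq -d}A\to A\to P\to\tau^{\leq -d}A[1]$ splits. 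Then $\tau^{\leq -d}A$ is a summand of $A$ with $\Hom(A,\tau^{\leq -d}A)=H^0(\tau^{\leq -d}A)=0$, which forces $\tau^{\leq -d}A=0$.
\end{enumerate}

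The paper instead argues as follows. By \cref{GeneralisedRealFunctor}, $1$-generation makes the realization functor a quasi-equivalence, so $t$ is $(d-1)$-complicial by \cref{compliciality}. Bonfert's lemma on derived projectives then shows that $A$ is a summand of an object of $\fd(A)^{(-d,0]}$.
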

	
	Consider now a finite-dimensional algebra $\Lambda$. Using the equivalence $\D^b(\Lambda)\simeq \per(\KoszulDual{\Lambda})$ and the observation that $\Mod(\Lambda)$ is $1$-generated in $\D^b(\Lambda)$, it was proven in \cite[Sec.~2.2, Prop.~1(b)]{Kel02} that the minimal $A_{\infty}$-model of the dg algebra $\KoszulDual{\Lambda}$ is generated in cohomological degrees $0$ and $1$ as an $A_{\infty}$-algebra. In \cref{subsection:CohGen} we recall the argument to explain how the condition that $\per(E)^{(-d,0]}\subseteq \per(E)$ is $1$-generated can be rephrased in terms of a condition on the minimal $A_{\infty}$-model $\E$ of the dg algebra $E$. In short, we require the $A_{\infty}$-algebra $\E$ to be coherently generated in cohomological degrees $0,1,\ldots,d$ in a specific way that is explained in \cref{subsection:CohGen}.
	
	The main technical tool to prove \cref{theorem:KoszulDperfectIntro} are dg realization functors. We prove the following generalization of conditions for realization functors to be fully-faithful (see e.g. \cite[Thm.~3.11]{PV18}).
	
	\begin{theorem}[\cref{GeneralisedRealFunctor}]
		Let $\C$ be a pretriangulated dg category and $\Cat{B}\subseteq \C$ a full extension-closed dg subcategory. Set $\A\coloneqq\tau^{\leq 0}\Cat{B}$. Then, the universal property of $\D^b_{dg}(\A)$ yields an exact dg functor $\real:\D^b_{dg}(\A)\to \C$. The following statements hold:
		\begin{enumerate}
			\item The realization functor $\real$ induces isomorphisms $\Hom_{D^b(\A)}(X,Y[n])\overset{\cong}{\to} \Hom_{H^0(C)}(X,Y[n])$ for $X,Y\in H^0(\A)$ and $n\leq 1$.
			\item The following are equivalent:
			\begin{enumerate}
				\item $\real$ is quasi fully-faithful.
				\item $\real$ induces isomorphisms $\Hom_{D^b(\A)}(X,Y[n])\overset{\cong}{\to} \Hom_{H^0(\C)}(X,Y[n])$ for all $n\geq 2$ and $X,Y\in H^0(\A)$.
				\item $\real$ induces epimorphisms $\Hom_{D^b(\A)}(X,Y[n])\twoheadrightarrow \Hom_{H^0(\C)}(X,Y[n])$ for all $n\geq 2$ and $X,Y\in H^0(\A)$.
				\item (Ef) $\Hom_{H^0(\C)}(-,Y[n])$ is weakly effaceable for every $Y \in H^0(\A)$ and $n\geq 2$, that is for all $X \in H^0(\A)$ and $f:X\to Y[n]$ in $H^0(\C)$ there exists a deflation $g:Z\to X$ in $H^0(\A)$ such that $fg=0$.
				\item (CoEf) $\Hom_{H^0(\C)}(X,-[n])$ is weakly effaceable for every $X \in H^0(\A)$ and $n\geq 2$, that is for all $Y \in H^0(\A)$ and $f:X\to Y[n]$ in $H^0(\C)$ there exists an inflation $g:Y\to Z$ in $H^0(\A)$ such that $g[n]\circ f=0$.
				\item $H^0(\A)$ is 1-generated in $H^0(\C)$.
			\end{enumerate}
			If the above equivalent conditions are satisfied, the essential image of $\real$ is $\mathrm{tria}(H^0(\A))$, the smallest triangulated subcategory of $H^0(\C)$ containing the objects of $H^0(\A)=H^0(\B)$.
		\end{enumerate}
	\end{theorem}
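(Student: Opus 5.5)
We argue in the spirit of the classical proof that realization functors are fully faithful (cf. \cite[Thm.~3.11]{PV18}), working with the exact structure on $H^0(\A)$ inherited from $H^0(\C)$. The plan has three stages: the low-degree comparison in (1), a chain of implications for (2), and the identification of the essential image.

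\textbf{Part (1).}
\begin{itemize}
\item For $n<0$, both sides vanish. On the $\C$-side, $\A=\tau^{\leq 0}\B$ has no negative-degree morphisms in cohomology into $X$ after connective cover; more precisely, $\Hom_{H^0(\C)}(X,Y[n])=H^n\C(X,Y)$. In $\D^b(\A)$ the same group is $H^n$ of the morphism complex of $\A$, which is $H^n\C(X,Y)$ for $n\leq 0$ by construction of $\tau^{\leq 0}$.
\item So for $n\leq 0$ the universal dg functor is the identity on morphism complexes in these degrees, since $\real$ restricted to $\A$ is the inclusion.
\item For $n=1$, both sides classify conflations. In $\D^b(\A)$ (Chen's construction), $\Hom(X,Y[1])$ is the Yoneda $\Ext^1$ of conflations in $H^0(\A)$. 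In $H^0(\C)$, extension-closedness of $\B$ identifies morphisms $X\to Y[1]$ with triangles $Y\to E\to X\to Y[1]$ where $E\in\B$, hence $E\in\A$.
\item The map sends a conflation to its connecting morphism. It is bijective because every triangle in $\C$ with end terms in $\A$ has middle term in $\A$ and is a conflation there.
\end{itemize}

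\textbf{Part (2).} Prove the chain (a)$\Rightarrow$(b)$\Rightarrow$(c)$\Rightarrow$(f)$\Rightarrow$(d),(e)$\Rightarrow$(c)$\Rightarrow$(a).
\begin{itemize}
\item \emph{(a)$\Rightarrow$(b)$\Rightarrow$(c)} are trivial.
\item \emph{(c)$\Rightarrow$(f).} In $\D^b(\A)$ the subcategory $H^0(\A)$ is $1$-generated: Yoneda $\Ext^n$ is generated by $\Ext^1$'s, because $\D^b$ of an exact category computes Yoneda Ext. Surjectivity of $\real$ in degrees $\geq 2$ together with part (1) transports this factorization into $H^0(\C)$.
\item \emph{(f)$\Rightarrow$(d).} Write $f=f_{n-1}\cdots f_0$ with $f_0:X\to X_1[1]$. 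Realize $f_0$ by a conflation $X_1\to Z\xrightarrow{g} X$. Then $f_0g=0$, hence $fg=0$. The case (e) is dual, using the last factor.
\item \emph{(d)$\Rightarrow$(c), by induction on $n$.} Given $f:X\to Y[n]$, choose a deflation $g:Z\to X$ with kernel $K$ and $fg=0$. The triangle $K\to Z\to X\xrightarrow{\delta}K[1]$ gives a factorization $f=f'\delta$ with $f':K[1]\to Y[n]$, i.e.\ $f'\in\Hom(K,Y[n-1])$. By induction (or part (1) when $n-1=1$), $f'$ lies in the image of $\real$. The class $\delta$ lies in the image by part (1), so $f$ does too. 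The case (e)$\Rightarrow$(c) is dual.
\item \emph{(c)$\Rightarrow$(a).} Injectivity is the main obstacle.
\end{itemize}

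\textbf{The main obstacle: injectivity.} Proceed by induction on $n$, using a five-lemma argument in the spirit of Beilinson.
\begin{itemize}
\item Take $\xi\in\Hom_{\D^b(\A)}(X,Y[n])$ with $\real(\xi)=0$.
\item Using effaceability in $\D^b(\A)$, which holds for Yoneda Ext, choose a deflation $Z\to X$ with kernel $K$ killing $\xi$.
\item Compare the long exact sequences
\[\Hom(Z,Y[n-1])\to\Hom(K,Y[n-1])\to\Hom(X,Y[n])\to\Hom(Z,Y[n])\]
for $\D^b(\A)$ and for $\C$.
\item The induction hypothesis gives isomorphisms in degree $n-1$, and the maps in degree $n$ are surjective by (c). A four-lemma argument then gives injectivity at $\Hom(X,Y[n])$.
\end{itemize}
Since both source and target are generated by $H^0(\A)$ under shifts and cones, d\'evissage on cones extends fully faithfulness from $H^0(\A)$ to all of $\D^b(\A)$, which gives (a).

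\textbf{Essential image.} The image of a fully faithful exact functor is triangulated. It contains $H^0(\A)$, and $\D^b(\A)$ is generated by $H^0(\A)$. Hence the image equals $\mathrm{tria}(H^0(\A))$, and $H^0(\A)=H^0(\B)$ as objects.
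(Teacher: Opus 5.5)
Your proposal is correct, and its core is the paper's argument:
\begin{itemize}
\item part (1) via Chen's embedding theorem, together with the identification of degree-one morphisms with conflations;
\item the same injectivity chase: kill $\xi$ by a deflation $Z\to X$ with kernel $K$, lift using surjectivity on $\Hom(Z,Y[n-1])$, and conclude with injectivity on $\Hom(K,Y[n-1])$ (in the paper, the deflation is the one attached to the first degree-one factor of $\xi$);
\item d\'evissage for (a), and the description of the essential image.
\end{itemize}

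The genuine difference is how (d) and (e) are tied in. The paper proves (b)$\Leftrightarrow$(d),(e) by regarding $\real$ as a morphism of $\delta$-functors extending the identity in degree $0$. It then uses uniqueness of such extensions for weakly effaceable $\delta$-functors (\cref{WeaklyEffaceable}, \cref{DeltaCorollary}). You instead show (f)$\Rightarrow$(d),(e) by killing the first (resp.\ last) degree-one factor with its own conflation, and (d),(e)$\Rightarrow$(c) by dimension shifting with part (1).

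What each route buys:
\begin{itemize}
\item \emph{Your route} is more elementary. It shows that, once part (1) is known, effaceability is just a reformulation of $1$-generation. It still needs \cref{HigherExt} for effaceability and $1$-generation inside $\D^b(\A)$, but not the universality of $\delta$-functors.
\item \emph{The paper's route} gets the isomorphisms in (b) directly from (d). It also exhibits $\real$ in degrees $\geq 2$ as the unique morphism of $\delta$-functors extending the identity, which is conceptually tidier.
\end{itemize}

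Two slips to fix.
\begin{itemize}
\item ``For $n<0$ both sides vanish'' is false, since $\tau^{\leq 0}$ keeps negative cohomology. The correct statement, which you then give, is that both sides equal $H^n\B(X,Y)$; the $\D^b(\A)$-side equality comes from \cref{EmbeddingTheorem}(1).
\item The literal four-lemma does not apply in the injectivity step, because injectivity on $\Hom(Z,Y[n])$ is exactly what is being proved. The chase works only because $\xi$ is killed by the chosen deflation. What it uses is surjectivity in degree $n-1$ (from (c), or from part (1) when $n=2$), not in degree $n$.
\end{itemize}
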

	
	As a consequence of the above we get the following relation to $(d-1)$-complicial $t$-structures.
	
	\begin{defi}
		Let $\T$ be a triangulated category. Following \cite[Def.~C.5.3.1.]{Lurie18}, a $t$-structure $t$ on $\T$ is \definef{$n$-complicial} if for every object $X\in \T^{\leq 0}$ there exists $X'\in \T^{[-n,0]}$ and a morphism $f\in \Hom_{H^0(\T)}(X',X)$, such that $H^0(f):H^0(X')\to H^0(X)$ is an epimorphism in $\heart_t$. We call $t$ \definef{strictly $n$-complicial} if $n$ is minimal with this property.
	\end{defi}
	
	The following theorem is a variant of \cite[Thm.~6.3.2]{Ste23} for dg categories.
	
	\begin{theorem}[\cref{compliciality}]
		Let $\C$ be a pretriangulated dg category with a bounded $t$-structure. Let $d\geq 1$. The following are equivalent:
		\begin{enumerate}
			\item The $t$-structure on $\C$ is $(d-1)$-complicial.
			\item The realization functor $\real:\D^b_{dg}(\Hc{\C}{d})\to \C$ is a quasi-equivalence.
		\end{enumerate}
		Similarly, the following are equivalent:
		\begin{enumerate}
			\item The $t$-structure on $\C$ is strictly $(d-1)$-complicial.
			\item The realization functor $\D^b_{dg}(\Hc{\C}{d})\to \C$ is a quasi-equivalence, but the realization functor $\D^b_{dg}(\Hc{\C}{d-1})\to \C$ is not.
		\end{enumerate}
	\end{theorem}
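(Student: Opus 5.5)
We apply the generalized realization theorem (\cref{GeneralisedRealFunctor}) with $\B=\C^{\leq 0}\cap\C^{>-d}$, so that $\A=\Hc{\C}{d}$. Since the $t$-structure is bounded, every object of $\C$ is an iterated extension of shifts of objects of the heart, and the heart lies in $\Hc{\C}{d}$. Hence $\mathrm{tria}(H^0(\Hc{\C}{d}))=H^0(\C)$. By the last assertion of \cref{GeneralisedRealFunctor}, $\real$ is a quasi-equivalence if and only if it is quasi fully-faithful. Thus it suffices to show that $(d-1)$-compliciality is equivalent to one of the effaceability conditions, and (Ef) looks most convenient. The deflations in $H^0(\Hc{\C}{d})$ are the maps $g:Z\to X$ whose cocone lies again in $\C^{(-d,0]}$. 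Concretely, if $g$ fits in a triangle $K\to Z\to X\to K[1]$ with $X,Z\in\C^{(-d,0]}$, then $K\in\C^{(-d,0]}$ exactly when $H^0(g)$ is an epimorphism in the heart.

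For (1)$\Rightarrow$(2), take $X,Y\in\Hc{\C}{d}$, $n\geq 2$ and $f:X\to Y[n]$. Let $K=\cocone(f)[-1]$, or rather the fibre of $f$, so that there is a triangle $Y[n-1]\to K\to X\xrightarrow{f} Y[n]$. Then $K\in\C^{\leq 0}$, since $Y[n-1]\in\C^{\leq 1-n}$. By compliciality there is $K'\in\C^{[-(d-1),0]}=\C^{(-d,0]}$ and a map $K'\to K$ which is surjective on $H^0$. The composite $g:K'\to K\to X$ satisfies $fg=0$. Moreover $H^0(g)$ is surjective, because $H^0(K)\to H^0(X)$ is surjective: $H^1(Y[n-1])=H^{n}(Y)=0$ as $n\geq 1$. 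Therefore $g$ is a deflation and (Ef) holds.

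For (2)$\Rightarrow$(1), we use 1-generation, or equivalently (Ef), and induct on the amplitude. Take $X\in\C^{\leq 0}$; by boundedness, $X\in\C^{[-m,0]}$. If $m\leq d-1$ there is nothing to do. Otherwise consider the truncation triangle $\tau^{\leq -d}X\to X\to X_1\to$, where $X_1:=\tau^{>-d}X\in\Hc{\C}{d}$. The map $X_1\to(\tau^{\leq -d}X)[1]$ has target in $\C^{[-m+1,-d+1]}$. We would like to efface it with a deflation from $\Hc{\C}{d}$, and then lift that deflation to $X$ after composing, giving $X'\to X$ with $X'$ of amplitude $(-d,0]$ and $H^0$-surjective. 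The target is not a single shifted object of $\Hc{\C}{d}$, so the plan is to filter $(\tau^{\leq -d}X)[1]$ into pieces $Y_j[n_j]$ with $Y_j\in\Hc{\C}{d}$ and $n_j\geq d-?$. We then efface successively: at each step we replace the source by a deflation onto it, so that the composite to the next piece vanishes, and use composition of deflations together with the octahedral axiom.

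The main obstacle is this step. When the filtration pieces sit in shift degree $1$ rather than $\geq 2$, (Ef) is unavailable, and we must use instead that Ext$^1$ maps between objects of $\Hc{\C}{d}$ are realized by conflations. For that case, the pullback along such a conflation gives the lift directly. We would therefore choose the filtration by $d$-blocks so that each piece is $Y[n]$ with $n\geq 1$, treating $n=1$ via conflations and $n\geq 2$ via (Ef).

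The strict version then follows formally. Because $\Hc{\C}{d-1}$ corresponds to $(d-2)$-compliciality, the minimality of $d-1$ translates into the failure of the realization functor for $\Hc{\C}{d-1}$. For $d=1$, the condition reads \enquote{$d-1=0$ is minimal}, which is automatic, and $\Hc{\C}{0}$ is to be interpreted as the empty or zero category, so the second realization functor is trivially not an equivalence.
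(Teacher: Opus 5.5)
Your argument is correct in substance, but it follows a different route from the paper's.

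\textbf{Direction (1)$\Rightarrow$(2).} The paper proves density of $\real$ directly. It inducts on the amplitude of $X$ and uses compliciality to peel off $P\to X$ with $P\in\He{\C}{d}$. It has to carry along the auxiliary statement that $\real$ induces $\Hom(T,W)\cong\Hom(X,W)$ for all $W\in\He{\C}{d}$, so that $h\colon X'\to P$ can be lifted and the five lemma applied. It then upgrades density to a quasi-equivalence via \cref{DenseCoro}, whose proof is itself a $1$-generation argument resting on the co-$t$-structure.

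You instead verify (Ef) in one step. The fibre $K$ of $f$ lies in $\C^{\leq 0}$, compliciality gives $K'\to K$, and the composite $K'\to X$ is a deflation, because a map between objects of $\C^{(-d,0]}$ is a deflation iff it is surjective on $H^0$. Essential surjectivity then comes for free from boundedness, since $\mathrm{tria}(H^0(\Hc{\C}{d}))=H^0(\C)$. This is shorter, and it shows that $(d-1)$-compliciality is literally effaceability of higher extensions, without constructing preimages under $\real$.

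\textbf{Direction (2)$\Rightarrow$(1).} Here the paper's route is the shorter one. Density, together with the co-$t$-structure on $\tr(\Hc{\C}{d})$ passed to the Verdier quotient, gives a single triangle $T'\to T\to T''$ with $\real(T'')\in\C^{\leq -1}$. Your route uses only (Ef) and truncations, at the cost of an iteration. Its merit is that it exhibits compliciality $\Leftrightarrow$ (Ef) entirely inside $H^0(\C)$.

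\textbf{A shift slip in your sketch.} Your sketch of (2)$\Rightarrow$(1) contains a shift error that creates the ``main obstacle''. Since $\tau^{\leq -d}X\in\C^{[-m,-d]}$, you have $(\tau^{\leq -d}X)[1]\in\C^{[-m-1,-d-1]}$, not $\C^{[-m+1,-d+1]}$.
\begin{itemize}
\item Cut it by $t$-truncations into blocks of cohomological degrees $[-(j+1)d,-jd-1]$ for $j\geq 1$. This gives pieces $Y_j[jd+1]$ with $Y_j\in\C^{(-d,0]}$.
\item Every shift is therefore at least $d+1\geq 2$, so (Ef) applies at each step and the case $n=1$ never occurs. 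Your conflation treatment of it would be valid, but it is not needed.
\item No octahedral axiom is required. Efface the map to the top piece by a deflation, factor through the next stage of the filtration using the long exact $\Hom(Z,-)$-sequence, repeat, and compose the deflations (Ex1).
\item Finally, lift the resulting deflation $g\colon Z\to X_1$ along $X\to X_1$ and use $H^0(X)\cong H^0(X_1)$.
\end{itemize}

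\textbf{The case $d=1$ of the strict statement.} Here $\Hc{\C}{0}=0$, so the second realization functor fails to be a quasi-equivalence only when $\C\neq 0$. This degenerate case lies outside what the paper defines, since $\Hc{\C}{d}$ is only introduced for $d\geq 1$, so it is harmless.
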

	
	As a further application we obtain the following recognition theorem for $\fd(A)$ with $A$ a proper connective dg algebra such that $H^{*}A$ is concentrated in degrees between $1-d$ and $0$ which is inspired by \cite[Prop.~1.3.3.7]{Lurie17}. 
	
	\begin{theorem}[\cref{theorem:RecognitionTheorem}]
		Let $\T=H^0(\C)$ be an algebraic $\Hom$-finite triangulated category, with $\C$ a locally-finite pretriangulated dg category such that the following conditions are satisfied:
		\begin{enumerate}
			\item There exists a collection $\DL\coloneqq \{L_1,\ldots,L_n\}\subseteq \T$ which is a simple-minded collection in $\thick(\DL)$. In particular, $\DL$ yields a bounded $t$-structure on $\thick(\DL)$ which we denote by $(\thick(\DL)^{\leq 0},\thick(\DL)^{\geq 0})$.
			\item There exists $d\in\NN_{\geq 1}$ and indecomposable objects $\{P_1,\ldots,P_n\}\subseteq\T$
			such that for all $1\leq i\leq n$ there exists a triangle
			\[Y_i\longrightarrow P_i \longrightarrow L_i \longrightarrow Y_i[1]\]
			in $\T$ such that 
			\[Y_i\in\thick(\DL)^{(-d,0]}=\Filt(\DL)[d-1]\ast\ldots\ast\Filt(\DL)[1]\ast\Filt(\DL).\]
			\item For $P\coloneqq \oplus_{i=1}^n P_i$ and $L\coloneqq\oplus_{i=1}^n L_i$ it holds $\Hom_{\T}(P,L[k])=0$ for all $k\geq 1$.
		\end{enumerate} 
		Then, the realization functor 
		\[\real:\D^b_{dg}(\thick(\DL)^{(-d,0]})\stackrel{\simeq}{\longrightarrow} \thick(\DL)\subseteq \C\]
		is a $t$-exact quasi-equivalence. Moreover, for $A\coloneqq\C(P,P)$ there is a $t$-exact quasi-equivalence
		\[\fd_{dg}(A)\stackrel{\simeq}{\longrightarrow}\thick(\DL)\subseteq \C\]
		which sends $A$ to $P$. In particular, $A\in\fd(A)^{(-d,0]}$ is proper connective, the equivalence restricts to $\per(A)\simeq\thick(P)$ and if $\DL\subseteq \T$ is full, that is $\thick(\DL)=\T$, it holds $\fd(A)\simeq \T$.
	\end{theorem}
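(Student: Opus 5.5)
We will obtain both statements from the compliciality theorem (\cref{compliciality}) applied to the pretriangulated dg category $\thick_{dg}(\DL)\subseteq\C$, the full dg subcategory on $\thick(\DL)$, with the bounded $t$-structure given by the simple-minded collection $\DL$. The $t$-exact quasi-equivalence is then identified with $\fd_{dg}(A)$ via a Koszul/Morita argument using the objects $P_i$.

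\textbf{Step 1: compliciality.} We show that the $t$-structure on $\thick(\DL)$ is $(d-1)$-complicial.
\begin{itemize}
\item The heart is $\Filt(\DL)$, a length category with simples $L_i$. Every object of $\thick(\DL)^{\leq 0}$ is bounded, so by the usual reduction it suffices to cover $H^0(X)$.
\item First we check that $P_i\in\thick(\DL)^{(-d,0]}$. This holds because $\thick(\DL)^{(-d,0]}$ is extension-closed, and the triangle exhibits $P_i\in Y_i\ast L_i$ with $Y_i$ and $L_i$ both in it.
\item Applying $H^0$ to $Y_i\to P_i\to L_i\to Y_i[1]$ gives $H^0(P_i)\to L_i\to H^1(Y_i)=0$. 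Hence $H^0(P_i)\to L_i$ is an epimorphism.
\item Condition (3) says $\Hom(P,-)$ kills $\Filt(\DL)[k]$ for $k\geq1$. By induction on the $t$-amplitude, $\Hom(P,Z[1])=0$ for every $Z\in\thick(\DL)^{\leq 0}$.
\item For $X\in\thick(\DL)^{\leq0}$, choose a projective cover in the sense of the heart: the top of $H^0(X)$ is a sum of $L_i$. The map $\Hom(P,X)\to\Hom(P,H^0(X))$ is surjective, since its cokernel lies in $\Hom(P,(\tau^{\leq -1}X)[1])=0$.
\item Likewise $\Hom(P,H^0X)\to\Hom(P,\topf H^0X)$ is surjective, since $\Hom(P,\rad H^0X[1])=0$.
\item Lifting $\oplus P_{i}^{m_i}\to\topf H^0(X)$ thus gives $f:X'\to X$ with $X'\in\thick(\DL)^{[-(d-1),0]}$ and $H^0(f)$ surjective modulo the radical. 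By Nakayama in the length category $\Filt(\DL)$, $H^0(f)$ is an epimorphism.
\end{itemize}
\cref{compliciality} now gives the first claim: $\real$ is a quasi-equivalence. It is $t$-exact because it is compatible with hearts: the heart of $\D^b(\Hc{}{d})$ is sent to $\Filt(\DL)$.

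\textbf{Step 2: $P$ is a silting generator.} We show $\thick(P)$ together with $\DL$ behaves like $\per(A)$ and $\fd(A)$.
\begin{itemize}
\item $\Hom(P,P[k])=0$ for $k\geq1$, since $P\in\thick(\DL)^{\leq0}$ and $\Hom(P,\thick(\DL)^{\leq -1})=0$ by the vanishing above. So $A=\C(P,P)$ is connective.
\item $A$ is proper because $\T$ is Hom-finite and $P$ lies in a bounded range. In fact $H^{-k}(A)=\Hom(P,P[-k])$ vanishes for $k\geq d$, since $\Hom(P,Z)=0$ for $Z\in\thick(\DL)^{\geq1}$. This uses that the $t$-structure's coaisle is generated by $L[k]$, $k\leq -1$, together with the vanishing of $\Hom(P,L[k])$ for $k\neq0$. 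The case $k<0$ needs $P\in\thick(\DL)^{\leq0}$.
\item From $\Hom(P_i,L_j[k])=0$ for $k\neq0$ and $\Hom(P_i,L_j)=\delta_{ij}\End(L_j)$ (via the top argument), $\{P_i\}$ and $\{L_j\}$ are in Koszul-dual position.
\end{itemize}

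\textbf{Step 3: the equivalence with $\fd_{dg}(A)$.} Consider the dg functor $\C(P,-):\thick_{dg}(\DL)\to\D_{dg}(A)$. It lands in $\fd$ by Hom-finiteness and sends $P\mapsto A$ and $L_i\mapsto$ the simple $A$-modules $S_i$. It is fully faithful on $P$ tautologically. On the $L_i$ we compare $\REnd(L)$ in $\C$ with $\REnd_A(\oplus S_i)$, by dévissage from $P$ using that $L$ admits a resolution in $\thick(P)$-approximations.

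Alternatively, and more cleanly, we compose with Step 1. The functor $\C(P,-)$ is $t$-exact by the vanishing computations and restricts to an exact equivalence of extended hearts $\thick(\DL)^{(-d,0]}\simeq\fd(A)^{(-d,0]}$. The realization theorem, applied on both sides, then upgrades this to $\fd_{dg}(A)\simeq\thick(\DL)$; on the $A$ side we use $H^*A\in(-d,0]$ and that $\fd(A)$ with its standard $t$-structure is $(d-1)$-complicial, witnessed by $A$.

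I expect the main obstacle to be showing that $\C(P,-)$ gives an equivalence of the exact dg categories of $d$-extended hearts, and in particular essential surjectivity. For this I would induct on $d$ via the truncation triangles, using full faithfulness on $\Filt(\DL)[j]$ and extension-closedness. The final statements then follow: the equivalence restricts to $\thick(P)\simeq\per(A)$ because $P\mapsto A$, and if $\DL$ is full then $\thick(\DL)=\T$.
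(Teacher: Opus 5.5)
Your Steps 1 and 2 are correct. Step 1 in fact gives a different route to the first claim than the paper's. You verify $(d-1)$-compliciality of $t_{\DL}$ directly, by lifting $\bigoplus_i P_i^{m_i}\to\topf H^0(X)$ through $X$ via $\Hom_{\T}(P,\thick(\DL)^{\leq -1})=0$ and Nakayama in the heart, and then invoke \cref{compliciality}. The paper instead shows that $\thick(\DL)^{(-d,0]}$ has enough projectives with $\Proj=\add(P)$ (Horseshoe lemma) and applies \cref{coro:RealisationEnoughProj}.

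The gap is in Step 3: you never establish the quasi-equivalence $\fd_{dg}(A)\simeq\thick(\DL)$ sending $A$ to $P$. Essential surjectivity, which you single out as the main obstacle, is harmless. Once $G=\C(P,-)$ is quasi fully-faithful on $\thick(\DL)$, its essential image is a triangulated subcategory of $\fd(A)$ containing $G(L_i)\cong S_i$, hence all of $\fd(A)=\mathrm{tria}(S_A)$.

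The real issue is full faithfulness, and the induction you sketch for it does not close. That induction uses truncation triangles, full faithfulness on each $\Filt(\DL)[j]$, and extension-closedness. A five-lemma devissage of $\Hom(X,Y[k])$, $k\leq 1$, for $X,Y\in\thick(\DL)^{(-d,0]}$ requires comparing $\Hom_{\T}(M,N[m])$ with $\Hom_{\fd(A)}(GM,GN[m])$ for heart objects $M,N$ and all $m$ up to $d$. For $m\geq 2$ this does not follow from the equivalence of hearts; it is exactly the nontrivial content. Your plan to apply the realization theorem ``on both sides'' has the same problem: it presupposes an exact quasi-equivalence of extended hearts, which is the same missing comparison.

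Your option (a) points in the right direction but omits the argument that makes it work. One way to complete it:
\begin{enumerate}
\item $G$ is $t$-exact, by your vanishing computations.
\item $G$ induces isomorphisms $\Hom_{\T}(Q,Y[n])\cong\Hom_{\D(A)}(GQ,GY[n])$ for $Q\in\thick(P)$ and arbitrary $Y$, by devissage from $\Hom_{\T}(P,Y[n])=H^n\C(P,Y)$.
\item Iterate your Step 1 construction. The cocone of a map $P^{m}\to K$ with $H^0$ epimorphic stays in $\thick(\DL)^{(-d,0]}$. Using associativity of $\ast$, every $X$ in the extended heart then admits, for each $m$, a triangle $X_{\leq m}\to X\to K_{m+1}[m+1]\to X_{\leq m}[1]$ with $X_{\leq m}\in\thick(P)$ and $K_{m+1}\in\thick(\DL)^{(-d,0]}$.
\item Fix $n$ and $Y$ in the heart. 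For $m>n$, the groups $\Hom(K_{m+1}[m+1],Y[n])$ and $\Hom(K_{m+1}[m],Y[n])$ vanish by $t$-orthogonality, and so do their counterparts after applying $G$, using $t$-exactness. Hence $\Hom_{\T}(X,Y[n])\cong\Hom_{\T}(X_{\leq m},Y[n])\cong\Hom_{\D(A)}(GX_{\leq m},GY[n])\cong\Hom_{\D(A)}(GX,GY[n])$, compatibly with $G$.
\item Devissage from heart objects then gives full faithfulness on all of $\thick(\DL)$.
\end{enumerate}
The paper avoids this by applying \cref{prop:EnoughProjModA} to $\thick(\DL)^{(-d,0]}$ with projective generator $P$, which rests on the Koszul duality results of \cite{F25}. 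The completed direct argument would be more elementary and independent of that machinery.
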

	
	We remark that for $d=1$ this yields a recognition theorem for $\D^b(\Lambda)$ for $\Lambda$ a finite-dimensional algebra. Indeed, in that case $A$ is in the heart, hence $H^*(A)$ is concentrated in degree $0$. We also remark that for $d=1$ the above theorem can be deduced from \cite{AMY19} (see \cref{rema:ExplanationAMY} for details).
	
	In \cref{sec:LengthExact} we propose a notion of length exact $d$-categories, $n$-semibricks and wide $n$-subcategories of such exact $d$-categories (for $1\leq n\leq d$). We prove the following statement, which is a generalization of \cite[Thm.~1.2]{Rin76} and \cite[Thm.~2.5]{Eno21}.
	
	\begin{theorem}[\cref{theorem:Enomoto}]
		Let $\E$ be a length exact $d$-category and $1\leq n\leq d$. There are mutually inverse bijections between the following two classes.
		\begin{enumerate}
			\item The class of $n$-semibricks in $\E$.
			\item The class of length wide $n$-subcategories of $\E$.
		\end{enumerate}
	\end{theorem}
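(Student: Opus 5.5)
The plan follows Ringel's and Enomoto's proofs for exact length categories, with all notions read inside the $d$-truncated extended-heart framework. Realize $\E$ as an extension-closed subcategory $\Hc{\C}{d}$ of a pretriangulated dg category $\C$. This is possible via $\D^b_{dg}(\E)$ and the realization functor of \cref{GeneralisedRealFunctor}. Morphism and extension groups between objects of $\E$ can then be computed in $H^0(\C)$ in degrees $<d$, in the range where \cref{GeneralisedRealFunctor}(1) and its $d$-analogue apply.

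The two maps are defined as follows. To an $n$-semibrick $\Sc$ assign $\Filt_n(\Sc)$: the closure of $\Sc$ under extensions inside $\E$, truncated to the $n$-extended part, i.e. $\Filt(\Sc)[n-1]\ast\cdots\ast\Filt(\Sc)$ taken inside $\E$. Conversely, to a length wide $n$-subcategory $\W$ assign its set of simple objects $\Simples(\W)$, i.e. the objects admitting no proper nonzero admissible subobject within $\W$.

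The proof then has four steps, in this order.
\begin{enumerate}
\item Show that $\Simples(\W)$ is an $n$-semibrick. This is a Schur-lemma argument: any nonzero morphism between simples of $\W$ factors through images that again lie in $\W$, because $\W$ is wide. Hence such a morphism is an isomorphism, and endomorphism rings are division rings. The higher-degree vanishing required of an $n$-semibrick should follow from the vanishing of negative self-extensions of the heart part.
\item Show that $\W=\Filt_n(\Simples(\W))$ by induction on length: every object of a length category has a finite composition series by admissible subobjects.
\item Show that $\Filt_n(\Sc)$ is a length wide $n$-subcategory whose simples are exactly $\Sc$. This is the analogue of Ringel's theorem, and I will prove it by the Jordan–Hölder type argument of \cite[Thm.~1.2]{Rin76}. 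Given a morphism $f\colon X\to Y$ between $\Sc$-filtered objects, induct on the sum of the filtration lengths. Take a top $\Sc$-factor $X\defl S$ and a bottom $S'\infl Y$, and use the semibrick condition: any map $S\to S'$ is zero or an isomorphism. This splits off pieces and shows that kernels, images and cokernels (the homotopy kernels/cokernels in the $d$-truncated sense) stay in $\Filt_n(\Sc)$.
\item Check that the simple objects of $\Filt_n(\Sc)$ are exactly the members of $\Sc$. This uses $\Hom(S,S')=0$ for $S\neq S'$ and that endomorphisms of each $S$ form a division ring.
\end{enumerate}

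The main obstacle is step 3, specifically controlling the higher parts. In a $d$-category, "kernel" means a homotopy kernel, which lives in $\E$ only after truncation. So one must show that the fiber of $f$, computed in $\C$ and then $\tau^{\leq 0}$-truncated, is filtered by $\Sc[i]$ for $0\leq i<n$. I would first handle this via the triangulated picture: $\Filt_n(\Sc)=\thick(\Sc)^{(-n,0]}$, where $\thick(\Sc)$ carries the bounded $t$-structure with heart $\Filt(\Sc)$ given by the simple-minded collection $\Sc$. This description holds because an $n$-semibrick satisfies the Hom-vanishing that makes $\Sc$ simple-minded in $\thick(\Sc)$. Wideness then reduces to closure of extended hearts under truncated cones, which is formal from $t$-structure truncations. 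Step 3 thus becomes an application of the extended heart machinery rather than a hands-on Ringel induction. What remains is to verify that these cones, formed in $\thick(\Sc)$, agree with those formed in $\E$, using the $1$-generation/effaceability criterion from \cref{GeneralisedRealFunctor}.
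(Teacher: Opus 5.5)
Your forward direction ($\Sc\mapsto\Filt_n(\Sc)$, the triangulated half of step 3) is essentially the paper's argument. The $n$-semibrick axioms make $\Sc$ simple-minded in a triangulated category containing it; the paper works in $\D^b(\W)$ and you in $\thick(\Sc)\subseteq\D^b(\E)$, and either works. One then identifies $\Filt_n(\Sc)$ with $\Filt(\Sc)[n-1]\ast\cdots\ast\Filt(\Sc)$, and extended hearts are abelian $n$-truncated.

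The genuine gap is the inverse map. You take as simples of $\W$ the objects with no proper nonzero admissible subobject in $\W$, but for $n\geq 2$ this notion collapses. For $S\in\Sc$, the split triangle $S\oplus S\xrightarrow{\pi_2}S\xrightarrow{0}S[1]\to(S\oplus S)[1]$ has all terms in $\Filt_n(\Sc)$. So $\pi_2$ is an inflation into $S$ that is neither zero nor invertible. More generally, $X\oplus S\xrightarrow{\pi_1}X$ has cone $S[1]\in\W$, so every nonzero $X\in\W$ admits such an inflation. Your $\Simples(\W)$ is therefore empty, and step 4 is false. If you restrict to inflations that are monomorphisms in $H^0(\W)$, the shifts become simple as well. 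In $\D^b(k)^{(-2,0]}$ both $k$ and $k[1]$ then qualify, but $\{k,k[1]\}$ is not a $2$-semibrick, since $H^{-1}(\W)(k,k[1])\cong\Hom_{\D^b(k)}(k,k)\neq0$. So step 1 fails too, and the "composition series by admissible subobjects" of step 2 is not a usable notion of length.

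The missing idea is to take simples in the discrete part, as the paper does. The inverse map is $\W\mapsto\Simples(\dis{\W})$, the simple objects of the exact $1$-category $\dis{\W}$. Its homotopy category is abelian, and it corresponds to the heart $\heart_t$ under $\W\simeq\D^b_{dg}(\W)^{(-n,0]}$. Correspondingly, \emph{length} means $\W=\Filt_n(\Simples(\dis{\W}))$. With this notion:
\begin{itemize}
\item step 2 is just the definition;
\item step 4 is the fact that the simples of the heart of $t_{\Sc}$ are exactly $\Sc$ \cite[Thm.~4.4]{Sch20};
\item step 1 is Schur's lemma in the abelian category $H^0(\dis{\W})$, together with $H^{i}(\E)(L,L')=H^{i}(\W)(L,L')=0$ for $i<0$ by discreteness, and $L[i]\in\W$ for $0\leq i<n$.
\end{itemize}
You did not check the last condition; it holds because abelian $n$-truncated dg categories are $n$-filtered.

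Two smaller points. First, $\E$ is only exact, so it is not an extended heart $\Hc{\C}{d}$ in general. You only have $\E\simeq\tau^{\leq0}\D'$ for some extension-closed $\D'\subseteq\D^b_{dg}(\E)$. Second, the $1$-generation/effaceability comparison you plan at the end is neither available nor needed. Conflations and homotopy (co)kernels in $\W$ only involve $\tau^{\leq0}$ of morphism complexes, and these already agree with those computed in $\D^b_{dg}(\E)$.
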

	
	We obtain the following corollary.
	
	\begin{coro}[{\cref{coro:AbelianSchur}}]
		Let $\mathcal{E}$ be a length exact $d$-category. Then the following are equivalent:
		\begin{enumerate}
			\item $\mathcal{E}$ is an abelian $d$-truncated dg category in the sense of \cite[Def.~3.12]{Moc25},
			\item The collection of representatives of isomorphism classes of simple objects in $\E$ forms a $d$-semibrick.
		\end{enumerate}
	\end{coro}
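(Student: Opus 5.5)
The plan is to deduce the corollary from \cref{theorem:Enomoto} with $n=d$, applied to $\E$ itself. The idea is that, in a length exact $d$-category, being abelian $d$-truncated should be the same as being a length wide $d$-subcategory of oneself. Throughout, let $\Simples$ be a set of representatives of the isomorphism classes of simple objects of $\E$. Since $\E$ is length, every object has a finite filtration by inflations whose subquotients lie in $\Simples$, so $\Filt(\Simples)=\E$.

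First I would check that the bijection of \cref{theorem:Enomoto} has the expected form. It should send an $n$-semibrick $\Sc$ to $\Filt(\Sc)$, and a length wide $n$-subcategory $\W$ to the set of (representatives of) its simple objects. This is the form in \cite{Rin76,Eno21}, and I would confirm it from the construction in \cref{sec:LengthExact}. In particular, any $d$-semibrick $\Sc$ with $\Filt(\Sc)=\E$ must equal $\Simples$, since it is the set of simples of the wide subcategory $\E$.

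For (1)$\Rightarrow$(2): assume $\E$ is abelian $d$-truncated in the sense of \cite[Def.~3.12]{Moc25}. I would verify directly that $\E\subseteq\E$ satisfies the axioms of a length wide $d$-subcategory. The length property is the hypothesis. The wideness conditions, namely closure under the relevant (homotopy) kernels and cokernels of morphisms between objects of the subcategory, taken inside $\E$, should hold precisely because $\E$ is abelian $d$-truncated. The bijection then shows that the simples of $\E$, i.e.\ $\Simples$, form a $d$-semibrick.

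For (2)$\Rightarrow$(1): if $\Simples$ is a $d$-semibrick, then \cref{theorem:Enomoto} shows that $\Filt(\Simples)=\E$ is a length wide $d$-subcategory of $\E$. It remains to see that a length exact $d$-category which is wide in itself is abelian $d$-truncated. The intended argument is that the defining property of a wide $d$-subcategory supplies kernels and cokernels in $\E$ of every morphism, with the exactness compatibilities required in \cite[Def.~3.12]{Moc25}.

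The main obstacle is this last step: matching the definition of ``length wide $d$-subcategory'' in \cref{sec:LengthExact} with Mochizuki's axioms for abelian $d$-truncated dg categories. Wideness is phrased relative to an ambient exact $d$-category, whereas abelianness is intrinsic. I would first unwind both definitions at the level of $H^0$ and of the truncated Hom complexes in degrees $(-d,0]$. I expect them to agree verbatim when the ambient category is $\E$ itself. If some axiom does not match directly, I would fall back on embedding $\E$ into $\D^b_{dg}(\E)$ and comparing kernels and cokernels there via the $t$-structure whose heart-type category is $\E$.
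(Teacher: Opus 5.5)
Your route is the paper's: specialize \cref{theorem:Enomoto} to $n=d$ and to the subcategory $\E\subseteq\E$. However, you have stated the bijection in its $d=1$ form, and the claim you derive from it is false for $d>1$.

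\textbf{The bijection and the length condition.} The theorem sends $\DL$ to $\Filt_d(\DL)=\Filt(\{L[i]:L\in\DL,\ 0\leq i\leq d-1\})$, not to $\Filt(\DL)$. Being a length exact $d$-category means exactly $\E=\Filt_d(\Simples(\dis{\E}))$. Your assertion that every object is filtered by simples, so that $\Filt(\Simples)=\E$, fails. Indeed, $\Filt(\Simples(\dis{\E}))$ is contained in the extension-closed subcategory $\dis{\E}$. But for a simple $L$, the object $L[1]\in\E$ is not discrete, since $H^{-1}\E(L,L[1])\cong H^0\E(L,L)\neq 0$.

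\textbf{The meaning of ``simple''.} ``Simple'' must be read as simple in the exact category $\dis{\E}$. An intrinsic notion of simple object in $H^0(\E)$ does not work. \cref{example:Simples} shows this with the conflation $S\oplus S\infl S\defl S[1]$.

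\textbf{The anticipated obstacle.} The obstacle you expect is not there. In \cref{sec:LengthExact}, a wide $n$-subcategory is by definition a full extension-closed subcategory that is abelian $n$-truncated for the induced exact structure. It is not defined by closure under homotopy kernels and cokernels. Hence $\E$ is a length wide $d$-subcategory of itself precisely when it is abelian $d$-truncated; the length part is the hypothesis. No unwinding of definitions and no comparison inside $\D^b_{dg}(\E)$ is needed.

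\textbf{After the correction.} With $\Filt$ replaced by $\Filt_d$, your two implications are exactly the paper's proof. The collection $\DL=\Simples(\dis{\E})$ satisfies $\Filt_d(\DL)=\E$. So \cref{theorem:Enomoto} gives that $\E$ is wide in itself if and only if $\DL$ is a $d$-semibrick. The paper also observes that the first two semibrick axioms hold automatically here, because $L[i]\in\E$ for $i<d$ and $L'\in\dis{\E}$. Thus condition (2) reduces to $\Simples(\dis{\E})$ being a semibrick in $H^0(\dis{\E})$.
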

	
	We consider the property of having enough projectives for length abelian $d$-truncated dg categories. The following result summarizes our findings.
	
	\begin{theorem}[\cref{theorem:SummaryAbelian}]
		Let $k$ be a perfect field. The following are in bijection:
		\begin{enumerate}
			\item\label{item:SummaryAbelian1Intro} Smooth locally-finite pretriangulated dg categories $\C$ equipped with a strictly $(d-1)$-complicial bounded $t$-structure $t$ such that $\heart_t$ has enough projectives, up to $t$-exact quasi-equivalence.  
			\item\label{item:SummaryAbelian2Intro} Length abelian $d$-truncated dg categories $\A$ with finitely many simples which are locally finite and $\Ext$-finite such that ${\height(\A)<\infty}$, up to exact quasi-equivalence.  
			\item\label{item:SummaryAbelian3Intro} Proper basic dg algebras, that is $H^0(A)$ is basic, with cohomology strictly concentrated in degrees $(-d,0]$, up to quasi-isomorphism. 
			\item\label{item:SummaryAbelian4Intro} Smooth locally-finite coconnective basic dg algebras, that is $H^0(E)$ is basic, such that $H^0(E)$ is semisimple and $\per(E)^{(-d,0]}\subseteq \per(E)$ is $1$-generated, up to quasi-isomorphism.
		\end{enumerate}
		Denoting by $L$ the direct sum of simple objects in $\C^{(-d,0]}$ or $\A$ and by $P$ the minimal right approximation of $L$ by projective objects in $\C^{(-d,0]}$ or $\A$, respectively, the bijections are given as follows (notice that the diagram commutes):
		\[\begin{tikzcd}
			&&& {\ref{item:SummaryAbelian1Intro}} \\
			\\
			\\
			{\ref{item:SummaryAbelian4Intro}} &&&&&& {\ref{item:SummaryAbelian2Intro}} \\
			\\
			\\
			&&& {\ref{item:SummaryAbelian3Intro}}
			\arrow["{(\C,t)\mapsto \C(L,L)}"', sloped, shift left, from=1-4, to=4-1]
			\arrow["{(\C,t)\mapsto \C^{(-d,0]_t}}", sloped, shift left, from=1-4, to=4-7]
			\arrow["{(\C,t)\mapsto\C^{(-d,0]}(P,P)}", shift left=5, curve={height=-140pt}, from=1-4, to=7-4]
			\arrow["{E\mapsto(\per(E),t_E)}", sloped, shift left, from=4-1, to=1-4]
			\arrow["{E \mapsto \per(E)^{(-d,0]}}", shift left, from=4-1, to=4-7]
			\arrow["{E \mapsto \KoszulDual{E}}", sloped, shift left=1, from=4-1, to=7-4]
			\arrow["{\small{(\D^b_{dg}(\A),t_{\Simples})\mapsfrom \A}}"', sloped, shift left=1, from=4-7, to=1-4]
			\arrow["{\A\mapsto \A(L,L)}", shift left, from=4-7, to=4-1]
			\arrow["{\A\mapsfrom\A(P,P)}"', sloped, shift left, from=4-7, to=7-4]
			\arrow["{A\mapsto (\fd(A),t_{\mathrm{st}})}", shift left=5, curve={height=-140pt}, from=7-4, to=1-4]
			\arrow["{A\mapsfrom \KoszulDual{A}}"', sloped, shift left, from=7-4, to=4-1]
			\arrow["{E\mapsto\per(E)^{(-d,0]}}", sloped, shift left=1, from=7-4, to=4-7]
		\end{tikzcd}\]
	\end{theorem}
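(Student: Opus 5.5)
The plan is to treat the dg category of type \ref{item:SummaryAbelian1Intro} as the hub and to compose bijections already established earlier in the paper. Most arrows in the diagram are instances of \cref{theorem:KoszulDperfectIntro}, \cref{compliciality}, \cref{theorem:RecognitionTheorem} and \cref{coro:AbelianSchur}. So the main work is to check that the extra adjectives match up under each correspondence: strictness, enough projectives, basic, smooth, height finite, and finitely many simples.

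\textbf{Step 1: \ref{item:SummaryAbelian3Intro} and \ref{item:SummaryAbelian4Intro}.} I restrict the bijection of \cref{theorem:KoszulDperfectIntro} to basic algebras. The point is that $H^0(\KoszulDual{A})\cong\End(S_A)$, and this is a product of division algebras over $\topf(H^0(A))$. It is basic exactly when $H^0(A)$ is basic, because both conditions say that the simples $S_i$ appear with multiplicity one.

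I handle strictness of the degree range through $H^{1-d}(A)\neq 0$. Concentration in degrees $(-d+1,0]$ corresponds to $1$-generation of $\per(E)^{(-d+1,0]}$. Hence ``strictly $(-d,0]$'' corresponds to the $(-d,0]$ condition holding while the $(-d+1,0]$ condition fails.

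\textbf{Step 2: \ref{item:SummaryAbelian1Intro} and \ref{item:SummaryAbelian3Intro}.} Given $A$, I take $(\fd_{dg}(A),t_{\mathrm{st}})$. This category is smooth and locally finite because $\fd(A)\simeq\per(\KoszulDual{A})$ and $\KoszulDual{A}$ is smooth (Step 1). Its heart is $\Mod H^0(A)$, which has enough projectives. The $t$-structure is $(d-1)$-complicial, since $A\in\fd(A)^{(-d,0]}$ surjects on $H^0$ onto every $H^0(X)$ via free modules $A^{m}$. It is strictly so, because $(d-2)$-compliciality would force, via \cref{compliciality}, the realization from $\Hc{A}{d-1}$ to be an equivalence, and this is incompatible with $H^{1-d}(A)\neq0$.

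Conversely, given $(\C,t)$ I let $L$ be the sum of simples and $P_i$ the projective covers of $L_i$ in the extended heart $\C^{(-d,0]}$. Here I would first show that projectives of the abelian heart lift to objects $P_i\in\C^{(-d,0]}$ with $\Hom(P,L[k])=0$ for $k\ge1$, built via $(d-1)$-compliciality and truncation. Then $Y_i=\mathrm{cocone}(P_i\to L_i)\in\thick(\DL)^{(-d,0]}$, and \cref{theorem:RecognitionTheorem} gives $\C\simeq\fd_{dg}(A)$ with $A=\C^{(-d,0]}(P,P)$ proper connective in degrees $(-d,0]$. Strictness transfers back by the same argument.

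\textbf{Step 3: \ref{item:SummaryAbelian1Intro} and \ref{item:SummaryAbelian2Intro}.} In one direction I send $(\C,t)$ to $\C^{(-d,0]}$. In the other, I send $\A$ to $\D^b_{dg}(\A)$ with the $t$-structure generated by its simples; \cref{coro:AbelianSchur} shows the simples form a $d$-semibrick, hence a simple-minded collection. By \cref{compliciality}, $(d-1)$-compliciality makes $\real:\D^b_{dg}(\C^{(-d,0]})\to\C$ a quasi-equivalence, so these two maps are mutually inverse. Showing that $\C^{(-d,0]}$ is a length abelian $d$-truncated dg category uses \cref{coro:AbelianSchur} together with finite height coming from boundedness. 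Finally, I check that the diagram commutes by identifying $\C(L,L)\simeq\KoszulDual{A}$ and $\C^{(-d,0]}(P,P)=A$.

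\textbf{Main obstacle.} I expect the hardest step to be the $(\C,t)\mapsto A$ direction of Step 2. It requires producing, from ``$\heart_t$ has enough projectives'' plus strict $(d-1)$-compliciality, objects $P_i$ in the extended heart satisfying condition (3) of \cref{theorem:RecognitionTheorem}. For this I would first try to build $P_i$ inductively through the truncation tower $\tau^{\geq -j}$, killing $\Ext^{k}(-,L)$ step by step. Finiteness of the height, and its matching across \ref{item:SummaryAbelian1Intro} and \ref{item:SummaryAbelian2Intro}, also needs care.
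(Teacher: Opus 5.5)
The gap is the step you flag yourself, and it is not a technicality. The whole direction $(1)\to(3)$, including the commutativity $\C^{(-d,0]}(P,P)=A$, needs objects $P_i\in\C^{(-d,0]}$ with $H^0(P_i)$ the projective cover of $L_i$ and $\Hom(P,L[k])=0$ for \emph{all} $k\geq 1$. Your sketch gives no reason such objects exist in $\C$. Killing $\Hom(-,L[k])$ degree by degree gives a tower $\cdots\to P^{(1)}\to P^{(0)}=Q$ (with $Q$ the projective generator of $\heart_t$), where each $P^{(m)}\in\C^{[-m,0]}$ is projective in the extended heart $\C^{[-m,0]}$. One cone with copies of $L[m+1]$ per degree is not enough for this: you must use the object of $\heart_t$ representing the left exact functor $\Hom(P^{(m)},-[m+2])|_{\heart_t}$, or iterate. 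More seriously, nothing in the sketch stops the tower. For $\C=\fd_{dg}(A)$ with $A$ locally finite, connective and not proper, the heart still has enough projectives, but the tower is $(\trunc^{\geq -m}A)_m$ and never stabilises.

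The missing idea is that $(d-1)$-compliciality stops it, by the splitting argument of \cref{CorollarySilting}:
\begin{itemize}
\item For $m\geq d-1$, choose $f\colon X'\to P^{(m)}$ with $X'\in\C^{[-(d-1),0]}$ and $H^0(f)$ epi.
\item Relative projectivity gives $\Hom(P^{(m)},X')\twoheadrightarrow\Hom(H^0P^{(m)},H^0X')$ and $\End(P^{(m)})\cong\End(H^0P^{(m)})$.
\item So lifting a section of $H^0(f)$ splits $f$, hence $P^{(m)}\in\C^{[-(d-1),0]}$, and the tower is constant from $P^{(d-1)}$ on.
\end{itemize}

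The paper never builds $P$ by hand. By \cref{compliciality}, $\C\simeq\D^b_{dg}(\C^{(-d,0]})$. By \cref{prop:EnoughProjHeart} (proved via Koszul duality, \cref{coro:d-simple-minded}), $\C^{(-d,0]}$ has a projective generator $P$, which is derived projective by \cref{lemma:DerivedProj}. Then \cref{prop:EnoughProjModA} directly gives a $t$-exact equivalence $\C\simeq\fd_{dg}(\C^{(-d,0]}(P,P))$ sending $P\mapsto A$ and $L\mapsto S_A$. Together with \cref{prop:EnoughProj} and \cref{theorem:KoszulDperfect}, that is the whole argument. So the detour through \cref{theorem:RecognitionTheorem}, whose last part itself rests on \cref{prop:EnoughProjModA}, buys nothing unless you complete the intrinsic construction above.

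There are two further problems.

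First, finite height of $\C^{(-d,0]}$ does not come from boundedness of $t$, which says nothing about Loewy lengths in the heart. It comes from enough projectives via \cref{coro:EnoughProjLoewy} and \cref{lemma:BasicPropLoewy}. Conversely, for $(2)\to(1)$ you need \cref{coro:EnoughProjLoewy} (Gabriel's theorem) to get enough projectives in $\heart_{t_{\Simples}}$, and you do not address this.

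Second, your treatment of strictness proves a different statement. Via \cref{theorem:KoszulDperfect} you correctly show that strict concentration in $(-d,0]$ corresponds to $1$-generation of $\per(E)^{(-d,0]}$ \emph{plus} its failure for $\per(E)^{(-d+1,0]}$. Items (2) and (4), however, contain no such condition. Take a basic finite-dimensional algebra $\Lambda$ and $d\geq 2$. Then $\fd_{dg}(\Lambda)^{(-d,0]}$ lies in (2) and $\KoszulDual{\Lambda}$ lies in (4). Yet $(\D^b_{dg}(\Lambda),t_{\mathrm{st}})$ is only strictly $0$-complicial, and $\Lambda$ is not strictly concentrated in $(-d,0]$. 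So the maps out of (2) and (4) do not land in (1) and (3). You need to make the strictness conditions consistent across all four items, for example by dropping ``strictly'' in (1) and (3), and say so explicitly. Silently adding the extra condition in Step~1 while ignoring it in Step~3 does not prove the stated bijection.
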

	
	Finally, we prove that the above results are special cases of a Koszul duality statement between certain length exact $d$-categories and dg categories which axiomatize dg endomorphism categories of the simple objects in exact $d$-categories (see \cref{sec:KDExact} for the definitions). 
	
	\begin{theorem}[{\cref{KoszulExact}}]
			There are mutually inverse bijections between the following: 
		\begin{enumerate}
			\item\label{item:KoszulExact1Intro} Coconnective dg categories $E$ such that $\E\coloneqq\tw(E)^{(-d,0]}\subseteq \tw(E)$ is $1$-generated,  $\dis{\E}=\Filt(\ob(E))$ and $E$ is admissible Schur, up to quasi-equivalence.
			\item\label{item:KoszulExact2Intro} Length exact $d$-categories, up to exact quasi-equivalence.
		\end{enumerate}
		They restrict to bijections between the following:
		\begin{enumerate}
			\item[(1')] Coconnective dg categories $E$ such that $\tw(E)^{(-d,0]}\subseteq \tw(E)$ is $1$-generated, and $H^0(E)(x,y)$ is a division algebra for all $x,y\in\ob(E)$, up to quasi-equivalence.
			\item[(2')] Length abelian $d$-truncated dg categories, up to exact quasi-equivalence.
		\end{enumerate}
		If $k$ is a perfect field, they restrict further to bijections between the following:
		\begin{enumerate}
			\item[(1'')] Coconnective locally-finite smooth dg categories $E$ with $|\ob(E)|<\infty$ such that $\tw(E)^{(-d,0]}\subseteq \tw(E)$ is $1$-generated, and $H^0(E)(x,y)$ is a division algebra for all $x,y\in\ob(E)$, up to quasi-equivalence.
			\item[(2'')] Locally-finite, $\Ext$-finite length abelian $d$-truncated dg categories with finitely many simples and enough projectives, up to exact quasi-equivalence.
		\end{enumerate}
		
		The bijections are given as follows:
		\begin{align*}
			F:\ref{item:KoszulExact1Intro} &\longrightarrow \ref{item:KoszulExact2Intro} & G:\ref{item:KoszulExact2Intro} &\longrightarrow \ref{item:KoszulExact1Intro}\\
			E &\longmapsto \tw(E)^{(-d,0]} & \E &\longmapsto \D^b_{dg}(\E)(\DL_{\E},\DL_{\E})
		\end{align*}
		Here, $\DL_{\E}$ denotes the collection of simple objects in $\E$. Moreover, if $\E$ and $E$ correspond to each other under the above bijections, then there is a quasi-equivalence \[\tw(E)\stackrel{\simeq}{\longrightarrow}\D^b_{dg}(\E).\]
	\end{theorem}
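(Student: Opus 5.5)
We prove the statement by checking that $F$ and $G$ are well defined and mutually inverse, building everything on the generalized realization theorem (\cref{GeneralisedRealFunctor}). Start with $F$. Let $E$ be as in (1). Then $\tw(E)$ carries the bounded $t$-structure whose heart is $\Filt(\ob(E))$, coming from the simple-minded collection $\ob(E)$; coconnectivity of $E$ is what makes this collection simple-minded. The subcategory $\E=\tw(E)^{(-d,0]}$ is extension-closed in a pretriangulated dg category, so it is an exact dg category. We must show that $\E$ is a length exact $d$-category whose simples are the objects of $E$. The truncation property comes from $\E$ being the connective cover $\tau^{\leq 0}$ of an interval of the $t$-structure. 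The length property comes from the hypothesis $\dis{\E}=\Filt(\ob(E))$: every object of $\E$ has a finite filtration by objects of $E$. The admissible Schur hypothesis is exactly what is needed to identify these objects as the simples, with no further subobjects, and to guarantee that the conflations of $\E$ are the admissible ones.

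Next, since $\E$ is $1$-generated in $\tw(E)$, \cref{GeneralisedRealFunctor}(2)(f)$\Rightarrow$(a) makes $\real:\D^b_{dg}(\E)\to\tw(E)$ quasi fully faithful. Its essential image is $\mathrm{tria}(H^0(\E))$. This image contains $\ob(E)$, which generates $\tw(E)$, so $\real$ is a quasi-equivalence $\tw(E)\simeq\D^b_{dg}(\E)$; this already gives the last assertion of the statement. Restricting $\real$ to the simples gives a quasi-equivalence $G F(E)=\D^b_{dg}(\E)(\DL_\E,\DL_\E)\simeq \tw(E)(\ob E,\ob E)\simeq E$. Hence $GF\simeq\id$.

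Now $G$. Let $\E$ be a length exact $d$-category and set $E\coloneqq\D^b_{dg}(\E)(\DL_\E,\DL_\E)$.
\begin{enumerate}
\item \emph{Coconnectivity.} Negative self-extensions between objects of $\E$ in $\D^b(\E)$ vanish up to the truncation bound, and the simples lie in the heart-like part. One then checks that $\DL_\E$ is a simple-minded collection in $\thick(\DL_\E)=\D^b(\E)$; generation holds by the length property.
\item \emph{Image of $\E$.} This gives $\tw(E)\simeq\D^b_{dg}(\E)$, and we must show that $\E$ maps onto $\tw(E)^{(-d,0]}$. One inclusion follows by induction on length. For the other, an object of $\tw(E)^{(-d,0]}$ is an iterated extension $\Filt(\DL)[d-1]\ast\cdots\ast\Filt(\DL)$. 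We want to lift it to $\E$ using part (1) of \cref{GeneralisedRealFunctor} together with the $d$-truncated nature of $\E$. I expect this identification to be the main obstacle. It needs a careful comparison between the canonical $d$-extended heart of the simple-minded $t$-structure and the given exact structure, in particular that inflations and deflations in $\E$ match those induced from $\tw(E)$.
\item \emph{Remaining properties of $E$.} Once the image is identified, $1$-generation follows from \cref{GeneralisedRealFunctor}(2)(a)$\Rightarrow$(f). The condition $\dis{\E}=\Filt(\ob E)$ and the admissible Schur property translate directly from the length axioms, so $FG(\E)\simeq\E$.
\end{enumerate}

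For the restrictions, \cref{coro:AbelianSchur} identifies the abelian $d$-truncated case with the case where the simples form a $d$-semibrick. On the $E$ side this says $H^0(E)(x,y)$ is a division algebra; for $x\neq y$ this should be read as zero, by Schur. For (1'')$\leftrightarrow$(2'') with $k$ perfect, we argue as in \cref{theorem:KoszulDperfect} and \cref{theorem:SummaryAbelian}. Smoothness of $E$ together with finitely many objects is equivalent, via Koszul duality with the proper connective $\E(P,P)$ from \cref{theorem:RecognitionTheorem}, to $\E$ having finitely many simples and enough projectives. Local and $\Ext$-finiteness match local finiteness of $E$.
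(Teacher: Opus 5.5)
There is a genuine gap. Your proof of $d$-truncatedness in $F$ and your whole treatment of $G$ assume that $\ob(E)$, respectively $\DL_{\E}$, is a simple-minded collection, so that $\tw(E)^{(-d,0]}$ is the $d$-extended heart of a $t$-structure. Coconnectivity does not give this. A simple-minded collection also needs every nonzero morphism between its members to be invertible. That is exactly the division-algebra hypothesis of (1'), and it is strictly stronger than admissible Schur.

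Take $E=k[x]/(x^2)$ as a one-object dg category in degree $0$, with $d=1$. Then $\E=\tw(E)^{(-1,0]}=\add(E)$ is split exact and $\dis{\E}=\E=\Filt(\ob E)$. The only admissible endomorphisms are $0$ and isomorphisms, and $1$-generation is vacuous because all morphisms of degree $\geq 1$ vanish. So $E$ lies in class (1), yet $x$ is a nonzero non-invertible endomorphism. The paper points this out right after defining $\tw(E)^{(-d,0]}$ and again in its final remark.

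Take the same $E$ with $d=2$; it is again in class (1), since morphisms of degree $\geq 2$ between two-term complexes vanish and $\dis{\E}=\add(E)$. Now $\E=\Filt(\{E,E[1]\})$ contains $\cone(x)$, whose $H^0$ is $k$. Every object of $\add(E)[1]\ast\add(E)$ splits as $A[1]\oplus B$ with $B$ free. So the description $\Filt(\DL)[d-1]\ast\cdots\ast\Filt(\DL)$, on which your step (2) for $G$ relies, is false in class (1). Likewise $d$-truncatedness of $\tw(E)^{(-d,0]}$ cannot come from a $t$-structure. The paper proves it by devissage: the extension-closed classes $S_l$ and $S_r$ contain all $L[i]$ with $L\in\ob(E)$ and $0\leq i\leq d-1$.

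None of the $t$-structure machinery is needed for $G$:
\begin{enumerate}
\item By definition, $E$ is the full dg subcategory of $\D^b_{dg}(\E)$ on $\DL_{\E}$. Since $\E=\Filt_d(\DL_{\E})$ generates $\D^b(\E)$, so does $\DL_{\E}$. The universal property of $\tw(E)$ therefore gives a quasi-equivalence $\tw(E)\to\D^b_{dg}(\E)$ with no simple-minded hypothesis.
\item This quasi-equivalence sends $\tw(E)^{(-d,0]}=\Filt(\{L[k]\})$ onto $\Filt_d(\DL_{\E})$. That is $\E$ by the very definition of a length exact $d$-category, because $\E$ is extension-closed in $\D^b_{dg}(\E)$ and its conflations correspond to triangles there.
\item So the step you flag as the main obstacle is immediate, and the same diagram gives $F\circ G\simeq\id$.
\item $1$-generation then transfers from $H^0(\E)\subseteq\D^b(\E)$, which is $1$-generated by \cref{HigherExt}. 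Admissible Schur follows from Schur's lemma for the exact category $H^0(\dis{\E})$.
\end{enumerate}

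Your argument for $G\circ F\simeq\id$ via \cref{GeneralisedRealFunctor} agrees with the paper. So does your reduction of the restricted bijections to \cref{coro:AbelianSchur}, \cref{prop:EnoughProj} and \cref{theorem:KoszulD}. Your $t$-structure reasoning is in fact valid in the Schur case (1')/(2'), but it does not establish the main bijection (1)$\leftrightarrow$(2).
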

	
	\subsection{Context and relation to other work}
	Recently, there has been growing interest in extended module categories of finite-dimensional algebras or, more generally, extended hearts of $t$-structures in (algebraic) triangulated categories.
	
	In the case of a finite-dimensional algebra $\Lambda$, the tilting theory of the extended module category $\Hc{\Lambda}{d}$ was studied in \cite{Gu24,Z25,GZ25}. In \cite[Thm.~4.1]{Gu24}, a bijection between $(d+1)$-term silting complexes, certain torsion pairs in $\Hc{\Lambda}{d}$ and certain cotorsion pairs in $\mathrm{K}^{[-d,0]}(\mathrm{proj}(\Lambda))$ is proven. In \cite[Thm.~3.12]{Z25} it is proven that $\Hc{\Lambda}{d}$ has almost-split conflations and the former bijections are extended to include basic $\tau$-tilting pairs in $\Hc{\Lambda}{d}$ in \cite[Thm.~4.7]{Z25}. Finally, in \cite[Thm.~7.3, Thm~7.11]{GZ25}, the bijections have been further extended to include $(d+1)$-term simple-minded collections in $\D^b(\Mod\Lambda)$ as well as left and right-finite semibricks and wide subcategories in $\Hc{\Lambda}{d}$. In particular, \cref{theorem:Enomoto} can be seen as a generalization of the bijections between semibricks and wide subcategories in $\Hc{\Lambda}{d}$, which corresponds to the case $n=1$. 
	
	The extended hearts of proper connective dg algebras $A$ were first studied in \cite{Moc25}, where it is proven that $\Hc{A}{d}$ is an abelian $d$-truncated dg category (see \cite[Thm.~3.44]{Moc25}). In \cref{prop:EnoughProj} we prove that any locally-finite, $\Ext$-finite, length abelian $d$-truncated dg category $\A$ with finitely many simples and enough projectives is of this form. In \cite{MP25} it was proven that $\Hc{A}{d}$ has almost split conflations and satisfies the first Brauer--Thrall Conjecture. Finally, in \cite{Moc26} a higher-dimensional version of the Auslander correspondence for proper connective dg algebras is proven. All these results indicate that $\Hc{A}{d}$ seems to behave very similarly to the module category of a finite-dimensional algebra.
	 
	In \cite[Thm.~3.44]{Moc25} it is shown that extended hearts of $t$-structures on pretriangulated dg categories are $d$-truncated dg categories. In \cref{prop:CharAb} we show that, analogous to the case $d=1$, all abelian $d$-truncated dg categories are of this form. We also remark that, in \cite{WZ25}, part of the tilting theory for $\Hc{\Lambda}{d}$ is generalized to extended hearts of silting $t$-structures on (not necessarily algebraic) triangulated categories.
	 
	This article continues this line of research but focuses on the foundational aspects of the relationship between length exact $d$-categories, length abelian $d$-truncated dg categories and extended hearts of proper connective dg algebras as well as their relationship to Koszul duality.

	\section{Preliminaries}
	
	\subsection{Terminology and notation}\label{section:Terminology}
	We work over an arbitrary field $k$. Unless mentioned otherwise, all categories are assumed to be $k$-linear. Modules over will always be right modules. We use freely the language of dg categories and their derived (dg) categories; our standard references are \cite{Kel94,Kel06}. In particular, for a dg category $\A$ we denote by \define{$\D_{dg}(\A)$} its \definef{derived dg category} and by 
	\[\define{\D(\A)}\coloneqq H^0(\D_{dg}(\A))\]
	its \definef{derived category}. For an object $A\in\A$ we denote by 
	\[\define{\repr{A}}\coloneqq \A(-,A)\in\D(\A)\] 
	the corresponding representable dg $\A$-module. We write 
	\[\define{\per(\A)}\coloneqq \thick(\{\repr{A}\colon A\in\A\})\subseteq \D(\A)\] 
	for the full subcategory of \definef{perfect dg $\A$-modules} and 
	\[\define{\fd(\A)}\coloneqq \left\{M\in\D(\A): \dim_k H^*(M)<\infty \right\}\subseteq\D(\A)\]
	for the \definef{perfectly valued derived category of $\A$}. We denote by $\define{\per_{dg}(\A)}$ and $\define{\fd_{dg}(\A)}$ the canonical dg enhancements of $\per(\A)$ and $\fd(\A)$, respectively. For a dg category $\A$, we denote by $\define{\tau^{\leq 0}\A}$ its \definef{connective cover}, that is the dg category with $\ob(\tau^{\leq 0}\A)\coloneqq \ob(\A)$ and, for $x,y\in\tau^{\leq 0}\A$ the dg vector space of morphisms 
	\[\tau^{\leq 0}\A(x,y)\coloneqq \left( \cdots \stackrel{d^{-3}_{\A}}{\longrightarrow} \A(x,y)^{-2} \stackrel{d^{-2}_{\A}}{\longrightarrow} \A(x,y)^{-1} \stackrel{d^{-1}_{\A}}{\longrightarrow} \Ker(d^0_{\A})\to 0\to \cdots\right),\]
	where $\Ker(d^0_{\A})$ is in degree $0$. We call $\A$ \definef{connective} if $H^*(\A)$ is concentrated in non-positive degrees and \definef{coconnective} if $H^*(\A)$ is concentrated in non-negative degrees. Observe that $\A$ is connective if and only if the canonical functor $\tau^{\leq 0}\A\to \A$ is a quasi-isomorphism. We say that $\A$ is \definef{locally-finite} if $H^{i}(\A)(x,y)$ is finite dimensional for all $x,y\in\A$ and all $i\in\ZZ$. If the graded vector space $H^{*}(\A)(x,y)$ is finite dimensional for all $x,y\in\A$, we call $\A$ \definef{proper}.

	\subsection{$t$-Structures, simple-minded collections and silting collections}\label{t-str-ch}
	We fix a Krull--Schmidt triangulated category $\KST$. In this section, we recall several different descriptions of the bounded $t$-structures associated to simple-minded and silting collections in $\KST$. We first introduce some notation.
	
	Let $\A,\Cat{B}\subseteq \C$ be subcategories of a category $\C$. We write \define{$\A\perp\Cat{B}$} if $\Hom_{\C}(a,b)=0$ for all $a\in\A,b\in\Cat{B}$. 
	
	For a collection of objects $\C$ in $\D$, we write 
	\[\define{\C^{\perp_{>0}}}=\{D\in\D:\Hom_{\D}(C,D[m])\; \forall m>0, C\in \C\}\] 
	and define \define{$\C^{\perp_{<0}}$}, \define{$\C^{\perp_{\geq 0}}$}, \define{$\C^{\perp_{\leq 0}}$} analogously.
	
	The following notion was introduced in \cite{BBD82}.
	
	\begin{defi}
		A \definef{$t$-structure} on a triangulated category $\D$ is a pair $t=\left(\D^{\leq0},\D^{\geq 0}\right)$ called \definef{aisle} and \definef{coaisle} of (strict) full subcategories such that 
		\begin{itemize}
			\item $\D^{\leq 0}[1]\subseteq \D^{\leq 0}$ and $\D^{\geq 0}[-1]\subseteq \D^{\geq 0}$,
			\item $\D^{\leq 0}\perp (\D^{\geq 0}[-1])$,
			\item for all $X\in\D$ there exists a triangle (called \definef{$t$-decomposition} of $X$) 
			\[\trunc^{<0}X\to X \to \trunc^{\geq 0}X\to \trunc^{<0}X[1]\]
			with $\trunc^{<0}X \in \D^{\leq 0}[1]$ and $\trunc^{\geq 0}X\in\D^{\geq 0}$.
		\end{itemize}
		The full subcategory 
		\[\heart\coloneqq\heart_t\coloneqq\D^{\leq 0}\cap \D^{\geq 0}\] 
		is called the \definef{heart} of $t$. For $n\in\ZZ$ we write 
		\[\D^{\leq n}:=\D^{\leq 0}[-n], \quad \D^{\geq n}:=\D^{\geq 0}[-n], \quad \D^{<n}:=\D^{\leq 0}[-(n-1)], \quad \D^{>n}:=\D^{\geq 0}[-(n+1)].\]  
		We call $t$ \definef{bounded} if 
		\[\D=(\cup_{n\in\ZZ}\D^{\leq n})\cap (\cup_{n\in\ZZ}\D^{\geq n}).\]
	\end{defi} 
	
	Recall that $\heart_t$ is an abelian category. Moreover, $\trunc^{<0}$ and $\trunc^{\geq 0}$ or more generally 
	\[\define{\trunc^{\leq n}}:\D\to\D^{\leq n}, \quad \define{\trunc^{\geq n}}:\D\to\D^{\geq n}\] 
	can be defined as the adjoints $\iota^{\leq n}\dashv \trunc^{\leq n}$ and $\trunc^{\geq n}\dashv \iota^{\geq n}$ to the inclusions 
	\[\iota^{\leq n}:\D^{\leq n}\to \D \text{ and } \iota^{\geq n}:\D^{\geq n}\to \D \text{ respectively.}\] 
	
	In this paper we are interested in bounded $t$-structures arising from simple-minded and silting collections. We recall the necessary definitions. The following definition goes back to \cite{Ric02}, see also \cite[Def.~3.2]{KY14}.
	
	\begin{defi}
		A \definef{simple-minded collection} is a collection of objects $\DL$ in $\D$ such that  
		\begin{itemize}
			\item no object of $\DL$ is isomorphic to zero,
			\item any non-zero morphism between two objects of $\DL$ is an isomorphism,
			\item $\Hom_{\D}(L,L'[n])=0$ for all $L,L'\in\DL$, $n<0$ and
			\item $\D=\thick(\DL)$.
		\end{itemize}
	\end{defi}
	
	In \cite[Thm.~4.4]{Sch20} it is shown that every simple-minded collection $\DL$ in $\D$ gives rise to a bounded $t$-structure $t_{\DL}=(\D^{\leq 0},\D^{\geq 0})$, where $\D^{\leq 0}$ (resp. $D^{\geq 0}$) denotes the full subcategory of $\D$ consisting of objects that have a \definef{$(d_1,\ldots,d_m)$-$\DL$-filtration} for some $m\in\mathbb{N}$ and $d_1\geq\dots\geq d_m\geq 0$ (resp. $0\geq d_1\geq\dots\geq d_m$), that is there exist $L_1,\ldots,L_m\in\DL$ and a sequence
	\[0=M_0\overset{f_0}{\longrightarrow}M_1\overset{f_1}{\longrightarrow}\cdots \overset{f_{m-2}}{\longrightarrow}M_{m-1}\overset{f_{m-1}}{\longrightarrow}M_m=M\]
	in $\D$ such that there exist triangles 
	\[M_{i-1}\overset{f_{i-1}}{\longrightarrow}M_{i}\overset{g_i}{\longrightarrow}L_i[d_i]\longrightarrow M_{i-1}[1] \quad \text{for }1\leq i\leq m.\]
	The heart $\heart_{\DL}$ of this $t$-structure is a \definef{length abelian category}, that is every object admits a finite composition series and the simple objects of $\heart_{\DL}$ are precisely the objects of $\DL$. Moreover, every bounded $t$-structure on $\D$ with length heart arises in this way, by taking $\DL$ to be the collection of simple objects in the heart (see for example \cite[Thm.~4.6]{Sch20}).
	 
	\begin{defi}[{\cite[Def.~4.1]{PV18}}]
		A \definef{silting collection} is a collection of objects $\DP$ in $\D$ such that  
		\begin{itemize}
			\item $\DP=\add(\DP)$, where $\add(\DP)\subseteq \D$ is the smallest subcategory closed under direct sums and direct summands,
			\item $t_{\DP}$, defined by $(\DP^{\perp_{>0}},\DP^{\perp_{<0}})$ is a $t$-structure on $\D$, called the \definef{silting $t$-structure} associated to $\DP$,
			\item and $\DP\subseteq\DP^{\perp_{>0}}$.
		\end{itemize}
		An object $P\in\D$ is called a \definef{silting object} if $\DP\coloneqq \add(P)$ is a silting collection and $\D=\thick(\DP)$.
	\end{defi}
	
	\begin{ex}
		Let $A$ be a proper connective dg algebra. The \definef{standard $t$-structure $t_{\mathrm{st}}$} on $\fd(A)$ is defined by
		\[\fd(A)^{\leq 0}\coloneqq\left\{M:H^{i}(M)=0 \;\; \forall i\geq 0\right\} \text{ and } \fd(A)^{\geq 0}\coloneqq\left\{M:H^{i}(M)=0 \;\; \forall i\leq 0\right\}.\]
		Then $(\fd(A)^{\leq 0},\fd(A)^{\geq 0})$ is a silting $t$-structure for the silting collection $\add(A)$. Moreover, $A$ is a silting object in $\per(A)$.
	\end{ex}

	\subsection{$A_{\infty}$-algebras}
	
	In this section we recall some basic definitions and facts on $A_{\infty}$-algebras and $A_{\infty}$-categories. We refer the reader to \cite{Kel01,Kel02,LH03} for details. In the following, we avoid the use of explicit signs appearing in the theory of $A_{\infty}$-categories. Although very important for the rigorous foundation of the theory, they are immaterial for our purposes. We remark, however, that there are several different sign conventions for the $A_{\infty}$-relations below. For an overview of the different conventions and their relations among each other we refer the reader to \cite{Pol}. The following definition is originally due to Stasheff \cite{Sta63}. 
	
	\begin{defi}
		An \definef{$A_{\infty}$-category} $\A$ over $k$ is the datum of 
		\begin{itemize}
			\item a class of objects $\mathrm{obj}(\A)$,
			\item for all $A,B\in\A$, a $\ZZ$-graded vector space ${\A}(A,B)$,
			\item for all $n\geq 1$ and all $a_0,\ldots,a_n\in\A$ a graded $k$-linear map
			\[m_n:{\A}(a_{n-1},a_{n})\otimes {\A}(a_{n-2},a_{n-1})\otimes \cdots \otimes {\A}(a_{0},a_{1})\longrightarrow {\A}(a_{0},a_{n})\]
			of degree $2-n$
		\end{itemize}
		such that for all $n\geq 1$ and all $a_0,\ldots,a_n\in\A$, we have the identity
		\[\sum_{r+s+t=n} \pm m_{r+1+t}(\id^{\otimes r}\otimes m_s\otimes \id^{\otimes t})=0,\]
		of maps 
		\[{\A}(a_{n-1},a_{n})\otimes {\A}(a_{n-2},a_{n-1})\otimes \cdots \otimes {\A}(a_{0},a_{1})\longrightarrow {\A}(a_{0},a_{n}).\]
		An $A_{\infty}$-category $\A$ is called \definef{minimal} if $m_1=0$. An $A_{\infty}$-category $\A$ is \definef{strictly unital} if for every $a\in\A$ there exists an element $1_{a}$ of degree $0$ such that for all $f,g$ it holds 
		\[m_1(1_a)=0, \quad m_2(1_a,f)=f, \quad m_2(g,1_a)=g\] 
		as well as for $f_1,\ldots,f_n$, the product $m_n(f_n,\ldots,f_1)$ vanishes if one of the $f_i$ equals $1_a$.
	\end{defi}
		
	\begin{rema}
		Observe that there are different notions of unitality of $A_{\infty}$-categories in the literature. However, as discussed in \cite[Ch.~\uppercase{i}.2]{Sei08}, it turns out that they are equivalent in a suitable sense.
	\end{rema}	
		
	\begin{rema}
		We identify a \definef{dg category} with an $A_{\infty}$-category with $m_n=0$ for $n\geq 3$. An \definef{$A_{\infty}$-algebra} is given by the datum of $A\coloneqq{\A}(\star,\star)$, where $\A$ is an $A_{\infty}$-category with one object $\star$. Explicitly, an $A_{\infty}$-algebra over $k$ is a $\ZZ$-graded vector space $A$ endowed with a family of graded $k$-linear maps 
		\[m_n:A^{\otimes n}\longrightarrow A, \quad n\geq 1\]
		of degree $2-n$, such that the following relations hold: For $n\geq 1$, 
		\[\sum_{r+s+t=n} \pm m_{r+1+t}(\id_A^{\otimes r}\otimes m_s\otimes \id_A^{\otimes t})=0,\]
		where $r,t\geq 0, s\geq 1$. 
	\end{rema}
		
	\begin{defi}
		Let $\A,\B$ be a pair of $A_{\infty}$-categories. An \definef{$A_{\infty}$-functor} $\F:\A\to\B$ is the datum of
		\begin{itemize}
			\item a map $\F_0:\mathrm{obj}(\A)\to\mathrm{obj}(\B)$,
			\item for all $n\geq 1$ and all $a_0,\ldots a_n\in\A$ a graded $k$-linear map
			\[\F_n:\A(a_{n-1},a_{n})\otimes \A(a_{n-2},a_{n-1})\otimes \cdots \otimes \A(a_{0},a_{1})\longrightarrow \B(\F_0(a_0),\F_0(a_n))\]
			of degree $1-n$
		\end{itemize}
		such that for all $n\geq 1$ and all $a_0,\ldots,a_n\in\A$, we have the identity
		\[\sum_{r+s+t=n} \pm \F_{r+1+t}(\id^{\otimes r}\otimes m^{\A}_s\otimes \id^{\otimes t}) = \sum_{u=1}^n \sum_{i_1+\ldots+i_r=n} \pm m^{\B}_u(\F_{i_1}\otimes \cdots \otimes \F_{i_r}) \]
		of maps 
		\[\A(a_{n-1},a_{n})\otimes \A(a_{n-2},a_{n-1})\otimes \cdots \otimes \A(a_{0},a_{1})\longrightarrow \B(\F_0(a_0),\F_0(a_n)).\]
		The $A_{\infty}$-functor $\F$ is a \definef{quasi-equivalence} if for all $a_0,a_1\in\A$ the chain map
		\[\F_1\colon \A(a_0,a_1)\longrightarrow \B(\F_0(a_0),\F_0(a_1))\] 
		is a quasi-isomorphism. We say that $\F$ is \definef{strict} if $\F_n=0$ for all $n\geq 2$. By specializing to the case where $\A$ and $\B$ have a single object, we obtain the notion of a \definef{(strict) morphism of $A_{\infty}$-algebras} and of \definef{quasi-isomorphism of $A_{\infty}$-algebras}.
			
		A \definef{minimal model} of an $A_{\infty}$-category $\A$ is given by the datum of the structure of a minimal $A_{\infty}$-category $(H^*(\A),m_2,m_3,\ldots)$ on the cohomology $H^*(\A)$ of $\A$ together with a quasi-equivalence of $A_{\infty}$-categories 
		\[\F:(H^*(\A),m_2,m_3,\ldots)\longrightarrow \A \quad \text{such that}\quad [\F_1(f)]=f \text{ for all } a,b\in \A, f\in H^{*}(\A)(a,b).\]
	\end{defi}

	\begin{theorem}[Homotopy Transfer Theorem \cite{Ka82}]\label{theorem:HTT}
		Every dg category admits a minimal model. Moreover, this model is unique up to (non unique) $A_{\infty}$-isomorphism.
	\end{theorem}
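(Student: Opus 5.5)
We construct the minimal model inductively, following Kadeishvili, using the fact that over a field every complex splits. For each pair of objects $a,b$ we choose a decomposition $\A(a,b)^n = B^n\oplus H^n\oplus C^n$, where $B^n=\mathrm{im}(d)$, $B^n\oplus H^n=\ker(d)$, and $d\colon C^n\to B^{n+1}$ is an isomorphism. This identifies $H^n$ with $H^n(\A)(a,b)$. It also yields a contraction: an inclusion $i\colon H^*(\A)\to\A$ of cycles, a projection $p\colon \A\to H^*(\A)$, and a homotopy $h$ of degree $-1$ with
\[\id - ip = dh+hd,\qquad pi=\id,\qquad h^2=0,\ hi=0,\ ph=0.\]
We put $\F_1\coloneqq i$ and $m_2([f],[g])\coloneqq p(f\circ g)$, which is just the induced composition on cohomology. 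If $\A$ is strictly unital we choose $H$ to contain the identities, so that units are preserved.

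Next assume that $m_2,\dots,m_{n-1}$ on $H^*(\A)$ and $\F_1,\dots,\F_{n-1}$ have been constructed so that the $A_\infty$-relations and the $A_\infty$-functor identities hold up to arity $n-1$. Since $\A$ is a dg category, the functor identity in arity $n$ takes the form
\[ d\F_n - \sum \pm\F_{r+1+t}(\id^{\otimes r}\otimes m_s\otimes\id^{\otimes t}) + i\, m_n = 0,\]
where the sum runs over the terms with $2\le s\le n-1$ together with the composites $\F_j\circ\F_{n-j}$. We collect all the already known terms into a single map
\[U_n=\sum_{j}\pm\, m^{\A}_2(\F_j\otimes\F_{n-j})\;-\;\sum_{2\le s\le n-1}\pm\,\F_{n-s+1}(\id^{\otimes r}\otimes m_s\otimes\id^{\otimes t}).\]
The key step is to show that $U_n$ takes values in cycles, i.e.\ $dU_n=0$. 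To do this we expand $dU_n$, substitute the inductive identities for $d\F_j$ with $j<n$, and observe that the resulting terms cancel in pairs because of associativity in $\A$ and the $A_\infty$-relations for $m_2,\dots,m_{n-1}$. Once this is established we set
\[m_n\coloneqq pU_n,\qquad \F_n\coloneqq hU_n.\]
Then $d\F_n = dhU_n=(\id-ip-hd)U_n = U_n - i m_n$, which is exactly the required identity. Finally, the $A_\infty$-relation for $m_n$ in arity $n+1$ follows by applying $p$ to the analogous cancellation in arity $n+1$. Alternatively, the whole construction can be phrased through the explicit sum-over-planar-trees formulas, with $m_2^{\A}$ at the vertices and $h$ on the internal edges. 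Because $\F_1=i$ induces the identity on cohomology, $\F$ is a quasi-equivalence with $[\F_1(f)]=f$, so it is a minimal model.

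For uniqueness, let $\F\colon H\to\A$ and $\F'\colon H'\to\A$ be two minimal models, both with underlying graded category $H^*(\A)$. Over a field, every $A_\infty$-quasi-equivalence admits an $A_\infty$-quasi-inverse up to homotopy. One can obtain this by applying the transfer construction above to the cone, or by using the standard fact that quasi-isomorphisms between $A_\infty$-categories are homotopy equivalences. So $\G\coloneqq\F'^{-1}\circ\F\colon H\to H'$ is an $A_\infty$-functor between minimal $A_\infty$-categories, and $\G_1$ is the identity of $H^*(\A)$ because both $\F_1$ and $\F'_1$ induce the identity on cohomology. An $A_\infty$-morphism with invertible $\G_1$ is an $A_\infty$-isomorphism: the inverse is solved recursively arity by arity, since the equation for $(\G^{-1})_n$ is linear in it with leading term $\G_1\circ(\G^{-1})_n$. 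This inverse depends on the choices made, which is why the isomorphism is non-unique.

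The main obstacle is the verification that $dU_n=0$, which requires careful sign bookkeeping. I would first carry it out for $n=3$, where it reduces to associativity of $\A$ together with $p$-compatibility. I would then use the bar-construction reformulation, in which an $A_\infty$-structure is a square-zero coderivation on the tensor coalgebra $T(sH)$ and a functor is a dg-coalgebra map. In that language the cancellation becomes the single statement that the obstruction is a coderivation-cycle, and the signs are handled uniformly by the suspension.
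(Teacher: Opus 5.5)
The paper gives no proof of its own here (it simply cites Kadeishvili), and your argument is Kadeishvili's inductive obstruction construction, made explicit through the contraction by setting $m_n=pU_n$ and $\F_n=hU_n$, with the arity-$(n+1)$ relation extracted from $dU_{n+1}$ by applying $p$ (using $pd=0$ and $pi=\id$); it is correct. For uniqueness, your remark about ``applying the transfer to the cone'' is not an argument, so rest that step on the quoted fact that $A_\infty$-quasi-isomorphisms are homotopy invertible, or bypass it by composing $\F'$ with the $A_\infty$-morphism $\A\to H^*(\A)$ extending $p$ that homotopy transfer also supplies, whose first component $p\circ\F'_1$ is already the identity.
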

	
	\begin{rema}
		Every minimal $A_{\infty}$-category arises in this way. In fact, the canonical functor from the $(\infty,1)$-category of dg categories to the $(\infty,1)$-category of $A_{\infty}$-categories is an equivalence of $(\infty,1)$-categories after $(\infty,1)$-localising at the classes of quasi-equivalences, see \cite[Cor.~5.2]{P24}. 
	\end{rema}
	
	For an $A_{\infty}$-category one can define the dg category \define{$\C_{\infty}(\A)$} of $A_{\infty}$-modules as well as the \definef{derived category of $\A$}, denoted by \define{$\D(\A)$}, as well as the \definef{Yoneda $A_{\infty}$-functor} 
	\[Y:\A\to \C_{\infty}(\A), \quad a\mapsto \A(-,a)\]
	in analogy with dg categories (see \cite[\S~4.2]{Kel01} for the definitions).
	
	\begin{defi}[{\cite[Def.~7.2.0.3]{LH03}}]\label{Def:Tw}
		Let $\A$ be an $A_{\infty}$-category. We define the $A_{\infty}$-category of \definef{(one-sided) twisted complexes} \define{$\tw(\A)$} as follows:
		\begin{itemize}
			\item The objects of $\tw(\A)$ are pairs
			\[\left(\bigoplus_{i=1}^n A_i[r_i],\delta=(\delta_{ij})\right), \quad n\in\NN, r_i\in\ZZ,\delta_{ij}\in\A^{r_i-r_j+1}(A_{j},A_{i}),\delta_{ij}=0 \text{ for }i\geq j\]
			such that 
			\begin{equation}\label{eq:Maurer-Cartan}
				\sum_{t=1}^{\infty}\pm m^{\A}_t(\delta,\ldots,\delta)=0.
			\end{equation} 
			Here, by abuse of notation, $m^{\A}_t$ denotes the extension of $m^{\A}_t$ to matrices with entries in $\A$.
			Pictorially, objects of $\tw(\A)$ can be visualized as follows:
			\[\begin{tikzcd}
				{A_n[r_n]} & {A_{n-1}[r_{n-1}]} & \cdots & {A_2[r_2]} & {A_1[r_1]}
				\arrow["{\delta_{n-1,n}}", from=1-1, to=1-2]
				\arrow["{\delta_{2,n}}", curve={height=-36pt}, from=1-1, to=1-4]
				\arrow["{\delta_{1,n}}", curve={height=32pt}, from=1-1, to=1-5]
				\arrow["{\delta_{n-2,n-1}}", from=1-2, to=1-3]
				\arrow["{\delta_{2,n-1}}", curve={height=16pt}, from=1-2, to=1-4]
				\arrow["{\delta_{1,n-1}}", curve={height=-36pt}, from=1-2, to=1-5]
				\arrow["{\delta_{2,3}}", from=1-3, to=1-4]
				\arrow["{\delta_{1,2}}", from=1-4, to=1-5]
			\end{tikzcd}\]
			\item Given a pair of objects of $\tw(\A)$,
			\[T=\left(\bigoplus_{i=1}^n A_i[r_i],\delta=(\delta_{ij})\right), \quad \text{and} \quad T'=\left(\bigoplus_{i=1}^{n'} A'_i[r'_i],\delta'=(\delta'_{ij})\right),\]
			the $\ZZ$-graded vector space of morphisms $\tw(\A)(T,T')$ has as degree $m$ component the vector-space of matrices
			\[f=(f_{ij}), \quad f_{ij}\in\A^{m+r'_i-r_j}(A_j,A'_i).\]
			Pictorially, we can visualize a morphism as follows; to increase readability, we did not include the labels of $\delta_{ij}$ and $f_{ij}$:
			\[\begin{tikzcd}
				{A_n[r_n]} && {A'_{n'}[r'_{n'}]} \\
				{A_{n-1}[r_{n-1}]} && {A'_{n'-1}[r'_{n'-1}]} \\
				\vdots && \vdots \\
				{A_1[r_1]} && {A'_{1}[r'_{1}]}
				\arrow[from=1-1, to=1-3]
				\arrow[from=1-1, to=2-1]
				\arrow[from=1-1, to=2-3]
				\arrow[curve={height=40pt}, from=1-1, to=4-1]
				\arrow[from=1-1, to=4-3]
				\arrow[from=1-3, to=2-3]
				\arrow[curve={height=-45pt}, from=1-3, to=4-3]
				\arrow[from=2-1, to=1-3]
				\arrow[from=2-1, to=2-3]
				\arrow[from=2-1, to=3-1]
				\arrow[curve={height=12pt}, from=2-1, to=4-1]
				\arrow[from=2-1, to=4-3]
				\arrow[from=2-3, to=3-3]
				\arrow[curve={height=-12pt}, from=2-3, to=4-3]
				\arrow[from=3-1, to=4-1]
				\arrow[from=3-3, to=4-3]
				\arrow[from=4-1, to=1-3]
				\arrow[from=4-1, to=2-3]
				\arrow[from=4-1, to=4-3]
			\end{tikzcd}\]
			\item The multiplications $m^{\tw(\A)}_n$ are defined by 
			\[m^{\tw(\A)}_n=\sum_{t=0}^\infty \pm m^{\A}_{n+t}(\id^{\otimes i_1}\otimes\delta^{\otimes j_1} \otimes \id^{\otimes i_2}\otimes\delta^{\otimes j_2}\otimes \ldots \otimes \id^{\otimes i_r}\otimes\delta^{\otimes j_r}).\]
			Here the terms of the second sum are given by decompositions
			\[\sum_{k=1}^r i_k=n, \quad \sum_{k=1}^r j_k=t, \quad i_k,j_k\geq 0\]
			In particular, the higher multiplications $m^{\tw(\A)}_n$ are completely determined by the $A_{\infty}$-structure of $\A$. 
		\end{itemize}
		
		For a closed morphism $f\in Z^0(\tw(\A))(T,T')$ with $T,T'\in\tw(\A)$ we use the notation 
		\[\define{\cocone(f)}\coloneqq \left(T'[-1]\oplus T,\left(\begin{matrix}-\delta_{T'} & f \\ 0 & \delta_{T} \end{matrix}\right)\right),\quad \define{\cone(f)}\coloneqq\left(T'\oplus T[1],\left(\begin{matrix}\delta_{T'} & f \\ 0 & -\delta_{T} \end{matrix}\right)\right).\]
	\end{defi}

	\begin{defi}
		Let $\A$ be a strictly unital $A_{\infty}$-category. We say that $\A$ is a \definef{pretriangulated $A_{\infty}$-category} if the canonical inclusion $H^0(u_{\tw})\colon H^0(\A)\stackrel{\simeq}{\longrightarrow} H^0(\tw(\A))$ is an equivalence.
	\end{defi}
	
	\begin{rema}
		By \cite[\S~4.3]{Kel94} every algebraic triangulated category is equivalent to $H^0(\A)$ of a (non-unique) pretriangulated dg category.
	\end{rema}
	
	The $A_{\infty}$-category $\tw(\A)$ is characterized by the following universal property.
	
	\begin{theorem}[Universal Property of $\tw(\A)$ {\cite[\S~7.2]{LH03}}]\label{theorem:UnivPropTw}
		The canonical inclusion
		\[u_{\tw}:\A\to \tw(\A)\]
		is universal among $A_{\infty}$-functors $\A\to \C$, where $\C$ a pretriangulated $A_{\infty}$-category.
	\end{theorem}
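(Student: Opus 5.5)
We read the statement in the usual homotopical sense. For a pretriangulated $A_{\infty}$-category $\C$, restriction along $u_{\tw}$ should induce a quasi-equivalence of $A_{\infty}$-functor categories
\[u_{\tw}^{*}\colon \mathrm{Fun}_{\infty}(\tw(\A),\C)\longrightarrow \mathrm{Fun}_{\infty}(\A,\C).\]
In particular, every $A_{\infty}$-functor $\F\colon\A\to\C$ extends along $u_{\tw}$, and the extension is unique up to homotopy. The plan has three steps: make $\tw$ functorial, use that $u_{\tw}$ is a quasi-equivalence for pretriangulated $\C$, and then prove uniqueness by induction on the length of twisted complexes.

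\emph{Step 1: functoriality of $\tw$.} Given an $A_{\infty}$-functor $\F\colon\A\to\Cat{B}$, I would define $\tw(\F)\colon\tw(\A)\to\tw(\Cat{B})$ as follows.
\begin{itemize}
\item On objects, send $\left(\bigoplus_i A_i[r_i],\delta\right)$ to $\left(\bigoplus_i \F_0(A_i)[r_i],\ \sum_{t\geq 1}\F_t(\delta,\ldots,\delta)\right)$.
\item On morphisms, set
\[\tw(\F)_n(f_n,\ldots,f_1)=\sum \pm\F_{n+t}(\delta^{\otimes j_0}\otimes f_n\otimes\delta^{\otimes j_1}\otimes\cdots\otimes f_1\otimes\delta^{\otimes j_n}).\]
\end{itemize}
All sums are finite because $\delta$ is strictly upper triangular, hence nilpotent. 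The Maurer--Cartan equation \eqref{eq:Maurer-Cartan} for the image, and the $A_{\infty}$-functor identities for $\tw(\F)$, follow formally from the $A_{\infty}$-functor identities for $\F$. One way to see this is to note that both sides are obtained by twisting the bar constructions by $\delta$ and $\F_{*}(\delta)$, and $\F$ induces a morphism of bar coalgebras compatible with this twisting. By construction $\tw(\F)\circ u_{\tw}=u_{\tw}\circ\F$ strictly.

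\emph{Step 2: existence.} Since $\C$ is pretriangulated, $H^0(u_{\tw})\colon H^0(\C)\to H^0(\tw(\C))$ is an equivalence. Because both categories are closed under shifts, $H^n(\C)(x,y)=H^0(\C)(x,y[n])$, and the same holds in $\tw(\C)$. Hence $u_{\tw}^{\C}$ is a quasi-equivalence. $A_{\infty}$-quasi-equivalences admit homotopy inverses (Lef\`evre-Hasegawa), so I choose a quasi-inverse $\Cat{P}\colon\tw(\C)\to\C$ and put
\[\widetilde{\F}\coloneqq\Cat{P}\circ\tw(\F).\]
Then $\widetilde{\F}\circ u_{\tw}=\Cat{P}\circ u_{\tw}^{\C}\circ\F\simeq\F$. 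This gives essential surjectivity of $u_{\tw}^{*}$ up to homotopy.

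\emph{Step 3: full faithfulness of $u_{\tw}^{*}$ (main obstacle).} This is the step that yields uniqueness of the extension up to coherent homotopy; I expect it to be the hardest. I would argue by induction on the length $n$ of twisted complexes. Let $\tw_{\leq n}(\A)$ be the full subcategory of twisted complexes of length at most $n$. Every object of length $n$ is a cone of a closed degree-zero morphism from a shift of an object of $\A$ to an object of length $n-1$. I would then show that restriction
\[\mathrm{Fun}_{\infty}(\tw_{\leq n}(\A),\C)\longrightarrow\mathrm{Fun}_{\infty}(\tw_{\leq n-1}(\A),\C)\]
is a quasi-equivalence. The key input is that any $A_{\infty}$-functor into a pretriangulated target preserves cones up to homotopy: the cone is characterised, up to quasi-isomorphism, by the associated $A_{\infty}$-module being quasi-representable. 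For this I would pass to modules. Under the Yoneda embedding, $\tw(\A)$ sits in $\C_{\infty}(\A)$ as the closure of the representables under shifts and cones. An $A_{\infty}$-functor into $\C$ corresponds to a $\C$-valued bimodule that is quasi-representable on objects of $\A$. This quasi-representability propagates along cones because $\C$ is pretriangulated, and natural transformations are controlled by Hochschild-type complexes, which are compatible with the filtration by length.
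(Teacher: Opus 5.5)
The paper gives no proof of its own for this statement; it only quotes \cite[\S~7.2]{LH03}. Your Steps~1 and~2 are the standard existence argument, familiar from Bondal--Kapranov's pretriangulated hull in the dg setting, and they are correct:
\begin{itemize}
\item $\tw$ is functorial, with $\tw(\F)\circ u_{\tw}=u_{\tw}\circ\F$ on the nose;
\item $u_{\tw}\colon\C\to\tw(\C)$ is a quasi-equivalence when $\C$ is pretriangulated;
\item composing $\tw(\F)$ with a homotopy inverse of $u_{\tw}$ gives the extension.
\end{itemize}
The detour through shifts in Step~2 is unnecessary. The functor $u_{\tw}$ is the identity on morphism complexes and on all $m_n$, so it is always quasi-fully faithful, and ``pretriangulated'' says exactly that it is essentially surjective on $H^0$. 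This existence statement is all the paper actually uses. Together with the fact that $\tw(\F)$ sends $\cone(f)$ to the cone of $\tw(\F)_1(f)$, it covers $\per_{dg}(E)\simeq\tw(\E)$ in \cref{subsection:CohGen} and the dashed arrows in the proof of \cref{KoszulExact}. In both places the quasi-equivalence then comes from d\'evissage on the generators.

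Your Step~3, uniqueness up to homotopy, goes beyond what the paper needs, and as written it is a plan rather than a proof. The restriction quasi-equivalences $\mathrm{Fun}_{\infty}(\tw_{\leq n}(\A),\C)\to\mathrm{Fun}_{\infty}(\tw_{\leq n-1}(\A),\C)$ are asserted, not established. The remark that Hochschild-type complexes are ``compatible with the filtration by length'' is not an argument. The route through quasi-representable bimodules would also need an $A_{\infty}$-version of To\"en's theorem identifying $\mathrm{Fun}_{\infty}(\A,\C)$ with right quasi-representable bimodules on morphisms, whereas you state the correspondence only on objects.

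The step closes more directly, with no induction on length. For $\tilde{\F},\tilde{\Cat{G}}\colon\tw(\A)\to\C$, the complex $\mathrm{Fun}_{\infty}(\tw(\A),\C)(\tilde{\F},\tilde{\Cat{G}})$ is by definition the Hochschild cochain complex of $\tw(\A)$ with coefficients in the pulled-back bimodule $M=\C(\tilde{\F}-,\tilde{\Cat{G}}-)$. The map $u_{\tw}^{*}$ is restriction of cochains to $\A$. Since $u_{\tw}$ is fully faithful and $\tw(\A)$ is generated by $\A$ under shifts and cones, restriction along $u_{\tw}\otimes u_{\tw}^{op}$ is an equivalence of derived categories of bimodules. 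It sends the diagonal bimodule of $\tw(\A)$ to that of $\A$. Morita invariance of Hochschild cohomology with coefficients then shows that $u_{\tw}^{*}$ is quasi-fully faithful. This does not use that $\C$ is pretriangulated; that hypothesis is needed only for essential surjectivity, which your Step~2 already gives. So the propagation of quasi-representability along cones can be dropped.
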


	\subsection{Exact dg categories}
	
	In this section, we collect the necessary statements on exact dg categories from \cite{Chen23}; for a more comprehensive presentation, the reader is referred to the original text.
	
	Given a dg category $\A$ we denote by \define{$\Ht(\A)$} the \definef{homotopy category of $3$-term complexes}. It is constructed as follows. First we define $\mathcal{B}$ to be the dg $k$-path category of the graded quiver
	\[\begin{tikzcd}
		0 & 1 & 2
		\arrow["a", from=1-1, to=1-2]
		\arrow["b", from=1-2, to=1-3]
	\end{tikzcd}\]
	with $|a|=0=|b|$, $d(a)=0=d(b)$ modulo the relation $ba=0$. Form the dg category $\mathrm{Fun}_{A_{\infty}}(\mathcal{B},\A)$ of $A_{\infty}$-functors $F:\mathcal{B}\to \A$. We define $\Ht(\A)$ by
	\[\Ht(\A)\coloneqq H^0(\mathrm{Fun}_{A_{\infty}}(\mathcal{B},\A)).\]
	Explicitly, the objects of $\Ht(\A)$ are given by diagrams $X$ in $\A$ of the form
	\begin{equation}\label{eq:3Term}
		\begin{tikzcd}
		{A_0} & {A_1} & {A_2}
		\arrow["f", from=1-1, to=1-2]
		\arrow["h", curve={height=18pt}, from=1-1, to=1-3]
		\arrow["g", from=1-2, to=1-3]
	\end{tikzcd}\end{equation}
	where $|f|=0=|g|$, $|h|=-1$ and $d(f)=0=d(g)$ as well as $d(h)=-gf$, or equivalently the objects are $3$-term twisted complexes over $\A$. For an explicit description of the morphisms in $\Ht(\A)$ we refer the reader to \cite[Def.~3.39]{Chen23}.
	
	\begin{defi}[{\cite[Def.~3.14]{Chen23}}]
		Consider a diagram in $\A$ of the form 
		\begin{equation}\label{eq:Square}
			\begin{tikzcd}
			{X_{00}} & {X_{01}} \\
			{X_{10}} & {X_{11}}
			\arrow["{f_2}", from=1-1, to=1-2]
			\arrow["{f_1}"', from=1-1, to=2-1]
			\arrow["h", from=1-1, to=2-2]
			\arrow["{g_2}", from=1-2, to=2-2]
			\arrow["{g_1}", from=2-1, to=2-2]
		\end{tikzcd}\end{equation}
		with $|f_i|=0=|g_i|$, $|h|=-1$ and $d(h)=g_1f_1-g_2f_2$. The morphism $(-g_1,g_2):X_{10}\oplus X_{01}\to X_{11}$ yields a canonical morphism 
		\[m:X_{00}\xrightarrow{((f_1,f_2),h)} \cocone((-g_1,g_2))\]
		in $\pretr(\A)$. 
		\begin{itemize}
			\item We say that the above diagram is \definef{homotopy cartesian} (or a \definef{homotopy pullback}) if for all $A\in\A$ the induced morphism
			\[\tau^{\leq 0}\Hom_{\D_{dg}(\A)}(\repr{A},\repr{X_{00}})\xrightarrow{(\repr{m})_{*}}\tau^{\leq 0}\Hom_{\D_{dg}(\A)}(\repr{A},\cocone((-g_1,g_2)))\]
			is an isomorphism in $\D(k)$. Equivalently, 
			\[H^{i}(\repr{m}):H^{i}(\repr{X_{00}})\longrightarrow H^{i}(\cocone((-g_1,g_2)))\] 
			is an isomorphism for all $i\leq 0$.
			\item Dually, the diagram \eqref{eq:Square} is \definef{homotopy cocartesian} (or \definef{homotopy pushout}) if the corresponding diagram in $\A^{op}$ is homotopy cartesian or equivalently the canonical morphism 
			\[\cone((-f_1,f_2))\longrightarrow \repr{X_{11}}\] 
			in $\D_{dg}(\A)$ induces as isomorphism in all non-positive cohomological degrees.
			\item The diagram \eqref{eq:Square} is \definef{homotopy bicartesian} if it is homotopy cartesian and homotopy cocartesian. 
			\item An object $X\in \Ht(\A)$ as in \eqref{eq:3Term} is \definef{homotopy left exact}, if the diagram 
			\begin{equation}\label{eq:LeftExact}\begin{tikzcd}
					{A_0} & {A_1} \\
					0 & {A_2}
					\arrow["f", from=1-1, to=1-2]
					\arrow["0"', from=1-1, to=2-1]
					\arrow["h", from=1-1, to=2-2]
					\arrow["g", from=1-2, to=2-2]
					\arrow["0", from=2-1, to=2-2]
			\end{tikzcd}\end{equation}
			is homotopy cartesian. In that case we call the diagram \eqref{eq:LeftExact} the \definef{homotopy kernel} of $g$. Sometimes we say that $f:A_0\to A_1$ or just $A_0$ is the homotopy kernel of $g$. We will use the notation \define{$\hker(g)$} for the homotopy kernel of $g$.
			\item Dually, $X$ is \definef{homotopy right exact} if \eqref{eq:LeftExact} is homotopy cocartesian and in that case we call \eqref{eq:LeftExact} the \definef{homotopy cokernel} of $f$. Sometimes we say that $g:A_1\to A_2$ or just $A_2$ is the homotopy cokernel of $f$. We will use the notation \define{$\hcoker(f)$} for the homotopy cokernel of $f$.
			\item If an object $X\in\Ht(\A)$ is both homotopy left and right exact, that is the diagram \eqref{eq:LeftExact} is homotopy bicartesian, we call $X$ a \definef{homotopy short exact sequence}.
		\end{itemize}
	\end{defi}
	
	\begin{rema}
		Observe that homotopy (co)kernels, if they exist, are unique up to homotopy equivalence.
	\end{rema}
	
	Recall from \cite{Tab05}, that the category of small dg categories admits the (cofibrantly generated) Dwyer--Kan model category structure, whose weak equivalences are the quasi-equivalences. Its homotopy category is denoted by \define{$\Hqe$}. 
		
	\begin{defi}[{\cite[Def.~5.1]{Chen23}}]
		Let $\A$ be an additive dg category. An \definef{exact structure} on $\A$ is a class $\Sc\subseteq\Ht(\A)$ stable under isomorphisms, consisting of homotopy short exact sequences (called \definef{conflations}) 
		\[\begin{tikzcd}
			{A_0} & {A_1} & {A_2}
			\arrow["f", tail, from=1-1, to=1-2]
			\arrow["h", curve={height=18pt}, from=1-1, to=1-3]
			\arrow["g", two heads, from=1-2, to=1-3]
		\end{tikzcd}\]
		where $f$ is called an \definef{inflation} and $g$ is called a \definef{deflation}, such that the following axioms are satisfied.
		\begin{enumerate}
			\item[Ex0] For every $A\in\A$, $\id_{A}$ is a deflation.
			\item[Ex1] Compositions of deflations are deflations.
			\item[Ex2] Given a deflation $p:B\defl C$ and a morphism $c:C'\to C$ in $Z^0(\A)$ there exists a homotopy pullback
			\[\begin{tikzcd}
				{B'} & {C'} \\
				B & C
				\arrow["{p'}", two heads, from=1-1, to=1-2]
				\arrow["b"', from=1-1, to=2-1]
				\arrow["{h'}", from=1-1, to=2-2]
				\arrow["c", from=1-2, to=2-2]
				\arrow["p", two heads, from=2-1, to=2-2]
			\end{tikzcd}\]
			such that $p'$ is a deflation as well. 
			\item[$\mathrm{Ex2}^{op}$] Given an inflation $i:A\infl B$ and a morphism $a:A\to A'$ in $Z^0(\A)$ there exists a homotopy pushout
			\[\begin{tikzcd}
				{A} & {B} \\
				{A'} & {B'}
				\arrow["{i}", tail, from=1-1, to=1-2]
				\arrow["a"', from=1-1, to=2-1]
				\arrow["{h''}", from=1-1, to=2-2]
				\arrow["{b'}", from=1-2, to=2-2]
				\arrow["{i'}", tail, from=2-1, to=2-2]
			\end{tikzcd}\]
			such that $i'$ is an inflation as well. 
		\end{enumerate}
		We call $(\A,\Sc)$ or simply $\A$, in case there is no risk of confusion, an \definef{exact dg category}.
		
		Let $(\A,\Sc), (\A',\Sc')$ be two exact dg categories. A morphism $F:\A\to \A'$ in $\Hqe$ is called \definef{exact} if the induced functor $\Ht(\A)\to\Ht(\A)$ sends the objects of $\Sc$ to $\Sc'$
	\end{defi}
	
	\begin{rema}
		Observe that by \cite[Ex.~5.6]{Chen23} a pretriangulated dg category equipped with the maximal exact structure of all homotopy short exact sequences is an exact dg category. Hence also every extension-closed subcategory is naturally an exact dg category with the induced exact structure (see \cite[Def.~5.7]{Chen23} for the definitions and the proof).
	\end{rema}
	
	We briefly want to recall the canonical extriangulated structure in the sense of \cite[Def.~2.12]{NP19} on the homotopy category $H^0(\A)$ of an exact dg category $(\A,\Sc)$. We freely use the language of extriangulated categories from \cite{NP19}.
	
	\begin{defi}[{\cite[Def.~5.12]{Chen23}}]
		Consider two objects $A,C\in H^0(\A)$, $i=1,2$ and two conflations $X_i$ of the form
		\begin{equation}\label{eq:Conflation}
			\begin{tikzcd}
			{A} & {B} & {C}
			\arrow["f_i", tail, from=1-1, to=1-2]
			\arrow["h_i", curve={height=18pt}, from=1-1, to=1-3]
			\arrow["g_i", two heads, from=1-2, to=1-3]
		\end{tikzcd}.\end{equation}
		A morphism $\theta:X_1\to X_2$ in $\Ht$ is called an \definef{equivalence} if it restricts to $\id_{A}$ and $\id_C$. This notion of equivalence gives rise to an equivalence relation on the set of conflations. For a conflation $X$ we denote by \define{$[X]$} its equivalence class. We define \define{$\EE_{\Sc}(C,A)$} to be the set of equivalence classes of conflations \eqref{eq:Conflation} with fixed ends $A$ and $C$.
	\end{defi}
	
	By \cite[Prop.~5.17]{Chen23}, $\EE\coloneqq\EE_{\Sc}$ defines a bifunctor. We define the realisation $\extrs$ of $\EE$ as follows: For $\delta=[X]\in\EE(C,A)$ with $X$ a conflation of the form 
	\begin{equation*}\begin{tikzcd}
			{A} & {B} & {C}
			\arrow["f", tail, from=1-1, to=1-2]
			\arrow["h", curve={height=18pt}, from=1-1, to=1-3]
			\arrow["g", two heads, from=1-2, to=1-3]
	\end{tikzcd}.\end{equation*}
	Finally, we define $\extrs(\delta)\coloneqq [A\stackrel{\overline{f}}{\to} B\stackrel{\overline{g}}{\to} C]$, where $\overline{f},\overline{g}$ denote the images of $f$ and $g$ in $H^0(\A)$, respectively.
 	
	\begin{theorem}[{\cite[Thm.~6.19]{Chen23}}]\label{theorem:CanonExt}
		The triple $(H^0(\A),\EE,\extrs)$ defines an extriangulated structure, the \definef{canonical extriangulated structure}, on $H^0(\A)$.
	\end{theorem}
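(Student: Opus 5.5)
The plan is to verify the axioms (ET1)--(ET4), (ET3)$^{\mathrm{op}}$, (ET4)$^{\mathrm{op}}$ of \cite[Def.~2.12]{NP19} by transporting as much as possible to a triangulated ambient category. The candidate ambient category is $H^0(\pretr(\A))$, or better the derived category $\D(\A)$, via the Yoneda embedding $A\mapsto\repr{A}$. A conflation $A\overset{f}{\infl}B\overset{g}{\defl}C$ with homotopy $h$ determines a closed degree-$0$ morphism $C\to A[1]$ in $\pretr(\A)$. Concretely, the square \eqref{eq:LeftExact} gives a canonical map $\cone(f)\to C$, and homotopy bicartesianness says precisely that it induces isomorphisms on $\tau^{\leq 0}$ of representable Hom-complexes. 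We therefore obtain a triangle $\repr{A}\to\repr{B}\to\repr{C}\to$ in the ``$\tau^{\leq 0}$-truncated'' sense, i.e.\ after applying $\tau^{\leq 0}\Hom(\repr{X},-)$ for all $X\in\A$. This yields a map
\[\EE(C,A)\longrightarrow \Hom(\repr{C},\repr{A}[1]),\]
compatible with the bifunctoriality of \cite[Prop.~5.17]{Chen23}.

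\textbf{Step 1: $\EE$ is biadditive (ET1).} The bifunctor structure is already given by \cite[Prop.~5.17]{Chen23}: pullback along $c$ via Ex2 and pushout along $a$ via Ex2$^{\mathrm{op}}$. Additivity of $\EE(C,A)$ is realized by a Baer sum: take the direct sum of two conflations (a conflation, by stability under homotopy pullbacks and pushouts), then pull back along the diagonal and push out along the codiagonal. Well-definedness up to equivalence and the abelian group axioms then follow from the uniqueness of homotopy pullbacks and pushouts up to homotopy equivalence. The zero element is the split sequence $A\to A\oplus C\to C$, with the identity deflation supplied by Ex0 and closure under direct sums.

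\textbf{Step 2: $\extrs$ is an additive realization (ET2).} Equivalent conflations have isomorphic middle terms in $H^0(\A)$, because a morphism in $\Ht(\A)$ restricting to identities on the ends is an isomorphism on the middle term; this is a five-lemma argument applied to the long exact sequences in $H^{\le 0}$ obtained from homotopy bicartesianness. Hence $\extrs$ is well defined. The realization condition, namely that for $(a,c)$ with $a_*\delta=c^*\delta'$ there is a compatible middle map, comes from composing the pushout morphism of conflations $\delta\to a_*\delta$ with the pullback morphism $c^*\delta'\to\delta'$.

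\textbf{Step 3: ET3, ET3$^{\mathrm{op}}$, and the octahedral axioms ET4, ET4$^{\mathrm{op}}$.} ET3 and ET3$^{\mathrm{op}}$ follow from the same kind of homotopy pushout/pullback manipulation: the square of given maps is lifted to a homotopy-commutative square, and the induced map on homotopy cokernels is found via the universal property of $\hcoker$ computed in $\tau^{\le0}$ of representable Hom-complexes.

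ET4 (and dually ET4$^{\mathrm{op}}$) is where I expect the main work. Given inflations $A\infl B$ and $B\infl D$, one has to show that the composite is an inflation with the right cokernel; by duality this is equivalent to Ex1 for deflations. One then has to produce the conflation $C\infl F\defl E$ between the cokernels, together with all the compatibilities. My approach would be to construct $F$ as a homotopy pushout of $C\leftarrow B\to D$ and then check homotopy bicartesianness by pasting homotopy bicartesian squares. The required pasting lemma, that a composite of homotopy (co)cartesian squares is again homotopy (co)cartesian, can be verified in $\pretr(\A)$: cocones compose via the octahedral axiom in $H^0(\pretr(\A))$, and the $\tau^{\le0}$-condition is inherited because truncation is exact on the relevant long exact sequences in nonpositive degrees.

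The main obstacle is keeping track of the degree $-1$ homotopies $h$ so that the resulting morphisms really are morphisms in $\Ht(\A)$, and so that the equivalence classes in $\EE$ match. My plan is to do all these coherence checks inside the $A_\infty$-functor category $\mathrm{Fun}_{A_\infty}(\mathcal B,\A)$, using the explicit description of morphisms from \cite[Def.~3.39]{Chen23}, rather than by hand.
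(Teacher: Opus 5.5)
The paper gives no argument for this statement; it simply quotes \cite[Thm.~6.19]{Chen23}. The route made available by the universal embedding (\cref{EmbeddingTheorem}) is as follows. Pass to $\tau^{\leq 0}\A$, which changes neither $H^0(\A)$ nor the conflations. Then $H^0(\A)$ becomes an extension-closed subcategory of the triangulated category $\D^b(\A)$, and the natural bijection $\EE(C,A)\cong\Hom_{\D^b(\A)}(C,A[1])$ turns $\extrs$ into the triangle realization. All of (ET1)--(ET4)$^{\mathrm{op}}$ are then inherited from the triangulated structure, as for any extension-closed subcategory.

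Your proposal aims at a similar transport, but the ambient category you pick cannot work, and this is a genuine gap. Under the Yoneda embedding into $\D(\A)$ or $H^0(\pretr(\A))$, a conflation does not become a triangle. Homotopy cartesianness only says that $\repr{A}\to\cocone(\repr{B}\to\repr{C})$ is an isomorphism on $H^{\leq 0}$. The defect sits in $H^1$, which is exactly where an extension class would have to live. Concretely, $\Hom_{\D(\A)}(\repr{C},\repr{A}[1])=H^1(\A(C,A))$ does not see the exact structure at all. For an ordinary exact category, viewed as a dg category in degree $0$, this group is zero, while $\EE(C,A)=\Ext^1(C,A)$ need not be. So the map $\EE(C,A)\to\Hom(\repr{C},\repr{A}[1])$ you announce carries no information, and there is nothing to transport. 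The missing idea is an ambient category in which conflations really are triangles and into which $H^0(\A)$ embeds fully faithfully. That is Chen's $\D^b_{dg}(\A)$, a Verdier quotient of $\tr(\A)$, together with \cref{EmbeddingTheorem}.

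If you instead verify the axioms by hand, several steps as written do not go through.

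\textbf{Step 2.} The long exact sequences supplied by homotopy (co)cartesianness end at $H^0\A(X,C)$ (resp.\ $H^0\A(A,Z)$). The next term is $H^1$ of a cocone, which is not controlled. The four lemma therefore only shows that the middle component $\theta_B$ of an equivalence of conflations induces isomorphisms in negative degrees and is a monomorphism and an epimorphism in $H^0(\A)$. It does not show that $\theta_B$ is invertible. You need an extra input, for example that the square formed by $\mathrm{id}_A$ and $\theta_B$ is a homotopy pushout, or the triangulated five lemma in $\D^b(\A)$.

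\textbf{Step 3.} ET4 is \emph{not} dual to Ex1, because Ex0 and Ex1 are imposed only on deflations. The fact that inflations compose is therefore a theorem, the dg analogue of Keller's result that Quillen's dual axioms are redundant. Pasting, together with a lemma that pushouts of inflations are homotopy bicartesian, can at best show that $A\to D\to F$ is a homotopy short exact sequence. An exact structure need not contain all such sequences, so you must still show that $A\to D$ is an inflation, and your plan has no argument for this.

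\textbf{Remaining steps.} The Baer-sum group structure (inverses, biadditivity) and the identity $a_*\delta=c^*\delta'$ required in ET3 are asserted rather than proved. All homotopy coherences are deferred to an unspecified computation in $\mathrm{Fun}_{A_\infty}(\mathcal{B},\A)$.
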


	\begin{theorem}[{\cite[Thm.~6.1.]{Chen23}}]\label{EmbeddingTheorem}
		Let $(\A,\Sc)$ be an exact dg category. There exists a universal exact morphism $F:\A\to \D^b_{dg}(\A)$ in Hqe from $\A$ to a pretriangulated dg category $\D^b_{dg}(\A)$.
		
		If $\A$ is connective, the morphism $F$
		\begin{enumerate}
			\item induces a quasi-equivalence from $\A$ to $\tau^{\leq 0}\D'$, where $\D'$ is an extension-closed dg subcategory of $\D^b_{dg}(\A)$, and
			\item a natural bijection $\EE(C,A)\overset{\sim}{\to} \Hom_{\D^b(\A)}(FC,FA[1])$ for each pair of objects $C,A$ in $H^0(\A)$ where $\define{\D^b(\A)}:=H^0(\D^b_{dg}(\A))$.
		\end{enumerate}
		We call $\D^b_{dg}(\A)$ the \definef{bounded derived dg category} of $(\A,\Sc)$.
	\end{theorem}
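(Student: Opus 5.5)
The plan is to build $\D^b_{dg}(\A)$ as a Drinfeld dg quotient and read the universal property off the universal properties of $\tw$ and of dg quotients. The two statements for connective $\A$ then reduce to computing morphism spaces in that quotient.

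\emph{Construction.} Take the pretriangulated hull $\tw(\A)$, or equivalently $\per_{dg}(\A)$. Every conflation $X=(A_0\infl A_1\defl A_2,h)$ in $\Sc$ is a $3$-term twisted complex, so it has a totalization $\mathrm{Tot}(X)\in\tw(\A)$, namely the cone of the canonical morphism $\cone(f)\to A_2$ built from $(g,h)$. Let $\Cat{N}\subseteq\tw(\A)$ be the thick dg subcategory generated by all $\mathrm{Tot}(X)$ with $X\in\Sc$. Define
\[\D^b_{dg}(\A)\coloneqq\tw(\A)/\Cat{N},\]
the Drinfeld dg quotient, and let $F$ be the composite $\A\to\tw(\A)\to\tw(\A)/\Cat{N}$.

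\emph{Universal property.} A morphism $\A\to\C$ in $\Hqe$ with $\C$ pretriangulated extends uniquely, in $\Hqe$, to $\tw(\A)\to\C$ by \cref{theorem:UnivPropTw}. The morphism is exact if and only if every conflation goes to a homotopy short exact sequence in $\C$. Since $\C$ is pretriangulated, that happens if and only if each $\mathrm{Tot}(X)$ becomes contractible, i.e.\ the extension kills $\Cat{N}$. Tabuada's universal property of the dg quotient in $\Hqe$ then gives the universal exact morphism $F$. This part is formal.

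\emph{Connective case.} The key claim is that, for $X,Y\in\A$, the map
\[H^{n}\A(X,Y)\to\Hom_{\D^b(\A)}(FX,FY[n])\]
is bijective for $n\leq 0$, and that for $n=1$ the natural map $\EE(Y,X)\to\Hom_{\D^b(\A)}(FY,FX[1])$ is bijective. Given this, $\tau^{\leq 0}$ of the full dg subcategory on $F(\A)$ is quasi-equivalent to $\A$. The extension closure $\D'$ of $F(\A)$ has $\tau^{\leq 0}\D'$ equal to the same thing, because every extension in $\D'$ between objects of $F(\A)$ comes from a conflation by surjectivity on $\Hom(-,-[1])$. This gives (1). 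The map in (2) is the one sending a conflation to its connecting morphism $FC\to FA[1]$ coming from the triangle $F(\mathrm{Tot})$.

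\emph{Computing morphisms in the quotient.} I would compute morphisms in $\tw(\A)/\Cat{N}$ via the Verdier quotient $H^0(\tw(\A))/H^0(\Cat{N})$. A morphism is then a roof $FX\xleftarrow{s}W\to FY[n]$ with $\cone(s)\in H^0(\Cat{N})$. The strategy is a dévissage using Ex1, Ex2 and $\mathrm{Ex2}^{op}$. First I would show that such roofs can be refined so that $W$ is built from $X$ by finitely many deflations, i.e.\ that the deflations onto objects of $\A$ are cofinal among the $s$ with cone in $\Cat{N}$. I would then show that, because $\A$ is connective, precomposition with a deflation is injective on $H^{\leq 0}$ and behaves like a left exact sequence on $H^{\leq 0}$. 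This is precisely the homotopy cartesian condition, which only constrains non-positive cohomology. The injectivity and surjectivity in degrees $n\leq 0$ follow from this.

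\emph{Degree one.} For $n=1$, surjectivity comes from refining a roof to a deflation $W\defl Y$ with kernel in $\A$ and then using $\mathrm{Ex2}^{op}$ to push out along the given map to $X$. Injectivity comes from showing that a conflation whose class vanishes in $\D^b(\A)$ is split, using the degree-$0$ bijection already proved.

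\emph{Main obstacle.} The hardest step is the cofinality statement: that an arbitrary $s$ with $\cone(s)\in\Cat{N}$ can be dominated by iterated deflations, compatibly with the higher homotopies $h$. I would attack it by induction on the length of a presentation of $\cone(s)$ as an iterated extension of shifts of the $\mathrm{Tot}(X)$. At each step, homotopy pullbacks from Ex2 absorb one generator. Connectivity is used to ensure that the resulting $\tau^{\leq 0}$-truncated Hom complexes do not see the positive-degree discrepancies.
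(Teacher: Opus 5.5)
The construction and the formal universal property are fine. Realizing $\D^b_{dg}(\A)$ as the Drinfeld quotient of $\tw(\A)$ by the thick subcategory generated by totalizations of conflations, and combining \cref{theorem:UnivPropTw} with Tabuada's universal property of dg quotients, agrees with what the paper records about \cite{Chen23}. The paper gives no proof of its own; it imports the theorem and only uses that $\D^b(\A)$ is a Verdier quotient of $\tr(\A)$. Your reduction of (1) and (2) to two bijections is also correct: bijectivity of $H^n\A(X,Y)\to\Hom_{\D^b(\A)}(FX,FY[n])$ for $n\le 0$, and of $\EE(Y,X)\to\Hom_{\D^b(\A)}(FY,FX[1])$. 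The degree-one injectivity via split inflations also works once degree $0$ is known. The problem is that these bijections are the whole content of the connective statement, and the proposal does not prove them.

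\emph{The cofinality claim.} As stated it is not the right statement. Deflations are not denominators of roofs: for a conflation $K\infl X'\overset{p}{\defl}X$ the cone of $Fp$ is $FK[1]$. What your degree-one surjectivity argument actually needs is weak effaceability: every $\phi\colon FX\to FY[n]$ with $n\ge 1$ satisfies $\phi\circ Fp=0$ for some deflation $p\colon X'\defl X$ in $\A$. Nothing in the sketch produces this.
\begin{itemize}
\item The denominator $s\colon W\to FX$ has $W$ an arbitrary twisted complex.
\item The generators $\mathrm{Tot}(X)[j]$ of $\Cat{N}$ occur in every shift $j\in\ZZ$.
\item Ex2 only pulls back deflations along morphisms between objects of $\A$, so it cannot by itself ``absorb'' such a generator.
\end{itemize}
You would first need a weight-type truncation on $\tr(\A)$ (as in \cref{theorem:Co-T-Structure}) that is compatible with $\Cat{N}$. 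That compatibility is exactly the hard point, and it is left open.

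\emph{The degree $\le 0$ lemma.} The lemma you intend to use in degrees $\le 0$ is false for exact dg categories. Homotopy cocartesianness of the conflation only gives an exact sequence $H^{-1}\A(K,Y)\to H^0\A(X,Y)\xrightarrow{p^*}H^0\A(X',Y)\to H^0\A(K,Y)$, and the connecting map need not vanish. For example, take $\C^{(-d,0]}$ with $d\ge 2$ and $0\neq S\in\heart_t$. Then $S\infl 0\defl S[1]$ is a conflation, so $0\to S[1]$ is a deflation, and precomposition with it kills $\id_{S[1]}$ (compare \cref{example:Simples}). Deflations are therefore not epimorphisms in $H^0(\A)$, and an injectivity argument modelled on exact $1$-categories breaks down. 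The connecting maps have to be tracked throughout.

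What is missing is an actual mechanism that produces these factorizations. Possible sources are a Gabriel--Quillen-type embedding of $\A$ combined with a left-special resolution condition as in \cite{SW25}, or a comparison with Klemenc's stable hull \cite{Kle22}.
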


	\begin{defi}[{\cite[Def.~1.1.1]{Bon10}},{\cite[Def.~1.4]{Pau08}}]
		Let $\T$ be a triangulated category. A pair $(\T_{\geq 0},\T_{\leq 0})$ of full subcategories in $\T$ that are closed under isomorphisms, direct sums and direct summands is called a \definef{co-t-structure} on $\T$ if the following conditions are satisfied:
		\begin{enumerate}
			\item $\T_{\geq 0}[-1]\subseteq \T_{\geq 0}$ and $\T_{\leq 0}[1]\subseteq \T_{\leq 0}$.
			\item\label{item:orthogonality} $\T_{\geq 0}[-1] \perp \T_{\leq 0}$.
			\item For all $X\in\T$ there exists a triangle 
			\[w_{>0}X\to X \to w_{\leq 0}X \to w_{>0} X[1]\]
			with $w_{>0}X\in \T_{\geq 0}[-1]$ and $w_{\leq 0}X\in\T_{\leq 0}$.
		\end{enumerate}
		The full subcategory $\T_{\geq 0}\cap \T_{\leq 0}$ is called the \definef{coheart} of the co-t-structure. As for $t$-structures we use the notation $\define{\T_{\geq n}}\coloneqq\T_{\geq 0}[n]$, $\define{\T_{> n}}\coloneqq\T_{\geq 0}[n+1]$, etc.
	\end{defi}
	
	\begin{theorem}[{\cite[Prop.~6.2.1, Rem.~6.2.2.3)]{Bon10}}]\label{theorem:Co-T-Structure}
		Let $\A$ be a connective dg category. Then the following full subcategories define a bounded co-t-structure, the canonical co-t-structure, on $\tr(\A)\coloneqq H^0(\tw(\A))$.
		\[\define{\tr(\A)_{\geq 0}}\coloneqq \left\{\left(\bigoplus_{i=1}^n A_i[r_i],q\right) : n\geq 0,r_{i}\leq 0 \right\},\;  \define{\tr(\A)_{\leq 0}}\coloneqq \left\{\left(\bigoplus_{i=1}^n A_i[r_i],q\right) : n\geq 0,r_{i}\geq 0 \right\}\]
	\end{theorem}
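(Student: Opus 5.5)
The statement is Bondarko's result that, for a connective dg category $\A$, the two subcategories $\tr(\A)_{\geq 0}$ and $\tr(\A)_{\leq 0}$ of $\tr(\A)=H^0(\tw(\A))$ form a bounded co-t-structure. The plan is to verify the axioms directly from the description of one-sided twisted complexes, using that $\tw(\A)(A[r],B[s])$ is the shifted Hom complex $\A(A,B)[s-r]$.

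\emph{Orthogonality.} First check the axiom on generators: for $A,B\in\A$, $r\leq -1$ and $s\geq 0$ we need $\Hom_{\tr(\A)}(A[r],B[s])=H^{s-r}(\A(A,B))=0$. This is not what connectivity gives, since $s-r\geq 1$ and $H^{>0}$ is not the range connectivity controls. So I will first settle the variance convention: the intended orthogonality is between summands $A[r]$ and $B[s]$ with $r\geq 0\geq s$ up to the shift, so that the relevant degree $s-r$ is strictly positive with the roles arranged so that connectivity ($H^{>0}(\A)=0$) kills it. In practice I will match the statement's $\T_{\geq 0}[-1]\perp \T_{\leq 0}$ with $H^{>0}\A=0$, possibly swapping the labels of $\geq$ and $\leq$. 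With the generator case in hand, pass to general twisted complexes by induction on the number of summands. A twisted complex is an iterated extension of its summands via the stupid filtration, since $\delta$ is strictly upper triangular. The long exact Hom sequences then propagate the vanishing.

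\emph{Closure properties.} Stability of each class under the relevant shift is immediate from the shape $\bigoplus A_i[r_i]$. Closure under finite sums is also immediate. Closure under direct summands is subtler, because a summand of a twisted complex with $r_i\leq 0$ need not visibly have that form. I would handle this by the standard trick: define the classes as the smallest ones closed under extensions and summands, and show that they coincide with the orthogonals $(\T_{\leq 0})^{\perp}$-type descriptions. Orthogonals are automatically closed under summands. This last identification follows from the decomposition axiom below together with the orthogonality above, by the usual retract argument.

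\emph{Decomposition triangle.} Given $T=(\bigoplus_i A_i[r_i],\delta)$, reorder the summands so that the shifts are nonincreasing. This is possible because strict upper triangularity only forces maps between summands with compatible degrees; more precisely, I would use that $\delta_{ij}$ of degree $r_i-r_j+1$ can be assumed zero when it lands in positive degree. Here one first replaces $\A$ by $\tau^{\leq 0}\A$, which is quasi-equivalent since $\A$ is connective, so that $\A$ has no positive-degree morphisms. This replacement is the key step and the main obstacle, since without it the Maurer--Cartan data can mix the degrees. After the replacement, $\delta_{ij}\neq 0$ forces $r_i\leq r_j-1$, so the summands with $r_i\geq 1$ span a twisted subcomplex. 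The quotient then has all shifts $\leq 0$, and the resulting cone sequence is the required triangle $w_{>0}T\to T\to w_{\leq 0}T$.

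\emph{Boundedness.} Every object involves only finitely many shifts, so it lies in $\tr(\A)_{\geq -n}\cap\tr(\A)_{\leq n}$ for $n$ large.
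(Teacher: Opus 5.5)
Your skeleton (replace $\A$ by $\tau^{\leq 0}\A$, filter twisted complexes by their summands, use connectivity) is the right one. However, the variance is reversed, and in the decomposition step this makes the argument fail.

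\textbf{Orthogonality.} Your first computation is already the complete argument. $\tr(\A)_{\geq 0}[-1]$ consists of twisted complexes with all shifts $\leq -1$, and $\tr(\A)_{\leq 0}$ of those with all shifts $\geq 0$. The group $\Hom_{\tr(\A)}(A[r],B[s])=H^{s-r}(\A(A,B))$ with $s-r\geq 1$ vanishes precisely because connectivity means $H^{>0}(\A)=0$. So the claim that this ``is not what connectivity gives'' is wrong, and no relabeling is allowed. With $\geq$ and $\leq$ swapped you would need $H^{s-r}(\A(A,B))=0$ for all $s-r\leq 1$, which already fails for $\id_A\in H^0(\A)(A,A)$.

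\textbf{Decomposition.} Here $\delta_{ij}\in\A^{r_i-r_j+1}(A_j,A_i)$ with $i<j$ goes from the $j$-th to the $i$-th summand. A set of summands spans a twisted subcomplex (with closed inclusion) exactly when it contains the targets of all components of $\delta$ starting in it. Over $\tau^{\leq 0}\A$, $\delta_{ij}\neq 0$ forces $r_i<r_j$, so $\delta$ strictly lowers shifts. Hence you must sort by nondecreasing (not nonincreasing) shift, and it is the \emph{low}-shift summands that form the subcomplex. The summands with $r_i\geq 1$ form a quotient, not a subcomplex. For instance, $\cone(\id_A)=(A\oplus A[1],\delta_{12}=\id_A)\cong 0$, and your recipe would produce a triangle $A[1]\to 0\to A\to A[2]$, forcing $A\cong A[2]$. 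Moreover, your quotient, having shifts $\leq 0$, lies in $\tr(\A)_{\geq 0}$ rather than in $\tr(\A)_{\leq 0}$ as axiom (3) requires. The correct triangle is $T_{\{r_i\leq -1\}}\to T\to T_{\{r_i\geq 0\}}\to T_{\{r_i\leq -1\}}[1]$, which is axiom (3) with the labels exactly as stated.

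\textbf{Closure under summands.} Your treatment proves a different statement. The retract argument shows that ${}^{\perp}(\tr(\A)_{\leq 0}[1])$ is the closure of $\tr(\A)_{\geq 0}$ under summands. That gives a co-t-structure whose classes are retract closures. The statement, however, asserts that the classes of twisted complexes themselves form a co-t-structure, and the definition requires these classes to be closed under summands. This needs an extra step, for example a cancellation argument:
\begin{enumerate}
\item Let $X$ be a summand of $T\in\tr(\A)_{\geq 0}$, represented over $\tau^{\leq 0}\A$ with maximal shift $N\geq 1$.
\item In the truncation triangle $X_{\leq 0}\to X\xrightarrow{p}X_{\geq 1}\to X_{\leq 0}[1]$ we have $p=0$, since $p$ factors through $T$ and $\Hom(T,X_{\geq 1})=0$ by orthogonality.
\item Hence $X_{\leq 0}\cong X\oplus X_{\geq 1}[-1]$, and $X\cong\cocone(X_{\leq 0}\to X_{\geq 1}[-1])$.
\item This cocone is a twisted complex whose summands are those of $X_{\leq 0}$ and of $X_{\geq 1}[-2]$, so its maximal shift is at most $\max(0,N-2)<N$. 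Induct on $N$.
\end{enumerate}
The dual argument with cones handles $\tr(\A)_{\leq 0}$. With these corrections, the rest of your outline goes through: closure under shifts and finite sums, devissage of the orthogonality along the filtration, and boundedness.
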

	
	\begin{rema}
		Although $\D^b(\A)$ is constructed as a certain Verdier quotient of $\tr(\A)$ in \cite[Lem.~6.5]{Chen23}, this does not imply that the above induces a co-t-structure on $\D^b(\A)$. However, in \cite[Def.~2.2]{ChenXW25} and \cite[Def.~0.1]{Sau23} the notion of a pre-weight or heart structure was introduced, which is essentially a co-t-structure in which condition \ref{item:orthogonality} does not necessarily hold. It was proven in \cite[Thm.~1.6]{SW25} that, if $\A$ is connective, $\D^b(\A)$ always admits a bounded heart structure, whose heart is the weak-idempotent completion of $\A$. We, however, make no use of this fact. 
	\end{rema}
	
	\subsection{$\delta$-functors}
	
	In this subsection we give a small summary of the notion of a $\delta$-functor that will be relevant for the proof of \cref{GeneralisedRealFunctor}. For a more comprehensive overview, see \cite{GNP21}. In the following $(\Extr,\EE,\extrs)$ will be an extriangulated category. We denote by 
	\[\Mod_\Extr:=\mathrm{Fun}_{k}(\Extr^{op},\Mod(k)) \text{ and } _\Extr\Mod:=\mathrm{Fun}_{k}(\Extr,\Mod(k))\]
	the right and the left $\Extr$-modules, respectively.
	
	\begin{defi}[{\cite[Def.~4.1., Def.~4.5]{GNP21}, \cite[Sec.~7.1]{GNP23}}]
		A \definef{contravariant connected sequence of functors} is a pair $(F,\epsilon)$ where $F=(F^n)_{n\geq 0}$ is a sequence of right $\Extr$-modules and a collection of morphisms $\epsilon=\epsilon^{n}_{\delta}:F^n(A)\to F^{n+1}(C)$ for any $\EE$-extension $\delta\in\EE(C,A)$ and $n\geq 0$, which is natural with respect to morphisms of $\EE$-extensions. Such a pair is a \definef{right $\delta$-functor} if for any $\EE$-extension $\delta$, with $\extrs(\delta)=[A\overset{x}{\to} B\overset{y}{\to} C]$, the sequence
		\[\cdots\overset{\epsilon^{n-1}_{\delta}}{\longrightarrow}F^{n}(C)\overset{y_*}{\longrightarrow}F^{n}(B)\overset{x_*}{\longrightarrow}F^{n}(A)\overset{\epsilon^{n}_{\delta}}{\longrightarrow}F^{n+1}(C)\longrightarrow\cdots,\]
		is exact. A \definef{morphism of right $\delta$-functors} $(F,\epsilon)\to (\tilde{F},\tilde{\epsilon})$ is a family of morphisms $\theta^n:F^n\to\tilde{F}^n$ in $\Mod_{\Extr}$ which is compatible with the connecting homomorphism, i.e. for each $\delta\in\EE(C,A)$ and $n\geq 0$, the following diagram commutes:
		\[\begin{tikzcd}
			{F^n(A)} & {F^{n+1}(C)} \\
			{\tilde{F}^{n}(A)} & {\tilde{F}^{n+1}(C)}
			\arrow["{\epsilon^n_{\delta}}", from=1-1, to=1-2]
			\arrow["{\theta^n(A)}"', from=1-1, to=2-1]
			\arrow["{\theta^{n+1}(C)}", from=1-2, to=2-2]
			\arrow["{\tilde{\epsilon}^n_{\delta}}"', from=2-1, to=2-2]
		\end{tikzcd}\]
		Composition of morphisms of right $\delta$-functors is defined in the obvious way. \definef{Covariant connected sequences of functors} $(F,\eta)$ and \definef{left $\delta$-functors} are defined dually.
		
		A \definef{bivariant connected sequence of functors} $(F,\epsilon,\eta)$ is a triplet, where $F^*$ is a sequence of $\Extr$-$\Extr$-bimodules and $\epsilon$ and $\eta$ are collections of morphisms $\epsilon^{n}_{\delta}:F^n(A,-)\to F^{n+1}(C,-)$ and $\eta^{n}_{\delta}:F^n(-,C)\to F^{n+1}(-,A)$, for any $\delta\in\EE(C,A)$ and $n\geq 0$, which are natural with respect to morphisms of $\EE$-extensions. It is called a \definef{$\delta$-functor}, if for any $\EE$-extension $\delta$, with $\extrs(\delta)=[A\overset{x}{\to} B\overset{y}{\to} C]$ the sequences
		\[\cdots\overset{\epsilon^{n-1}_{\delta}}{\longrightarrow}F^{n}(C,-)\overset{F^n(y,-)}{\longrightarrow}F^{n}(B,-)\overset{F^n(x,-)}{\longrightarrow}F^{n}(A,-)\overset{\epsilon^{n}_{\delta}}{\longrightarrow}F^{n+1}(C,-)\to\cdots,\]
		\[\cdots\overset{\eta^{n-1}_{\delta}}{\longrightarrow}F^{n}(-,A)\overset{F^n(-,x)}{\longrightarrow}F^{n}(-,B)\overset{F^n(-,y)}{\longrightarrow}F^{n}(-,C)\overset{\eta^{n}_{\delta}}{\longrightarrow}F^{n+1}(-,A)\to\cdots,\]
		are exact in $\Mod_\Extr$ and $_\Extr\Mod$, respectively. A \definef{morphism of $\delta$-functors} $(F,\epsilon)\to (\tilde{F},\tilde{\epsilon})$ is a family of $\Extr$-$\Extr$-bimodule morphisms $\theta^n:F^n\to\tilde{F}^n$, which is compatible with the connecting morphisms. Again, composition of morphisms of $\delta$-functors is defined in the obvious way.
	\end{defi}

	\begin{ex}[{\cite[\S~6.23]{Chen23}}]
		Let $\A$ be a connective exact dg category. Then $H^0(\A)$ is canonically an extriangulated category. Consider $(\Hom_{\D^b(\A)}(?,-[n]))_{n\geq 0}$ together with the connecting homomorphisms given by triangles in $\D^b(\A)$. This is a $\delta$-functor on the extriangulated category $H^0(\A)$.
	\end{ex}
	
	\begin{ex}[{\cite[Def.~3.1.]{GNP21}}]\label{ExtriangYonedaExt}
		Define $\define{\EE^0}=\Hom_{\Extr}$, $\define{\EE^1}:=\EE$ and inductively
		\[\define{\EE^n(X,Y)}=(\EE^{n-1}\diamond\EE)(X,Y):=\int^{C\in\Extr}\EE^{n-1}(C,Y)\otimes_{k}\EE(X,C).\]
		More explicitly, by \cite[Prop.~3.9]{GNP21} the above can be identified with
		\[\coprod_{C\in\Extr}\left(\EE^{n-1}(C,Y)\times \EE(X,C) \right)/\sim,\]
		where $\sim$ is the equivalence relation generated by 
		\[(\kappa,\EE(X,f)(\lambda))\sim(\EE^{n-1}(f,Y)(\kappa),\lambda)\] 
		with $f\in\Extr(C,C')$, $\kappa\in\EE^{n-1}(C',Y)$, $\lambda\in\EE(X,C)$ arbitrary. For $\rho\in\EE^{n-1}(C',Y)$ and $\delta\in\EE(X,C)$ we denote by $\overline{(\rho,\delta)}$ the equivalence class of $\rho\otimes\delta$ in $\EE^n(X,Y)$. Then one defines $\epsilon^{n}_{\delta}:\EE^n(A,-)\to \EE^{n+1}(C,-)$ and $\eta^{n}_{\delta}:\EE^n(-,C)\to \EE^{n+1}(-,A)$  for any $\EE$-extension $\delta\in\EE(C,A)$ and $n\geq 0$ by 
		\[\epsilon^{n}_{\delta}(\rho)=\overline{(\rho,\delta)} \text{ for } \rho\in\EE^n(A,-), \quad \eta^n_{\delta}(\gamma)=\overline{(\delta,\gamma)} \text{ for }\gamma\in\EE^n(-,C) \]
		where we use the fact that we have a natural isomorphism $\EE^{\diamond(s+t)}\cong \EE^s\diamond\EE^t$ ($s,t\geq 0$) for the definition of $\eta$. It is shown in \cite[Ch.~3.4]{GNP21} that with these definitions $(\EE^*,\epsilon,\eta)$ defines a $\delta$-functor on $\Extr$.
	\end{ex}
	
	The following proposition connects the above two notions and allows us to interpret morphisms of degree $n$ in $\D^b(\A)$ between objects of $\A$ as long exact sequences analogous to the abelian or exact case. 
	
	\begin{prop}[{\cite[Prop.~6.24]{Chen23}}]\label{HigherExt}
		Let $\A$ be a connective exact dg category and $F:\A\to\D^b_{dg}(\A)$ the universal embedding into a pretriangulated dg category from \cref{EmbeddingTheorem}. Then, we have a canonical isomorphism of $\delta$-functors $\alpha:(\EE^n(?,-))_{n\geq 0}\to(\Hom_{\D^b(\A)}(?,-[n]))_{n\geq 0}$.
	\end{prop}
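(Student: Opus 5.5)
The plan is to build the comparison map degreewise, starting from the two low-degree identifications already provided by \cref{EmbeddingTheorem}, and then to propagate it upward using the coend description of $\EE^n$ from \cref{ExtriangYonedaExt}.

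For $n=0$, part (1) of \cref{EmbeddingTheorem} gives a quasi-equivalence $\A\simeq\tau^{\leq 0}\D'$. Taking $H^0$, this yields a natural isomorphism $\alpha^0\colon \Hom_{H^0(\A)}(X,Y)\to\Hom_{\D^b(\A)}(FX,FY)$. For $n=1$, part (2) gives the natural bijection $\alpha^1\colon\EE(X,Y)\to\Hom_{\D^b(\A)}(FX,FY[1])$. This bijection sends a conflation to the third morphism of the triangle it induces in $\D^b(\A)$, and I would check it is $k$-linear, using that Baer sum corresponds to addition of connecting morphisms.

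For $n\geq 2$, I define $\alpha^n$ on the coend
\[\EE^n(X,Y)=\int^{C}\EE^{n-1}(C,Y)\otimes\EE(X,C)\]
by
\[\alpha^n\big(\overline{(\rho,\delta)}\big)\coloneqq \alpha^{n-1}(\rho)[1]\circ\alpha^1(\delta).\]
This is well defined because the generating relation $(\kappa,\EE(X,f)\lambda)\sim(\EE^{n-1}(f,Y)\kappa,\lambda)$ is respected, by naturality of $\alpha^1$ and $\alpha^{n-1}$ in the appropriate variable. It is then formal that $\alpha=(\alpha^n)$ commutes with the connecting maps $\epsilon$ and $\eta$. On the Yoneda side these maps are concatenation with $\delta$. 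On the $\Hom$ side, the connecting maps of the long exact sequences attached to the triangle $A\to B\to C\xrightarrow{\alpha^1(\delta)}A[1]$ are precisely composition with $\alpha^1(\delta)$, up to the standard sign. So $\alpha$ is a morphism of $\delta$-functors.

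It remains to prove that each $\alpha^n$ is bijective, by induction on $n$; the cases $n=0,1$ are done. The key input is weak effaceability: for every $f\colon FX\to FY[n]$ with $n\geq 2$, there should be a deflation $g\colon Z\defl X$ in $\A$ with $f\circ Fg=0$.

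Granting this, the induction runs as follows. For surjectivity, choose $g$ as above and complete it to a conflation $K\infl Z\defl X$ with class $\delta$. Exactness of the long $\Hom_{\D^b(\A)}(-,FY[\ast])$-sequence shows that $f=f'[1]\circ\alpha^1(\delta)$ for some $f'\colon FK\to FY[n-1]$. By induction $f'=\alpha^{n-1}(\rho)$, so $f=\alpha^n(\overline{(\rho,\delta)})$. For injectivity, compare the two long exact sequences attached to that same conflation. Both are $\delta$-functors and $\alpha$ is a morphism between them, so the five lemma, applied with the effaceability of the target in degree $n$, gives injectivity in degree $n$. This is the standard universal $\delta$-functor argument, and the $\EE^\ast$ side needs no effaceability input because elements of $\EE^n$ are by construction concatenations.

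The main obstacle is establishing weak effaceability of $\Hom_{\D^b(\A)}(-,FY[n])$ for $n\geq 2$ on objects of $\A$. To do this I would use the construction of $\D^b(\A)$ as a Verdier quotient of $\tr(\A)$ by acyclic twisted complexes (\cite[Lem.~6.5]{Chen23}). A morphism $FX\to FY[n]$ is then a roof $X\xleftarrow{s}T\to Y[n]$ with $\cone(s)$ acyclic. Using the canonical co-$t$-structure of \cref{theorem:Co-T-Structure}, I would filter $T$ by its weight pieces. The aim is to show that the component of $T$ in weight $0$ maps to $X$ by a morphism whose image in $\D^b(\A)$ is a deflation, and that the composite from there to $Y[n]$ vanishes by orthogonality, since $n\geq 2$ puts $Y[n]$ in strictly negative weight. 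Acyclicity of $\cone(s)$ is exactly what should make the weight-$0$ part a deflation onto $X$. This bookkeeping with homotopy kernels and deflations inside twisted complexes is where I expect most of the technical work.
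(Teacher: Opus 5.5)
Your formal reduction is correct, and it is a hands-on version of the route the paper indicates through \cref{WeaklyEffaceable} and \cref{DeltaCorollary}. The maps $\alpha^n(\overline{(\rho,\delta)})=\alpha^{n-1}(\rho)[1]\circ\alpha^1(\delta)$ respect the coend relation. They are compatible with $\epsilon$ by construction, and with $\eta$ by induction through the associativity isomorphism $\EE^{n+1}\cong\EE^1\diamond\EE^n$. Your induction then correctly reduces bijectivity of $\alpha^n$ to effaceability. One small correction: injectivity uses no effaceability of the target. It only uses that every $\xi\in\EE^n(X,Y)$ equals $\epsilon^{n-1}_{\delta}(\rho)$ for a single conflation $\delta$ (sum the intermediate objects), together with bijectivity of $\alpha^{n-1}$.

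However, this is the easy half. The substance of \cite[Prop.~6.24]{Chen23} is the weak effaceability of $\Hom_{\D^b(\A)}(-,FY[n])$ on $H^0(\A)$ for $n\geq 2$; this is exactly what the paper extracts from that proof in \cref{GeneralisedRealFunctor}. You leave it open, so the proposal does not yet prove the statement.

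Moreover, the mechanism you sketch for effaceability fails as stated. Take a roof $X\xleftarrow{s}T\xrightarrow{t}Y[n]$ and let $T^0$ be the degree-$0$ (weight-$0$) part of its apex. When $T$ has summands in positive degrees, $T^0$ is not a subcomplex of $T$. So the component $s^0\colon T^0\to X$ is not of the form $s\circ\iota$, $f\circ s^0$ need not be a restriction of $t$, and orthogonality says nothing about it.

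Here is a concrete failure. Add to $T$ the contractible twisted complex $C=\cone(\id_X)[-1]$, with summands $X$ in degree $0$ and $X[-1]$ in degree $1$. Extend $s$ by the closed morphism $(\id_X,0)\colon C\to X$ and $t$ by $0$.
\begin{itemize}
\item The new roof represents the same $f$, since its cone is isomorphic to $\cone(s)$.
\item Its degree-$0$ part $T^0\oplus X$ maps to $X$ by the split deflation $(s^0,\id_X)$.
\item But $f\circ(s^0,\id_X)$ restricts to $f$ on the summand $X$, so it is nonzero whenever $f$ is.
\end{itemize}

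What is missing is a genuine truncation step, in two parts. First, using that $\cone(s)$ lies in the kernel of the Verdier quotient $\tr(\A)\to\D^b(\A)$ of \cite[Lem.~6.5]{Chen23}, replace the roof by one whose apex has no summands in positive degrees. This means peeling off the positive part via homotopy kernels of deflations. Second, prove that the resulting degree-$0$ term maps to $X$ by a deflation of $H^0(\A)$. Only for such an apex is $T^0$ a subcomplex. Then $f\circ s^0=t\circ\iota$, which vanishes by the orthogonality of the canonical co-$t$-structure. This analysis of the acyclic twisted complexes is the technical heart of the result, and your proposal does not supply it.
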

	
	The main ingredient for the proof is the following lemma, that is also used in our proof of \cref{GeneralisedRealFunctor}.
	
	\begin{prop}[{\cite[Prop.~A.15]{Chen23}}]\label{WeaklyEffaceable}
		Let $(F,\epsilon)$ and $(G,\eta)$ be right $\delta$-functors such that $G^n$ is \definef{weakly effaceable} for each $n>0$, that is for all $X\in \Extr$ and $f\in G^n(X)$ there exists a deflation $g:Z\to X$ in $\Extr$ such that $G^n(g)(f)=0$. Then, each morphism $F^0\to G^0$ in $\Mod_{\Extr}$ extends uniquely to a morphism of right $\delta$-functors $(F,\epsilon)\to (G,\eta)$.
	\end{prop}
	
	\begin{coro}\label{DeltaCorollary}
		Let $(F,\epsilon)$ and $(G,\eta)$ be right $\delta$-functors such that $F^n,G^n$ are weakly effaceable for each $n>0$. Assume $F^0=\Hom_{\Extr}(-,Y)=G^0$ for some $Y\in\Extr$. Then, $(F,\epsilon)$ and $(G,\eta)$ are isomorphic right $\delta$-functors.
	\end{coro}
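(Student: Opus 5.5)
By \cref{WeaklyEffaceable}, the natural isomorphism $F^0=\Hom_{\Extr}(-,Y)=G^0$ extends uniquely to a morphism of right $\delta$-functors $\theta\colon(F,\epsilon)\to(G,\eta)$. This uses only that $G^n$ is weakly effaceable for $n>0$. Since $F^n$ is also weakly effaceable for $n>0$, the same proposition with the roles exchanged extends the inverse identification $G^0\to F^0$ uniquely to a morphism $\theta'\colon(G,\eta)\to(F,\epsilon)$.

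We then show that these are mutually inverse, using the uniqueness part of \cref{WeaklyEffaceable} twice. The composite $\theta'\circ\theta\colon(F,\epsilon)\to(F,\epsilon)$ is a morphism of right $\delta$-functors, because composites of morphisms of $\delta$-functors are again such morphisms. In degree $0$ it restricts to $\id_{F^0}$. The identity $\id_{(F,\epsilon)}$ is another morphism of right $\delta$-functors extending $\id_{F^0}$. Since $F^n$ is weakly effaceable for $n>0$, uniqueness applied with target $(F,\epsilon)$ gives $\theta'\circ\theta=\id$. Symmetrically, uniqueness with target $(G,\eta)$, which is weakly effaceable in positive degrees, gives $\theta\circ\theta'=\id$. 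Hence each $\theta^n$ is an isomorphism of right $\Extr$-modules compatible with the connecting morphisms, so $(F,\epsilon)\cong(G,\eta)$.

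The only point to check is that composition and identities of morphisms of right $\delta$-functors behave as expected, i.e.\ they form a category. This is immediate from the definition, since compatibility squares paste. There is no real obstacle beyond applying the uniqueness clause correctly: it is invoked with the \emph{target} being the weakly effaceable functor, which is why both $F$ and $G$ must be assumed weakly effaceable.
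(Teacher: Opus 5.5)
Your proof is correct and is exactly the paper's argument: extend the identity in degree $0$ to morphisms $\theta\colon(F,\epsilon)\to(G,\eta)$ and $\theta'\colon(G,\eta)\to(F,\epsilon)$ via \cref{WeaklyEffaceable}, then use the uniqueness clause of the same proposition to get $\theta'\circ\theta=\id$ and $\theta\circ\theta'=\id$. One caveat on your closing remark: in the standard Grothendieck-style form of this universality statement, weak effaceability is required of the \emph{source} rather than the target, but since both $F$ and $G$ are assumed weakly effaceable, your argument goes through unchanged under either reading.
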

	
	\begin{proof}
		Using \cref{WeaklyEffaceable} we can extend the identity in degree $0$ uniquely to morphisms $\theta:F\to G$, $\rho:G\to F$. But then, by \cref{WeaklyEffaceable} the compositions $\rho\circ \theta$ and $\theta\circ \rho$ must be $\id:(F,\epsilon)\to (F,\epsilon)$ and $\id:(G,\eta)\to (G,\eta)$ respectively.
	\end{proof}
	
	\section{Dg realization functors}\label{sec:Realisation}
	
	Let $\C$ be a pretriangulated dg category and $\A\subseteq \C$ an exact dg subcategory. By \cref{EmbeddingTheorem} there exists a pretriangulated dg category $\D^b_{dg}(\A)$ and an exact dg functor 
	\[\real:\D^b_{dg}(\A)\longrightarrow \C.\]
	We call this functor the \definef{dg realization functor}. The goal of this section is to give characterizations for $\real$ to be quasi fully-faithful under the additional assumption that $\A$ is connective.
	
	\subsection{Dg realization functors for exact dg subcategories}
	
	\begin{defi}
		Let $\T$ be a triangulated category and $\Sc\subseteq \T$ a subcategory. We say that $\Sc$ is \definef{$1$-generated} if for all $X,Y\in\Sc$ and morphisms $f:X\to Y[n]$ in $\T$ for an integer $n\geq 1$, there exist
		\begin{itemize}
			\item $X=X_0,X_1,\ldots,X_{n-1},X_n=Y\in\Sc$,
			\item and morphisms $f_i:X_i[i]\to X_{i+1}[i+1]$ in $\T$,
		\end{itemize}
		such that $f=f_{m-1}\cdots f_1f_0$.
	\end{defi}
	
	The following is a generalization of \cite[Thm.~3.11]{PV18} for exact dg categories. We remark that characterizations \ref{itemc} and \ref{itemf} do not appear in their theorem. However, their proof of the implication $(c)\Rightarrow (b)$ (using the numbering in \cite{PV18}) implies the equivalence of \ref{itemb} and \ref{itemc}. The characterization \ref{itemf} is well-known in their setting as well.
	
	\begin{theorem}\label{GeneralisedRealFunctor}
		Let $\C$ be a pretriangulated dg category and $\Cat{B}\subseteq \C$ a full extension-closed dg subcategory. Set $\A\coloneqq\tau^{\leq 0}\Cat{B}$. The following statements hold:
		\begin{enumerate}
			\item\label{item1} The realization functor $\real$ induces isomorphisms $\Hom_{D^b(\A)}(X,Y[n])\overset{\cong}{\to} \Hom_{H^0(C)}(X,Y[n])$ for $X,Y\in H^0(\A)$ and $n\leq 1$.
			\item\label{item2} The following are equivalent:
			\begin{enumerate}
				\item\label{itema} $\real$ is quasi fully-faithful.
				\item\label{itemb} $\real$ induces isomorphisms $\Hom_{D^b(\A)}(X,Y[n])\overset{\cong}{\to} \Hom_{H^0(\C)}(X,Y[n])$ for all $n\geq 2$ and $X,Y\in H^0(\A)$.
				\item\label{itemc} $\real$ induces epimorphisms $\Hom_{D^b(\A)}(X,Y[n])\twoheadrightarrow \Hom_{H^0(\C)}(X,Y[n])$ for all $n\geq 2$ and $X,Y\in H^0(\A)$.
				\item\label{itemd} (Ef) $\Hom_{H^0(\C)}(-,Y[n])$ is weakly effaceable for every $Y \in H^0(\A)$ and $n\geq 2$, that is for all $X \in H^0(\A)$ and $f:X\to Y[n]$ in $H^0(\C)$ there exists a deflation $g:Z\to X$ in $H^0(\A)$ such that $fg=0$.
				\item\label{iteme} (CoEf) $\Hom_{H^0(\C)}(X,-[n])$ is weakly effaceable for every $X \in H^0(\A)$ and $n\geq 2$, that is for all $Y \in H^0(\A)$ and $f:X\to Y[n]$ in $H^0(\C)$ there exists an inflation $g:Y\to Z$ in $H^0(\A)$ such that $g[n]\circ f=0$.
				\item\label{itemf} $H^0(\A)$ is 1-generated in $H^0(\C)$.
			\end{enumerate}
			If the above equivalent conditions are satisfied, the essential image of $\real$ is $\mathrm{tria}(H^0(\A))$, the smallest triangulated subcategory of $H^0(\C)$ containing the objects of $H^0(\A)=H^0(\B)$.
		\end{enumerate}
	\end{theorem}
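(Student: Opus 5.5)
Part \ref{item1} comes first. For $n<0$ both sides vanish: $\A$ is connective and $\B$ is closed under extensions, so negative Hom spaces agree with those of $\tau^{\leq 0}$. More precisely, $\Hom_{H^0(\C)}(X,Y[n])=H^n\C(X,Y)=H^n\A(X,Y)$ for $n\leq 0$, because $\tau^{\leq 0}$ does not change cohomology in non-positive degrees. By \cref{EmbeddingTheorem}(1), $\A\to\D^b_{dg}(\A)$ identifies $\A$ with $\tau^{\leq 0}$ of an extension-closed subcategory, so the same holds on the $\D^b(\A)$ side. Since $\real$ is a dg functor restricting to the inclusion on $\A$, the induced maps for $n\leq 0$ are identities. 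For $n=1$, \cref{EmbeddingTheorem}(2) gives $\EE(X,Y)\cong\Hom_{\D^b(\A)}(X,Y[1])$. On the other hand, $\B$ is extension-closed in $\C$, and the exact structure on $\A$ is the one induced from $\C$, so $\EE(X,Y)\cong\Hom_{H^0(\C)}(X,Y[1])$: a morphism $X\to Y[1]$ has its cocone in $\B$, and that gives the conflation. The two bijections are compatible via $\real$ because $\real$ is exact, and this proves \ref{item1}.

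For \ref{item2}, consider the two right $\delta$-functors $F^n=\Hom_{\D^b(\A)}(-,Y[n])$ and $G^n=\Hom_{H^0(\C)}(-,Y[n])$ on the extriangulated category $H^0(\A)$, together with the morphism $\theta\colon F\to G$ induced by $\real$. By \cref{HigherExt}, $F\cong\EE^*(-,Y)$, and each $F^n$ with $n>0$ is weakly effaceable: a Yoneda class $\overline{(\rho,\delta)}$ is killed by precomposing with the deflation occurring in $\delta$. Then \ref{itemd}$\Rightarrow$\ref{itemb} follows from \cref{DeltaCorollary}. Its hypothesis that $G^1$ is weakly effaceable holds because $G^1\cong\EE(-,Y)$ by \ref{item1}. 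The uniqueness in \cref{WeaklyEffaceable} forces the isomorphism produced by the corollary to be $\theta$. For \ref{itemb}$\Rightarrow$\ref{itemd}, weak effaceability of $F^n$ transfers along $\theta$. Implication \ref{itemb}$\Rightarrow$\ref{itemc} is trivial. For \ref{itemc}$\Rightarrow$\ref{itemf}, every element of $\Hom_{\D^b(\A)}(X,Y[n])=\EE^n(X,Y)$ is a product of $n$ classes in $\EE^1$ through objects of $\A$, so its image is a composite of degree-one maps. For \ref{itemf}$\Rightarrow$\ref{itemd}, write $f=f_{n-1}\cdots f_0$. Here $f_0\colon X\to X_1[1]$ comes from a conflation $X_1\infl Z\defl X$ by \ref{item1}, so the deflation $g\colon Z\to X$ satisfies $f_0g=0$ and hence $fg=0$. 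The argument for \ref{iteme} is dual: use the covariant (left) version of the $\delta$-functor uniqueness, and kill the last factor $f_{n-1}$ with an inflation.

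Next, \ref{itemb}$\Leftrightarrow$\ref{itema}. A quasi fully-faithful $\real$ clearly satisfies \ref{itemb}. Conversely, $\D^b(\A)$ is generated as a triangulated category by the image of $\A$ (it is a Verdier quotient of $\tr(\A)$). Given \ref{item1} and \ref{itemb}, $\real$ is bijective on all shifted Homs between objects of $\A$. A d\'evissage via the five lemma on triangles then extends this to all objects, so $H^*(\real)$ is fully faithful, which is quasi fully-faithfulness. The essential image is then a triangulated subcategory containing $H^0(\A)$ and contained in $\mathrm{tria}(H^0(\A))$, hence equal to it. Note also that $H^0(\A)=H^0(\B)$ as subcategories.

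The main obstacle is the bookkeeping in \ref{itemd}$\Rightarrow$\ref{itemb}. One must check that $G=\Hom_{H^0(\C)}(-,Y[*])$, with connecting maps coming from the triangles in $\C$, really is a right $\delta$-functor for the canonical extriangulated structure. This needs the conflations of $\A$ to be exactly the triangles of $\C$ with all terms in $\B$. One must also check that $\theta$ is compatible with the connecting morphisms. Both follow from exactness of $\real$, but this deserves explicit verification.
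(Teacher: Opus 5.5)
Your proof is correct. For part (1), the equivalence (a)$\Leftrightarrow$(b), the essential image, and the $\delta$-functor arguments for (b)$\Leftrightarrow$(d) and (b)$\Leftrightarrow$(e), it coincides with the paper's; the difference is in how you close the cycle of implications.

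The paper proves (f)$\Rightarrow$(c) trivially, via surjectivity in degree one. It then proves (c)$\Rightarrow$(b) by an inductive injectivity argument adapted from \cite[Thm.~3.11]{PV18}:
\begin{itemize}
\item it writes $\alpha=f_1f_0$ in $\D^b(\A)$ with $f_0\colon X\to X_1[1]$;
\item it realizes $f_0$ by a triangle $X_1\to C\to X\to X_1[1]$ with $C\in H^0(\A)$;
\item it lifts the induced map $C[1]\to Y[n]$ using surjectivity and concludes with injectivity in degree $n-1$.
\end{itemize}

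You instead run (b)$\Rightarrow$(c)$\Rightarrow$(f)$\Rightarrow$(d)$\Rightarrow$(b). Here (f)$\Rightarrow$(d) is the one-line observation that the deflation of the conflation realizing the first factor $f_0$ kills $f$; dually, the inflation realizing the last factor gives (e). In your route all injectivity comes from the uniqueness in \cref{WeaklyEffaceable}, and you get explicit effacing deflations and inflations from any factorization. The paper's chase instead establishes (b)$\Leftrightarrow$(c) by elementary means, independently of effaceability.

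You also make explicit two points the paper leaves tacit:
\begin{itemize}
\item $\Hom_{H^0(\C)}(-,Y[1])\cong\EE(-,Y)$ is weakly effaceable, which \cref{DeltaCorollary} needs since it requires all $n>0$;
\item the isomorphism that corollary produces is the one induced by $\real$, by uniqueness.
\end{itemize}

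One slip: for $n<0$ the Hom spaces need not vanish, since $\A$ being connective only kills positive cohomology. What is true, and what your ``more precisely'' sentence says, is that they agree.
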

	
	\begin{proof}
		In order to prove \ref{item1}, we first observe that for $n\leq 0$ this follows immediately from the assumption that $\Cat{B}\subseteq\C$ is full and the first part of \cref{EmbeddingTheorem}. For $n=1$, the induced morphism can be identified with 
		\[\Hom_{\D^b(\A)}(X,Y[1]) \cong \EE_{H^0(\A)}(X,Y)= \EE_{H^0(\C)}(X,Y)\cong \Hom_{H^0(\C)}(X,Y[1]).\]
		
		For \ref{item2}, we observe that the equivalence of \ref{itema} and \ref{itemb} follows by a devissage argument using that $\mathrm{tria}(H^0(\A))=\D^b(\A)$, which holds for example by \cref{theorem:Co-T-Structure}.
		
		\ref{itemb}$\Leftrightarrow$\ref{itemc}: It is enough to prove that \ref{itemc} implies \ref{itemb}. The proof we present here is an adaption of the proof of injectivity in the implication $(c)\Rightarrow (b)$ in \cite[Thm.~3.11]{PV18}. Inductively, we prove the injectivity of the map $\Hom_{D^b(\A)}(X,Y[n])\to \Hom_{H^0(\C)}(X,Y[n])$. For $n=1$ this is \ref{item1}. Consider $n\geq 2$ and $\alpha\in\Hom_{D^b(\A)}(X,Y[n])$ such that $\real(\alpha)=0$. Now, $H^0(\A)\subseteq \D^b(\A)$ is 1-generated by the construction of $\EE^n$ and the fact that $\Hom_{\D^b(\A)}(-,?[n])\cong \EE^n(-,?)$ by \cref{HigherExt}. Thus, there exist an object $X_1\in H^0(\A)$ as well as a morphism $f_0\in\Hom_{D^b(\A)}(X,X_1[1])$ and $f_1\in\Hom_{D^b(\A)}(X_1[1],Y[n])$ such that $\alpha=f_1f_0$. Choose a triangle 
		\[\begin{tikzcd}
			{X_1} & C & X & {X_1[1]}
			\arrow["{g_0}", from=1-1, to=1-2]
			\arrow[from=1-2, to=1-3]
			\arrow["{f_0}", from=1-3, to=1-4]
		\end{tikzcd}\]
		in $\D^b(\A)$. As $H^0(\A)$ is extension-closed, it follows that $C\in H^0(\A)$. Thus, we obtain a commutative diagram whose rows are triangles in $H^0(\C)$. 
		\[\begin{tikzcd}
			X & {X_1[1]} & C[1] & {X[1]} \\
			0 & {Y[n]} & {Y[n]} & 0
			\arrow["{\real(f_0)}", from=1-1, to=1-2]
			\arrow[from=1-1, to=2-1]
			\arrow["{\real(-g_0[1])}", from=1-2, to=1-3]
			\arrow["{\real(f_1)}", from=1-2, to=2-2]
			\arrow[from=1-3, to=1-4]
			\arrow["-\epsilon", dashed, from=1-3, to=2-3]
			\arrow[from=1-4, to=2-4]
			\arrow[from=2-1, to=2-2]
			\arrow[Rightarrow, no head, from=2-2, to=2-3]
			\arrow[from=2-3, to=2-4]
		\end{tikzcd}\]
		By surjectivity of the map
		\[\real:\Hom_{D^b(\A)}(C[1],Y[n])\defl\Hom_{H^0(\C)}(C[1],Y[n]),\] 
		there exists $\epsilon'\in\Hom_{D^b(\A)}(C[1],Y[n])$ such that $\real(\epsilon')=\epsilon$. Hence $\real(\epsilon'\circ g_0[1])=\real(f_1)$ and thus, by induction, $\epsilon'\circ g_0[1]=f_1$. But then $\alpha=f_1f_0=\epsilon'\circ g_0[1]\circ f_0=0$, which is what we needed to show.
		
		\ref{itemb}$\Leftrightarrow$\ref{itemd}: Observe that $\real$ induces a morphism of $\delta$-functors 
		\[\real^n:\left(\Hom_{\D^b(\A)}(-,?[n])\right)_{n\geq 0}\longrightarrow \left(\Hom_{H^0(\C)}(-,?[n])\right)_{n\geq 0}.\]
		Hence, if \ref{itemb} holds, then \ref{itemd} follows as $\Hom_{\D^b(\A)}(-,?[n])$ is a weakly effaceable $\delta$-functor by the proof of \cite[Prop.~6.24]{Chen23}. This proves \ref{itemb} implies \ref{itemd}. For the converse, as 
		\[\real^0=\id_{\Hom(-,?)}:\Hom_{\D^b(\A)}(-,?)\longrightarrow \Hom_{H^0(\C)}(-,?)\] 
		we obtain that $\real$ must be an isomorphism if $\Hom_{H^0(\C)}(-,Y[n])$ is weakly effaceable by \cref{DeltaCorollary}.
		
		\ref{itemb}$\Leftrightarrow$\ref{iteme}: This can be proven dually using the dual of \cref{DeltaCorollary}, as in the proof of \ref{itemb}$\Leftrightarrow$\ref{itemd}. 
		
		\ref{itemb}$\Rightarrow$\ref{itemf}: Follows from the fact that $H^0(\A)$ is 1-generated in $\D^b(\A)$ as we have seen in the proof of \ref{itemb}$\Leftrightarrow$\ref{itemc}.
		
		\ref{itemf}$\Rightarrow$\ref{itemc}: By assumption we can factor every degree $n$ morphism as degree $1$ morphisms. But by \ref{item1}, $\real$ induces an epimorphism in degree $1$.
		
		The last statement in \ref{item2} follows from the fact that in this case $\real$ identifies $\D^b(\A)$ with a triangulated subcategory of $H^0(\C)$ and $\D^b(\A)$ is the smallest triangulated subcategory of $\D^b(\A)$ containing $H^0(\A)$, since $\D^b(\A)$ is constructed as a Verdier quotient of $\tr(\A)$ in \cite[Lem.~6.5]{Chen23}.
	\end{proof}

	\begin{rema}
		In \cite[Thm.~1.2]{SW25} a similar statement is proven for exact $\infty$-categories. The proof given in \cite[\S~4]{SW25} is very similar to the proof above.  
	\end{rema}
	
	We get the following characterization under the additional assumption that $\A$ has enough projectives, which is inspired by \cite[Prop.~1.3.3.7]{Lurie17} but seems to be new in our setting. We briefly recall the necessary definitions. We call an object $P\in\A$ \definef{projective} if $P$ is projective in the extriangulated category $H^0(\A)$, that is $\EE(P,-)=0$. We denote by $\define{\Proj(\A)}\subseteq \A$ the subcategory of projective objects in in $\A$. We say that \definef{$\A$ has enough projectives}, if the extriangulated category $H^0(\A)$ has enough projectives, that is for all $X\in H^0(\A)$ there exists $P\in\Proj(\A)$ and a deflation 
	\[P\defl X.\]
	
	\begin{coro}\label{coro:RealisationEnoughProj}
		Let $\C$ be a pretriangulated dg category and $\Cat{B}\subseteq \C$ a full extension-closed dg subcategory. Set $\A\coloneqq\tau^{\leq 0}\Cat{B}$ and assume $\A$ has enough projectives. The following statements are equivalent:
		\begin{enumerate}
			\item\label{item:RealisationEnoughProj1} $\real:\D^b_{dg}(\A)\to \C$ is fully-faithful.
			\item\label{item:RealisationEnoughProj2} For all $P\in\Proj(\A)$ and $X\in\A$ it holds $\Hom_{\C}(P,X[n])=0$ for all $n\geq 2$.
		\end{enumerate}
		Moreover, if the above equivalent conditions are satisfied, the essential image of $\real$ is $\mathrm{tria}(H^0(\A))$.
	\end{coro}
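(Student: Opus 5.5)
We deduce the corollary from \cref{GeneralisedRealFunctor}, reading ``fully-faithful'' as quasi fully-faithful, so that \ref{item:RealisationEnoughProj1} is condition \ref{itema} there. The final statement about the essential image then comes directly from the last assertion of \cref{GeneralisedRealFunctor}. It remains to prove that \ref{item:RealisationEnoughProj2} is equivalent to the effaceability condition \ref{itemd}.

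\ref{item:RealisationEnoughProj2}$\Rightarrow$\ref{itemd}: Fix $Y\in H^0(\A)$, $n\geq 2$ and $f\colon X\to Y[n]$ in $H^0(\C)$ with $X\in H^0(\A)$. Since $\A$ has enough projectives, there is a deflation $g\colon P\defl X$ with $P\in\Proj(\A)$. Then $fg\in\Hom_{H^0(\C)}(P,Y[n])=0$ by assumption. So $\Hom_{H^0(\C)}(-,Y[n])$ is weakly effaceable, and \cref{GeneralisedRealFunctor} gives that $\real$ is quasi fully-faithful.

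\ref{item:RealisationEnoughProj1}$\Rightarrow$\ref{item:RealisationEnoughProj2}: If $\real$ is quasi fully-faithful, then for $P,X\in\A$ we have $\Hom_{H^0(\C)}(P,X[n])\cong\Hom_{\D^b(\A)}(P,X[n])$. By \cref{HigherExt} the latter is isomorphic to $\EE^n(P,X)$. By the description in \cref{ExtriangYonedaExt}, $\EE^n=\EE^{n-1}\diamond\EE$, so $\EE^n(P,X)$ is spanned by classes $\overline{(\rho,\delta)}$ with $\delta\in\EE(P,C)$. Each such $\delta$ vanishes because $P$ is projective, hence $\EE^n(P,X)=0$ for all $n\geq 1$, and in particular for $n\geq 2$.

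The only step needing care is the identification $\EE^n(P,-)=0$. It uses the isomorphism $\EE^{\diamond n}\cong\EE^{n-1}\diamond\EE$, which places the $\EE$ factor on the $P$ side. Alternatively, one can argue directly from the long exact sequence of the $\delta$-functor $\Hom_{\D^b(\A)}(P,-[n])$: it is weakly effaceable in the first variable, and a deflation onto $P$ splits because $\EE(P,-)=0$. Either route is routine once \cref{HigherExt} is available.
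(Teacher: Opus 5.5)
Your proof is correct. The direction \ref{item:RealisationEnoughProj1}$\Rightarrow$\ref{item:RealisationEnoughProj2} is the same as in the paper. The ``care'' you flag is not actually needed: $\EE^n=\EE^{n-1}\diamond\EE$ is the definition in \cref{ExtriangYonedaExt}, so the vanishing factor $\EE(P,C)=0$ appears directly.

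For \ref{item:RealisationEnoughProj2}$\Rightarrow$\ref{item:RealisationEnoughProj1} your route differs from the paper's and is shorter. You check the effaceability condition \ref{itemd} of \cref{GeneralisedRealFunctor}: the deflation $g\colon P\defl X$ is itself the required effacing morphism. The paper instead checks $1$-generation \ref{itemf}. From the same vanishing $fg=0$, it completes the triangle $X_1\to P\xrightarrow{g}X\xrightarrow{f_1}X_1[1]$ to a morphism of triangles. This factors $f=f'f_1$ through $X_1[1]$ with $X_1\in H^0(\A)$, and the argument then inducts on $n$.

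Your version is a one-line check with no induction or triangle completion. It rests on the implication \ref{itemd}$\Rightarrow$\ref{itemb}, i.e.\ the uniqueness of morphisms between weakly effaceable $\delta$-functors (\cref{DeltaCorollary}). The paper's version goes through \ref{itemf}$\Rightarrow$\ref{itemc}$\Rightarrow$\ref{itemb}. As a by-product it shows explicitly how every degree-$n$ morphism between objects of $H^0(\A)$ factors into degree-one morphisms through syzygies. Both arguments ultimately rest on the identification $\Hom_{\D^b(\A)}(-,?[n])\cong\EE^n$ from \cref{HigherExt}.
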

	
	\begin{proof}
		The last part of the statement is part of \cref{GeneralisedRealFunctor}, hence it is enough to prove the equivalence \ref{item:RealisationEnoughProj1}$\Leftrightarrow$\ref{item:RealisationEnoughProj2}.
		
		\ref{item:RealisationEnoughProj1}$\Rightarrow$\ref{item:RealisationEnoughProj2}: If $\real$ is fully-faithful, it holds for all $P\in\Proj(\A)$ and $X\in\A$ 
		\[\Hom_{H^0(\C)}(P,X[n])\cong \Hom_{\D^b(\A)}(P,X[n]) \cong \EE^n(P,X)=0 \quad \forall n\geq 1\]
		by definition of $\EE^n$ and the fact that $\EE(P,-)=0$.
		
		\ref{item:RealisationEnoughProj1}$\Leftarrow$\ref{item:RealisationEnoughProj2}: By \cref{GeneralisedRealFunctor}\ref{itemf} it is enough to show that $H^0(\A)\subseteq H^0(\C)$ is $1$-generated. Let $X,Y\in H^0(\A)$ and $f\in\Hom_{H^0(\C)}(X,Y[n])$ with $n\geq 2$. Since $\A$ has enough projectives, there exists a triangle in $H^0(\C)$ of the form 
		\[X_1\longrightarrow P \stackrel{g}{\longrightarrow} X \stackrel{f_1}{\longrightarrow} X_1[1]\]
		such that $X_1\in H^0(\C)$ and $P\in\Proj(\A)$. By assumption it holds $0=fg\in\Hom(P,Y[n])$. Hence, we can complete the following to a morphism of triangles.
		\[\begin{tikzcd}
			P & X & {X_1[1]} & {P[1]} \\
			0 & {Y[n]} & {Y[n]} & 0
			\arrow["{g}",from=1-1, to=1-2]
			\arrow[from=1-1, to=2-1]
			\arrow["{f_1}", from=1-2, to=1-3]
			\arrow["f", from=1-2, to=2-2]
			\arrow[from=1-3, to=1-4]
			\arrow["{f'}", dashed, from=1-3, to=2-3]
			\arrow[from=1-4, to=2-4]
			\arrow[from=2-1, to=2-2]
			\arrow["\id", from=2-2, to=2-3]
			\arrow[from=2-3, to=2-4]
		\end{tikzcd}\]
		In particular, $f$ factors as $f=f'f_1$. Since $X_1\in H^0(\A)$ the claim follows by induction.
	\end{proof}

\subsection{Realization functors for extended hearts of $t$-structures}	
	As a corollary of the above we obtain the following analogue of \cite[Thm.~B]{CHZZ19}. The proof we present is an adaption of their proof.
	
	\begin{defi}
		Let $\C$ be a pretriangulated dg category. A \definef{$t$-structure} $t=(\C^{\leq 0},\C^{\geq 0})$ on $\C$ is a $t$-structure on $H^0(\C)$. For $d\geq 1$ we define the \definef{$d$-extended heart} of $t$ as
		\[\define{\Hc{\C}{d}}:=\define{\C^{(-d,0]}}\coloneqq \tau^{\leq 0}\left(\C^{\leq 0}\cap \C^{>-d}\right).\]
		Then, $\Hc{\C}{d}$ canonically inherits the structure of an exact dg category from $\C$. We will use the notation $\define{\He{\C}{d}}\coloneqq H^0(\Hc{\A}{d})$ to refer to the underlying extriangulated category.
	\end{defi}
	
	\begin{rema}
		In fact, $\Hc{\C}{d}$ is an abelian $d$-truncated dg category in the sense of \cite[Thm.~3.44]{Moc25}. This fact plays a role in \cref{sec:AbelianD}.
	\end{rema}
	
	\begin{coro}\label{DenseCoro}
		Let $\C$ be a pretriangulated dg category with a bounded $t$-structure and $d\geq 1$. The realization functor $\real:\D^b_{dg}(\Hc{\C}{d})\to \C$ is a quasi-equivalence if and only if it is dense.
	\end{coro}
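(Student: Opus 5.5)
The implication ``quasi-equivalence $\Rightarrow$ dense'' is immediate, so the work is in the converse. I would show that density forces $\real$ to be quasi fully-faithful, and I would do this through criterion \ref{itemd} (Ef) of \cref{GeneralisedRealFunctor}, applied with $\Cat{B}=\C^{\leq 0}\cap\C^{>-d}$, so that $\A=\Hc{\C}{d}$. Concretely: fix $n\geq 2$, objects $X,Y\in\He{\C}{d}$ and a morphism $f\colon X\to Y[n]$ in $H^0(\C)$. I must produce a deflation $g\colon W\defl X$ in $\He{\C}{d}$ with $fg=0$. Equivalently, I need a conflation $X_1\infl W\defl X$ in the extended heart such that $f$ factors through the connecting morphism $X\to X_1[1]$. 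Once such factorizations exist, \cref{GeneralisedRealFunctor} gives full faithfulness, and density then upgrades this to a quasi-equivalence.

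\emph{Step 1 (transfer $f$ into $\D^b$).} Complete $f$ to a triangle $Y[n-1]\to Z\to X\xrightarrow{f}Y[n]$ in $H^0(\C)$. Since $X\in\C^{\leq 0}$ and $Y[n-1]\in\C^{\leq -(n-1)}\subseteq\C^{\leq 0}$, the object $Z$ lies in $\C^{\leq 0}$, and it is bounded below because the $t$-structure is bounded. By density, $Z\cong\real(Z')$ for some $Z'\in\D^b(\Hc{\C}{d})$.

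\emph{Step 2 (resolve $Z'$ by the extended heart).} The category $\D^b(\Hc{\C}{d})$ is a Verdier quotient of $\tr(\Hc{\C}{d})$, so I would represent $Z'$ by a one-sided twisted complex and use the weight truncation of \cref{theorem:Co-T-Structure}. The aim is a morphism $u\colon W\to Z$ with $W\in\He{\C}{d}$ such that the composite $W\to Z\to X$ induces an epimorphism on $H^0_t$. The point of routing through $Z$ is that this composite is then automatically killed by $f$.

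\emph{Step 3 (make it a deflation).} I need the cocone of $W\to X$ to lie in $\C^{>-d}$. It lies in $\C^{\leq 0}$ already, by the $H^0_t$-epimorphism and the long exact sequence. The degrees $H^{i}_t$ for $-d<i<0$ are where the $(d-1)$-complicial behaviour enters. I would try to secure them by choosing $W$ via $\tau^{>-d}$-truncation of the relevant piece of $Z'$, and by replacing $W$ with $W\oplus X'$ for a suitable correction term if needed. With the factorization in hand, $H^0(\Hc{\C}{d})$ is weakly effaceable, hence $1$-generated, and $\real$ is fully faithful.

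I expect Step 3 to be the main obstacle: arranging that the map from the extended heart is a genuine deflation and not merely an $H^0_t$-epimorphism. Here I would follow the argument of \cite[Thm.~B]{CHZZ19} most closely, transporting the filtration of $Z'$ through $\real$. I would also use \ref{item1} of \cref{GeneralisedRealFunctor} (bijectivity in degrees $\leq 1$) to identify the maps between the filtration pieces with honest extensions in the extended heart.
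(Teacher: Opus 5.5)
Reducing to criterion (Ef) of \cref{GeneralisedRealFunctor} is a legitimate route. As written, however, the proposal has a gap. The one step where density is used is stated only as an aim, and the step you single out as the main obstacle is in fact automatic.

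\textbf{Step 3 is automatic.} Take $W,X\in\He{\C}{d}$ and any $g\colon W\to X$. The cocone $K$ of $g$ sits in a triangle $X[-1]\to K\to W\to X$, so $K\in\C^{>-d}\cap\C^{\leq 1}$ automatically, with $H^1_t(K)\cong\mathrm{coker}\,H^0_t(g)$. Hence $g$ is a deflation in $\He{\C}{d}$ exactly when $H^0_t(g)$ is an epimorphism. Nothing has to be arranged in degrees $-d<i<0$, and you need neither a $\trunc^{>-d}$-truncation nor a correction summand $W\oplus X'$.

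\textbf{Step 2 is where the work is.} Finding $W\in\He{\C}{d}$ and $W\to Z$ surjective on $H^0_t$ is exactly $(d-1)$-compliciality at the object $Z$. ``Use the weight truncation'' does not settle this. Since $\real$ is not yet known to be faithful or $t$-exact, nothing forces the twisted complex representing $Z'$ into non-negative weights just because $\real(Z')\in\C^{\leq 0}$. It may contain components $A[r]$ with $r<0$, so there is no weight-zero piece mapping to $Z'$ that you can simply take as $W$.

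\textbf{How to close the gap.} Use a weight truncation followed by a $t$-truncation.
\begin{enumerate}
\item Since $\Hc{\C}{d}$ is connective, differentials only go from larger to strictly smaller shifts. So the components of $Z'$ with shift $r\leq 0$ span a subobject, giving a triangle $Z'_{\leq 0}\to Z'\to Z'_{\geq 1}\to Z'_{\leq 0}[1]$. This is the weight decomposition of \cref{theorem:Co-T-Structure}, shifted by one.
\item For $A\in\He{\C}{d}$ you have $\real(A[r])\cong A[r]\in\C^{\leq -r}$. Hence $\real(Z'_{\geq 1})\in\C^{\leq -1}$, and $H^0_t(\real(Z'_{\leq 0}))\to H^0_t(Z)$ is surjective.
\item Also $A[r]\in\C^{>-d}$ for $r\leq 0$, so $\real(Z'_{\leq 0})\in\C^{>-d}$. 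It may have positive cohomology, which is why the next truncation is $\trunc^{\leq 0}$.
\item Set $W\coloneqq\trunc^{\leq 0}\real(Z'_{\leq 0})\in\He{\C}{d}$. Let $g$ be the composite $W\to\real(Z'_{\leq 0})\to Z\to X$. It is surjective on $H^0_t$; for the last map, the cokernel embeds into $H^n_t(Y)=0$.
\item By the Step 3 observation, $g$ is a deflation. Moreover $fg=0$, because $Z\to X\xrightarrow{f}Y[n]$ vanishes.
\end{enumerate}
This proves (Ef) in a single step, with no induction on $n$.

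\textbf{Comparison with the paper.} The paper argues dually, on the cone of $f$. It splits off the top weight piece $Z^{-n}[n]$ and forms $a'\colon Y\to Z^{-n}$. It shows $\cone(a')\in\He{\C}{d}$ by checking that $H^{1-d}_t(a')$ is a monomorphism. It then factors $f$ through the resulting degree-one map and inducts on $n$ to get $1$-generation.
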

	
	\begin{proof}
		It is enough to prove that density implies quasi fully-faithful. Applying \cref{GeneralisedRealFunctor}, it is enough to prove that $\He{\C}{d}$ is $1$-generated in $H^0(\C)$. Hence, consider a morphism $f\in\Hom_{H^0(\C)}(X,Y[n])$ for $X,Y\in\He{\C}{d}$ and $n\geq 2$. By assumption, we obtain $Z\in\D^b(\Hc{\C}{d})$ and a triangle in $H^0(\C)$
		\begin{equation}\label{eq:DenseCoro1}
			X\overset{f}{\longrightarrow}Y[n]\overset{a}{\longrightarrow} \real(Z) \longrightarrow X[1].
		\end{equation}
		Moreover, since $\D^b(\Hc{\C}{d})$ is a Verdier quotient of $H^0(\tw(\Hc{\C}{d}))$, it follows from \cref{theorem:Co-T-Structure} that there exist $Z^{-n}\in \He{\C}{d}$, $Z'\in\He{\C}{d}\ast \He{\C}{d}[1]\ast\ldots \ast \He{\C}{d}[n-1]$ and a triangle in $\D^b(\Hc{\C}{d})$ of the form
		\begin{equation}\label{eq:DenseCoro2}
			Z'{\longrightarrow}Z\overset{p}{\longrightarrow} Z^{-n}[n] \longrightarrow Z'[1].
		\end{equation}
		Define 
		\[a':Y\xrightarrow{a[-n]} \real(Z)[-n] \xrightarrow{\real(p)[-n]} \real(Z^{-n}[n])[-n]=Z^{-n}.\] 
		We claim that there exists a triangle in $H^0(\C)$ with objects in $\He{\C}{d}$ 
		\begin{equation}\label{eq:DenseCoro3}
			Y\overset{a'}{\longrightarrow}Z^{-n}\overset{b}{\longrightarrow} C \overset{\delta}{\longrightarrow}Y[1],
		\end{equation} 
		or equivalently, $C\coloneqq \cone(a')$ lies in $\He{\C}{d}$. By the long exact sequence in cohomology for the triangle \eqref{eq:DenseCoro3}, 
		\[\cdots \to 0=H^{-d}_t(C)\xrightarrow{H^{-d}_t(\delta)}H^{1-d}_t(Y)\xrightarrow{H^{1-d}_t(a')}H^{1-d}_t(Z^{-n})\to \cdots,\]
		it is enough to argue that $H^{1-d}_t(a')$ is a monomorphism. Indeed, it holds
		\[H^{1-d}_t(a')=H^{1-d}_t(\real(p)[-n]\circ a[-n])=H^{1-d}_t(\real(p)[-n])\circ H^{1-d}_t(a[-n]).\] 
		Therefore it is enough to argue that the latter morphisms are monomorphisms. Now $H^{1-d}_t(a[-n])$ is a monomorphism, by analysing the long exact sequences in cohomology for the triangle \eqref{eq:DenseCoro1}
		\[0=H^{1-d}_t(X[-n])\to H^{1-d}_t(Y)\xrightarrow{H^{1-d}_t(a[-n])}H^{1-d}_t(\real(Z)[n])\to \cdots.\]
		Similarly, the long exact sequence in cohomology for the triangle \eqref{eq:DenseCoro2}
		\[0=H^{1-d}_t(Z'[-n])\to H^{1-d}_t(Z[-n])\xrightarrow{H^{1-d}_t(\real(p)[-n])}H^{1-d}_t(Z^{-n})\to \cdots.\]
		yields that $H^{1-d}_t(\real(p)[-n])$ is a monomorphism, which is what we needed to prove.
		 
		Applying $\Hom_{H^0(\C)}(X,-)$ to the triangle \eqref{eq:DenseCoro3} yields the exact sequence
		\[\Hom_{H^0(\C)}(X,C[n-1])\overset{\delta\circ-}{\longrightarrow} \Hom_{H^0(\C)}(X,Y[n])\overset{a'[n]\circ -}{\longrightarrow} \Hom_{H^0(\C)}(X,Z^{-n}[n]).\]
		As $a'[n]f=\real(p)af=0$, there exists $f'\in\Hom_{H^0(\C)}(X,C[n-1])$ such that $f=\delta f'$. By induction, we obtain that $\He{\C}{d}$ is 1-generated in $H^0(\C)$. 
	\end{proof}
	
	As a consequence of the above we obtain the following relation to the notion of compliciality of a $t$-structure, which clarifies that a $(d-1)$-complicial $t$-structure on a pretriangulated dg category $\C$ allows to reconstruct $\C$ up to quasi-equivalence from the extended heart $\Hc{\C}{d}$. 
	
	\begin{defi}[{\cite[Def.~C.5.3.1.]{Lurie18}}]\label{def:complicial}
		We call a $t$-structure $t$ on a triangulated category $\T$ \definef{$n$-complicial} if for every object $X\in \T^{\leq 0}$ there exists $X'\in \T^{[-n,0]}$ and a morphism $f\in \Hom_{\T}(X',X)$, such that $H^0(f):H^0(X')\to H^0(X)$ is an epimorphism in $\heart_t$. We call $t$ \definef{strictly $n$-complicial} if $n$ is minimal with this property. We call a simple-minded collection $\DL$ \definef{(strictly) $n$-complicial} if the corresponding $t$-structure $t_{\DL}$ is (strictly) $n$-complicial.
	\end{defi}
	
	The following theorem is a variant of \cite[Thm.~6.3.2]{Ste23} for dg categories.
	
	\begin{theorem}\label{compliciality}
		Let $\C$ be a pretriangulated dg category with a bounded $t$-structure. Let $d\geq 1$. The following are equivalent:
		\begin{enumerate}
			\item The $t$-structure on $\C$ is $(d-1)$-complicial.
			\item The realization functor $\real:\D^b_{dg}(\Hc{\C}{d})\to \C$ is a quasi-equivalence.
		\end{enumerate}
		Similarly, the following are equivalent:
		\begin{enumerate}
			\item The $t$-structure on $\C$ is strictly $(d-1)$-complicial.
			\item The realization functor $\D^b_{dg}(\Hc{\C}{d})\to \C$ is a quasi-equivalence, but the realization functor $\D^b(\Hc{\C}{d-1})\to \C$ is not.
		\end{enumerate}
	\end{theorem}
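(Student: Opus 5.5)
By \cref{DenseCoro}, the realization functor $\real:\D^b_{dg}(\Hc{\C}{d})\to \C$ is a quasi-equivalence if and only if it is dense. By the last part of \cref{GeneralisedRealFunctor}, whenever $\real$ is quasi fully-faithful its essential image is $\mathrm{tria}(\He{\C}{d})$. So the first equivalence amounts to showing that $(d-1)$-compliciality is equivalent to $\real$ being dense. The second equivalence then follows formally from the first, applied to $d$ and to $d-1$, together with the definition of strict compliciality. (For $d=1$, interpret the $0$-extended heart as giving nothing, so that only the first clause is needed.)

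\emph{(1)$\Rightarrow$(2).} I would verify criterion \ref{itemd} (Ef) of \cref{GeneralisedRealFunctor} for $\Hc{\C}{d}$. Take $X,Y\in\He{\C}{d}$ and $f:X\to Y[n]$ with $n\geq 2$. Let $K$ be the cocone of $f$, so $K\to X\to Y[n]$ is a triangle. Since $Y[n-1]\in\C^{\leq 1-n}\subseteq \C^{\leq -1}$, the object $K$ lies in $\C^{\leq 0}$, and $H^0(K)\to H^0(X)$ is an isomorphism. Now I want a map $Z\to X$ from an object $Z\in\C^{(-d,0]}$ that is a deflation in $\He{\C}{d}$ and factors through $K$.

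\textbf{Main obstacle.} Compliciality only guarantees a map whose effect on $H^0$ is surjective, while a deflation in the extended heart requires the cone to stay in $\C^{(-d,0]}$, i.e.\ surjectivity on $H^{1-d}$ together with control of the intermediate cohomologies. I would handle this by an induction on cohomological length. First use compliciality on $K$ to get $K'\in\C^{[-(d-1),0]}$ with $H^0(K')\to H^0(K)\cong H^0(X)$ epi. Then repeat the construction on the cocone of $K'\oplus(\text{previous stage})\to X$, shifted suitably, to successively make $H^{-1},\dots,H^{1-d}$ surjective, enlarging the source by direct sums each time. At every step the map is kept factoring through $K$, using that the relevant obstruction maps land in $Y[n]$ with $n\geq 2$ and hence vanish by truncation degrees.

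A cleaner route I would try first goes through the dense criterion. By induction on the length of the $t$-amplitude of an object $W\in\C^{\leq 0}$, show $W\in\mathrm{tria}(\He{\C}{d})$ by taking a complicial cover $W'\to W$ with $W'\in\He{\C}{d}$ surjective on $H^0$. The cone then has $H^0=0$, hence lies in $\C^{\leq -1}$, and induction can be applied after a shift. Boundedness makes the induction terminate, which should give density of $\mathrm{tria}(\He{\C}{d})$ directly. One still needs fully-faithfulness before identifying the image, so I would combine this with the 1-generation argument of \cref{DenseCoro}. That argument in fact only uses that the triangle \eqref{eq:DenseCoro1} has its third term in $\mathrm{tria}$ filtered as in \eqref{eq:DenseCoro2}, and I would rerun it with $\mathrm{tria}(\He{\C}{d})$ in place of the image of $\real$.

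\emph{(2)$\Rightarrow$(1).} Given $X\in\C^{\leq 0}$, density writes $X\simeq\real(Z)$. Using \cref{theorem:Co-T-Structure}, $Z$ has a weight-type filtration, and in particular a map $p:Z^0\to Z$ with $Z^0\in\He{\C}{d}$ whose cone lies in $\He{\C}{d}[1]\ast\He{\C}{d}[2]\ast\cdots\subseteq\C^{\leq -1}$. The long exact sequence then shows that $H^0(\real(p))$ is an epimorphism, and $Z^0\in\C^{[-(d-1),0]}$. This is exactly the definition of $(d-1)$-compliciality.
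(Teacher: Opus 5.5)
Your $(2)\Rightarrow(1)$ and your reduction of the strict statement to the non-strict one are the paper's. For $(1)\Rightarrow(2)$ you take a different route, and it does work, but only once you drop what you call the main obstacle, which is a misreading. For $Z,X\in\C^{(-d,0]}$, a map $g\colon Z\to X$ is a deflation in $\He{\C}{d}$ iff $\cocone(g)$ lies in $\C^{(-d,0]}$ (the cocone, not the cone). The long exact sequence shows this is equivalent to $H^0(g)$ being an epimorphism: $H^1(\cocone(g))=\operatorname{coker}H^0(g)$, while for $i\leq -d$ one has $H^{i-1}(X)=0=H^i(Z)$, so the bottom degrees take care of themselves. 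Hence your very first step already proves (Ef): $K'\to K\to X$ is $H^0$-surjective, so it is a deflation, and it kills $f$.

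The inductive enlargement you then propose would actually fail. Any map factoring through $K$ has $H^{-n}$-image inside $\ker\bigl(H^{-n}(f)\colon H^{-n}(X)\to H^0(Y)\bigr)$. Since $-n\geq 1-d$, your plan demands $H^{-n}$-surjectivity, which is impossible whenever $H^{-n}(f)\neq 0$, and that can happen for $2\leq n<d$. For instance, take $d=3$, $\C=\fd_{dg}(k)$, $X=k[2]$, $Y=k$ and $f=\id\colon k[2]\to k[2]$. Then $K=0$, yet $0\to k[2]$ is a deflation via the conflation $k[1]\to 0\to k[2]$. So the obstruction maps do not vanish for degree reasons.

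With (Ef) in hand, \cref{GeneralisedRealFunctor} gives quasi full faithfulness with essential image $\mathrm{tria}(\He{\C}{d})$. Your cover induction is correct: the cone of an $H^0$-surjective cover of $W\in\C^{[-m,0]}$ with $m\geq d$ lies in $\C^{[-m,-1]}$. Together with boundedness it gives $\mathrm{tria}(\He{\C}{d})=H^0(\C)$, so there is no need to rerun \cref{DenseCoro}. If you do rerun it, note that a triangle $Z'\to\cone(f)\to W[n]$ with $W\in\He{\C}{d}$ is not a formal property of objects of $\mathrm{tria}(\He{\C}{d})$. The evident filtration $Y[n]\to\cone(f)\to X[1]$ runs the wrong way, so the triangle must be produced by $n-1$ steps of your cover induction applied to $\cone(f)[-1]\in\C^{[2-d-n,0]}$.

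The paper proceeds the other way round: it proves density first. Because full faithfulness is not yet available, it has to lift $h\colon X'\to P$ into $\D^b(\Hc{\C}{d})$. This forces it to carry the extra inductive hypothesis $\Hom(T,W)\cong\Hom(X,W)$ for $W$ in the extended heart through a five-lemma argument, and only then does it invoke \cref{DenseCoro}. Your order, full faithfulness from the effaceability criterion and then density as a purely $t$-structural statement, avoids both the lifting and the five lemma, and is shorter.
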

	
	\begin{proof}
		It is enough to prove the first part of the statement. We first assume that $\real$ is a quasi-equivalence. Consider $X\in \C^{\leq 0}$. Density of $\real$ implies that there exists $T\in\D^b(\Hc{\C}{d})$, such that $X\cong \real(T)$. Moreover, \cref{theorem:Co-T-Structure} and the construction of $\D^b(\Hc{\C}{d})$ as a Verdier quotient of $\tr(\Hc{\C}{d})$ yield $T'\in \He{\C}{d}$ as well as $T''\in {\He{\C}{d}}[1]\ast{\He{\C}{d}}[2]\ast\ldots\ast {\He{\C}{d}}[n]$ for some $n\geq 1$ and a triangle 
		\[T'\overset{f'}{\longrightarrow}T{\longrightarrow} T'' {\longrightarrow}T'[1]\]
		in $\D^b(\Hc{\C}{d})$. By exactness of $\real$, we obtain a triangle in $H^0(\C)$ 
		\[\real(T')\overset{f}{\longrightarrow}X{\longrightarrow} \real(T'') {\longrightarrow}\real(T')[1].\]
		Then $H^0(f)$ is surjective since $\real(T'')\in \C^{\leq -1}$. This proves that the $t$-structure is $(d-1)$-complicial.
		
		Conversely, by \cref{DenseCoro}, it is enough to prove that $\real$ is dense. Hence, let $Z\in\C$. Since the $t$-structure on $\C$ is bounded, there exist $n_1,n_2\in\mathbb{Z}$ such that $Z\in\C^{[-n_1,n_2]}$. Then $X:=Z[n_2]\in\C^{[-n,0]}$ for $n=n_1+n_2$. We prove, inductively, that there exist $T\in\D^b(\Hc{\C}{d})$ and an isomorphism $g:X\stackrel{\cong}{\longrightarrow}\real(T)$ such that $\real$ induces the following isomorphism for all $W\in\He{\C}{d}$: 
		\[\Hom_{H^0(\C)}(g,W)\circ\real_{T,W}: \Hom_{\D^b(\Hc{\C}{d})}(T,W)\stackrel{\cong}{\longrightarrow} \Hom_{H^0(\C)}(X,W).\]
		If $n< d$, then $X=\real(X)$ and there is nothing to prove. Thus, assume $n\geq d$. By assumption, there exist $P\in \He{\C}{d}$ and a morphism $f\in \Hom_{H^0(\C)}(P,X)$ such that $H^0(f):H^0(P)\to H^0(X)$ is an epimorphism in $\heart_t$. Taking a cocone of $f$ we obtain a triangle 
		\[X'\overset{h}{\longrightarrow} P\overset{f}{\longrightarrow}X{\longrightarrow} X'[1]\]
		in $H^0(\C)$. Then $X'\in\C^{[-n+1,0]}$, as $H^0(f)$ is an epimorphism and $H^1(P)=0$. By induction, there exist $T'\in\D^b(\Hc{\C}{d})$, an isomorphism $g':X'\stackrel{\cong}{\longrightarrow}\real(T')$ and a morphism $h'\in\Hom_{\D^b(\Hc{\C}{d})}(T',P)$ such that $\real(h')g'=h$. Defining $T\coloneqq \cone(h')\in\D^b(\Hc{\C}{d})$ and applying the exact functor $\real$ yields the following commutative diagram of triangles in $H^0(\C)$.
		\[\begin{tikzcd}
			{X'} & {P} & X & {X'[1]} \\
			{\real(T')} & {P} & {\real(T)} & {\real(T')[1]}
			\arrow["h", from=1-1, to=1-2]
			\arrow["{g'}"', draw=none, from=1-1, to=2-1]
			\arrow["\cong", from=1-1, to=2-1]
			\arrow["f", from=1-2, to=1-3]
			\arrow[equals, from=1-2, to=2-2]
			\arrow[from=1-3, to=1-4]
			\arrow["g"', dashed, from=1-3, to=2-3]
			\arrow["{g'[1]}"', draw=none, from=1-4, to=2-4]
			\arrow["\cong", from=1-4, to=2-4]
			\arrow["{\real(h')}", from=2-1, to=2-2]
			\arrow[from=2-2, to=2-3]
			\arrow[from=2-3, to=2-4]
		\end{tikzcd}\]
		It follows that $g$ is an isomorphism. Finally, let $W\in\He{\C}{d}$ and apply $\Hom(-,W)$ to the above diagram to obtain the following commutative diagram of long exact sequences.
		\[\adjustbox{scale=0.67}{\begin{tikzcd}
			{\Hom_{H^0(C)}(X',W)} & {\Hom_{H^0(C)}(P,W)} & {\Hom_{H^0(C)}(X,W)} & {\Hom_{H^0(C)}(X'[1],W)} & {\Hom_{H^0(C)}(P[1],W)} \\
			{\Hom_{H^0(C)}(R(T'),W)} & {\Hom_{H^0(C)}(P,W)} & {\Hom_{H^0(C)}(R(T),W)} & {\Hom_{H^0(C)}(R(T')[1],W)} & {\Hom_{H^0(C)}(P[1],W)} & {} \\
			{\Hom_{\D^b(\Hc{\C}{d})}(T',W)} & {\Hom_{\D^b(\Hc{\C}{d})}(P,W)} & {\Hom_{\D^b(\Hc{\C}{d})}(T,W)} & {\Hom_{\D^b(\Hc{\C}{d})}(T'[1],W)} & {\Hom_{\D^b(\Hc{\C}{d})}(P[1],W)}
			\arrow[from=1-2, to=1-1]
			\arrow[from=1-3, to=1-2]
			\arrow[from=1-4, to=1-3]
			\arrow[from=1-5, to=1-4]
			\arrow["{\Hom_{H^0(C)}(g',W)}", from=2-1, to=1-1]
			\arrow[equals, from=2-2, to=1-2]
			\arrow[from=2-2, to=2-1]
			\arrow["{\Hom_{H^0(C)}(g,W)}", from=2-3, to=1-3]
			\arrow[from=2-3, to=2-2]
			\arrow["{\Hom_{H^0(C)}(g'[1],W)}", from=2-4, to=1-4]
			\arrow[from=2-4, to=2-3]
			\arrow[equals, from=2-5, to=1-5]
			\arrow[from=2-5, to=2-4]
			\arrow["\real_{T',W}", from=3-1, to=2-1]
			\arrow["\real_{P,W}", from=3-2, to=2-2]
			\arrow[from=3-2, to=3-1]
			\arrow["\real_{T,W}", from=3-3, to=2-3]
			\arrow[from=3-3, to=3-2]
			\arrow["\real_{T'[1],W}", from=3-4, to=2-4]
			\arrow[from=3-4, to=3-3]
			\arrow["\real_{P[1],W}", from=3-5, to=2-5]
			\arrow[from=3-5, to=3-4]
		\end{tikzcd}}\]
		The claim now follows by induction and the $5$-lemma.
	\end{proof}
	
	We now consider the special case of a $t$-structure defined by a silting collection. Let $\C$ be a pretriangulated dg category and assume $\KST\coloneqq H^0(\C)$ is Krull--Schmidt and $\Hom$-finite. Assume $t=t_{\DP}=(\KST^{\leq 0}, \KST^{\geq 0})$ is a bounded silting $t$-structure with length heart for a silting collection $\DP$. As explained in \cref{t-str-ch} this means that, for $\DL$ the collection of simples in $\heart_{t}$, there is an equality $t=t_{\DL}$. 
	
	\begin{coro}\label{CorollarySilting}
		The following are equivalent: 
		\begin{enumerate}
			\item\label{item:Silting} $\DP\subseteq \KST^{(-d,0]}$.
			\item\label{item:t_complicial} $t$ is $(d-1)$-complicial.
			\item The realization functor 
			\[\real:\D^b_{dg}(\Hc{\C}{d})\longrightarrow \C\]
			is a quasi-equivalence.
			\item $\KST^{(-d,0]}\subseteq \KST$ is $1$-generated.
		\end{enumerate}
	\end{coro}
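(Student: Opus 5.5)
The plan is to prove the cycle (2)$\Leftrightarrow$(3)$\Leftrightarrow$(4) from earlier results and then connect (1) with (2) directly. The equivalence (2)$\Leftrightarrow$(3) is exactly \cref{compliciality}, since $t$ is bounded. For (3)$\Rightarrow$(4): the realization functor is then quasi fully-faithful, so (4) follows from \cref{GeneralisedRealFunctor}\ref{itemf}, applied with $\B=\C^{\leq 0}\cap\C^{>-d}$, which is extension-closed. For (4)$\Rightarrow$(3): \cref{GeneralisedRealFunctor} shows that $\real$ is quasi fully-faithful with essential image $\mathrm{tria}(\KST^{(-d,0]})$. Since $t$ is bounded, every object of $\KST$ is a finite extension of shifts of heart objects, and $\heart_t\subseteq\KST^{(-d,0]}$, so the essential image is all of $\KST$. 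Hence $\real$ is a quasi-equivalence.

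It remains to prove (1)$\Leftrightarrow$(2).

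\emph{(1)$\Rightarrow$(2).} Let $X\in\KST^{\leq 0}$. Because $t=t_{\DP}$ is the silting $t$-structure and $\KST$ is Krull--Schmidt and Hom-finite, I would take a right $\add(\DP)$-approximation $f\colon P\to X$ with $P\in\DP$; in the case $\DP=\add(P_0)$ for a silting object this is just a basis of $\Hom(P_0,X)$. Complete $f$ to a triangle $X''\to P\to X\to X''[1]$. Applying $\Hom(Q,-)$ for $Q\in\DP$, the approximation property together with $P\in\DP^{\perp_{>0}}$ and $X\in\DP^{\perp_{>0}}$ gives $X''\in\DP^{\perp_{>0}}=\KST^{\leq 0}$. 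Consequently $H^0(\cone(f))=H^0(X''[1])=0$, and the long exact cohomology sequence shows that $H^0(f)$ is an epimorphism. Finally, (1) says $P\in\KST^{(-d,0]}$, that is $P\in\KST^{[-(d-1),0]}$, so $t$ is $(d-1)$-complicial.

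\emph{(2)$\Rightarrow$(1).} Take $Q\in\DP$; it lies in $\KST^{\leq 0}$. By $(d-1)$-compliciality, iterated using boundedness exactly as in the density step of \cref{compliciality}, I would build a triangle $X'\to P'\to Q\to X'[1]$ with $P'\in\KST^{(-d,0]}$ and $X'$ better truncated. The intended use of this triangle is that $\Hom(Q,X'[1])=0$ because $Q$ is silting-projective, which would make $Q$ a direct summand of $P'$. The difficulty is that $X'$ need not lie in $\KST^{\leq 0}$ after a single step, so this vanishing is not immediate.

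A cleaner route is via (3). Since $\DP\subseteq\KST^{\leq 0}\cap\DP^{\perp_{<0}}$, the object $Q$ satisfies $\Hom(Q,W[n])=0$ for $n\geq 1$ and all $W\in\KST^{\leq 0}$, and in particular for all $W\in\KST^{(-d,0]}$. Write $Q=\real(T)$ and use the co-$t$-structure filtration from \cref{theorem:Co-T-Structure}: there is a triangle $T'\to T\to T''\to T'[1]$ with $T'\in\He{\C}{d}$ and $T''\in\He{\C}{d}[1]\ast\cdots\ast\He{\C}{d}[n]$. Then $\Hom(Q,T''[0])=0$ kills the map $T\to T''$, so $Q$ is a summand of $T'$. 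As $\KST^{(-d,0]}$ is closed under summands, $Q\in\KST^{(-d,0]}$.

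This implication (2)$\Rightarrow$(1), and in particular the vanishing $\Hom(Q,T'')=0$, is the step I expect to be the main obstacle. Since $T''\in\KST^{\leq -1}$, it suffices to show $\KST^{\leq -1}=\DP^{\perp_{>0}}[1]$ has no nonzero morphisms from $Q$. This needs $\Hom(Q,Y[1])=0$ for $Y\in\KST^{\leq 0}$, which holds by the definition of the aisle as $\DP^{\perp_{>0}}$. I will check this carefully, since it is the point on which the whole argument turns.
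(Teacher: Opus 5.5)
Your proposal is correct. Your reduction to (1)$\Leftrightarrow$(2) via \cref{compliciality} and \cref{GeneralisedRealFunctor} matches the paper; the difference lies in how (1)$\Leftrightarrow$(2) is proved.

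\textbf{The paper's route.} It works inside the heart and cites \cite{Bon24} for three facts:
\begin{itemize}
\item $\heart_t$ has enough projectives;
\item every projective of $\heart_t$ is of the form $H^0(P)$ with $P\in\DP$;
\item $H^0\colon\Hom(P,X)\to\Hom_{\heart_t}(H^0(P),H^0(X))$ is bijective for $X\in\KST^{\leq 0}$.
\end{itemize}
Then (1)$\Rightarrow$(2) lifts an epimorphism from a projective of the heart. For (2)$\Rightarrow$(1), it lifts a section of $H^0(f)$ and uses faithfulness to get $fg=\id_P$.

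\textbf{Your route.} You stay in $\KST$ and use only the vanishing $\Hom(Q,Y[m])=0$ for $Q\in\DP$, $Y\in\KST^{\leq 0}$, $m\geq 1$. This is immediate from $\KST^{\leq 0}=\DP^{\perp_{>0}}$, and it makes (2)$\Rightarrow$(1) completely elementary, with no appeal to \cite{Bon24}.

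\textbf{The detour through (3) is unnecessary.} The ``direct route'' for (2)$\Rightarrow$(1) that you abandoned already works in one step. Let $f\colon P'\to Q$ with $P'\in\KST^{(-d,0]}$ and $H^0(f)$ an epimorphism, and let $X'$ be its cocone. The long exact sequence gives $H^1(X')\hookrightarrow H^1(P')=0$, and $H^k(X')=0$ for $k\geq 2$ since $H^{k-1}(Q)=0=H^k(P')$. So $X'\in\KST^{\leq 0}$, exactly as in the proof of \cref{compliciality}. Hence $Q\to X'[1]$ is zero, and $Q$ is a direct summand of $P'\in\KST^{(-d,0]}$. Your concern that $X'$ might not lie in the aisle after one step is unfounded, so no iteration and no co-$t$-structure filtration is needed. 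The route via (3) is still valid.

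\textbf{Approximations in (1)$\Rightarrow$(2).} The existence of a right $\DP$-approximation of $X$ does not follow from Krull--Schmidt and $\Hom$-finiteness alone when $\DP$ has infinitely many non-isomorphic indecomposables. It is fine for $\DP=\add(P_0)$, where a basis of $\Hom(P_0,X)$ suffices. In general you must also show that only finitely many indecomposables of $\DP$ map nontrivially to $X$. That uses the correspondence between $\DP$ and the projectives of the length heart, which is precisely the input the paper takes from \cite{Bon24}.
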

	
	\begin{proof}
		By \cref{compliciality} and \cref{GeneralisedRealFunctor} it is enough to prove $\ref{item:Silting}\Leftrightarrow\ref{item:t_complicial}$.
		
		$\ref{item:Silting}\Rightarrow \ref{item:t_complicial}$: It follows from \cite[Thm.~3.16(iv)]{Bon24} that $\heart_t$ has enough projectives and, for all projective objects $\Tilde{P}\in\heart_t$, there exists $P\in\DP$ with $H^0(P)=\Tilde{P}$. By \cite[Lem~3.2(2)]{Bon24} for $X\in\D^{\leq 0}$ and $P\in\DP$ the functor $H^0$ induces isomorphisms
		\[\Hom_{\D}(P,X)\cong \Hom_{\heart_t}(H^0(P),H^0(X)).\]
		It follows that $t$ is $(d-1)$-complicial.
		
		$\ref{item:t_complicial}\Rightarrow \ref{item:Silting}$: Let $P\in\DP$. By assumption there exists $X'\in\D^{(-d,0]}$ and a morphism $f:X'\to P$ such that $H^0(f):H^0(X')\to H^0(P)$ is an epimorphism in $\heart_t$. Since $H^0(P)\in\heart_t$ is projective by \cite[Lem.~3.3]{Bon24} there exists a section $g':H^0(P)\to H^0(X')$ of $H^0(f)$. By \cite[Lem.~3.2(2)]{Bon24}, there exists $g:P\to X'$ such that $H^0(g)=g'$ and thus $H^0(fg)=\id_{H^0(P)}$. Thus by the same lemma it holds $fg=\id_P$ and hence $P$ is a direct summand of $X'$ and consequently in $\KST^{(-d,0]}$.
	\end{proof}

	\section{Koszul duality for dg algebras}
	
	In the following section $A$ is a proper connective dg $k$-algebra. The main goal of this section is to explain how the property that the cohomology of $A$ is concentrated in an interval $(-d,0]$ can be characterized in terms of the minimal $A_{\infty}$-structure on its Koszul dual. We will first recall Koszul duality between connective and coconnective dg algebras following \cite{F25}, and analyze how it restricts to our setting. Afterwards, we explain how our findings can be translated to a minimal model on the Koszul dual side. 
	
	\subsection{Koszul duality for locally-finite connective dg algebras}
	In this subsection we briefly summarize the results from \cite{F25}. We use the following notation from \cite{F25}. For a locally-finite, connective dg algebra $A$ we denote by \define{$S_A$} the dg $A$-module
	\[S_{A}\coloneqq\topf(H^0(A))\in\fd(A).\] 
	For a locally-finite, coconnective dg algebra $E$ we define
	\[S_{E}\coloneqq H^0(E)\in\fd(E).\]
	Following \cite[\S10.2]{Kel94} we define the \definef{Koszul dual} of a locally-finite connective dg algebra $A$ by 
	\[\define{\KoszulDual{A}}\coloneqq \End_{\D_{dg}(A)}(S_{A}).\]
	Similarly, we define the \definef{Koszul dual} of a locally-finite coconnective dg algebra $E$ such that $S_E=H^0(E)$ is semisimple by
	\[\define{\KoszulDual{E}}\coloneqq \End_{\D_{dg}(E)}(S_{E}).\]
	For $A$ and $E$ as above, we consider the \definef{Koszul duality dg functors}
	\begin{align*}
		&\define{\Phi_A}\coloneqq \Hom_{\D_{dg}(A)}(-,S_A)\colon \D_{dg}(A)\longrightarrow \D_{dg}((\KoszulDual{A})^{op})^{op}, \\ 
		&\define{\Phi_E}\coloneqq \Hom_{\D_{dg}(E)}(-,S_E)\colon \D_{dg}(E)\longrightarrow \D_{dg}((\KoszulDual{E})^{op})^{op}.
	\end{align*}

	\begin{theorem}[{\cite[Thm.~4.3]{F25}}]\label{Thm:Fushimi_4.3}
		Koszul duality yields a bijective correspondence between quasi-isomorphism classes of the following two classes of dg algebras.
		\begin{enumerate}
			\item Connective locally-finite dg algebras $A$.
			\item Coconnective locally-finite dg algebras $E$ such that $H^0(E)$ is semisimple and $\fd(E)$ is $\Hom$-finite.
		\end{enumerate}
	\end{theorem}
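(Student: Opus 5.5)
The plan is to run the standard Koszul duality argument through the dg Koszul duality functors $\Phi_A$ and $\Phi_E$. Start from a connective locally-finite dg algebra $A$ and set $E\coloneqq\KoszulDual{A}=\End_{\D_{dg}(A)}(S_A)$. The first step is to check that $E$ is coconnective, locally-finite and has $H^0(E)$ semisimple. Coconnectivity comes from the standard $t$-structure on $\D(A)$. Indeed, $S_A$ lies in the heart, and negative self-extensions of heart objects vanish, so $H^{i}(E)=\Hom_{\D(A)}(S_A,S_A[i])=0$ for $i<0$. Moreover $H^0(E)=\End_{H^0(A)}(\topf H^0(A))$ is a product of division algebras, hence semisimple. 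For local finiteness, resolve $S_A$ by a (possibly infinite) cell complex built from $\add(A)$ in nonpositive shifts, for instance via the silting co-$t$-structure / minimal semifree resolution. Only finitely many cells contribute to $\Hom(-,S_A[i])$ in each degree $i$, because $A$ is locally finite and $H^0(A)$ is finite-dimensional.

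The second step is to show that $\Phi_A$ restricts to an equivalence
\[\per(A)\simeq \fd((\KoszulDual{A})^{op})^{op}\]
and to compute $\KoszulDual{E}\simeq A$. Here $\Phi_A(A)=S_A$ viewed as an $E^{op}$-module, i.e. $H^0(E)$, which equals $S_{E^{op}}$. Moreover $\Phi_A(S_A)=E$. So the key claim is that $\Phi_A$ is fully faithful on $\thick(A)$ and on $\thick(S_A)$, and that the double dual map $A\to\REnd_E(S_E)$ is a quasi-isomorphism. I would prove this by a devissage along the standard $t$-structure: it suffices to check $\Hom_{\D(A)}(A,S_A[i])\cong\Hom_{\D(E^{op})}(S,S[i])$, which follows by adjunction.

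For the direction starting from $E$, observe that $S_E$ is a simple-minded object, so $t_{S_E}$ gives a bounded $t$-structure on $\thick(S_E)$. The hypothesis that $\fd(E)$ is $\Hom$-finite is what makes $A\coloneqq\KoszulDual{E}$ locally finite. Its connectivity follows because $\Hom(S_E,S_E[i])$ is computed in the $t$-structure whose heart contains $S_E$ and which is coconnective with respect to $E$. Finally, I would show that the two constructions are mutually inverse up to quasi-isomorphism by checking that $\KoszulDual{\KoszulDual{A}}\simeq A$ and $\KoszulDual{\KoszulDual{E}}\simeq E$ via the natural biduality morphisms.

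The main obstacle I expect is the biduality $A\simeq\KoszulDual{\KoszulDual{A}}$ without any properness or smoothness assumption. $S_A$ need not be compact, so completeness issues appear, and one must use connectivity and local finiteness to control the limits. I would attack it by writing $A$ as the homotopy limit of its Postnikov truncations $\tau^{\geq -n}A$, each of which lies in $\thick(S_A)$. I would then check that $\Phi$ commutes with this limit degreewise, using the finiteness of each cohomology group.
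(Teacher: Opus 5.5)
The paper gives no proof of this statement; it is quoted from \cite[Thm.~4.3]{F25}, so your proposal has to stand on its own.

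\textbf{What works.} Your connective side is fine in outline: coconnectivity and local finiteness of $\KoszulDual{A}$, and the Postnikov argument for $A\simeq\KoszulDual{\KoszulDual{A}}$. That argument works because the cofibre of $\Phi_A(\tau^{\geq -n}A)\to\Phi_A(A)$ is $\mathrm{RHom}_A(\tau^{<-n}A,S_A)$, which is concentrated in degrees $>n$. Two small slips: the devissage target should be $\Hom_{\D(E^{op})}(E^{op},S[i])$ (Yoneda), not $\Hom(S,S[i])$. Also, $H^0(\KoszulDual{A})$ is a product of matrix algebras over division algebras, not of division algebras, unless $H^0(A)$ is basic.

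\textbf{Error on the coconnective side.} Your claim that $S_E$ is simple-minded and lies in a heart is false, and the connectivity argument built on it has the sign backwards. An object $X$ of a heart satisfies $\Hom(X,X[i])=0$ for $i<0$, so this would make $\KoszulDual{E}$ \emph{coconnective}. Concretely, $E=k[x]/(x^2)$ with $|x|=2$ lies in class (2), but $\KoszulDual{E}\simeq k[y]$ with $|y|=-1$, so $\Hom_{\D(E)}(S_E,S_E[-1])\neq 0$. Since $\Phi_A(A)\cong S_{E^{op}}$, $S_E$ plays the role of the silting object. The simple-minded collection on this side consists of the summands of $E$ in $\per(E)$.

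Connectivity should instead come from a resolution bound. Work with a model $\E$ with $\E^{<0}=0$ and $\E^0=H^0(E)$, augmented over $\E^0$; for instance an $H^0(E)$-linear minimal model, at least when $H^0(E)$ is separable. If $H^{<\ell}(M)=0$, the bar resolution of $M$ has cells $E[-m]$ with $m\geq\ell$. Hence $\mathrm{RHom}_E(M,S_E)$ is concentrated in degrees $\leq-\ell$, and taking $M=S_E$, $\ell=0$ gives connectivity.

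\textbf{Missing step: the $E$-side biduality.} You only assert $E\simeq\KoszulDual{\KoszulDual{E}}$, and it does not follow from the $A$-side biduality. The latter gives injectivity of $A\mapsto\KoszulDual{A}$, not surjectivity onto class (2). Your Postnikov trick also does not transfer verbatim, since $\D(E)$ has no counterpart of the standard $t$-structure. One way to close this uses the weight tower $\E\to\E/\E^{\geq n+1}$:
\begin{enumerate}
\item For an $H^0(E)$-linear minimal model, $\E^{\geq n+1}$ is a submodule.
\item The quotients are finite extensions of objects of $\add(S_E[-i])$, so they lie in $\thick(S_E)$ by local finiteness.
\item By the bound above, the cofibre of $\Phi_E(\E/\E^{\geq n+1})\to\Phi_E(E)=S_E$ is concentrated in degrees $\leq -n-1$.
\item Hence $S_E\simeq\mathrm{hocolim}_n\Phi_E(\E/\E^{\geq n+1})$, and you conclude exactly as on the $A$-side.
\end{enumerate}

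\textbf{Missing check: $\Hom$-finiteness of $\fd(\KoszulDual{A})$.} For $E=\KoszulDual{A}$ you never verify that $\fd(E)$ is $\Hom$-finite, which is part of class (2). Your Step 2 only addresses full faithfulness. You also need $\fd(E^{op})=\thick(S_{E^{op}})$ to conclude that $\fd(E^{op})^{op}\simeq\per(A)$ is $\Hom$-finite. You must then pass from $E^{op}$ to $E$, e.g. by running the argument for $A^{op}$ and using $\KoszulDual{(A^{op})}\simeq(\KoszulDual{A})^{op}$ via $k$-duality.
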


	\begin{theorem}[{\cite[Thm.~4.4]{F25}}]\label{Thm:Fushimi_4.4}
		Let $A$ be a locally-finite connective dg algebra. Then the dg functor $\Phi_A$ induces equivalences of pretriangulated dg categories as follows
		\begin{align*}
			\Phi_A\colon \fd_{dg}(A)\stackrel{\simeq}{\longrightarrow}\per_{dg}((\KoszulDual{A})^{op})^{op}, \quad \Phi_A\colon\per_{dg}(A)\stackrel{\simeq}{\longrightarrow}\fd_{dg}((\KoszulDual{A})^{op})^{op}.
		\end{align*}
		Moreover, $\Phi_A(S_A)\cong(\KoszulDual{A})^{op}$ and $\Phi_A(A)\cong S_{(\KoszulDual{A})^{op}}$ and hence the composition of $\Phi_A$ with $\Hom_{\D_{dg}((\KoszulDual{A})^{op})}(-,(\KoszulDual{A})^{op})$ yields a $t$-exact equivalence of pretriangulated dg categories
		\[\overline{\Phi}_A\colon \fd(A)\xrightarrow{\simeq}\per_{dg}(\KoszulDual{A}),\]
		such that $\overline{\Phi}_A(S_A)\cong \KoszulDual{A}$.
	\end{theorem}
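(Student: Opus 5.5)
The plan is to handle the two equivalences separately. Both reduce to full faithfulness of $\Phi_A=\Hom_{\D_{dg}(A)}(-,S_A)$ on a generating object, plus identifying the image.

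\emph{The equivalence on $\fd_{dg}(A)$.}
1. Since $A$ is connective and locally finite, the standard $t$-structure on $\fd(A)$ is bounded. Its heart is the category of finite-dimensional $H^0(A)$-modules, which is a length category whose simples are the summands of $S_A=\topf(H^0(A))$. By the filtration description of \cref{t-str-ch}, $\fd(A)=\thick(S_A)$.
2. $\Phi_A(S_A)=\Hom_{\D_{dg}(A)}(S_A,S_A)=\KoszulDual{A}$, viewed as the free module over $(\KoszulDual{A})^{op}$. By construction, $\Phi_A$ induces a quasi-isomorphism
\[\Hom_{\D_{dg}(A)}(S_A,S_A)\longrightarrow \Hom(\Phi_A S_A,\Phi_A S_A)\cong \KoszulDual{A}\]
(up to the op convention).
3. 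Since $\Phi_A$ is exact, a dévissage over $\thick(S_A)$ makes it quasi fully faithful on $\fd_{dg}(A)$. Its essential image is $\thick((\KoszulDual{A})^{op})=\per_{dg}((\KoszulDual{A})^{op})^{op}$.

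\emph{The equivalence on $\per_{dg}(A)$.}
1. $\Phi_A(A)=\Hom_A(A,S_A)=S_A$. As a module over $(\KoszulDual{A})^{op}$ this is $H^0$ of the Koszul dual, namely the semisimple $S_{(\KoszulDual{A})^{op}}$.
2. The essential image of $\thick(A)$ is then $\thick(S_{(\KoszulDual{A})^{op}})$. Since $(\KoszulDual{A})^{op}$ is coconnective with semisimple $H^0$, this is $\fd((\KoszulDual{A})^{op})$ by the analogous length/filtration argument.
3. It remains to show that $A\to \Hom(\Phi_A A,\Phi_A A)$ is a quasi-isomorphism, i.e.\ the biduality $A\simeq \KoszulDual{(\KoszulDual{A})}$. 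This is the main obstacle, because $A$ itself need not lie in $\fd(A)$, so step 3 of the first part does not apply directly.

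\emph{Proving biduality.} I would use the Postnikov tower $A\to \tau^{\geq -n}A$. Each $\tau^{\geq -n}A$ lies in $\fd(A)$ by local finiteness, so $\Phi_A$ is already fully faithful on these truncations by the first part.
1. Connectivity of the target in a fixed cohomological degree $i$ means $H^i\Hom(\Phi_A A,\Phi_A A)$ is computed by $\tau^{\geq -n}A$ for $n\gg |i|$.
2. I would show that the relevant maps of Mittag-Leffler systems become isomorphisms in each degree, using that $H^0(A)$ is finite-dimensional so that $S_A$ detects $\mathrm{rad}$-completeness. This is the step I expect to require genuine care.

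\emph{The composite $\overline{\Phi}_A$ and $t$-exactness.}
1. Composing with the duality $\Hom(-,(\KoszulDual{A})^{op})$ between perfect modules over $(\KoszulDual{A})^{op}$ and $\per(\KoszulDual{A})$ gives $\overline{\Phi}_A$, which sends $S_A\mapsto \KoszulDual{A}$.
2. For $t$-exactness, it suffices to check that the simples (the summands of $S_A$) go to the indecomposable summands of $\KoszulDual{A}$. These form the simple-minded collection defining the $t$-structure on $\per(\KoszulDual{A})$.
3. Hearts therefore match, and by the $\DL$-filtration description so do the aisles and coaisles.
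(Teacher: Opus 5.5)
The paper does not prove this statement; it imports it from \cite{F25}. Your argument therefore has to stand on its own.

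\textbf{What works.} The $\fd_{dg}(A)$ half (devissage from $S_A$, using $\fd(A)=\thick(S_A)$) is fine, and so is the $t$-exactness argument.

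\textbf{The main gap: biduality.} The quasi-isomorphism $A\simeq\REnd_{(\KoszulDual{A})^{op}}(\Phi_A A)$ is the real content of the second equivalence, and you only announce it. As sketched, it does not go through, for four reasons.
\begin{enumerate}
\item Quasi full faithfulness of $\Phi_A$ on $\fd(A)$ says nothing by itself about $\Hom$-complexes with $A\notin\fd(A)$ as an argument.
\item Comparing with $\REnd(\Phi_A\tau^{\geq -n}A)\simeq\REnd_A(\tau^{\geq -n}A)$ does not help. These complexes do not form a tower, and they differ from $\tau^{\geq -n}A$ in positive degrees.
\item Your step~1 asserts the conclusion rather than deriving it.
\item $\rad$-completeness is not the relevant input.
\end{enumerate}

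\textbf{The missing mechanism.} Since $S_A$ lies in the heart and $\tau^{<-n}A\in\D(A)^{\leq -n-1}$, you get $H^i\Phi_A(\tau^{<-n}A)=\Hom_{\D(A)}(\tau^{<-n}A,S_A[i])=0$ for $i\leq n$. Hence $\Phi_A(A)\simeq\operatorname{hocolim}_n\Phi_A(\tau^{\geq -n}A)$. Now use the right adjoint $\Psi=\Hom_{(\KoszulDual{A})^{op}}(-,S_A)$ of $\Phi_A$.
\begin{enumerate}
\item The map $A\to\REnd(\Phi_A A)$ is the unit $A\to\Psi\Phi_A A$.
\item The unit is an isomorphism at $S_A$, by Yoneda, since $\Phi_A S_A$ is free. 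Hence it is an isomorphism on all of $\thick(S_A)=\fd(A)$, which contains each $\tau^{\geq -n}A$.
\item $\Psi$ turns the homotopy colimit into a homotopy limit. Naturality of the unit then gives $\Psi\Phi_A(A)\simeq\operatorname{holim}_n\tau^{\geq -n}A$.
\item Finally $\operatorname{holim}_n\tau^{\geq -n}A\simeq A$ by the Milnor sequence, because each tower $H^i(\tau^{\geq -n}A)$ is eventually constant.
\end{enumerate}
What is needed is the unit (a mixed $\Hom$), not full faithfulness on $\fd(A)$. Local finiteness enters only to place $\tau^{\geq -n}A$ in $\fd(A)$.

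\textbf{A second unjustified step: $\thick(S_E)=\fd(E)$.} For $E=(\KoszulDual{A})^{op}$ this does not follow from an ``analogous length argument''. For coconnective $E$, cohomological truncations of dg modules are not $E$-linear. Moreover, the summands of $S_E$ are not simple-minded: under the duality $\Hom(S_E,S_E[i])\cong H^i(A)$, which is typically nonzero for $i<0$. So $S_E$ plays the role of the silting object $A$. The claim is the $E$-side counterpart of $\per(A)=\thick(A)$ and needs its own proof. Two possible routes:
\begin{enumerate}
\item Replace $E$ by a minimal $A_\infty$-model that is strictly unital over the semisimple algebra $H^0(E)$, and replace $N\in\fd(E)$ by a finite-dimensional minimal $A_\infty$-module. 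Degree counting shows that $N^{\geq q}$ is a submodule with $N^{\geq q}/N^{\geq q+1}\in\add(S_E)[-q]$. Over a non-perfect field such relative models need care, since $H^0(E)$ need not be separable.
\item Use the weight-structure techniques of Keller--Nicol\'as for positive dg algebras.
\end{enumerate}

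\textbf{The same input behind $\Phi_A(A)\cong S_{(\KoszulDual{A})^{op}}$.} Computing the cohomology of $\Phi_A(A)$ identifies it with $S_{(\KoszulDual{A})^{op}}$ only once you know one further fact. A dg module over a coconnective algebra, whose cohomology is a semisimple $H^0$-module concentrated in degree $0$, must be determined up to isomorphism by that module.
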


	\subsection{Koszul duality for proper connective dg algebras}\label{section:KoszulDualityProper}
	
	In the following we explain how the above bijection restricts to connective dg algebras $A$ that are proper and coconnetive locally-finite dg algebras $E$ with semisimple $H^0(E)$, satisfying suitable homological finiteness conditions, that we introduce here. 
	
	\begin{defi}
		Let $\A$ be a dg category. We call $\A$ \definef{(homologically) smooth} if the diagonal bimodule $_a\A_b={\A}(b,a)$ is perfect, that is $\A\in\per(\A\otimes_{k}\A^{op})$. We call $\A$ \definef{Hfd-closed} if $\fd(\A)\subseteq \per(\A)$. Finally, we call $\A$ \definef{regular} if $\per(\A)=\thick^{n}(\{\repr{A}:A\in\A\})$ for some $n\in\NN$. Here we define, for a triangulated category $\T$ and a collection of objects $\C\subseteq \T$, 
		\[\define{\thick^1(\C)}\coloneqq \add(\{C[i]:C\in\C,i\in\ZZ\}), \quad \define{\thick^n(\C)}\coloneqq \add(\thick^{n-1}(\C)\ast\thick^1(\C)) \text{ for }n\geq 2.\]
	\end{defi}
	
	In general, we have the following relation between the above notions. 
	\begin{equation}\label{eq:IncompleteDiagram}
		\begin{tikzcd}
			{\A \text{ is smooth}} && {\A \text{ is Hfd-closed}} \\
			& {\A \text{ is regular}}
			\arrow["{\text{\cite[Lem.~3.8]{KS25}}}", Rightarrow, from=1-1, to=1-3]
			\arrow["{\text{\cite[Lem.~3.6]{Lu10}}}"{description}, Rightarrow, from=1-1, to=2-2]
		\end{tikzcd}
	\end{equation}
	
	Using the results in this section we complete the above diagram below in the case where $k$ is perfect and $\A=E$ is a coconnective locally-finite dg algebra such that $H^0(E)$ is semisimple.

	\begin{theorem}\label{KoszulDualityProperHfdClosed}
		Koszul duality yields a bijective correspondence between quasi-isomorphism classes of the following two classes of dg algebras.
		\begin{enumerate}
			\item Connective proper dg algebras $A$.
			\item\label{item:HfdClosed} Coconnective locally-finite Hfd-closed dg algebras $E$, such that $H^0(E)$ is semisimple.
		\end{enumerate}
	\end{theorem}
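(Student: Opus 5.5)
Since the correspondence of \cref{Thm:Fushimi_4.3} is already a bijection between connective locally-finite $A$ and coconnective locally-finite $E$ with $H^0(E)$ semisimple and $\fd(E)$ $\Hom$-finite, the plan is to show that, under this bijection, $A$ is proper exactly when $E=\KoszulDual{A}$ is Hfd-closed. We also need to check that Hfd-closedness of $E$ forces $\fd(E)$ to be $\Hom$-finite, so that every $E$ in class \ref{item:HfdClosed} lies in the domain of \cref{Thm:Fushimi_4.3}. The dictionary comes from \cref{Thm:Fushimi_4.4}: $\Phi_A$ gives $\fd(A)\simeq\per((\KoszulDual{A})^{op})^{op}$ and $\per(A)\simeq\fd((\KoszulDual{A})^{op})^{op}$, with $A\mapsto S$ and $S_A\mapsto(\KoszulDual{A})^{op}$. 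We will apply it to $A$ and, symmetrically, to the opposite algebras, using that $(\KoszulDual{A})^{op}\simeq \KoszulDual{(A^{op})}$ up to Morita/quasi-isomorphism. This is a standard fact: $S_{A^{op}}$ is the $k$-dual of $S_A$, and dualizing commutes with $\REnd$ for finite-dimensional cohomology. Alternatively, we can phrase everything with $E^{op}$, since Hfd-closedness is not obviously left–right symmetric.

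For the forward direction, suppose $A$ is proper. Then $A\in\fd(A)$, so $\per(A)\subseteq\fd(A)$. Transport this along $\Phi_A$. Up to taking opposites, it says $\fd((\KoszulDual{A})^{op})\subseteq\per((\KoszulDual{A})^{op})$, i.e.\ $(\KoszulDual{A})^{op}$ is Hfd-closed. To obtain Hfd-closedness of $\KoszulDual{A}$ itself, apply the same argument to $A^{op}$, which is also proper connective, and use the identification $\KoszulDual{(A^{op})}\simeq(\KoszulDual{A})^{op}$. Alternatively, use $k$-duality $D=\Hom_k(-,k)$, which exchanges $\fd(E)$ and $\fd(E^{op})$. $H^0(E)$ is semisimple by \cref{Thm:Fushimi_4.3}.

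For the converse, let $E$ be coconnective, locally finite, Hfd-closed, with $H^0(E)$ semisimple. First, $\fd(E)\subseteq\per(E)$ and $E$ is locally finite, so $\Hom_{\D(E)}(M,N)$ is finite-dimensional for $M\in\per(E)$ and $N\in\fd(E)$, by dévissage from $\Hom(E,N[i])=H^i(N)$. Hence $\fd(E)$ is $\Hom$-finite and \cref{Thm:Fushimi_4.3} provides a connective locally-finite $A$ with $\KoszulDual{A}\simeq E$. Then run the equivalences backwards. Hfd-closedness of $E$ (or of $E^{op}$, after the opposite-algebra identification) transports via $\Phi_A$ to $\per(A)\subseteq\fd(A)$, so $A\in\fd(A)$, i.e.\ $A$ is proper. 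The bijection statement then follows by restricting \cref{Thm:Fushimi_4.3}.

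The main obstacle is the left–right bookkeeping. \cref{Thm:Fushimi_4.4} naturally produces a statement about $(\KoszulDual{A})^{op}$, while the theorem concerns $E$. The cleanest route is to prove that for $E$ coconnective, locally finite, with semisimple $H^0$ and $\fd(E)$ $\Hom$-finite, $E$ is Hfd-closed if and only if $E^{op}$ is. We would use $D\colon\fd(E)^{op}\simeq\fd(E^{op})$, and the fact that $\per(E)\cap\fd(E)$ is generated in a controlled way, to check that $D$ sends $\fd(E)\cap\per(E)$ onto $\fd(E^{op})\cap\per(E^{op})$. Failing that, we would instead prove $\KoszulDual{(A^{op})}\simeq(\KoszulDual{A})^{op}$ directly, using $\REnd_{A^{op}}(DS_A)\simeq\REnd_A(S_A)^{op}$.
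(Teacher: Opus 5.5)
Your proof is correct, and its core is the same as the paper's. You reduce to \cref{Thm:Fushimi_4.3}, observe that Hfd-closedness makes $\fd(E)$ $\Hom$-finite, and transport $\per(A)\subseteq\fd(A)$ along $\Phi_A$ to see that $A$ is proper if and only if $(\KoszulDual{A})^{op}$ is Hfd-closed.

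Where you differ is the left--right bookkeeping. The paper replaces a proper $A$ by a quasi-isomorphic finite-dimensional connective dg algebra $B$ (\cite[Lem.~A.3]{Goo25}). It then cites \cite[Lem.~6.17]{Goo24} for the symmetry ``$\KoszulDual{B}$ is Hfd-closed iff $(\KoszulDual{B})^{op}$ is''.

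You instead run the same $\Phi$-argument for $A^{op}$ and use $\KoszulDual{(A^{op})}\simeq(\KoszulDual{A})^{op}$. This identification follows from three facts:
\begin{itemize}
\item the duality $D\colon\fd(A)^{op}\simeq\fd(A^{op})$;
\item $\add(DS_A)=\add(S_{A^{op}})$, since both have cohomology concentrated in degree $0$, given by semisimple left $H^0(A)$-modules containing every simple;
\item Hfd-closedness is invariant under derived Morita equivalence, because both $\per$ and $\fd$ are intrinsically characterized.
\end{itemize}

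Your route is more elementary and treats the two directions symmetrically. The finite-dimensional model $B$ is only available once $A$ is known to be proper. So your opposite-algebra argument is the more transparent way to get the implication ``$\KoszulDual{A}$ Hfd-closed $\Rightarrow$ $A$ proper'', where properness is exactly what is being proved.

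The vaguer alternative you float, proving left--right symmetry of Hfd-closedness directly via $D$ on $\per(E)\cap\fd(E)$, is not needed. You can drop it in favour of the $\KoszulDual{(A^{op})}\simeq(\KoszulDual{A})^{op}$ identification.
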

	
	\begin{proof}
		We first observe that for a locally finite dg algebra $E$, $\per(E)$ is $\Hom$-finite. Hence, for $E$ as in \ref{item:HfdClosed}, $\fd(E)\subseteq \per(E)$ is $\Hom$-finite. Thus, in view of \cref{Thm:Fushimi_4.3} it is enough to show that a connective locally-finite dg algebra $A$ is proper if and only if $\KoszulDual{A}$ is Hfd-closed. To this end, we observe that the equivalence $\Phi_A$ yields that 
		\[\per(A)\subseteq\fd(A) \text{ if and only if } \fd((\KoszulDual{A})^{op})^{op}\subseteq\fd((\KoszulDual{A})^{op})^{op}.\]
		Thus $A$ is proper if and only if $(\KoszulDual{A})^{op}$ is Hfd-closed. By \cite[Lem.~A.3]{Goo25} every proper connective dg algebra is quasi-isomorphic to a finite-dimensional connective dg algebra $B$. Finally, by \cite[Lem.~6.17]{Goo24}, $\KoszulDual{B}$ is Hfd-closed if and only if $(\KoszulDual{B})^{op}$ is Hfd-closed.
	\end{proof}
	
	Under the assumption, that $k$ is a perfect field we obtain the following variant of \cref{KoszulDualityProperHfdClosed}, which we were not able to find in the literature in this form but should be known to experts. Observe that the implication ``\ref{item:A-side}$\Rightarrow$\ref{item:E-side}'' is proven in \cite[Prop.~6.12]{Goo24}. The same proof applies in our setting. We include it for the convenience of the reader. Observe that this does not give a new proof that $\fd(A)$ is smooth for a proper connective dg algebra over a perfect field (see \cite[Prop.~6.9(ii)]{KS25}), as the proof is based on this result.
	
	\begin{theorem}\label{KoszulDualityProperSmooth}
		Let $k$ be a perfect field. Koszul duality yields a bijective correspondence between quasi-isomorphism classes of the following two classes of dg algebras.
		\begin{enumerate}
			\item\label{item:A-side} Connective proper dg algebras $A$.
			\item\label{item:E-side} Coconnective locally-finite, smooth dg algebras $E$, such that $H^0(E)$ is semisimple.
		\end{enumerate}
	\end{theorem}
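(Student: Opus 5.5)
Since Koszul duality already gives a bijection between connective locally-finite dg algebras and coconnective locally-finite dg algebras $E$ with $H^0(E)$ semisimple and $\fd(E)$ $\Hom$-finite (\cref{Thm:Fushimi_4.3}), and \cref{KoszulDualityProperHfdClosed} refines this to proper $A$ versus Hfd-closed $E$, the plan is to show that, for coconnective locally-finite $E$ with semisimple $H^0(E)$ over a perfect field, one has
\[E \text{ smooth} \iff E \text{ Hfd-closed and } \fd(E) \text{ Hom-finite}.\]
Combined with the earlier bijections this gives the statement: starting from $E$ smooth we land in class (2) of \cref{KoszulDualityProperHfdClosed}, and starting from $A$ proper we must show $\KoszulDual{A}$ is smooth.

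For \ref{item:E-side}$\Rightarrow$\ref{item:A-side}, smooth implies Hfd-closed by \cite[Lem.~3.8]{KS25}. Hence $\fd(E)\subseteq\per(E)$, and $\per(E)$ is $\Hom$-finite because $E$ is locally-finite and thick closure preserves $\Hom$-finiteness. (One could also note that regularity \cite[Lem.~3.6]{Lu10} is what would be needed for $\Hom$-finiteness of all of $\per(E)$; with local finiteness this is immediate, though, since $\Hom_{\per(E)}(E,E[i])=H^i(E)$ is finite-dimensional.) Then \cref{KoszulDualityProperHfdClosed} shows that $\KoszulDual{E}$ is connective and proper, and that $\KoszulDual{(\KoszulDual{E})}\simeq E$.

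For \ref{item:A-side}$\Rightarrow$\ref{item:E-side}, following \cite[Prop.~6.12]{Goo24}, first replace $A$ by a finite-dimensional connective dg algebra via \cite[Lem.~A.3]{Goo25}. By \cref{Thm:Fushimi_4.4} there is an equivalence $\fd_{dg}(A)\simeq\per_{dg}(\KoszulDual{A})$ sending $S_A$ to $\KoszulDual{A}$. Smoothness of a dg category is Morita invariant, so $\KoszulDual{A}$ is smooth if and only if $\per_{dg}(\KoszulDual{A})$ is, if and only if $\fd_{dg}(A)$ is, the latter viewed as the dg category with the classical generator $S_A$, whose endomorphism algebra is $\KoszulDual{A}$. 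Now invoke \cite[Prop.~6.9(ii)]{KS25}: for $k$ perfect and $A$ proper connective, $\fd_{dg}(A)$ is smooth.

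The main obstacle is this last input. Perfectness of $k$ enters exactly here: it guarantees that $H^0(A)/\rad$ is separable, so that $S_A$, which is semisimple over a separable algebra, gives smoothness. I would simply cite \cite{KS25}, checking that their notion of smoothness of $\fd_{dg}(A)$ agrees with ours via Morita invariance. Finally, I would verify that the two assignments are mutually inverse; this is inherited directly from \cref{Thm:Fushimi_4.3}.
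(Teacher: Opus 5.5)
Your proposal is correct. The direction from proper $A$ to smooth $\KoszulDual{A}$ is the paper's argument: \cite[Prop.~6.9(ii)]{KS25} gives smoothness of $\fd_{dg}(A)\simeq\per_{dg}(\KoszulDual{A})$, and Morita invariance transfers it to $\KoszulDual{A}$. (The paper justifies the Morita invariance via \cite[Lem.~2.1]{KS25} and \cite[Thm.~3.17]{LS14}.) The reduction to a finite-dimensional model via \cite[Lem.~A.3]{Goo25} is not needed at that step.

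The converse is where your route differs. The paper works directly with \cref{Thm:Fushimi_4.3}. It uses \cite[Lem.~4.1]{KW23} to see that a smooth $E$ has $\Hom$-finite $\fd(E)$, and then \cite[Cor.~4.2.3, Rem.~4.2.4]{KL24} to conclude that $\KoszulDual{E}$ is proper. You instead use that smooth implies Hfd-closed \cite[Lem.~3.8]{KS25}. This makes $\Hom$-finiteness of $\fd(E)\subseteq\per(E)$ immediate from local finiteness, and you then feed $E$ into \cref{KoszulDualityProperHfdClosed}.

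Your route is more economical inside the paper: it avoids \cite{KW23} and \cite{KL24} and reuses the Hfd-closed bijection just established. The cost is that the theorem then depends on \cref{KoszulDualityProperHfdClosed}, including the $E$ versus $E^{op}$ bookkeeping done there via \cite{Goo24,Goo25}. The paper's proof is independent of that result. This independence is what lets it obtain \cref{coro:Charsmooth} afterwards by combining two separate theorems; in your approach, the implication smooth $\Rightarrow$ Hfd-closed is an input rather than an output.

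Your parenthetical remark that regularity \cite[Lem.~3.6]{Lu10} is ``what would be needed'' for $\Hom$-finiteness of $\per(E)$ is misleading, since local finiteness alone gives it by devissage. You do not rely on it, so it is harmless.
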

	
	\begin{proof}
		Clearly all connective proper dg algebras are in the first class of \cref{Thm:Fushimi_4.3}. Moreover, by \cite[Lem.~4.1]{KW23} if $E$ is smooth, then $\fd(E)$ is $\Hom$-finite. Thus, using the bijection from \cref{Thm:Fushimi_4.3}, it is enough to show that proper and smooth correspond to each other under this correspondence. 
		
		First assume that $A$ is proper. In \cite[Prop.~6.9(ii)]{KS25} (here we need the perfect field assumption) it is proven that $\per_{dg}(E)\cong\fd_{dg}(A)$ is smooth. Since $\per_{dg}(E)\subseteq \D_{dg}(E)$ consists of a set of generators of $\D_{dg}(E)$ by \cite[Lem.~2.1]{KS25} it holds that the functor
		\[\D_{dg}(E)\longrightarrow \D_{dg}\left(\per_{dg}(E)\right), \quad P\mapsto \Hom_{\D_{dg}(E)}(-,P)\] 
		defines an equivalence of pretriangulated dg categories. By \cite[Thm.~3.17]{LS14} smoothness is preserved under dg Morita equivalence and hence $E$ is smooth. Thus we have shown that properness of $A$ implies smoothness of $E$. On the other hand, if $E$ is smooth, then $A$ is proper by \cite[Cor.~4.2.3.,Rem.~4.2.4]{KL24}.
	\end{proof}
	
	As applications we record the following two corollaries. \cref{coro:LoeweyDfd} is proven more directly in \cite[Lem.~5.9]{RS22}.
	
	\begin{coro}\label{coro:LoeweyDfd}
		Let $k$ be a perfect field. Let $A$ be a proper connective dg algebra. Then there exists $n\in\mathbb{N}$ such that 
		\[\fd(A)=\thick^n(S_A).\]
	\end{coro}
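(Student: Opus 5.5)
We plan to transport the statement to the Koszul dual side and use the smoothness of the Koszul dual provided by \cref{KoszulDualityProperSmooth}. Set $E\coloneqq\KoszulDual{A}$. By \cref{Thm:Fushimi_4.4} there is a $t$-exact equivalence $\overline{\Phi}_A\colon\fd(A)\xrightarrow{\simeq}\per(E)$ sending $S_A$ to $E$. Since $\overline{\Phi}_A$ is an exact equivalence, it preserves the filtrations $\thick^n$. It is therefore enough to show $\per(E)=\thick^n(E)$ for some $n$, i.e.\ that $E$ is regular in the sense defined above.

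For regularity we use the diagram \eqref{eq:IncompleteDiagram}. By \cref{KoszulDualityProperSmooth}, which is where the hypothesis that $k$ is perfect enters, $E$ is homologically smooth. By \cite[Lem.~3.6]{Lu10}, smoothness implies regularity. Hence there is $n$ with $\per(E)=\thick^n(E)$. Here $\thick^n$ is built from $\add$ of shifts and extensions, so it is intrinsic to the triangulated category together with the chosen object. Pulling back along $\overline{\Phi}_A^{-1}$ therefore gives $\fd(A)=\thick^n(S_A)$.

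The step needing care is checking that the equivalence really identifies $\thick^n(S_A)$ with $\thick^n(E)$. This holds because an exact equivalence commutes with shifts, with $\ast$ of subcategories and with $\add$. It also uses that $\overline{\Phi}_A(S_A)\cong\KoszulDual{A}=E$, which is stated in \cref{Thm:Fushimi_4.4}.

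A second point to confirm is that the version of \cite[Lem.~3.6]{Lu10} applies to dg algebras rather than to $A_\infty$- or $\infty$-categorical ones. If not, we would argue directly. Smoothness means the diagonal bimodule $E$ lies in $\thick^m(E\otimes E^{op})$ for some $m$. Tensoring any $M\in\per(E)$ with the diagonal gives $M\cong M\otimes^{\mathbb L}_E E\in\thick^m(M\otimes_k E)$, and $M\otimes_k E$ is a direct sum of shifts of $E$. Local finiteness of $E$ and perfection of $M$ make this sum finite in each degree, or at least a direct summand of a finite one after taking $\add$. This bounds the generation time uniformly by $m$.
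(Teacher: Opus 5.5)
Your proposal is correct and is essentially the paper's proof: \cref{KoszulDualityProperSmooth} gives smoothness of $E=\KoszulDual{A}$, then \cite[Lem.~3.6]{Lu10} gives $\per(E)=\thick^n(E)$, and this is transported along $\overline{\Phi}_A$, which sends $S_A$ to $E$. The fallback in your last paragraph is not needed, since the paper applies the cited lemma directly to dg algebras; as written it is also incomplete: $H^*(M)$ can be nonzero in infinitely many degrees (e.g.\ $M=E$ when $H^*(E)$ is unbounded), so $M\otimes_k E$ is in general an infinite direct sum of shifts of $E$, and reaching $\thick^m(E)$ needs a compactness argument rather than local finiteness.
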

	
	\begin{proof}
		By \cref{KoszulDualityProperSmooth} we know that $E\coloneqq \KoszulDual{A}$ is smooth. Hence, by \cite[Lem.~3.6]{Lu10} it follows that $\per(E)=\thick^n(E)$ for some $n\in\mathbb{N}$. But the equivalence $\overline{\Phi}_A:\fd(A)\cong \per(E)$ sends $S_A$ to $E$, hence it follows that $\fd(A)=\thick^n(S)$
	\end{proof}
	
	\begin{coro}\label{coro:Charsmooth}
		Let $k$ be a perfect field. Let $E$ be a coconnective, locally-finite dg algebra such that $H^0(E)$ is semisimple. Then $E$ is smooth if and only if it is Hfd-closed.
	\end{coro}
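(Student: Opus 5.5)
The plan is to combine \cref{KoszulDualityProperHfdClosed} and \cref{KoszulDualityProperSmooth}, which give two descriptions of the same class of dg algebras. Fix $E$ coconnective and locally finite with $H^0(E)$ semisimple. Over a perfect field, \cref{KoszulDualityProperSmooth} identifies the Koszul duals of connective proper dg algebras with the smooth such $E$. \cref{KoszulDualityProperHfdClosed} identifies the same Koszul duals with the Hfd-closed such $E$. Both bijections are induced by the single assignment $A\mapsto \KoszulDual{A}$ from \cref{Thm:Fushimi_4.3}. So both classes equal the image of that map on quasi-isomorphism classes of proper connective $A$, and hence they coincide.

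In more detail, I would argue as follows.

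\emph{Smooth $\Rightarrow$ Hfd-closed.} This is the general implication cited in \eqref{eq:IncompleteDiagram} from \cite[Lem.~3.8]{KS25}, and it needs no hypothesis on $k$. Alternatively, by \cref{KoszulDualityProperSmooth} we have $E\simeq \KoszulDual{A}$ with $A$ proper connective, and \cref{KoszulDualityProperHfdClosed} then says $\KoszulDual{A}$ is Hfd-closed.

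\emph{Hfd-closed $\Rightarrow$ smooth.} Suppose $E$ is Hfd-closed. By \cref{KoszulDualityProperHfdClosed} there is a connective proper dg algebra $A$ with $E\simeq \KoszulDual{A}$; concretely, one may take $A\coloneqq\KoszulDual{E}$ and use the inverse bijection of \cref{Thm:Fushimi_4.3}. Since $k$ is perfect, \cref{KoszulDualityProperSmooth} then shows that $\KoszulDual{A}$ is smooth. Smoothness is invariant under quasi-isomorphism, and indeed under Morita equivalence by \cite[Thm.~3.17]{LS14}, so $E$ is smooth.

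The main point to check is that the two theorems really use the same correspondence and the same side conventions, so that "the Koszul dual of a proper connective $A$" means the same dg algebra in both. There is an op-subtlety here: \cref{KoszulDualityProperHfdClosed} first obtains Hfd-closedness for $(\KoszulDual{A})^{op}$ and then passes to $\KoszulDual{A}$ via \cite[Lem.~6.17]{Goo24}. Both theorems are stated for $\KoszulDual{A}$ itself and both are built on \cref{Thm:Fushimi_4.3}, so this causes no problem. Once that is confirmed, the corollary follows formally, and the diagram \eqref{eq:IncompleteDiagram} is completed for such $E$ over a perfect field.
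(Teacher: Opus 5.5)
Your proposal is correct and follows the paper's proof, which simply combines \cref{KoszulDualityProperSmooth} and \cref{KoszulDualityProperHfdClosed}: both results identify their class of $E$ with the Koszul duals of proper connective dg algebras under the same correspondence from \cref{Thm:Fushimi_4.3}. Your extra remarks are sound but not needed for the argument, namely the direct implication via \cite[Lem.~3.8]{KS25} and the invariance of smoothness and Hfd-closedness under quasi-isomorphism.
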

	
	\begin{proof}
		This follows from combining \cref{KoszulDualityProperSmooth} and \cref{KoszulDualityProperHfdClosed}. 
	\end{proof}
 	
 	The following example shows that regularity of $E$ does not imply that $E$ is Hfd-closed or even smooth. The example is inspired by the example given in \cite[Paragraph after Lem.~3.8]{KS25}.
 	
 	\begin{ex}
 		Consider the Kronecker quiver with arrows $a_i$ of degree $|a_i|=-i$ for $i\in\NN_{>0}$, that is a graded quiver $Q\coloneqq(Q_0,Q_1)$ defined by
 		\[Q_0=\{1,2\}, \quad Q_{1}\coloneqq \{a_i:1\to 2 : i\in\NN\}, \quad |a_i|\coloneqq -i.\]
 		We define the graded path-algebra $A\coloneqq kQ$ and $E\coloneqq \KoszulDual{A}$. We observe that $A$ is a formal, connective, locally-finite dg algebra that is not proper. Hence, by \cref{Thm:Fushimi_4.3}, $E$ is a coconnective, locally finite dg algebra such that $H^0(E)=k^2$ and $\fd(E)$ is $\Hom$-finite. However, $E$ is not Hfd-closed by \cref{KoszulDualityProperHfdClosed} and therefore can not be smooth.
 		
 		We claim, however, that $E$ is regular. As in the proof of \cref{coro:LoeweyDfd}, one sees that regularity of $E$ is equivalent to $\fd(A)=\thick^n(S_A)$ for some $n\in\NN$. We claim that $\fd(A)=\thick^4(S_A)$. To this end let $X\in\fd(A)$. Then there exist $n\in\ZZ$ and $d\in\NN_{>0}$ such that $X[n]\in\fd(A)^{(-d,0]}$. We need to prove that $X[n]\in\thick^4(S_A)$. Using the notation $\trunc^{(-d,0]}\coloneqq\trunc^{>-d}\circ\trunc^{\leq 0}$ it is proven in \cref{coro:truncatedHearts} that $\fd(A)^{(-d,0]}\cong\fd(\trunc^{(-d,0]}A)^{(-d,0]}$. Now, $A_d\coloneqq\trunc^{(-d,0]}A$ is the graded path algebra $KQ_d$ of the quiver 
 		\[Q_d\coloneqq \begin{tikzcd}
 			1
 			\arrow[r, draw=none, "\raisebox{+1.5ex}{\vdots}" description]
 			\arrow[r, bend left,        "a_1"]
 			\arrow[r, bend right, swap, "a_{d-1}"]
 			&
 			2
 		\end{tikzcd}, \quad |a_i|=-i.\]
 		Moreover, using the standard graded projective bimodule resolution of $A_d$
 		\[0\longrightarrow \bigoplus_{i=1}^{d-1} \left(A_de_2 \otimes_k e_1A_d\right)(i) \longrightarrow \bigoplus_{i=1}^2 A_de_i\otimes_k e_iA_d \longrightarrow A_d\longrightarrow 0\]
 		we see that for $X_d\in\fd(A_d)$ it holds 
 		\[X_d\cong X_d\otimes^{\DL}_{A_d}A_d\in \thick^2(A_d)\]
 		and hence $\fd(A_d)=\thick^2(A_d)$. Finally, denoting by $S_i=\topf(H^0(e_iA))$ the simple object corresponding to the vertex $i$, the triangle 
 		\[\coprod_{i=1}^{d-1} S_1[i] \longrightarrow A_d \longrightarrow S_1 \oplus S_2 \longrightarrow \coprod_{i=1}^{d-1} S_1[i+1]\]
 		in $\fd(A_d)$ yields that $A_d\in\thick^2(S_{A_d})$ and hence $\fd(A_d)=\thick^4(S_{A_d})$.
 	\end{ex}
 	
 	Hence over a perfect field $k$ the diagram \eqref{eq:IncompleteDiagram} can be completed as follows
 	\[\begin{tikzcd}
 		{\A \text{ is smooth}} && {\A \text{ is Hfd-closed}} \\
 		& {\A \text{ is regular}}
 		\arrow["{\text{\cref{coro:Charsmooth}}}", Rightarrow, 2tail reversed, from=1-1, to=1-3]
 		\arrow["{\text{\cite[Lem.~3.6]{Lu10}}}"{description}, shift left=2, between={0.15}{1}, Rightarrow, from=1-1, to=2-2]
 		\arrow[Rightarrow, from=1-3, to=2-2]
 		\arrow["{\scaleto{\times}{12pt}}"{marking, yshift=-2.22pt}, shift left=3, curve={height=-8pt}, Rightarrow, from=2-2, to=1-1]
 		\arrow["{\scaleto{\times}{12pt}}"{marking, yshift=-2.22pt}, shift right=3, curve={height=8pt}, Rightarrow, from=2-2, to=1-3]
 	\end{tikzcd}\]

	We now analyse more carefully in which way this bijection restricts, if we want to determine the degrees in which $H^*(A)$ is concentrated.
	
	\begin{theorem}\label{theorem:KoszulD}
		Let $d\in\NN_{>0}$. Koszul duality yields a bijective correspondence between quasi-isomorphism classes of the following two classes of dg algebras:
		\begin{enumerate}
			\item Connective proper dg algebras $A$ such that $H^*(A)$ is concentrated in degrees $(-d,0]$.
			\item Coconnective locally-finite Hfd-closed dg algebras $E$ such that $H^0(E)$ is semisimple and $\per(E)^{(-d,0]}\subseteq\per(E)$ is $1$-generated.
		\end{enumerate}
		Moreover, the standard $t$-structure on $\fd_{dg}(A) \simeq \per_{dg}(E)$ is $(d-1)$-complicial or equivalently, the realization functor
		\[\real:\D_{dg}^b\left( \fd_{dg}(A)^{(-d,0]} \right)\longrightarrow \fd_{dg}(A)\]
		is a quasi-equivalence.
	\end{theorem}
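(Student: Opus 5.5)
Start from the bijection of \cref{KoszulDualityProperHfdClosed}. It already matches connective proper $A$ with coconnective locally-finite Hfd-closed $E$ having $H^0(E)$ semisimple. So the task is to show that, for corresponding $A$ and $E=\KoszulDual{A}$, the cohomology $H^*(A)$ lies in degrees $(-d,0]$ if and only if $\per(E)^{(-d,0]}\subseteq\per(E)$ is $1$-generated.

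By \cref{Thm:Fushimi_4.4}, the functor $\overline{\Phi}_A\colon\fd_{dg}(A)\to\per_{dg}(E)$ is a $t$-exact equivalence sending $S_A$ to $E$. Hence it carries the standard $t$-structure to the $t$-structure $t_E$ of the simple-minded collection given by the summands of $E$. In particular it identifies $\fd(A)^{(-d,0]}$ with $\per(E)^{(-d,0]}$, and $1$-generation transports along the equivalence. The condition on $E$ therefore becomes: $\fd(A)^{(-d,0]}$ is $1$-generated in $\fd(A)$.

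Next, apply \cref{CorollarySilting} to $\C=\fd_{dg}(A)$. Its hypotheses need checking. $\fd(A)$ is Krull--Schmidt and $\Hom$-finite because $A$ is proper. The standard $t$-structure is bounded with length heart $\Mod H^0(A)$, whose simples are the summands of $S_A$. By the example in \cref{t-str-ch}, it is the silting $t$-structure of $\DP=\add(A)$, which lies in $\fd(A)$ since $A$ is proper. The corollary then says the following are equivalent:
\begin{itemize}
\item $\add(A)\subseteq\fd(A)^{(-d,0]}$, i.e.\ $H^*(A)$ is concentrated in degrees $(-d,0]$;
\item $\fd(A)^{(-d,0]}$ is $1$-generated;
\item the $t$-structure is $(d-1)$-complicial;
\item the realization functor $\real$ is a quasi-equivalence.
\end{itemize}
This gives the bijection and, at the same time, the ``moreover'' statement.

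The main point needing care is this application of \cref{CorollarySilting}. Its statement assumes $\DP$ is a silting collection in $\KST=H^0(\C)$ with $t=t_{\DP}$. Here $A\in\fd(A)$ need not generate $\fd(A)$; it only generates $\per(A)\subseteq\fd(A)$. So I would check that the inputs used in that proof, namely \cite[Lem.~3.2, 3.3, Thm.~3.16]{Bon24}, only need $(\add A^{\perp_{>0}},\add A^{\perp_{<0}})$ to be the standard $t$-structure on $\fd(A)$ with $\add(A)$ inside the aisle. This is immediate, since $\Hom(A,M[i])=H^i(M)$.

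Even without \cref{CorollarySilting}, the implication from cohomology bounds to complicial can be argued directly. For $X\in\fd(A)^{\leq 0}$, take a projective cover $P\to H^0(X)$ in $\Mod H^0(A)$ with $P\in\add(A)$. Lift it to a map $P\to X$ using $\Hom(A,X)=H^0(X)$; this is $H^0$-surjective, and $P\in\fd(A)^{(-d,0]}$. Conversely, the complicial property gives, as in the proof of \cref{CorollarySilting}, that $A$ is a summand of an object of $\fd(A)^{(-d,0]}$, using projectivity of $H^0(A)$ and $\Hom(A,X)\cong\Hom(H^0A,H^0X)$ for $X$ in the aisle. Finally, \cref{compliciality} and \cref{GeneralisedRealFunctor} convert complicial into quasi-equivalence of $\real$, and that into $1$-generation.
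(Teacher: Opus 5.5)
Your proposal is correct and is essentially the paper's proof: reduce to \cref{KoszulDualityProperHfdClosed}, transport $1$-generation along the $t$-exact equivalence $\overline{\Phi}_A$ of \cref{Thm:Fushimi_4.4}, and apply \cref{CorollarySilting} to $\fd_{dg}(A)$ with $\DP=\add(A)$. Your worry about generation is already handled by the paper's definition of a silting collection, which follows \cite{PV18} and does not require $\thick(\DP)$ to be the whole category (cf.\ the example in \cref{t-str-ch}); your extra verification is therefore harmless, and your derivation of the ``moreover'' clause from \cref{CorollarySilting} is slightly more explicit than the paper's brief appeal to \cref{GeneralisedRealFunctor}.
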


	\begin{proof}
		Given the first part of the theorem, the equivalence of dg categories is in \cref{Thm:Fushimi_4.4}. The compliciality property follows from \cref{GeneralisedRealFunctor}.
		
		By \cref{KoszulDualityProperHfdClosed}, it is enough to prove that $A\in\fd(A)^{(-d,0]}$ if and only if $\per(E)^{(-d,0]}\subseteq\per(E)$ is $1$-generated. By \cref{CorollarySilting}, $A\in\fd(A)^{(-d,0]}$ if and only if $\fd(A)^{(-d,0]}\subseteq \fd(A)$ is $1$-generated. By the $t$-exact equivalence $\fd(A)\simeq\per(E)$ $\fd(A)^{(-d,0]}\subseteq \fd(A)$ is $1$-generated if and only if $\per(E)^{(-d,0]}\subseteq\per(E)$ is $1$-generated. 
	\end{proof}
	
	For a perfect field, using \cref{coro:Charsmooth}, \cref{theorem:KoszulD} can be stated as follows
	
	\begin{theorem}\label{theorem:KoszulDperfect}
		Let $k$ be a perfect field and $d\in\NN_{>0}$. Koszul duality yields a bijective correspondence between quasi-isomorphism classes of the following two classes of dg algebras:
		\begin{enumerate}
			\item Connective proper dg algebras $A$ such that $H^*(A)$ is concentrated in degrees $(-d,0]$.
			\item Coconnective locally-finite smooth dg algebras $E$ such that $H^0(E)$ is semisimple and $\per(E)^{(-d,0]}\subseteq\per(E)$ is $1$-generated.
		\end{enumerate}
		Moreover, the standard $t$-structure on $\fd_{dg}(A) \simeq \per_{dg}(E)$ is $(d-1)$-complicial or equivalently, the realization functor
		\[\real:\D_{dg}^b\left( \fd_{dg}(A)^{(-d,0]} \right)\longrightarrow \fd_{dg}(A)\]
		is a quasi-equivalence.
	\end{theorem}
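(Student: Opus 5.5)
The plan is to deduce this directly from \cref{theorem:KoszulD} by trading the hypothesis ``Hfd-closed'' for ``smooth'' using \cref{coro:Charsmooth}. Since $k$ is perfect, \cref{coro:Charsmooth} shows that a coconnective locally-finite dg algebra $E$ with $H^0(E)$ semisimple is smooth if and only if it is Hfd-closed. Hence the second class in the present statement coincides, as a class of quasi-isomorphism classes, with the second class in \cref{theorem:KoszulD}. Note that smoothness and Hfd-closedness are invariant under quasi-isomorphism (via Morita invariance, cf.\ \cite[Thm.~3.17]{LS14}), and so is $1$-generation of $\per(E)^{(-d,0]}$. The first classes are identical.

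The argument therefore runs in three steps. First, given $A$ connective proper with $H^*(A)$ in $(-d,0]$, \cref{theorem:KoszulD} produces $E=\KoszulDual{A}$, which is coconnective, locally finite and Hfd-closed, has semisimple $H^0(E)$, and has $\per(E)^{(-d,0]}$ $1$-generated. By \cref{coro:Charsmooth}, $E$ is smooth; alternatively, smoothness follows directly from \cref{KoszulDualityProperSmooth}. Second, conversely, given $E$ in the second class, \cref{coro:Charsmooth} shows that $E$ is Hfd-closed, so \cref{theorem:KoszulD} applies and gives $\KoszulDual{E}$ in the first class. Third, the two assignments are mutually inverse up to quasi-isomorphism because they are restrictions of the bijection of \cref{theorem:KoszulD}, which is itself a restriction of \cref{Thm:Fushimi_4.3}.

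The ``moreover'' part carries over verbatim from \cref{theorem:KoszulD}. The $t$-exact equivalence $\fd_{dg}(A)\simeq\per_{dg}(E)$ comes from \cref{Thm:Fushimi_4.4}. The $(d-1)$-compliciality of the standard $t$-structure follows from \cref{CorollarySilting} with $\DP=\add(A)$: we have $A\in\fd(A)^{(-d,0]}$, and the silting collection $\add(A)$ induces the standard $t$-structure, whose heart $\Mod H^0(A)$ is a length category. Equivalence with the realization functor being a quasi-equivalence is then \cref{compliciality}.

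I expect no real obstacle. The only point requiring care is confirming that the hypotheses of \cref{coro:Charsmooth} (perfect field, coconnective, locally finite, semisimple $H^0$) are exactly those present in both classes, which they are.
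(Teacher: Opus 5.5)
Your proposal is correct and follows the paper's own route. The paper obtains this theorem directly from \cref{theorem:KoszulD} by replacing ``Hfd-closed'' with ``smooth'' via \cref{coro:Charsmooth} over a perfect field, and the ``moreover'' part is inherited unchanged, just as in your three steps.
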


	\begin{rema}
		The combination of the above theorems implies in particular, that, for a perfect field $k$, for a coconnective locally-finite smooth dg algebra $E$ such that $H^0(E)$ is semisimple there exists $d\geq 1$ such that $\per(E)^{(-d,0]}\subseteq \per(E)$ is $1$-generated.
	\end{rema}

	\subsection{Coherently-generated $A_{\infty}$-algebras}\label{subsection:CohGen}
	
	In the following section we explain how to interpret the condition that $\per(E)^{(-d,0]}\subseteq \per(E)$ is $1$-generated as a condition on the minimal $A_{\infty}$-structure on $H^{\ast}(E)$ which allows us to obtain a characterization of $A_{\infty}$-algebras arising as Koszul-duals of proper connective dg-algebras $A$ as well as determining in which interval the cohomology of $A$ is concentrated. The motivation comes from the following theorem of Keller (for a proof, see \cite[Thm.~3.4.2]{KM25} or \cite{J24}).
	
	\begin{theorem}[{\cite[Sec.~2.2,Prop.~1(b)]{Kel02}}]
		Let $\Lambda$ be a finite-dimensional algebra. Let $S_1,\ldots,S_n$ be a complete set of isomorphism classes of simple $\Lambda$-modules and $S\coloneqq S_1\oplus\ldots\oplus S_n$. Then, the Yoneda algebra $\Ext^{\ast}_{\Lambda}(S,S)$ is generated in its homogeneous components of cohomological degrees $0$ and $1$ as an $A_{\infty}$-algebra.
	\end{theorem}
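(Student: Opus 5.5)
The plan is to deduce the statement from the $1$-generation machinery of \cref{sec:Realisation} together with the standard dictionary between $\Ext$-algebras and minimal $A_{\infty}$-models. Set $E\coloneqq\KoszulDual{\Lambda}=\REnd_{\Lambda}(S)$. Its cohomology is $\Ext^{\ast}_{\Lambda}(S,S)$, which carries a minimal $A_{\infty}$-structure $(m_2,m_3,\ldots)$ by \cref{theorem:HTT}. By \cref{Thm:Fushimi_4.4} we have $\D^b(\Lambda)\simeq\fd(\Lambda)\simeq\per(E)$, and under this equivalence $\Mod(\Lambda)$ corresponds to $\per(E)^{(-1,0]}$, the heart of $t_{\DL}$ for $\DL=\{S_1,\ldots,S_n\}$. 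Let $\E'\subseteq H^{\ast}(E)$ be the $A_{\infty}$-subalgebra generated by $H^0\oplus H^1$, i.e.\ the smallest graded subspace containing $H^0(E)$ and $H^1(E)$ and closed under all $m_k$. We have to show $\E'=H^{\ast}(E)$.

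The argument will be by induction on the length of twisted complexes. Every module $M\in\Mod(\Lambda)$ has a finite composition series, so it is represented by a one-sided twisted complex over the minimal model $H^{\ast}(E)$ of the form $(\bigoplus_i S_{j_i},\delta)$, with all shifts $r_i=0$. The Maurer--Cartan condition \eqref{eq:Maurer-Cartan} forces $\delta$ to have entries of degree $1$, so $\delta_{ij}\in H^1$. Now take a class $\xi\in\Ext^n(S_a,S_b)$ with $n\geq 2$. Since $\Mod(\Lambda)$ is $1$-generated in $\D^b(\Lambda)$ (this is \ref{itemf} of \cref{GeneralisedRealFunctor} applied to the heart, or directly the Yoneda description in \cref{HigherExt}), $\xi$ factors as $S_a\to M[1]\to S_b[n]$ with $M\in\Mod(\Lambda)$. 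It is enough to factor through one extension step and then iterate.

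Next I compute such a composite in $\tw(H^{\ast}(E))$. Morphisms $S_a\to M[1]$ and $M[1]\to S_b[n]$ are matrices $(f_i)$ and $(g_i)$ with entries in $H^{\ast}(E)$, closed for $m_1^{\tw}$. By the formula for $m^{\tw}_2$ in \cref{Def:Tw}, their composite is a sum of terms
\[
m_{2+t}(g,\delta,\ldots,\delta,f).
\]
The entries of $f$ lie in $\Ext^1$, because the morphism $S_a\to S_{j}[1]$ is forced into degree $1$. The entries of $g$ lie in $\Ext^{n-1}$, which belongs to $\E'$ by induction on $n$. The entries of $\delta$ lie in $H^1$. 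Hence $\xi$ is a sum of higher products of elements of $\E'$, so $\xi\in\E'$. The base cases $n=0,1$ are trivial, and this completes the induction.

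The main obstacle is the precise correspondence between factorizations in $\D^b(\Lambda)$ and the matrix formulae in $\tw$. First, morphisms in $H^0(\tw)$ are only defined up to $m_1^{\tw}$-boundaries, and those boundaries themselves involve higher products with $\delta$. Second, I must make sure that the module $M$ really is realized by a twisted complex with all $r_i=0$. For the second point I will use that the filtration of $M$ by simples gives an iterated cone presentation, together with \cref{theorem:UnivPropTw} to identify $\per(E)$ with $H^0(\tw(H^{\ast}(E)))$. For the first point, boundary terms again lie in $\E'$ by the same degree argument, so they do not leave the subalgebra.
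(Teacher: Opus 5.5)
Your proposal is correct and follows essentially the route the paper indicates for this result (Keller's argument as presented in \cite[Thm.~3.4.2]{KM25}): $1$-generation of $\Mod(\Lambda)$, i.e.\ of $\tw(\Ext^{\ast}_{\Lambda}(S,S))^{(-1,0]}$, read off through the formula for $m_2^{\tw}$. The paper factors a degree-$n$ class completely into $n$ degree-one maps, whereas you split off one degree-one step and induct on $n$; this is only a bookkeeping difference. Your worry about $m_1^{\tw}$-boundaries is moot: $S_a$ and $S_b[n]$ are one-term complexes with zero differential, and $m_1=0$, so $H^0(\tw)(S_a,S_b[n])=\Ext^n_{\Lambda}(S_a,S_b)$ on the nose.
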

	
	A natural question to ask is whether, or under which additional assumptions, it is possible to reverse the above implication. The following example illustrates that the naive notion of being generated in degrees $0,\ldots,d$ is not enough to ensure that the cohomology of the Koszul dual is concentrated in degrees $(-d,0]$
	
	\begin{ex}\label{ex:Generation}
		Consider the graded algebra $E=kQ/(a_{n-1}a_{n-2}\cdots a_1)$ for $n\geq 4$, where $Q$ denotes the linearly oriented quiver of Dynkin type $A_n$,
		\[Q\coloneqq
		\begin{tikzcd}
			1 & 2 & 3 & \cdots & {n-1} & n
			\arrow["{a_1}", from=1-1, to=1-2]
			\arrow["{a_2}", from=1-2, to=1-3]
			\arrow["{a_3}", from=1-3, to=1-4]
			\arrow["{a_{n-2}}", from=1-4, to=1-5]
			\arrow["{a_{n-1}}", from=1-5, to=1-6]
		\end{tikzcd}\]
		with $|a_i|=1$. Then $E$ defines an $A_{\infty}$-algebra (with $m_n=0$ for $n\neq 2$), which is non-negative, locally-finite, strictly unital and generated in degrees $0$ and $1$. Moreover, it is homologically smooth, because there is a finite projective resolution of the $E\otimes_{k} E^{op}$-module $E$ of the following form (see \cite[Thm.~4.1]{Bar97})
		\[0\longrightarrow e_nE \otimes Ee_1 \longrightarrow \bigoplus_{a\in Q_1} e_{t(a)}E \otimes Ee_{s(a)}\longrightarrow \bigoplus_{i\in Q_0} e_iE\otimes Ee_i\longrightarrow E.\]
		However, this can not be the Koszul dual of a finite-dimensional algebra $\tilde{\Lambda}$. To see this one can either use \cite[Sec.~2.2,Prop.~1(a)]{Kel02} or consider degrees $0,1,2$ of $E$ to see it would have to hold $\tilde{\Lambda}\cong k\tilde{Q}/(\{b_ib_{i+1}:1\leq i\leq n-2\})$ where
		\[\tilde{Q}\coloneqq 
		\begin{tikzcd}
			1 & 2 & 3 & \cdots & {n-1} & n
			\arrow["{b_1}"', from=1-2, to=1-1]
			\arrow["{b_2}"', from=1-3, to=1-2]
			\arrow["{b_3}"', from=1-4, to=1-3]
			\arrow["{b_{n-2}}"', from=1-5, to=1-4]
			\arrow["{b_{n-1}}"', from=1-6, to=1-5]
		\end{tikzcd}.\] 
		Here we use $n\geq 4$. However, the Koszul dual of this algebra is $kQ\ncong E$. 
		
		In fact, one computes 
		\[\Hom_{\D(E)}(S_n,S_1[3-n])\neq 0,\] 
		and therefore $H^{*}(\KoszulDual{E})$ is not concentrated in degrees $(3-n,0]$. By \cref{KoszulDualityProperHfdClosed} $\per(E)^{(3-n,0]}\subseteq\per(E)$ is not $1$-generated.
		
		This example shows that, although the present graded algebra is generated by its degree $0$ and $1$ part, it is not coherently generated in degrees $0,1,\ldots,n-3$, a term that will be defined below. For example, for $n=4$, the following morphism of degree $2$ between objects in $H^0(\tw(E)^{(-1,0]})$ can not be factored into degree $1$ morphisms between objects in $H^0(\tw(E)^{(-1,0]})$
		\[\begin{tikzcd}
			1 \\
			2 && 3 \\
			2 && 4 \\
			3
			\arrow["{a_1}"'{inner sep=.8ex}, "\shortmid"{marking}, from=1-1, to=2-1]
			\arrow["{a_2a_1}"{inner sep=.8ex}, "{\shortmid\shortmid}"{marking}, from=1-1, to=2-3]
			\arrow["{a_1}"'{inner sep=.8ex}, "\shortmid"{marking}, curve={height=24pt}, from=1-1, to=3-1]
			\arrow["{a_3a_2}"{inner sep=.8ex}, "{\shortmid\shortmid}"{marking}, from=2-1, to=3-3]
			\arrow["{a_2}"'{inner sep=.8ex}, "\shortmid"{marking}, curve={height=24pt}, from=2-1, to=4-1]
			\arrow["{a_3}"{inner sep=.8ex}, "\shortmid"{marking}, from=2-3, to=3-3]
			\arrow["{a_3a_2}"{inner sep=.8ex}, "{\shortmid\shortmid}"{marking}, from=3-1, to=3-3]
			\arrow["{a_2}"'{inner sep=.8ex}, "\shortmid"{marking}, from=3-1, to=4-1]
		\end{tikzcd}\]
	\end{ex}

	In the rest of this section we explain in which sense \cref{KoszulDualityProperSmooth} answers the question we posed above. In fact, Keller's original (unpublished) proof, which is explained in \cite[Thm.~3.4.2]{KM25}, shows that $\tw(\Ext^{\ast}_{\Lambda}(S,S))^{(-1,0]}\subseteq\tw(\Ext^{\ast}_{\Lambda}(S,S))$ is $1$-generated and reformulates this is in terms of the $A_{\infty}$-structure on $\Ext^{\ast}_{\Lambda}(S,S)$.
	
	In the following we consider a basic proper connective dg algebra $A$ and $E\coloneqq\KoszulDual{A}$. Using \cref{theorem:HTT}, we obtain a strictly unital coconnective minimal $A_{\infty}$-algebra $\E\coloneqq (H^{\ast}(E),m^{\E}_2,m^{\E}_3,\ldots)$. In the following we use the notation 
	\[\E^{[0,n]}\coloneqq \bigoplus_{i=0}^n\E^i.\] 
	Recall that, by the universal property of $\per_{dg}(E)$ and $\tw(\E)$, it holds $\per_{dg}(E)\simeq \tw(\E)$. 
	
	By definition, a morphism $f:T_0\to T_n[n]$ in $H^0(\tw(\E))$ with $T_0,T_n\in\tw(\E)^{(-d,0]}$ corresponds to a matrix with entries in $\E^{[0,n+d-1]}$. Recall that
	\[m^{\tw(\E)}_2=\sum_{t=0}^{\infty}\sum \pm m^{\E}_{2+t}(\id^{\otimes i_1} \otimes \delta^{\otimes j_1}\otimes \cdots \otimes \id^{\otimes i_r}\otimes \delta^{\otimes j_r}).\]
	The condition that $\tw(\E)^{(-d,0]}$ is $1$-generated means that the morphism $f$ can be written as
	\begin{equation}\label{eq:CohGen}
		f=m^{\tw(\E)}_2(f_n,\ldots,m^{\tw(\E)}_2(f_3,m^{\tw(\E)}_2(f_2,f_1)))
	\end{equation} 
	for some $f_i:T_{i-1}[i-1]\to T_i[i]$ in $H^0(\tw(\E))$ with $1\leq i\leq n$, and $T_i\in\tw(\E)^{(-d,0]}$.
	Each of the $f_i$ is by definition a matrix with entries in $\E^{[0,d]}$, hence \eqref{eq:CohGen} may be interpreted as a condition that asserts that $\E$ is \definef{coherently generated in degrees $0,1,2,\ldots,d$}.

	\leaveout{\begin{lemma}[{\cite[Lem.~4.1]{SHYD19}}]
		Let $E$ be a strictly unital mininmal coconective $A_{\infty}$-algebra. Then $\{e_1E,\ldots,e_nE\}$ is a simple-minded collection in $\per(E)$
	\end{lemma}
	
	It holds $\per_{dg}(E)\simeq \per_{A_{\infty}}(\Einfty)\simeq \tw(\Einfty)$ by the universal property of $\tw(\Einfty)$.
	
	For $p\in\ZZ$, we use the notation 
	\[\define{\Einfty^p(i,j)}\coloneqq e_j\Einfty^p e_i\coloneqq \left \{m_2(e_j,m_2(e,e_i))\stackrel{m_1=0}{=}m_2(m_2(e_j,e),e_i):e\in\Einfty^p \right\}.\]
	
	Recalling \cref{Def:Tw}, we first observe that we may identify objects $\tw(\Einfty)$ with triples
	\begin{gather*}
		(x,v,r)=\left((x_{ij})_{1\leq i,j\leq n},(v_i)_{1\leq i \leq n},(r_i)_{1\leq i\leq n}\right),\\ 
		n\in\NN, r_i\in\ZZ, v_i\in\{1,\ldots r\}, x_{ij}\in\Einfty^{r_i-r_j+1}(v_j,v_i),x_{ij}=0 \text{ for }i \geq j
	\end{gather*}
	such that 
	\begin{equation}\label{eq:TwObject}
		\sum_{t=1}^{\infty}(-1)^{\frac{t(t-1)}{2}} m^{\Mat{\Einfty}}_t(x^{\otimes t})=0.
	\end{equation}
	Under this identification, morphisms of degree $m$ between two objects $(x,v,r),(y,v,r')$ in $\tw(\Einfty)$ can be identified with \definef{matrices of degree $m$} 
	\[f=(f_{ij}), \quad f_{ij}\in\Einfty^{m+r'_i-r_j}(v_j,v'_i)\]
	and $m^{\tw(\Einfty)}_n$ is defined as in \cref{Def:Tw}.
	
	Using the description of the $t$-structure of the simple-minded collection $\{\Einfty e_1,\ldots, \Einfty e_r\}$ from \cref{t-str-ch}, the objects of $\tw(\Einfty)^{(-d,0]}$ correspond to triples $(x,v,r)$ that satisfy \ref{eq:TwObject} such that
	\[0\leq r_n \leq r_{n-1} \leq \ldots \leq r_1 \leq d-1.\]
	In particular $x_{ij}\in\Einfty^{[0,d]}(v_j,v_i)$ for all $1\leq i,j\leq n$
	
	Now, the condition that $\per(E)^{(-d,0]}\subseteq \per(E)$ is $1$-generated can be written out as follows. Given objects $(x,v,r)$ and $(y,v',r')$ in $\tw(\Einfty)^{(-d,0]}$ and a matrix of degree $n$, $f\in\tw(\Einfty)^n(x,v)$ such that $m^{\tw(\Einfty)}_1(f)=0$, there exist objects 
	\[(x,v,r)=(x^0,v^0,r^0),(x^1,v^1,r^1), (x^2,v^2,r^2),\ldots, (x^{n},v^n,r^n)=(y,v',r')\]
	in $\tw(\Einfty)^{(-d,0]}$ and matrices $f^k:(x^k,v^k,r^k)\to (x^{k+1},v^{k+1},r^{k+1})$ of degree $1$ such that
	\[m_1^{\tw(\Einfty)}(f^k)=0, \quad f=f^{n-1}\overset{\tw}{\circ}\ldots\overset{\tw}{\circ}f^0 \quad \text{in }H^0(\Einfty).\] 
	Here we define for $h,g\in H^0(\tw(\Einfty))$ 
	\[h\overset{\tw}{\circ}g=m^{\tw}_2(h,g).\] 
	One can easily check that $\overset{\tw}{\circ}$ is assocative, which means that the above is well defined.
	
	We can simplify the above condition by observing that $\per(E)^{(-d,0]}$ is $1$-generated in $\per(E)$ if and only if for all $X,Z\in\per(E)^{(-d,0]}$, and $f:X\to Z[2]$ in $\per(E)$, there exists $Y\in\per(E)^{(-d,0]}$ and morphisms $f^0:X\to Y[1]$ and $f^1:Y[1]\to Z[2]$ such that $f=f^1\circ f^0$. That is, given a matrix $f$ of degree $2$ with $m^{\tw}_1(f)=0$, there exist matrices $f^0,f^1,h$ of degree $1$ such that it holds
	\[f=m^{\tw(\Einfty)}_1(h)+m^{\tw(\Einfty)}_2(f^1,f^0).\]
	
	Pictorially, the situation looks as follows. For simplicity, we depict the case where $X$ is a shifted simple and $Y$ and $Z$ can be build as the extension of two shifted simples. 
	\[\begin{tikzcd}
		X && Y && Z \\
		&& {v^1_2[r^1_2]} && {v^2_2[r^2_2]} \\
		{v^0_1[r^0_1]} && {v^1_1[r^1_1]} && {v^2_1[r^2_1]}
		\arrow["{f^0}"{inner sep=.8ex}, "\shortmid"{marking}, from=1-1, to=1-3]
		\arrow["f"{inner sep=.8ex}, "{\shortmid\shortmid}"{marking}, curve={height=-24pt}, from=1-1, to=1-5]
		\arrow["h"{inner sep=.8ex}, "\shortmid"{marking}, curve={height=18pt}, from=1-1, to=1-5]
		\arrow["{f^1}"{inner sep=.8ex}, "\shortmid"{marking}, from=1-3, to=1-5]
		\arrow["{\scalemath{0.7}{f^1_{22}}}"{inner sep=.8ex}, "\shortmid"{marking}, from=2-3, to=2-5]
		\arrow["{x^1_{12}}"{inner sep=.8ex}, "\shortmid"{marking}, from=2-3, to=3-3]
		\arrow["{\scalemath{0.7}{f^1_{12}}}"{pos=0.6, inner sep=.8ex}, "\shortmid"{marking}, curve={height=-6pt}, from=2-3, to=3-5]
		\arrow["{x^2_{12}}"{inner sep=.8ex}, "\shortmid"{marking}, from=2-5, to=3-5]
		\arrow["{\scalemath{0.7}{f^0_{21}}}"{pos=0.6, inner sep=.8ex}, "\shortmid"{marking}, from=3-1, to=2-3]
		\arrow["{\scalemath{0.7}{h_{21}}}"{pos=0.3, inner sep=.8ex}, "\shortmid"{marking}, curve={height=-40pt}, from=3-1, to=2-5]
		\arrow["{\scalemath{0.7}{f^0_{11}}}"{inner sep=.8ex}, "\shortmid"{marking}, from=3-1, to=3-3]
		\arrow["{\scalemath{0.7}{h_{11}}}"{inner sep=.8ex}, "\shortmid"{marking}, curve={height=18pt}, from=3-1, to=3-5]
		\arrow["{\scalemath{0.7}{f^1_{21}}}"{pos=0.3, inner sep=.8ex}, "\shortmid"{marking}, curve={height=6pt}, from=3-3, to=2-5]
		\arrow["{\scalemath{0.7}{f^1_{11}}}"{inner sep=.8ex}, "\shortmid"{marking}, from=3-3, to=3-5]
	\end{tikzcd}\]   
	So for example, it holds 
	\begin{align*}
		f_{11}= & \pm m_2(f^1_{11},f^0_{11})\pm m_2(f^1_{12},f^0_{21})\pm m_2(x^2_{12},h_{21})\\
		& \pm m_3(x^2_{12},f^{1}_{21},f^{0}_{11}) \pm m_3(,f^1_{11},x^1_{12},f^{0}_{21}) \pm  m_3(x^2_{12},f^{1}_{22},f^{0}_{21}) \pm m_4(x^2_{12},f^1_{21},x^1_{12},f^{0}_{21})
	\end{align*}
	and the signs are determined by \cref{Def:Tw}.
	
	In short, this can be interpreted as requiring that $\Einfty$ is generated in the degrees $0,1,\ldots,d$ in a coherent way. Hence, we call a strictly unital, locally-finite, minimal $A_{\infty}$-algebra $\Einfty$ \definef{coherently generated in degrees $[0,d]$} if $\tw(\Einfty)^{(-d,0]}\subseteq\tw(\Einfty)$ is $1$-generated. One might wonder if it is possible to simplify the definition and make it more explicit. Unfortunately, at the present moment we were not able to do so. The difficulty is illustrated in \cref{ex:Generation}.
	 
	Before we explain this example we remark that \cref{theorem:KoszulD} yields the following corollary.
	
	\begin{coro}\label{coro:KoszulD}
		Let $k$ be a perfect field. Koszul duality yields a bijective correspondence between quasi-isomorphism classes of the following two classes.
		\begin{enumerate}
			\item Connective, proper, basic dg algebras $A$, such that $H^*(A)$ is concentrated in degrees $(-d,0]$.
			\item Coconnective, locally-finite, smooth, strictly unital, minimal $A_{\infty}$-algebras $\Einfty$ that are coherently generated in degrees $[0,d]$.
		\end{enumerate}
	\end{coro}}

	\section{Abelian $d$-truncated dg categories}\label{sec:AbelianD}
	
	Before we generalize the above findings to certain (length) exact dg categories we investigate the notion of abelian $d$-truncated dg categories. In the following, we will use the following notation.
	
	\begin{nota}
		Let $\A$ be an additive dg category. For an object $A\in\A$, the homotopy cokernel of the morphism $A\to 0$, if it exists, is denoted by $\define{\Sigma A}$. Dually, the homotopy kernel of the morphism $0\to A$, if it exists, is denoted by $\define{\Omega A}$.
	\end{nota}
	
	Recall the following notion from \cite{Moc25}, which is a natural generalization of abelian categories to the dg setting.
	
	\begin{defi}[{\cite[Def.~3.12]{Moc25}}]
		Let $\A$ be an additive dg category. We call $\A$ \definef{$d$-truncated}, if $H^i(\A(x,y))=0$ for all $i\notin(-d,0]$. A $d$-truncated dg category $\A$ is an \definef{abelian $d$-truncated dg category} if it has the following properties:
		\begin{enumerate}
			\item $\A$ admits homotopy kernels and homtopy cokernels.
			\item Suppose $f\in Z^0(\A)(x,y)$ satisfies $\Omega^{d-1}(\hker(f))=0$. Then $f=\hker(\hcoker(f))$, i.e. there exists a homotopy bicartesian square of the following form.
			\[\begin{tikzcd}
				x & y \\
				0 & {\hcoker(f)}
				\arrow["f",from=1-1, to=1-2]
				\arrow[from=1-1, to=2-1]
				\arrow["h", from=1-1, to=2-2]
				\arrow[dashed, from=1-2, to=2-2]
				\arrow[from=2-1, to=2-2]
			\end{tikzcd}\]
			\item Suppose $f\in Z^0(\A)(x,y)$ satisfies $\Sigma^{d-1}(\hcoker(f))=0$. Then $f=\hcoker(\hker(f))$, i.e. there exists a homotopy bicartesian square of the following form.
			\[\begin{tikzcd}
				\hker(f) & x \\
				0 & y
				\arrow[dashed, from=1-1, to=1-2]
				\arrow[from=1-1, to=2-1]
				\arrow["h", from=1-1, to=2-2]
				\arrow["f",from=1-2, to=2-2]
				\arrow[from=2-1, to=2-2]
			\end{tikzcd}\]
		\end{enumerate}
	\end{defi}
	
	The corresponding $\infty$-categorical notion was introduced in \cite[Def.~6.2.4]{Ste23}.
	
	\begin{defi}[{\cite[Def.~6.2.4]{Ste23}}]
		Let $\A$ be a \definef{$(d,1)$-category}, that is an $\infty$-category such that $\pi_{n}(\A(x,y),f)$ is trivial for all $n\geq d$ and $x,y\in\A$, $f\in\A(x,y)$. We say that $\A$ is an \definef{abelian $(d,1)$-category} if the following conditions are satisfied:
		\begin{enumerate}
			\item $\A$ is additive and admits finite limits and colimits,
			\item\label{item:abelian(d,1)2} Let $f:x\to y$ be a \definef{$(d-2)$-truncated morphism} in $\A$, that is for all $a\in\A$ and $g\in\A(a,x)$, denoting by $[g]\in\pi_0(\A(a,x))$ the homotopy class of a morphism, the induced morphism
			\[\pi_n([f]_{*})\colon \pi_n({\A}(a,x),[g])\to \pi_n({\A}(a,y),[f]\circ[g])\] 
			is injective for $n=d-1$ and bijective for $n\geq d$. Then $f$ is the fiber of its cofiber, that is the cofiber sequence $x\to y\to \cofib(f)$ is also a fiber sequence.
			\item Dual of \ref{item:abelian(d,1)2}.
		\end{enumerate}
		We will denote by \define{$(d,1)\mathrm{Cat}_{\mathrm{ab}}$} the $\infty$-category of abelian $(d,1)$-categories and exact functors. 
	\end{defi}
	
	In \cite[Thm~3.44]{Moc25} it is proven that for a pretriangulated dg category $\C$ with a $t$-structure $t$ on $\C$, the $d$-extended heart $\C^{(-d,0]}$ is an abelian $d$-truncated dg category. The following proposition yields the converse, which is a generalisation of the well-known fact that an abelian category embeds as the heart of the natural $t$-structure on $\D^b(\A)$. The result is proven in \cite[Thm.~6.3.2]{Ste23} in the $\infty$-categorical setting. The proof we present relies heavily on the results in \cite{Ste23} and essentially consists of making the relevant comparisons between the dg and $\infty$-categorical notions. 
	
	\begin{theorem}\label{prop:CharAb}
		Let $\A$ be an abelian $d$-truncated dg category. Then, there exists a $t$-structure $t$ on $\D^b_{dg}(\A)$, such that the universal embedding $\A\hookrightarrow \D^b_{dg}(\A)$ induces a quasi-equivalence 
		\[\A\stackrel{\simeq}{\longrightarrow}\D^b_{dg}(\A)^{(-d,0]}.\]
		Thus abelian $d$-truncated dg categories are precisely $d$-extended hearts of $t$-structures on pretriangulated dg categories.
	\end{theorem}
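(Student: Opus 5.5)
The plan is to transport the statement to the $\infty$-categorical setting and invoke \cite[Thm.~6.3.2]{Ste23}, which asserts that for an abelian $(d,1)$-category $\A_\infty$ the bounded derived $\infty$-category $\D^b(\A_\infty)$ carries a bounded $t$-structure whose $d$-extended heart $\D^b(\A_\infty)^{(-d,0]}$ is equivalent to $\A_\infty$ via the canonical embedding.

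\emph{Step 1: translating the dictionary.} Let $N_{dg}(\A)$ be the dg nerve of $\A$. It is an additive $\infty$-category with mapping spaces given by Dold--Kan applied to $\tau^{\leq 0}\A(x,y)$. Because $\A$ is $d$-truncated, $\pi_n$ of these mapping spaces vanishes for $n\geq d$, so $N_{dg}(\A)$ is a $(d,1)$-category. Homotopy (co)cartesian squares in the sense of \cite[Def.~3.14]{Chen23} are by construction those squares that become (co)cartesian after applying $\tau^{\leq 0}\Hom(\repr{A},-)$. They therefore correspond to pullback and pushout squares in $N_{dg}(\A)$, and homotopy kernels and cokernels become fibers and cofibers. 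It remains to match the truncation conditions. The long exact sequence of $\pi_*$ for the fiber sequence $\hker(f)\to x\to y$ shows that $\Omega^{d-1}\hker(f)=0$ holds exactly when $f$ is $(d-2)$-truncated in the sense of \cite[Def.~6.2.4]{Ste23}, and dually for cokernels. Finite limits and colimits reduce to kernels and cokernels together with direct sums. Hence $N_{dg}(\A)$ is an abelian $(d,1)$-category.

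\emph{Step 2: comparing the derived categories.} I would show that $N_{dg}(\D^b_{dg}(\A))\simeq\D^b(N_{dg}(\A))$ compatibly with the embeddings. Both sides are characterized by a universal property: Chen's $\D^b_{dg}(\A)$ is universal among exact morphisms into pretriangulated dg categories (\cref{EmbeddingTheorem}), and Stefanich's $\D^b$ is universal among exact functors into stable $\infty$-categories. Under the equivalence between pretriangulated dg categories and $k$-linear stable $\infty$-categories (compare \cite[Cor.~5.2]{P24}), exact dg morphisms correspond to functors that preserve the relevant fiber/cofiber sequences. Matching the two universal properties then gives the identification.

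An alternative that avoids this comparison is to argue inside the dg world. One would first construct the $t$-structure directly, taking as aisle the extension closure of $\{A[n]:A\in\A,\,n\geq0\}$ in $\D^b(\A)$ and as coaisle the extension closure of $\{A[n]:n\leq 1-d\}$, while using the abelian axioms to produce truncation triangles. This is essentially Stefanich's argument, so I prefer to cite him.

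\emph{Step 3: conclusion.} Transport Stefanich's $t$-structure along the equivalence of Step 2 to a $t$-structure on $H^0(\D^b_{dg}(\A))$. The embedding $\A\to\D^b_{dg}(\A)$ then lands in $\D^b_{dg}(\A)^{(-d,0]}$ and is essentially surjective onto it. By \cref{EmbeddingTheorem}(1), since $\A$ is connective, it is quasi-fully faithful onto its image after applying $\tau^{\leq 0}$. This gives the quasi-equivalence. The final sentence of the statement follows by combining this with \cite[Thm.~3.44]{Moc25}.

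The main obstacle is Step 2. Comparing the universal properties requires checking carefully that Chen's exact structure on $\A$, namely all homotopy short exact sequences, corresponds to the class of fiber-cofiber sequences used by Stefanich. It also requires that Chen's construction of $\D^b_{dg}$ by Verdier quotient of $\tw(\A)$ models the $\infty$-categorical one. I would attempt this by comparing both to the localization of $\tw(\A)$ at the acyclic twisted complexes coming from conflations.
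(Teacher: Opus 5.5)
Your Steps~1 and~3 agree with the paper: Step~1 is what the paper imports as \cite[Prop.~4.19]{Moc25}, and Step~3 is its closing argument. Step~2, however, has a genuine gap. \cite[Thm.~6.3.2]{Ste23} does \emph{not} say that $\D^b_\infty(\A)$ is universal among exact functors from $\A$ into arbitrary stable $\infty$-categories. That universal property belongs to the stable hull $\St(\A)$ of \cite{Kle22}. Stefanich's result is an adjunction with $(-)_{\leq d-1}$ on bounded prestable categories. It only tests $\A$ against targets that already carry a $t$-structure-type datum in which the image of $\A$ is $(d-1)$-truncated.

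The exact functor $N_{dg}(\A)\to N_{dg}(\D^b_{dg}(\A))$ is not of that form until you have built the very $t$-structure you are trying to construct. So ``matching the two universal properties'' is circular. What you get for free is only a comparison functor $\St(N_{dg}\A)\to\D^b_\infty(N_{dg}\A)$. The other identification, $\St(N_{dg}\A)\simeq N_{dg}(\D^b_{dg}(\A))$, is already \cite[Thm.~6.25]{Chen23}. So the point you single out as the main obstacle (matching Chen's conflations and Verdier quotient with the $\infty$-categorical localization) is a citation, and the real work lies elsewhere. Your dg-internal fallback is only sketched (the truncation triangles are the whole difficulty), so it does not close the gap either.

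The missing idea is the proof that $\St(\A)\to\D^b_\infty(\A)$ is an equivalence, which is \cref{Prop:St=Db} in the paper. It uses the part of Stefanich's theorem your proposal never invokes: the $t$-structure on $\D^b_\infty(\A)$ is $(d-1)$-complicial. The argument runs as follows.
\begin{enumerate}
\item Take a deflation $x\twoheadrightarrow a$ in $\D^b_\infty(\A)_{\geq 0}$ with $a\in\A$. Compliciality gives $b\in\A$ and $b\to x$ surjective on $H_0$, hence a deflation. The composite $b\twoheadrightarrow a$ is then a deflation in $\A$, so $\A\subseteq\D^b_\infty(\A)_{\geq 0}$ is left special.
\item By \cite[Thm.~1.2]{SW25}, $\St(\A)\to\St(\D^b_\infty(\A)_{\geq 0})$ is fully faithful. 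The target is $\SW(\D^b_\infty(\A)_{\geq 0})=\D^b_\infty(\A)$ by \cite[Cor.~3.8]{Kle22}.
\item Boundedness of the $t$-structure gives essential surjectivity.
\end{enumerate}
In dg language, this is the effaceability/$1$-generation criterion of \cref{GeneralisedRealFunctor}, supplied by compliciality exactly as in \cref{compliciality}. Without this step there is no $t$-structure on $\D^b_{dg}(\A)$ for Step~3 to transport.
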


	We first recall the necessary results from \cite{Ste23}, please see \cite{Ste23} for the definitions.
	
	\begin{theorem}[{\cite[Thm.~6.3.2.(1)]{Ste23}}]\label{theorem:Stefanich}
		Let $\mathrm{Cat}^b_{\mathrm{pst}}$ be the $\infty$-category of finitely-complete bounded prestable $\infty$-categories and exact functors and fix $d\geq 1$. The functor 
		\[(-)_{\leq d-1}:\mathrm{Cat}^b_{\mathrm{pst}}\longrightarrow (d,1)\mathrm{Cat}_{\mathrm{ab}}\] 
		admits a fully faithful left adjoint 
		\[\D^b_{\infty}(-)_{\geq 0}:(d,1)\mathrm{Cat}_{\mathrm{ab}}\longrightarrow \mathrm{Cat}^b_{\mathrm{pst}}.\]
		Furthermore an object belongs to the image of $\D^b_{\infty}(-)_{\geq 0}$ if and only if it is $(d-1)$-complicial. 
	\end{theorem}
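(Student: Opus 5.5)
The statement is imported from \cite{Ste23}, and in the paper it is only used as a black box to prove \cref{prop:CharAb}. If one wanted to reprove it, the plan would mirror the dg arguments of \cref{sec:Realisation}, transported to the $\infty$-categorical setting.

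\textbf{Construction of the left adjoint.} Given an abelian $(d,1)$-category $\A$, I would consider the stable $\infty$-category $\mathrm{Fun}^{\mathrm{lex}}(\A^{op},\mathrm{Sp})$ of left exact spectral presheaves. Inside it I would take the smallest full subcategory containing the Yoneda image of $\A$ and closed under finite colimits and extensions. The first check is that this subcategory is bounded prestable and finitely complete; I would call it $\D^b_\infty(\A)_{\geq 0}$. By construction it is generated under colimits from $\A$, and every connective object receives a map from an object of $\A$ that is surjective on $\pi_0$. This already gives $(d-1)$-compliciality of objects in the image.

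\textbf{Adjunction.} For a bounded prestable $\C$, an exact functor $\A\to\C_{\leq d-1}$ should extend uniquely, by left Kan extension along the generation above, to an exact functor $\D^b_\infty(\A)_{\geq 0}\to\C$. Conversely, an exact functor $\D^b_\infty(\A)_{\geq 0}\to\C$ restricts to $(d-1)$-truncated objects. Combining these two directions gives the adjunction with $(-)_{\leq d-1}$.

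\textbf{Full faithfulness: the main obstacle.} Full faithfulness of the left adjoint amounts to the unit $\A\to(\D^b_\infty(\A)_{\geq 0})_{\leq d-1}$ being an equivalence. This requires two things:
\begin{itemize}
\item mapping spaces between objects of $\A$ computed in the envelope agree with those in $\A$;
\item every $(d-1)$-truncated object of the envelope lies in $\A$.
\end{itemize}
I expect this step to be the hardest. I would first try an effaceability argument in the spirit of \cref{WeaklyEffaceable} and \cref{GeneralisedRealFunctor}: use the axiom that $(d-2)$-truncated morphisms are fibers of their cofibers to show that homotopy groups of the Yoneda presheaves are weakly effaceable beyond degree $d-1$. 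This should identify them with the mapping spaces in $\A$. An induction on the length of a cell decomposition would then show that truncated objects stay in $\A$.

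\textbf{Image characterization.} It remains to show that the counit $\D^b_\infty(\C_{\leq d-1})_{\geq 0}\to\C$ is an equivalence precisely when $\C$ is $(d-1)$-complicial. Necessity follows from the remark in the construction step. For sufficiency I would repeat the inductive resolution argument from the proof of \cref{compliciality}:
\begin{itemize}
\item for $X$ connective and bounded, choose $P\in\C_{\leq d-1}$ mapping onto $\pi_0X$;
\item pass to the fiber, which has strictly smaller amplitude;
\item apply induction together with the five lemma.
\end{itemize}
This gives essential surjectivity. Full faithfulness comes from the same $1$-generation and effaceability criterion as in \cref{DenseCoro}.
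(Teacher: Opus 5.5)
The paper gives no proof of this statement. It imports \cite[Thm.~6.3.2(1)]{Ste23} and uses it as a black box in \cref{Prop:St=Db} and \cref{prop:CharAb}, as you say.

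Your reproof, however, breaks at the first step, because the ambient category is zero. Let $\A$ be an additive $(d,1)$-category with finite colimits. Mapping spaces are $(d-1)$-truncated, so $\mathrm{Map}_{\A}(\Sigma^d x,y)\simeq\Omega^d\mathrm{Map}_{\A}(x,y)$ is contractible, and hence $\Sigma^d x\simeq 0$. A finite-limit-preserving $F\colon\A^{op}\to\mathrm{Sp}$ has $F(0)\simeq 0$ and $F(\Sigma x)\simeq F(0)\times_{F(x)}F(0)\simeq\Omega F(x)$. Therefore $\Omega^d F(x)\simeq F(\Sigma^d x)\simeq 0$, so $F\simeq 0$, and $\mathrm{Fun}^{\mathrm{lex}}(\A^{op},\mathrm{Sp})=0$. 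The representables are only left exact as $\mathrm{Sp}_{\geq 0}$-valued functors; as $\mathrm{Sp}$-valued functors their fibres acquire an $\Ext^1$-type $\pi_{-1}$. A workable ambient is the stable hull $\St(\A)$ of \cite{Kle22}, built from additive presheaves localized at conflations, into which any exact $\infty$-category embeds fully faithfully.

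Once that is repaired, the first bullet of your ``main obstacle'' is automatic and uses nothing about $\A$ being abelian, so your effaceability argument is aimed at the wrong place. The abelian $(d,1)$-axiom must supply two things you assert without argument:
\begin{enumerate}
\item[(i)] the subcategory generated by $\A$ under finite colimits and extensions has truncations, i.e.\ it is the aisle of a bounded $t$-structure on its stabilization (your ``first check'' of finite completeness);
\item[(ii)] its $(d-1)$-truncated objects are exactly those of $\A$.
\end{enumerate}
Statement (ii) fails for every non-abelian exact category when $d=1$, since hearts are abelian. It also cannot be obtained by induction on a cell decomposition: the pieces of a decomposition of a truncated object are not truncated, and you have no truncation functors until (i) is known. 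The paper proves $\St(\A)\simeq\D^b_\infty(\A)$ only by invoking this very theorem, so you would need an independent construction of the $t$-structure.

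Two further claims also depend on (i). First, compliciality is not ``by construction''. Closure of the $\pi_0$-coverable objects under extensions requires effacing maps $c\to X[1]$ by deflations in $\A$, which is an induction via the Yoneda-$\Ext$ description. Second, the adjunction needs the Kan-extended functor to preserve fibres, i.e.\ to be left $t$-exact.

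Your image characterization, adapting \cref{compliciality} and \cref{DenseCoro} with \cite[Thm.~1.2]{SW25} in place of \cref{GeneralisedRealFunctor}, is fine once the rest is in place.
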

	
	\begin{rema}
		Observe that \cite[Prop.~2.2.15.(3)]{Ste23} shows that in the setting of the above theorem our notion of $(d-1)$-compliciality from \cref{def:complicial} agrees with the one in \cite[Def.~2.3.8]{Ste23}.
	\end{rema}
	
	In \cite[Rem.~6.3.3]{Ste23} Stefanich defines for an abelian $(d,1)$-category $\A$ the stable category
	\[\D^b_{\infty}(\A)=\SW(\D^b_{\infty}(\A)_{\geq 0})\coloneqq \colim (\D^b_{\infty}(\A)_{\geq 0}\stackrel{\Sigma}{\to}\D^b_{\infty}(\A)_{\geq 0}\stackrel{\Sigma}{\to}\ldots )\]
	which is the Spanier--Whitehead $\infty$-category of $\D^b_{\infty}(\A)_{\geq 0}$ (see \cite[\S C.1.1]{Lurie18}). It follows from \cite[Prop.~C.1.2.9]{Lurie18} that $\D^b_{\infty}(\A)$ admits a (homologically indexed) bounded $t$-structure $t$ whose aisle is given by $\D^b_{\infty}(\A)_{\geq 0}$. Since $\D^b_{\infty}(-)_{\geq 0}$ is fully faithful, the unit 
	\[\eta\colon \A\stackrel{\simeq}{\longrightarrow} \D^b_{\infty}(\A)_{[0,d)}\coloneqq \D^b_{\infty}(\A)_{\geq 0} \cap \D^b_{\infty}(\A)_{\leq d-1}\]
	is an equivalence. In particular, $\A$ can be identified with an extension closed subcategory of an exact $\infty$-category, and is thus canonically an exact $\infty$-category in the sense of \cite{Bar15}. Alternatively, this can be shown directly from the definition of an abelian $(d,1)$-category. We denote by $\St(\A)$ the stable hull of $\A$ in the sense of \cite[Def.~3.1]{Kle22} and compare the two constructions $\D^b_{\infty}(\A)$ and $\St(\A)$. The universal property of $\St(\A)$ yields a canonical morphism 
	\[\St(\A)\longrightarrow \D^b_{\infty}(\A).\]
	
	\begin{prop}\label{Prop:St=Db}
		For an abelian $(d,1)$-category $\A$, the canonical morphism
		\[\St(\A)\stackrel{\simeq}{\longrightarrow}\D^b_{\infty}(\A)\]
		is an equivalence of stable $\infty$-categories.
	\end{prop}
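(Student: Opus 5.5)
The plan is to prove that the canonical functor $\St(\A)\to\D^b_{\infty}(\A)$ is fully faithful and essentially surjective, in the $\infty$-categorical analogue of the argument behind \cref{GeneralisedRealFunctor}. We view $\A$ as an extension-closed subcategory of the stable $\infty$-category $\D^b_{\infty}(\A)$ via the equivalence $\eta\colon\A\simeq\D^b_{\infty}(\A)_{[0,d)}$. This makes the canonical functor a realization functor for the exact $\infty$-category $\A$ with its induced exact structure. By the universal property of the stable hull \cite{Kle22}, the functor restricts to the identity on $\A$.

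We start with essential surjectivity. Since the $t$-structure on $\D^b_{\infty}(\A)$ is bounded, $\D^b_{\infty}(\A)$ is generated under shifts and extensions by the extended heart $\D^b_{\infty}(\A)_{[0,d)}\simeq\A$. Concretely, every object lies in some $\D^b_{\infty}(\A)_{[a,b]}$, and Postnikov truncations present it as an iterated extension of shifts of objects of $\D^b_{\infty}(\A)_{[0,d)}$. The essential image of a functor from the stable hull is closed under shifts and cofibers, because it is exact. It also contains $\A$, so it is everything.

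For full faithfulness, we use that $\St(\A)$ is generated by $\A$ under shifts and cofibers. By dévissage it then suffices to show that
\[\pi_0\Map_{\St(\A)}(X,Y[n])\to\pi_0\Map_{\D^b_{\infty}(\A)}(X,Y[n])\]
is bijective for $X,Y\in\A$ and all $n\in\ZZ$. For $n\le 1$ this holds because both sides are computed within $\A$ and by extensions in $\A$: \cite{Kle22} gives $\A\subseteq\St(\A)$ fully faithful and extension-closed, and the same holds in $\D^b_{\infty}(\A)$. For $n\ge2$ we follow \cref{GeneralisedRealFunctor}: it is enough to show that $\A$ is $1$-generated in $\D^b_{\infty}(\A)$, since $\A$ is already $1$-generated in $\St(\A)$ by the identification of higher Hom groups with Yoneda $\EE^n$. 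Surjectivity then follows from the degree $1$ case, and injectivity follows by the inductive argument of \cref{GeneralisedRealFunctor}(b)$\Leftrightarrow$(c).

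The main obstacle is $1$-generation of $\A$ in $\D^b_{\infty}(\A)$. We would deduce it from \cref{theorem:Stefanich}: $\D^b_{\infty}(\A)_{\geq0}$ lies in the image of the left adjoint, so its $t$-structure is $(d-1)$-complicial. We then run the argument of \cref{compliciality} and \cref{DenseCoro}. Given $f\colon X\to Y[n]$ with $n\ge2$, we use compliciality to cover the relevant truncation by an object of the extended heart. This produces $Y\to Z^{-n}$ in $\A$ whose cone $C$ lies in $\A$ (a monomorphism check on $H_{d-1}$), such that $f$ factors through $C[n-1]$. The claim then follows by induction on $n$.

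If this dg-style step does not transfer cleanly, the fallback is to identify both $\St(\A)$ and $\D^b_{\infty}(\A)$ as Spanier--Whitehead stabilizations of bounded prestable categories generated by $\A$. We would then use the universal property in \cref{theorem:Stefanich} to produce an inverse functor $\D^b_{\infty}(\A)_{\geq0}\to\St(\A)_{\geq0}$, where the latter is the prestable subcategory generated by $\A$ under colimits and extensions. Both composites are the identity on $\A$, and hence equivalences by uniqueness in the universal properties.
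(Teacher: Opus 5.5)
Your architecture is viable: reduce by devissage to $\pi_0$ of mapping spaces between objects of $\A$, handle $n\leq 1$ via \cite{Kle22}, then use $1$-generation of $\A$ in $\D^b_{\infty}(\A)$ together with the injectivity induction from \cref{GeneralisedRealFunctor}. However, two steps fail as written.

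First, essential surjectivity cannot come first. The essential image of an exact functor is closed under shifts, but not under cofibers of morphisms that are not in the image of the functor. It is a stable subcategory only once full faithfulness is known, which is the order the paper uses. This matters because the argument of \cref{DenseCoro} that you want to run takes density as input: $Z^{-n}$ there is the last piece of a weight-type decomposition of a preimage of $\cone(f)$. So the argument is circular, unless you first reprove density independently by the inductive construction in the proof of \cref{compliciality} and also supply such decompositions in $\St(\A)$.

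Second, $(d-1)$-compliciality does not produce the inflation $Y\to Z^{-n}$ you describe. It gives maps \emph{from} objects of $\A$ \emph{into} connective objects that are surjective on $H_0$, and these yield effacing \emph{deflations}. An inflation out of $Y$ that is injective on $H_{d-1}$ and kills $f$ would need the dual property, which is not directly available.

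The repair is short. For $f\colon X\to Y[n]$ with $X,Y\in\A$ and $n\geq 2$, set $V\coloneqq\fib(f)$. The fiber sequence $Y[n-1]\to V\to X$ shows $V\in\D^b_{\infty}(\A)_{\geq 0}$ and $H_0(V)\cong H_0(X)$. Compliciality gives $b\in\D^b_{\infty}(\A)_{[0,d)}\simeq\A$ and $b\to V$ surjective on $H_0$. The composite $g\colon b\to X$ satisfies $fg=0$, and the long exact sequence gives $\fib(g)\in\D^b_{\infty}(\A)_{[0,d)}$, so $g$ is a deflation in $\A$. Hence $f$ factors through $X\to\fib(g)[1]$, and induction on $n$ gives $1$-generation as in the proof of \cref{coro:RealisationEnoughProj}.

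This is exactly the paper's check that $\A\subseteq\D^b_{\infty}(\A)_{\geq 0}$ is left special. The paper does not then redo \cref{GeneralisedRealFunctor} by hand. It cites \cite[Thm.~1.2]{SW25} for full faithfulness of $\St(\A)\to\St(\D^b_{\infty}(\A)_{\geq 0})$, and \cite[Cor.~3.8]{Kle22} for $\St(\D^b_{\infty}(\A)_{\geq 0})\simeq\SW(\D^b_{\infty}(\A)_{\geq 0})=\D^b_{\infty}(\A)$. Density then follows from boundedness.

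Your hands-on route works once repaired, but it also needs the $\infty$-categorical analogue of \cref{HigherExt}: that $\A$ is $1$-generated in $\St(\A)$. This needs a reference, not just an appeal to Yoneda $\EE^n$.

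Your fallback does not stand on its own either. A functor $\D^b_{\infty}(\A)_{\geq 0}\to\St(\A)_{\geq 0}$ obtained from \cref{theorem:Stefanich} presupposes a $t$-structure on $\St(\A)$ whose aisle is generated by $\A$, which is essentially the claim.
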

	
	\begin{proof}
		Since $\D^b_{\infty}(-)_{\geq 0}$ is fully faithful, the unit 
		\[\eta\colon \A\stackrel{\simeq}{\longrightarrow} \D^b_{\infty}(\A)_{[0,d)}\coloneqq \D^b_{\infty}(\A)_{\geq 0} \cap \D^b_{\infty}(\A)_{\leq d-1}\]
		is an equivalence. Moreover, by \cref{theorem:Stefanich} the $t$-structure $t$ is $(d-1)$-complicial. We claim that $\A\subseteq \D^b_{\infty}(\A)_{\geq 0}$ is left special in the sense of \cite[Def.~1.1]{SW25}: For every deflation $P:x\defl a$ in $\D^b_{\infty}(\A)_{\geq 0}$ with $x\in\D^b_{\infty}(\A)_{\geq 0}$ and $a\in\A$ there exists a morphism $f:b\to x$ with $b\in\A$ such that $p\circ f:b\defl a$ is a deflation in $\A$. In that case, \cite[Thm.~1.2]{SW25} yields that the canonical functor
		\[\St(\A) \simeq \St(\D^b_{\infty}(\A)_{[0,d)}) {\longrightarrow} \St(\D^b_{\infty}(\A)_{\geq 0})\simeq\D^b_{\infty}(\A) \] 
		is fully faithful. Here the last equivalence holds since $\D^b_{\infty}(\A)_{\geq 0}$ is prestable and hence, by \cite[Cor.~3.8]{Kle22},
		\[\St(\D^b_{\infty}(\A)_{\geq 0})\simeq \SW(\D^b_{\infty}(\A)_{\geq 0})=\D^b_{\infty}(\A).\]
		As the $t$-structure on $\D^b_{\infty}(\A)$ is bounded and $\D^b_{\infty}(\A)_{[0,d)}$ is the essential image of $\A$ under the above functor it is dense as well.
		
		Hence, it remains to prove that $\A\subseteq \D^b_{\infty}(\A)_{\geq 0}$ is left special. To this end, we consider a deflation $p:x\defl a$ with $x\in\D^b_{\infty}(\A)$ and $a\in\A$. By $(d-1)$-compliciality, there exists a morphism $f:b\to x$ in $\D^b_{\infty}(\A)_{\geq 0}$ with $b\in\D^b_{\infty}(\A)_{[0,d)}=\A$ such that $H_0(f):H_{0}(b)\to H_{0}(x)$ is an epimorphism in $\heart_t$. The long exact sequence in homology for the triangle
		\[\fib(f)\longrightarrow b\stackrel{f}{\longrightarrow} x \longrightarrow \fib(f)[1]\]
		in $\D^b(\A)$ yields that the fibre $\fib(f)$ of $f$ in $\D^b_{\infty}(\A)$ is in $\D^b_{\infty}(\A)_{\geq 0}$, which implies that $f$ is a deflation. Hence $p\circ f:b\defl a$ is a deflation as well.
	\end{proof}
	
	\begin{proof}[Proof of \cref{prop:CharAb}.]
		Denote by $N_{dg}$ the dg nerve functor in the sense of \cite[Constr.~3.1.1.6]{Lurie17}. By \cite[Prop.~4.19]{Moc25}, a connective dg category $\A$ is abelian $d$-truncated if and only if $N_{dg}(\A)$ is an abelian $(d,1)$-category. The universal embedding $F_{dg}:\A\hookrightarrow \D^b_{dg}(\A)$ yields an exact morphism 
		\[N_{dg}(F_{dg}):N_{dg}(\A)\longrightarrow N_{dg}(\D^b_{dg}(\A)).\] 
		By the universal property of $\St(N_{dg}(\A))$ there is a morphism $G_{\infty}$ such that the following diagram commutes
		\[\begin{tikzcd}
			{N_{dg}(\A)} && {\St(N_{dg}(\A))} \\
			& {N_{dg}(\D^b_{dg}(\A))}
			\arrow["{F_{\infty}}", hook, from=1-1, to=1-3]
			\arrow["{N_{dg}(F_{dg})}"', hook, from=1-1, to=2-2]
			\arrow["{G_{\infty}}", dashed, from=1-3, to=2-2]
		\end{tikzcd}\]
		By \cite[Thm.~6.25]{Chen23} the morphism $G_{\infty}$ is an equivalence of stable $\infty$-categories. By the above discussion and \cref{Prop:St=Db}, there exists a $t$-structure $t$ on $\St(N_{dg}(\A))\simeq N_{dg}(\D^b_{dg}(\A))$ such that $F_{\infty}$ induces an equivalence of $\infty$-categories 
		\[N_{dg}(\A)\stackrel{\simeq}{\longrightarrow} \St(N_{dg}(\A))_{[0,d)_t}.\] 
		Since the objects of $N_{dg}(\A)$ are the objects of $\A$, it follows that $F_{dg}$ induces a quasi-equivalence 
		\[\A\stackrel{\simeq}{\longrightarrow}\D^b_{dg}(\A)^{(-d,0]_t}. \qedhere\]
	\end{proof}
	
	\section{Length exact $d$-categories}\label{sec:LengthExact}
	
	In this section, we introduce the notion of length exact $d$-categories. Roughly speaking, we want to capture the exact $d$-truncated dg categories that are built from the simples in the exact category $\dis{\E}\subseteq \E$ of discrete objects defined below. One goal of this chapter to characterize when such exact dg categories are abelian $d$-truncated.
	 
	In this section, let $\E$ be an exact $d$-truncated dg category. 
	
	\begin{defi}
		We define the \definef{subcategory of discrete objects} $\dis{\E}\subseteq \E$ to be the full dg subcategory consisting of objects
		\[\dis{\E}\coloneqq\left\{X\in\E:H^n\E(-,X)=0 \text{ for all } n\neq 0\right\}.\]
	\end{defi}
	
	Since $\E$ is connective, by \cite[Thm.~6.1]{Chen23}, we can and we will identify $\tau^{\leq 0}\E$ with $\tau^{\leq 0}\D'$ for an extension closed dg subcategory $\D'$ of $\D^b_{dg}(\E)$. Under this identification, using a long exact sequence it is easy to see that $\dis{\E}$ is extension-closed, hence an exact dg category. By the following lemma, $\dis{\E}$ is quasi-equivalent to an exact ($1$-)category.
	
	\begin{lemma}\label{lemma:Exact1} Let $\E$ be an exact $1$-category. The following statements hold.
		\begin{enumerate}
			\item\label{item:Exact1} The canonical extriangulated structure on $H^0(\E)$ (see \cref{theorem:CanonExt}) is an exact structure. 
			\item\label{item:Abelian1} If $\E$ is an abelian $1$-category, then $H^0(\E)$ is abelian.
		\end{enumerate}
	\end{lemma}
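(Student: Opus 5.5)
The plan is to reduce both statements to the embedding $\E\hookrightarrow\D^b_{dg}(\E)$ of \cref{EmbeddingTheorem}. Since $\E$ is an exact $1$-category it is in particular connective and $1$-truncated, so $H^i\E(x,y)=0$ for $i\neq 0$. Hence $\E\to H^0(\E)$ is a quasi-equivalence, and by \cref{EmbeddingTheorem}(1) we may regard $H^0(\E)$ as a full extension-closed subcategory $H^0(\D')$ of the triangulated category $\D^b(\E)$ satisfying $\Hom_{\D^b(\E)}(X,Y[n])=0$ for $n<0$ and $X,Y\in H^0(\E)$. By construction, the canonical extriangulated structure on $H^0(\E)$ (\cref{theorem:CanonExt}) is the one induced by triangles of $\D^b(\E)$ with all three terms in $H^0(\E)$, by \cref{EmbeddingTheorem}(2).

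For \ref{item:Exact1} I would invoke the standard criterion of Nakaoka--Palu: an extriangulated category is exact exactly when inflations are monomorphisms and deflations are epimorphisms, since then every conflation is a kernel--cokernel pair. For a conflation $A\xrightarrow{x}B\xrightarrow{y}C$, choose the corresponding triangle $A\to B\to C\xrightarrow{\delta} A[1]$ in $\D^b(\E)$. If $g\colon W\to A$ with $W\in H^0(\E)$ satisfies $xg=0$, then $g$ factors through $C[-1]\to A$. But $\Hom(W,C[-1])=0$ by negative vanishing, so $g=0$. Dually, $\Hom(A[1],W)=0$ makes $y$ an epimorphism. The long exact Hom sequences then also identify $x$ as a kernel of $y$ and $y$ as a cokernel of $x$ inside $H^0(\E)$. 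Rather than appealing to the criterion, I would verify Quillen's axioms directly. Stability of conflations under composition comes from (ET4). Pullbacks along deflations and pushouts along inflations come from Ex2 and $\mathrm{Ex2}^{op}$, where homotopy pullbacks become honest pullbacks in $H^0$ because of $1$-truncatedness: the defining isomorphism on $H^{\le 0}$ reduces to $H^0$, which is exactly the universal property.

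For \ref{item:Abelian1}, an abelian $1$-truncated dg category is by definition one with homotopy kernels and cokernels where, since $d-1=0$, the hypotheses $\Omega^{0}(\hker f)=0$ and $\Sigma^0(\hcoker f)=0$ just say $\hker f=0$, respectively $\hcoker f=0$. First, $H^0(\E)$ is additive. Second, homotopy (co)kernels are genuine (co)kernels in $H^0(\E)$, again because the mapping complexes are concentrated in degree $0$, so the $H^{\le 0}$ condition is the $H^0$ universal property. Third, axioms (2) and (3) translate to: every monomorphism is the kernel of its cokernel, and every epimorphism is the cokernel of its kernel. Here one must check that $f$ is mono in $H^0(\E)$ iff $\hker f=0$, which follows from the kernel property. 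This is the classical definition of an abelian category.

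The main obstacle is the careful identification of homotopy (co)cartesian squares in a $1$-truncated dg category with ordinary (co)cartesian squares in $H^0$. This includes handling the degree $-1$ homotopy $h$, which must be shown to be irrelevant up to equivalence. I would first prove this as a preliminary observation, using that $\tau^{\le0}$ of the cocone has cohomology only in degree $0$, computed by the left exact sequence $0\to\Hom(A,X_{00})\to\Hom(A,X_{10}\oplus X_{01})\to\Hom(A,X_{11})$.
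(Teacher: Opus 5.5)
Your proposal is correct. Part (1) is essentially the paper's argument made explicit, while part (2) takes a genuinely different route.

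\textbf{Part (1).} The paper notes that the negative first extension $\EE^{-1}(X,Y)=\Hom_{\D^b(\E)}(X,Y[-1])=H^{-1}(\E)(X,Y)$ vanishes and cites \cite[Prop.~2.6]{AET23}. You unpack that citation: vanishing negative morphisms in $\D^b(\E)$ make inflations monic and deflations epic, and the Nakaoka--Palu criterion then gives exactness. Once you invoke that criterion, your additional direct check of Quillen's axioms is redundant, though harmless.

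\textbf{Part (2), the paper's route.} The paper uses the exact structure from part (1). It combines this with \cite[Thm.~3.21]{Moc25}, which says every morphism of $H^0(\E)$ factors as a deflation followed by an inflation. It then applies \cite[Exercise~8.6]{Bue10}: an exact category in which every morphism is admissible is abelian.

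\textbf{Part (2), your route.} You translate the axioms of an abelian $1$-truncated dg category directly into the classical definition: additive, kernels and cokernels exist, every monomorphism is a kernel, and every epimorphism is a cokernel. The key observation is that for $d=1$ the hypotheses $\Omega^{0}(\hker f)=0$ and $\Sigma^{0}(\hcoker f)=0$ say exactly that $f$ is monic, respectively epic, in $H^0(\E)$. This uses that homotopy kernels are honest kernels there.

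\textbf{Comparison.} Your route is self-contained and more elementary. It needs neither part (1) nor Mochizuki's factorization theorem, which is a result for general $d$. It does rest on the preliminary identification of homotopy (co)cartesian squares in a $1$-truncated dg category with (co)cartesian squares in $H^0$. Your sketch of that step is the right argument: in the long exact sequence for $\cocone((-g_1,g_2))$, the vanishing of $H^{-1}\E(A,X_{11})$ makes $H^0$ of the cocone the kernel of $\Hom(A,X_{10}\oplus X_{01})\to\Hom(A,X_{11})$, so the homotopy $h$ plays no role beyond making the square commute in $H^0$. The paper's route is shorter given the citations, and it stays within the exact/extriangulated framework used throughout that section.
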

	
	\begin{proof}
		We first prove \ref{item:Exact1}. With the canonical extriangulated structure $H^0(\E)$ is an extriangulated category with first negative extension 
		\[\EE^{-1}(X,Y):=\Hom_{\D^b(\E)}(X,Y[-1])=H^{-1}(\E)(X,Y)=0,\]  
		as $\E$ is a $1$-category. The assertion now follows from \cite[Prop.~2.6]{AET23}.
		
		In order to prove \ref{item:Abelian1}, we observe that by \cite[Thm.~3.21]{Moc25} every morphism $f$ in the exact category $H^0(\E)$ has a factorization $f=m\circ e$ with $e$ a deflation and $m$ an inflation and hence by \cite[Exercise~8.6]{Bue10} $H^0(\E)$ is abelian.
	\end{proof}
	
	Let $\C$ be a class of objects in $\E$. Following \cite{Z20} we denote by $\define{\Filt(\C)}$ the full subcategory of objects $X$ in $\E$ that are \definef{filtered by $\C$}, that is there exist objects $X_1,\ldots, X_n=X$ in $\E$, objects $C_1,\ldots,C_n$ in $\C$ and conflations 
	\[0\infl X_1\defl C_1\ext, \quad X_i\infl X_{i+1}\defl C_{i+1}\ext \text{ for } i>1.\]
	By \cite[Lem.~3.2]{Z20} $\Filt(\C)$ is the smallest extension-closed subcategory of $\E$ containing $\C$.
	
	We say that $\E$ is \definef{$d$-filtered} if $\dis{\E}[0,\ldots ,d-1]\subseteq\E$ (in $\D^b_{dg}(\E)$) and 
	\[\E=\Filt(\{X[i]:X\in\dis{\E},0\leq i\leq d-1\}).\]
	
	\begin{ex}\label{example:D-ComplicialExact}
		\begin{enumerate}
			\item An exact category $\E$ is $1$-filtered, since $\dis{\E}=\E$.
			\item Every abelian $d$-truncated dg category is $d$-filtered. Indeed, by \cref{prop:CharAb} every abelian $d$-truncated dg category can be identified with $\D^b_{dg}(\A)^{(-d,0]}$ for some $t$-structure $t$ on $\D^b_{dg}(\A)$. Then $\dis{\A}=(\D^b_{dg}(\A)^{<0})^{\perp}\cap\D^b_{dg}(\A)^{\leq 0}$ can be identified with $\heart_t$ and 
			\[\D^b(\A)^{(-d,0]}=\heart_t[d-1]\ast \ldots \ast \heart_t[1] \ast \heart_t.\]
		\end{enumerate}
	\end{ex}
	
	We assume $\E$ to be $d$-filtered in the following. For a class of objects $\C$ in $\dis{\E}$ and $1\leq n\leq d$, we define
	\[\define{\Filt_n(\C)}\coloneqq \Filt\left(\left\{ C[i]:C\in \C,0\leq i\leq n-1 \right\}\right)\subseteq \E.\]
	
	\begin{rema}\label{rema:Filt_n}
		In the above setting, there is an equality
		\[\Filt_n(\C)=\Filt_n(\Filt(\C)).\]
		Indeed, the inclusion ``$\subseteq$'' holds by definition, while the reverse inclusion holds using that we have the inclusion
		\[\{F[i]:F\in\Filt(C),1\leq i\leq n-1\}\subseteq \Filt_n(C),\]
		by shifting the triangles in a filtration of $F$.
	\end{rema}

	\begin{defi}
	Let $\E$ be $d$-filtered and $n\in\mathbb{N}$ such that $1\leq n\leq d$. We call a set of objects $\DL\subseteq\E$ an \definef{$n$-semibrick} if
	 \begin{enumerate}
	 	\item $\{L,L[1],\ldots,L[n-1]:L\in\DL\}\subseteq \E$,
	 	\item $H^{i}(\E)(L,L')=0$ for all $L,L'\in\DL$ and $i<0$, and
		\item for $L,L'\in\DL$
		\[H^0(\E(L,L'))=\begin{cases}0 & \text{ if }L\neq L', \\ \text{division algebra} &\text{ else.} \end{cases}\]
	\end{enumerate}
	We fix a complete collection of representatives \definef{$\Simples(\dis{\E})$} of the isomorphism classes of simple objects of the exact category $\dis{\E}$. We call $\E$ a \definef{length exact $d$-category} if $\Simples(\dis{\E})$ is a set and 
	\[\E=\Filt_d(\Simples(\dis{\E})).\]
	\end{defi}
	
	\begin{rema}
		Observe that the above definition of simples does not agree with either definition of simples in extriangulated categories given in \cite[Sec.~3]{WWZZ22} or \cite[Def.~3.1]{BHST24}. The reason for this is that we want to capture the case in which objects in $\E$ can be built from shifts of simple objcts in $\dis{\E}$. As illustrated in \cref{example:Simples}, simple objects in $\dis{\E}$ are not necessarily simple in the extriangulated category $H^0(\E)$ in the sense of the above references, hence the above notions are not suited for the properties we wish to study in this article.
	\end{rema}
	
	Let $1\leq n\leq d$. We call a full, extension-closed, exact dg subcategory $\W\subseteq \E$ \definef{wide $n$-subcategory} if $\W$ is abelian $n$-truncated with respect to the induced exact structure. For an wide $n$-subcategory $\W\subseteq\E$ the universal property of $\D^b_{dg}(\W)$ yields a canonical inclusion functor $\D^b_{dg}(\W)\hookrightarrow \D^b_{dg}(\E)$. Hence, we can and we will identify $\D^b_{dg}(\W)$ with a full exact dg subcategory of $\D^b_{dg}(\E)$.
	
	We obtain the following generalization of \cite[Thm.~2.5]{Eno21}. A variant of the proof we present here was explained in the case $d=1$ by Bodzenta at the Hausdorff Trimester Program: Symplectic Geometry and Representation Theory at the Hausdorff Center for Mathematics in Bonn in 2017; we were, however, not able to find it in the literature.
	
	\begin{theorem}\label{theorem:Enomoto}
		Let $\E$ be a length exact $d$-category and $1\leq n\leq d$. Then the assignments $\DL\mapsto \Filt_n(\DL)$ and $\W\mapsto \Simples(\dis{\W})$ define mutually inverse bijections between the following two classes.
		\begin{enumerate}
			\item The class of $n$-semibricks in $\E$.
			\item The class of length wide $n$-subcategories of $\E$.
		\end{enumerate}
	\end{theorem}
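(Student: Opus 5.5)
We have to check that both assignments land in the right class and that they are mutually inverse. Throughout we work inside $\T\coloneqq\D^b(\E)$, where $\E$ sits as an extension-closed subcategory (after applying $\tau^{\leq 0}$, by \cref{EmbeddingTheorem}).

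\emph{Step 1: from an $n$-semibrick to a length wide $n$-subcategory.} Let $\DL$ be an $n$-semibrick. Conditions (2) and (3) say that $\DL$ is a simple-minded collection in $\thick_{\T}(\DL)$: the negative extensions vanish, and $\Hom$-spaces are division algebras or zero. By \cite[Thm.~4.4]{Sch20} we then get a bounded $t$-structure on $\thick(\DL)$ with length heart $\Filt(\DL)$, whose simples are exactly $\DL$. Its $n$-extended heart is
\[
\Filt(\DL)[n-1]\ast\cdots\ast\Filt(\DL),
\]
and this equals $\Filt_n(\DL)$; the equality uses \cref{rema:Filt_n} together with the fact that $\Filt$ computes iterated extensions.

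Condition (1), together with extension-closedness of $\E$ in $\T$, gives $\Filt_n(\DL)\subseteq\E$. Since $\E$ is connective, we have $\tau^{\leq0}$ of the extended heart equal to the full subcategory of $\E$. By \cite[Thm.~3.44]{Moc25}, $\Filt_n(\DL)$ is therefore abelian $n$-truncated, hence a wide $n$-subcategory of $\E$.

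Its discrete part is $\heart=\Filt(\DL)$ (compare \cref{example:D-ComplicialExact}), so $\Simples(\dis{\Filt_n(\DL)})=\DL$ and the subcategory is length.

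One point needs checking: the dg structure induced from $\E$ must agree with the one induced from $\thick(\DL)\subseteq\T$. This holds because both are full subcategories of $\T$ with the $\tau^{\le0}$ truncation.

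\emph{Step 2: from a length wide $n$-subcategory to an $n$-semibrick.} Let $\W$ be a length wide $n$-subcategory and put $\DL\coloneqq\Simples(\dis{\W})$. By \cref{prop:CharAb} we can write $\W\simeq\D^b_{dg}(\W)^{(-n,0]}$ for a $t$-structure whose heart is $\dis{\W}$. This heart is an abelian $1$-category, by \cref{lemma:Exact1}.

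Simples in an abelian category satisfy Schur's lemma, which gives condition (3). Negative Ext between objects of a heart vanish, which gives condition (2). The shifts $L[i]$ for $0\le i\le n-1$ lie in the $n$-extended heart, hence in $\W\subseteq\E$, which gives condition (1).

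The $\Hom$-spaces and the shifts must be computed in $\E$ rather than in $\D^b(\W)$. Here we use that $\D^b_{dg}(\W)\to\D^b_{dg}(\E)$ is fully faithful on $\W$: $\W$ is full in $\E$, and \cref{GeneralisedRealFunctor}\ref{item1} controls degrees $\le1$. For shifts, the $\Sigma$ computed in $\W$ is a cone in $\D^b(\E)$ lying in $\E$.

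\emph{Step 3: the two assignments are mutually inverse.} Step 1 already shows $\Simples(\dis{\Filt_n(\DL)})=\DL$. For the other composite, the length hypothesis gives $\W=\Filt_n(\Simples(\dis{\W}))$. To see this, apply \cref{example:D-ComplicialExact} inside $\W$ to write $\W=\dis{\W}[n-1]\ast\cdots\ast\dis{\W}$, and observe that $\dis{\W}=\Filt(\DL)$ because $\dis{\W}$ is a length abelian category. Then \cref{rema:Filt_n} identifies the result with $\Filt_n(\DL)$.

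\emph{Main obstacle.} I expect the hardest part to be Step 1, specifically identifying $\Filt_n(\DL)$ as computed in $\E$ with the $n$-extended heart of $t_{\DL}$ on $\thick(\DL)$. This needs two things. First, the conflations of $\E$ among these objects must be exactly the triangles of $\T$, which holds because $\E$ is extension-closed in $\T$. Second, the iterated-extension description of the extended heart must hold, which I would prove by induction on the length of the $\DL$-filtration from \cref{t-str-ch}, reordering filtration steps by degree.
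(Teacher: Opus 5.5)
Your proof is correct and follows the paper's argument. An $n$-semibrick is shown to be a simple-minded collection, and \cite[Thm.~4.4]{Sch20} identifies $\Filt_n(\DL)$ with the $n$-extended heart of $t_{\DL}$, so \cite[Thm.~3.44]{Moc25} applies. Conversely, Schur's lemma in the heart $\dis{\W}$ gives the semibrick conditions, while $\W=\Filt_n(\Simples(\dis{\W}))$ is the length hypothesis.

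The only deviation is your choice of ambient category. You work in $\thick(\DL)\subseteq\D^b(\E)$, whereas the paper works in $\D^b(\W)$ and checks $\thick(\DL)=\D^b(\W)$. Both choices work, because in each case \cref{EmbeddingTheorem} realizes $\E$ (resp.\ $\W$) as $\tau^{\leq 0}$ of an extension-closed subcategory, and that is all the argument uses.
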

	
	\begin{proof}
		Let $\DL$ be a $n$-semibrick in $\E$. We prove that $\W\coloneqq \Filt_n(\DL)$ is a wide $n$-subcategory and $\Simples(\dis{\W})=\DL$. First observe that $\W= \Filt_n(\DL)\subseteq \E$ is clearly an extension closed exact dg subcategory of $\E$.
		
		We claim that $\DL$ defines a simple-minded collection in $\D^b_{dg}(\W)$ and for the corresponding $t$-structure $t$ on $\D^b_{dg}(\W)$, the extended heart $\D^b_{dg}(\W)^{(-n,0]}$ can be identified with $\W$ via the universal embedding. From this claim we get that $\W$ is an abelian $n$-category by \cite[Thm.~3.44]{Moc25}, since it is an extended heart of a $t$-structure. Moreover, under this identification the simple objects in $\dis{\W}=\heart_t$ are the objects of $\DL$ by \cite[Thm.~4.4]{Sch20}. Hence, it is enough to prove the above claim.
		
		We first observe that since $\DL$ is a $n$-semibrick, for $L,L'\in\DL$,
		\[\Hom_{\D^b(\W)}(L,L')\cong H^0(\W)(L,L')= H^0(\E)(L,L') =\begin{cases}0 & \text{ if }L\neq L',\\ \text{ division algebra} & \text{ if } L=L'.  \end{cases}\]
		Moreover, for all $L,L'\in\DL$ and $k>0$,
		\[\Hom_{\D^b(\W)}(L,L'[-k])=H^{-k}(\E)(L,L')=0.\]
		At last, $\thick(\DL)=\D^b(\W)$ since $\W=\Filt_n(\DL)\subseteq\thick(\DL)$ and thus
		\[\D^b(\W)=\thick(\W)\subseteq\thick(\DL).\]
		
		Finally, we need to show that $\W$ can be identified with $\D^b_{dg}(\W)^{(-n,0]}$. Recall that for the $t$-structure corresponding to $\DL$, the heart is given by $\heart_t=\Filt(\DL)$, the simple objects in $\heart_t$ are precisely the objects of $\DL$ and 
		\[\D^b(\W)^{(-n,0]}=\Filt(\DL)[n-1]\ast\Filt(\DL)[n-2]\ast\cdots \ast \Filt(\DL)\subseteq\Filt_n(\DL)=\W.\] 
		Since $\D^b(\W)^{(-n,0]}$ is extension-closed, and contains $\DL[i]$ for $0\leq i\leq n-1$, the reverse inclusion holds as well. This finishes the proof of the statement that $\DL\mapsto \Filt_n(\DL)$ is well-defined and a right inverse of $\W\mapsto \Simples(\dis{\W})$.
		
		Assume now $\W\subseteq \E$ is a length wide $n$-subcategory. Then $\W$ is a length abelian $n$-truncated dg category and $\DL\coloneqq\Simples(\dis{\W})$ is a $1$-semibrick by applying Schur's Lemma to $H^0(\dis{\W})$, which is an abelian category by \cref{lemma:Exact1}. Since $\DL\subseteq \dis{\W}$, for $L,L'\in\DL$ and $i<0$,
		\[H^{i}(\E)(L,L')=H^{i}(\W)(L,L')=0.\]
		Finally, since $\W$ is $n$-filtered, for $L\in\DL$ and $0\leq i<n$ we have $L[i]\in\W$.
		This yields that the map $\W\mapsto \DL$ is well defined. As $\W$ is length, $\Filt_n(\DL)=\W$. Thus, $\DL\mapsto \Filt_n(\DL)$ is also a left inverse of $\W\mapsto \Simples(\dis{\W})$.
	\end{proof}
	
	\begin{coro}\label{coro:AbelianSchur}
		Let $\mathcal{E}$ be a length exact $d$-category. Then the following are equivalent:
		\begin{enumerate}
			\item\label{AbelianCond} $\mathcal{E}$ is an abelian $d$-truncated dg category,
			\item\label{SchurCond2} $\Simples(\dis{\E})$ is a $d$-semibrick,
			\item\label{SchurCond1} $\Simples(\dis{\E})$ is a semibrick in the exact category $H^0(\dis{\E})$.
		\end{enumerate}
	\end{coro}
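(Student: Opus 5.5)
The plan is to prove the cycle \ref{AbelianCond}$\Rightarrow$\ref{SchurCond1}$\Rightarrow$\ref{SchurCond2}$\Rightarrow$\ref{AbelianCond}. Throughout, $\Simples(\dis{\E})$ is regarded inside $\D^b_{dg}(\E)$, with $\tau^{\leq 0}\E$ identified with an extension-closed subcategory as in \cref{EmbeddingTheorem}.

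\ref{SchurCond2}$\Rightarrow$\ref{AbelianCond}: apply \cref{theorem:Enomoto} with $n=d$ and $\DL=\Simples(\dis{\E})$. It shows that $\Filt_d(\DL)$ is a (length) wide $d$-subcategory of $\E$, i.e.\ abelian $d$-truncated for the induced exact structure. Since $\E$ is a length exact $d$-category, $\E=\Filt_d(\Simples(\dis{\E}))$. Hence $\E$ itself is abelian $d$-truncated.

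\ref{AbelianCond}$\Rightarrow$\ref{SchurCond1}: by \cref{prop:CharAb}, identify $\E$ with $\D^b_{dg}(\E)^{(-d,0]}$ for a $t$-structure $t$. As in \cref{example:D-ComplicialExact}, $\dis{\E}$ is then identified with $\heart_t$, so $H^0(\dis{\E})$ is abelian (alternatively, use \cref{lemma:Exact1}\ref{item:Abelian1}). Its exact structure is the canonical abelian one, because conflations in $\dis{\E}$ are exactly the triangles in $\D^b(\E)$ with all terms in the heart, i.e.\ short exact sequences in $\heart_t$. Schur's lemma in $\heart_t$ then shows that the simples form a semibrick. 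The only care needed is that ``simple in the exact category $\dis{\E}$'' coincides with ``simple in the abelian category $\heart_t$''. This holds because inflations in $\dis{\E}$ are precisely the monomorphisms of $\heart_t$.

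\ref{SchurCond1}$\Rightarrow$\ref{SchurCond2}: check the three axioms of a $d$-semibrick for $\DL=\Simples(\dis{\E})$. The third axiom is the semibrick hypothesis, since $H^0(\E)(L,L')=H^0(\dis{\E})(L,L')$. The second holds because objects of $\dis{\E}$ satisfy $H^{i}\E(-,L')=0$ for $i\neq 0$, in particular for $i<0$. For the first, $d$-filteredness of $\E$ (which is part of $\E$ being length) gives $\dis{\E}[0,\ldots,d-1]\subseteq\E$, so $L[i]\in\E$ for $0\le i\le d-1$.

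The main obstacle is the identification step in \ref{AbelianCond}$\Rightarrow$\ref{SchurCond1}. One must confirm that the simples of the exact category $\dis{\E}$, as defined intrinsically, agree with the simples of $\heart_t$ under \cref{prop:CharAb}, and that $\dis{\E}$ corresponds to $\heart_t$ rather than a larger subcategory. To settle this, I would argue that $X\in\D^b(\E)^{(-d,0]}$ has $H^{n}\E(-,X)=0$ for all $n\neq 0$ exactly when $X$ is right orthogonal to $\D^{<0}\cap\D^{(-d,0]}$. Since the relevant shifts of the heart lie in the extended heart, this forces $X\in\D^{\geq0}$.
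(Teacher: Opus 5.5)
Your proof is correct and follows the paper's argument: (2)$\Rightarrow$(1) comes from \cref{theorem:Enomoto} with $n=d$ together with $\Filt_d(\Simples(\dis{\E}))=\E$, and (3)$\Rightarrow$(2) holds because the first two $d$-semibrick axioms are automatic for a length exact $d$-category. For (1)$\Rightarrow$(3) the paper just invokes the other half of \cref{theorem:Enomoto}, whose proof is exactly your Schur's lemma argument in the abelian category $H^0(\dis{\E})\simeq\heart_t$; your explicit identification of $\dis{\E}$ with $\heart_t$, and of its simples with the simples of $\heart_t$, spells out what the paper takes from \cref{example:D-ComplicialExact}.
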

	
	\begin{proof}
		Let $\DL\coloneqq \Simples(\dis{\E})$. By definition $\Filt_d(\DL)=\E$. Thus by \cref{theorem:Enomoto}, $\E$ is a length wide $d$-subcategory of $\E$ if and only if $\DL$ is a $d$-semibrick. However, the first two conditions of a $d$-semibrick are automatic by the assumption that $\E$ is a length exact $d$-category and $\DL\subseteq\dis{\E}$. Hence, in \ref{SchurCond1}, it is enough to check that $\DL$ is a semibrick in $\dis{\E}$. Moreover, by definition, a length exact $d$-category is a wide $d$-subcategory of itself precisely when it is abelian $d$-truncated.
	\end{proof}
	
	As a corollary of the proof above we obtain an explicit description of the $t$-structure on $\D^b_{dg}(\A)$ that lets us identify $\A$ and $\D^b_{dg}(\A)^{(-d,0]}$ in the case of length abelian $d$-truncated dg categories.
	
	\begin{coro}\label{AbelianEmbedding}
		Let $\A$ be a length abelian $d$-truncated dg category. Then $\Simples(\dis{\A})\subseteq \D^b_{dg}(\A)$ is a simple-minded collection and the universal embedding $\A\to \D^b_{dg}(\A)$ induces a quasi-equivalence
		\[\A\stackrel{\simeq}{\longrightarrow}\D^b_{dg}(\A)^{(-d,0]}\]
		for the corresponding $t$-structure, which we denote by \definef{$t_{\Simples}$}.
	\end{coro}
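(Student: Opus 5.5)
The plan is to apply the argument from the first half of the proof of \cref{theorem:Enomoto} with $\E=\W=\A$ and $n=d$. Set $\DL\coloneqq\Simples(\dis{\A})$. Since $\A$ is a length abelian $d$-truncated dg category, \cref{coro:AbelianSchur} shows that $\DL$ is a $d$-semibrick in $\A$. Being length also gives $\A=\Filt_d(\DL)$.

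The first step is to verify that $\DL$ is a simple-minded collection in $\D^b(\A)$, and this is the same computation as in the proof of \cref{theorem:Enomoto}. By \cref{EmbeddingTheorem}(1), the universal embedding identifies $\A$ with $\tau^{\leq 0}$ of an extension-closed subcategory of $\D^b_{dg}(\A)$. Hence for $L,L'\in\DL$ and $k\geq 0$ we have
\[\Hom_{\D^b(\A)}(L,L'[-k])\cong H^{-k}(\A)(L,L').\]
This vanishes for $k>0$, because $L\in\dis{\A}$ (equivalently, by the second semibrick axiom). For $k=0$ it is zero if $L\neq L'$ and a division algebra if $L=L'$. Every object of $\DL$ is nonzero, so nonzero morphisms between objects of $\DL$ are isomorphisms. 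For generation, note that $\A=\Filt_d(\DL)\subseteq\thick(\DL)$. Moreover $\D^b(\A)=\mathrm{tria}(H^0(\A))$, by the Verdier quotient description together with \cref{theorem:Co-T-Structure}. Therefore $\D^b(\A)=\thick(\DL)$. By \cite[Thm.~4.4]{Sch20} we obtain the bounded $t$-structure $t_{\Simples}=t_{\DL}$, whose heart is $\Filt(\DL)$.

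The second step is to identify the $d$-extended heart. By the description of $t_{\DL}$ via filtrations,
\[\D^b(\A)^{(-d,0]}=\Filt(\DL)[d-1]\ast\cdots\ast\Filt(\DL).\]
Each factor $\Filt(\DL)[i]$ with $0\leq i\leq d-1$ lies in $\Filt_d(\DL)=\A$ by \cref{rema:Filt_n}, and $\A$ is extension-closed in $\D^b(\A)$. Hence $\D^b(\A)^{(-d,0]}\subseteq\A$. Conversely, $\A=\Filt_d(\DL)$ is built by extensions from the objects $L[i]$ with $0\leq i\leq d-1$. These lie in $\D^b(\A)^{(-d,0]}$, which is extension-closed as an intersection of an aisle and a shifted coaisle. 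So the two subcategories coincide as subcategories of $\D^b(\A)$.

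The remaining step, which I expect to be the main subtle point, is to upgrade this equality of objects to a quasi-equivalence of dg categories $\A\to\D^b_{dg}(\A)^{(-d,0]}=\tau^{\leq0}(\D^b_{dg}(\A)^{\leq 0}\cap\D^b_{dg}(\A)^{>-d})$. Here I would again use \cref{EmbeddingTheorem}(1): the embedding $\A\to\tau^{\leq0}\D'$ is a quasi-equivalence onto its image. Its image is quasi-fully-faithful, and we have just shown it has the same essential image. It remains to check that the morphism complexes agree after applying $\tau^{\leq0}$, and this holds because both are full dg subcategories of $\D^b_{dg}(\A)$ truncated by $\tau^{\leq0}$. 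Alternatively, one could use \cref{prop:CharAb} directly and check that its $t$-structure has heart $\dis{\A}$ with simples $\DL$, so that it coincides with $t_{\DL}$ by uniqueness of bounded $t$-structures with a given length heart.
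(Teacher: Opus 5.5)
Your proposal is correct and follows essentially the paper's argument. The paper also derives this as a corollary of the first half of the proof of \cref{theorem:Enomoto}: it combines \cref{coro:AbelianSchur} with the quasi-fully-faithfulness of the universal embedding after $\tau^{\leq 0}$ to obtain the simple-minded collection, and then identifies $\Filt_d(\Simples(\dis{\A}))$ with $\heart_{t_{\Simples}}[d-1]\ast\cdots\ast\heart_{t_{\Simples}}=\D^b_{dg}(\A)^{(-d,0]}$. One small slip: the vanishing of $H^{-k}(\A)(L,L')$ for $k>0$ comes from the target $L'$ being discrete, since discreteness is defined via $H^n\A(-,X)$, though your appeal to the semibrick axiom already covers this.
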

	
	\begin{proof}
		Since the universal embedding induces a quasi-equivalence 
		\[\tau^{\leq 0}\A(X,Y)\stackrel{\simeq}{\to} \tau^{\leq 0}\D^b_{dg}(\A)(X,Y) \text{ for } X,Y\in\A,\] \cref{coro:AbelianSchur} implies that $\Simples(\dis{\A})\subseteq\D^b_{dg}(\A)$ is a simple-minded collection and thus the universal embedding induces a quasi-equivalence
		\[\A\stackrel{\simeq}{\longrightarrow} \Filt_d(\Simples(\dis{\A}))=\heart_{t_{\Simples}}[d-1]\ast\heart_{t_{\Simples}}[d-2]\ast \cdots \ast \heart_{t_{\Simples}}=\D^b_{dg}(\A)^{(-d,0]}. \qedhere\] 
	\end{proof}

	\begin{ex}
		We have seen in \cref{example:D-ComplicialExact} that every abelian $d$-truncated dg category is $d$-filtered. This is not the case for exact $d$-truncated dg categories. One possible explanation for this defect is that exact $m$-truncated dg categories are exact $n$-truncated dg categories for every $m\geq n$, while this is never the case for an abelian $m$-truncated dg category that is not the $0$ category.
		
		Consider for example any exact category $\E$. We can view $\E$ as an exact $1$-category. Then 
		\[\dis{\E}[0,\ldots,1-1]=\dis{\E}\subseteq \E\]
		and hence $\E$ is $1$-filtered. However, $\E$ is an exact $d$-truncated dg category for any $d\geq 1$, but for $d>1$ we have
		\[\left\{L[n]\in\D^b(\E):L\in\Simples(\dis{\E}), 0\leq n<d\right\}\nsubseteq \E,\]
		since $\Hom_{\D^b(\E)}(L[n],\E)\cong\Hom_{\D^b(\E)}(L,\E[-n])=0$. Thus $\E$ is not $d$-filtered for $d>1$.
	\end{ex}
	
	\begin{ex}\label{example:Simples}
		\leaveout{The following two examples illustrate that, even for abelian $d$-categories, suspensions of objects in $\DL$ might be simple in the extriangulated sense, but do not have to.
		
		$\D^b_{dg}(k)^{(-d,0]}$: all shifts of simples are simple, since the only triangles involving $k$ are up to rotation given by
		\[k\to k\oplus k \to k \to k[1], \quad k\to 0 \to k[1] \to k[1]\]
		
		$\D^b_{dg}(k\Tilde{A}_3/\rad^2)^{(-d,0]}$ where $\Tilde{A}_2$ is the cyclic Nakayama algebra with $2$ simples: In this second example, no positive shift of a simple is simple in the extriangulated sense. The reason are the following conflations in $\D^b_{dg}(k\Tilde{A}_3/\rad^2)^{(-d,0]}$ for every $0<i<d$:
		\[2[i-1]\infl 1[i] \defl \begin{smallmatrix} 2\\1 \end{smallmatrix}[i] \ext, \quad 
		1[i-1] \infl 2[i] \defl \begin{smallmatrix} 1\\2 \end{smallmatrix} \ext.\]}
		
		The following illustrates why the notion of a (sub-object) simple object in an extriangulated category introduced in \cite[Def.~3.1]{BHST24} is not suited for our purposes.
		
		Let $\C$ be any pretriangulated dg category with a $t$-structure $t$ and $d>1$. Let $S\in\heart_t$, possibly a simple object. Then 
		\[S\oplus S \stackrel{\pi_2}{\infl} S \defl S[1] \ext\]
		is a conflation in $\E\coloneqq\C^{(-d,0]}$ and the first morphism (the projection to the second summand) is neither $0$ nor an isomorphism. This means that the simple objects of $\dis{\E}$ are not simple in the sense of \cite[Def.~3.1]{BHST24}.
	\end{ex}

	\section{Length abelian $d$-truncated dg categories with enough projectives}
	
	In what follows, $\A$ denotes a length abelian $d$-truncated dg category with finitely many simples $\Simples\coloneqq\Simples(\dis{\A})$, and $H^0(\A)$ its homotopy category with its canonical extriangulated structure. By \cref{AbelianEmbedding}, we can and we will identify $\A$ with $\D^b_{dg}(\A)^{(-d,0]}\eqqcolon \Hc{\A}{d}$. 
	
	Recall from section \ref{sec:Realisation} that we call an object $P\in\A$ \definef{projective} if $P$ is projective in the extriangulated category $H^0(\A)$, that is $\EE(P,-)=0$. We denote by $\define{\Proj(\A)}\subseteq \A$ the subcategory of projective objects in in $\A$. We say that \definef{$\A$ has enough projectives}, if the extriangulated category $H^0(\A)$ has enough projectives, that is for all $X\in H^0(\A)$ there exists $P\in\Proj(\A)$ and a deflation 
	\[P\defl X.\]
	We say that $P\in\Proj(\A)$ is a projective generator if for all $X\in\A$ there exists a deflation of the form $P^n\defl X$ for some $n\in\NN$.
	
	The goal of this chapter is to prove an analogue of the classical statement that a length abelian category with finitely many simples is equivalent to the module category of a finite-dimensional algebra if and only if it has enough projectives if and only if it has globally bounded Loewy-length (see \cite{Ga62,Ga73}). 
	
	The following theorem from \cite{F25} characterizes pretriangulated dg categories that are quasi-equivalent to $\fd_{dg}(A)$ for a connective, locally-finite dg algebra $A$. We explain how to restrict to proper connective dg algebras in order to characterize length abelian $d$-truncated dg categories with enough projectives.
	
	\begin{theorem}[{\cite[Thm.~F]{F25}}]
		A locally-finite pretriangulated dg category $\C$ is equivalent to $\fd_{dg}(A)$ for some locally-finite connective dg algebra $A$ if and only if there is a bounded $t$-structure on $\C$ with length heart which has a projective generator.
	\end{theorem}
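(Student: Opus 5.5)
The forward implication is a direct check; the converse I would prove by constructing the connective algebra $A$ as the endomorphisms of a ``projective cover of the simples'' that in general exists only as an inverse limit of objects of $\C$.

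\emph{Forward direction.} For a locally-finite connective $A$, equip $\fd_{dg}(A)$ with the standard $t$-structure. It is bounded, because $H^*(M)$ is finite-dimensional for every $M\in\fd(A)$. Its heart is the category of finite-dimensional $H^0(A)$-modules. Since $H^0(A)$ is finite-dimensional, this heart is length and has the projective generator $H^0(A)$. Local finiteness of $\fd_{dg}(A)$ follows from the usual dévissage along the $t$-structure, using that $\Hom_{\fd(A)}(S,S'[n])$ is finite-dimensional for simple modules $S,S'$ because $A$ is locally finite.

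\emph{Converse: setup.} Let $\heart$ be the heart, $Q$ a projective generator and $L$ the direct sum of the simple objects. There are finitely many simples, since each is a quotient of $Q$. As in \cref{t-str-ch}, boundedness and the length property give $\C=\thick(L)$, and $t=t_{\DL}$ for $\DL$ the simples. Put $E\coloneqq\C(L,L)$. It is coconnective because $\Hom(L,L[n])=0$ for $n<0$, locally finite because $\C$ is, and $H^0(E)=\End(L)$ is a product of division algebras, hence semisimple. The universal property gives a $t$-exact quasi-equivalence $\C\simeq\per_{dg}(E)$ sending $L$ to $E$. By \cref{Thm:Fushimi_4.3} and \cref{Thm:Fushimi_4.4}, it then suffices to show that $\fd(E)$ is $\Hom$-finite. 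I would instead build $A$ directly, which also explains where the projective generator enters.

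\emph{Converse: construction of $A$.} Lift $Q$ to an object $P_1\in\heart$. Inductively, I would construct objects $P_n\in\C^{(-n,0]}$ and maps $P_{n+1}\to P_n$ such that:
\begin{itemize}
\item $\trunc^{>-n}P_{n+1}\cong P_n$;
\item $\Hom(P_n,X[k])=0$ for all $X\in\heart$ and $1\leq k\leq n$.
\end{itemize}
To pass from $P_n$ to $P_{n+1}$, I would take the finite-dimensional space $\Hom(P_n,L[n+1])$ (finite by local finiteness) and form the cocone of the universal map $P_n\to L[n+1]\otimes\Hom(P_n,L[n+1])^*$. This kills the degree-$(n+1)$ obstruction against $L$, hence against all of $\heart$ by dévissage along composition series. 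Projectivity of $H^0(P_1)=Q$ supplies the base case $k=1$.

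Then set $A\coloneqq\lim_n\C(P_n,P_n)$, realised as a dg algebra via a homotopy limit. Its $H^{-i}$ stabilises once $n>i+1$, so $A$ is connective and locally finite with $H^0(A)=\End(Q)$. The functor $X\mapsto\colim_n\C(P_n,X)$ defines an exact dg functor $\C\to\D_{dg}(A)$. For $X\in\C^{(-m,0]}$ the colimit stabilises at $n\approx m$, so the functor is computed by a single $P_n$, and it is $t$-exact since $\Hom(P_n,-)$ kills positive shifts of the heart. It sends simples to simple $H^0(A)$-modules and is fully faithful on $\{L[i]\}$, again by stabilisation, so by dévissage it is a quasi-equivalence onto $\thick(S_A)=\fd(A)$.

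\emph{Main obstacle.} The delicate step is the stabilisation claim that $\C(P_n,X)\to\C(P_{n+1},X)$ is a quasi-isomorphism in the relevant degrees for $n$ large relative to $X$. This is what makes both the limit algebra $A$ and the comparison functor well behaved. My first attempt would be to prove it from the triangle $L[n]\otimes V\to P_{n+1}\to P_n$, with $V$ finite-dimensional, combined with the vanishing $\Hom(L[n],X[j])=0$ for $j$ in the appropriate range whenever $X\in\C^{(-m,0]}$ and $n>m+1$.
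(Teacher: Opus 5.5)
The paper gives no proof of this statement; it quotes it from \cite{F25}, so I assess your argument on its own. Your forward direction is fine. So is your reduction of the converse to $\Hom$-finiteness of $\fd(E)$, which you do not prove. The direct construction of $A$ that you pursue instead fails at the inductive step. Suppose $P_{n+1}$ is the cocone of $u\colon P_n\to T[n+1]$ with $T=L\otimes\Hom(P_n,L[n+1])^*$ semisimple. The triangle $T[n]\to P_{n+1}\to P_n\xrightarrow{u}T[n+1]$ gives an exact sequence
\[\Hom(T,X)\xrightarrow{u^*}\Hom(P_n,X[n+1])\to\Hom(P_{n+1},X[n+1])\to\Ext^1_{\heart}(T,X)\xrightarrow{u^*}\Hom(P_n,X[n+2]).\]
Universality of $u$ kills only the cokernel of the first map; the new term $\Ext^1_{\heart}(T,X)$ survives. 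For example, take $\Lambda=k[y]/(y^2)$, $A=\Lambda[x]/(x^2)$ with $|x|=-1$, $\C=\fd_{dg}(A)$ and $P_1=Q=\Lambda$. Then $\Hom(P_1,X[j])\cong X$ for even $j\geq 0$ and $0$ otherwise. So $u\colon P_1\to k[2]$ is the nonzero map, and its cocone $P_2$ has $\Hom(P_2,k[2])\cong\Ext^1_{\Lambda}(k,k)=k\neq 0$.

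There is a second problem with $\otimes_k$: with several simples, or when $\End(L_i)\neq k$, the first $u^*$ is not injective on $\Hom(T,L_i)$. So even $\Hom(P_{n+1},L_i[n])\neq 0$ can occur. The correct fibre is $M_n[n]$, where $M_n\in\heart$ is in general not semisimple (in the example it should be $H^{-1}(A)\cong\Lambda$). Take $M_n$ representing the left exact functor $X\mapsto\Hom(P_n,X[n+1])$ on $\heart\simeq\Mod\End(Q)$. This representability is where the projective generator and local finiteness are really used, not only in the base case. Then $u^*\colon\Hom(M_n,-)\to\Hom(P_n,-[n+1])$ is an isomorphism on $\heart$. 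Comparing injective copresentations shows that $\Ext^1_{\heart}(M_n,-)\to\Hom(P_n,-[n+2])$ is injective, which gives the vanishing for $1\leq k\leq n+1$.

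Second, $A\coloneqq\lim_n\C(P_n,P_n)$ is not meaningful as stated, and connectivity does not follow from stabilisation of $H^{-i}$. There is no tower of dg algebras here, since pre- and postcomposition with $P_{n+1}\to P_n$ are not quasi-isomorphisms. Even for the correct $P_n=\trunc^{>-n}A$, each $\C(P_n,P_n)$ has positive cohomology that grows with $n$: for $A=k[x]$ with $|x|=-1$ one finds $H^j\C(P_n,P_n)\cong k$ for $2\leq j\leq n+1$.

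The right object is the endomorphism algebra of the pro-object $(P_n)_n$, e.g.\ the derived endomorphisms of the $\C$-module $\mathrm{hocolim}_n\C(P_n,-)$. Its cohomology is computed by $\lim_m\colim_n H^*\C(P_n,P_m)$. With the corrected $P_n$:
\begin{itemize}
\item $\Hom(P_n,P_m[j])=0$ for $j>0$ and $n\geq m+j$;
\item $\Hom(P_n,P_m[j])\cong\Hom(Q,H^j(P_m))$ for $j\leq 0$ and $n\geq m$.
\end{itemize}
This is the stabilisation you single out, and it gives connectivity, local finiteness and the module structure on $\colim_n\C(P_n,X)$. Finally, full faithfulness on $\{L[i]\}$ does not follow from stabilisation alone and still needs an argument, for instance via truncated resolutions of $L$ by the $P_n$ in a range depending on $i$.
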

	
	\begin{coro}\label{coro:d-simple-minded}
		Let $\C$ be a locally-finite pretriangulated dg category. Assume $\DL\subseteq \C$ is a finite $d$-complicial simple-minded collection and define
		\[L=\bigoplus_{L_i\in\DL}L_i, \quad E\coloneqq \C\left(L,L \right).\] 
		The following statements are equivalent:
		\begin{enumerate}
			\item\label{item:d-simple-minded1} $A\coloneqq \KoszulDual{E}$ is a proper connective dg algebra such that $H^*(A)$ is concentrated in degrees $(-d,0]$ and there is a $t$-exact quasi-equivalence
			\[(\Phi_A)^{-1}\circ \C(-,L)\colon\C \stackrel{\simeq}{\longrightarrow}\per_{dg}(E^{op})^{op}\stackrel{\simeq}{\longrightarrow}\fd_{dg}(A)\]
			with respect to $t_{\DL}$ and the standard $t$-structure respectively. 
			\item\label{item:d-simple-minded2} The heart $\heart_{t_{\DL}}$ has a projective generator.
		\end{enumerate}
	\end{coro}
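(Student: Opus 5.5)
The plan is to prove \ref{item:d-simple-minded1}$\Rightarrow$\ref{item:d-simple-minded2} directly and to obtain \ref{item:d-simple-minded2}$\Rightarrow$\ref{item:d-simple-minded1} by combining \cite[Thm.~F]{F25}, \cref{CorollarySilting} and Koszul duality (\cref{Thm:Fushimi_4.3}, \cref{Thm:Fushimi_4.4}). Throughout, $\C=\thick(\DL)$ because $\DL$ is a simple-minded collection. Hence $t_{\DL}$ is a bounded $t$-structure on $H^0(\C)$ whose heart is length, with simples $\DL$ (\cite[Thm.~4.4]{Sch20}). The compliciality hypothesis will be used in the form of \cref{compliciality}: the realization functor $\D^b_{dg}(\Hc{\C}{d})\to\C$ is a quasi-equivalence. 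This is the indexing that matches the degree range $(-d,0]$ in \ref{item:d-simple-minded1}.

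\ref{item:d-simple-minded1}$\Rightarrow$\ref{item:d-simple-minded2}: a $t$-exact quasi-equivalence $\C\simeq\fd_{dg}(A)$ identifies $\heart_{t_{\DL}}$ with the heart of the standard $t$-structure. Since $A$ is connective and proper, that heart is $\Mod H^0(A)$ with $H^0(A)$ finite-dimensional. Then $H^0(A)$ is a projective generator, and so is its preimage in $\heart_{t_{\DL}}$.

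\ref{item:d-simple-minded2}$\Rightarrow$\ref{item:d-simple-minded1}: since $\C$ is locally finite and $t_{\DL}$ is bounded with length heart having a projective generator, \cite[Thm.~F]{F25} gives a locally-finite connective dg algebra $A'$ and a quasi-equivalence $\C\simeq\fd_{dg}(A')$. I would check from its construction that this equivalence is $t$-exact, sending $t_{\DL}$ to the standard $t$-structure. Concretely, $A'=\C(P,P)$, where $P$ is the object lifting the projective generator, as in the silting description of \cite[Lem.~3.2, Thm.~3.16]{Bon24}. In particular $t_{\DL}$ is the silting $t$-structure of $\add(P)$. Now \cref{CorollarySilting}, whose condition (3) is our compliciality hypothesis, gives $\add(P)\subseteq\C^{(-d,0]}$. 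Therefore $H^*(A')=H^*(\C(P,P))$ is concentrated in degrees $(-d,0]$; this uses $\Hom(P,P[i])=0$ for $i>0$ and $P\in\C^{>-d}$. It is finite-dimensional by local finiteness, so $A'$ is proper.

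It remains to identify $A'$ with $\KoszulDual{E}$ and the equivalence with $(\Phi_A)^{-1}\circ\C(-,L)$. Under the $t$-exact equivalence $\C\simeq\fd_{dg}(A')$, the simples $\DL$ of the heart go to the simple $H^0(A')$-modules. So $L\mapsto S_{A'}$, possibly up to multiplicities, which can be handled by choosing $A'$ basic or passing to a Morita-equivalent algebra. Consequently $E=\C(L,L)\simeq\REnd_{A'}(S_{A'})=\KoszulDual{(A')}$. The bijection of \cref{Thm:Fushimi_4.3} then gives $A\coloneqq\KoszulDual{E}\simeq A'$. Finally, \cref{Thm:Fushimi_4.4} says that $\Phi_{A'}=\Hom(-,S_{A'})$ induces $\fd_{dg}(A')\simeq\per_{dg}(E^{op})^{op}$. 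Composing it with $\C\simeq\fd_{dg}(A')$ recovers $\C(-,L)$, which shows that $(\Phi_A)^{-1}\circ\C(-,L)$ is a quasi-equivalence and is $t$-exact.

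I expect the main obstacle to be this last bookkeeping step: verifying that the equivalence produced by \cite[Thm.~F]{F25} is $t$-exact and sends $L$ to $S_{A'}$ compatibly with the dg structure, so that $E\simeq\KoszulDual{(A')}$ holds as dg algebras and not only on cohomology. I would handle it by writing the equivalence as $\C(P,-)$ and computing its value on each $L_i$ by means of \cite[Lem.~3.2(2)]{Bon24}.
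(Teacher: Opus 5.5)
\textbf{Gap: the silting object $P$.} The problem is the step where you take the algebra from \cite[Thm.~F]{F25} to be $A'=\C(P,P)$ for some $P\in\C$ lifting the projective generator. From this you conclude that $t_{\DL}$ is the silting $t$-structure of $\add(P)$, so that \cref{CorollarySilting} applies. The results of \cite{Bon24} you cite start from a silting collection. They do not produce one from a projective generator of the heart, and in general there is none inside $\C$.

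To see this, let $\C=\fd_{dg}(A')$ with $A'$ locally finite, connective and not proper. Suppose $P\in\fd(A')$ were derived projective with $H^0(P)\cong H^0(A')$. Then there are maps $A'\to P\to\trunc^{>-n}A'$ inducing isomorphisms on $H^0$. Maps out of $A'$ are determined on $H^0$, so the composite is the truncation map. Hence every truncation $A'\to\trunc^{>-n}A'$ would factor through $P\in\fd(A')^{>-m}$, forcing $H^{\leq -m}(A')=0$. So the existence of $P$ is equivalent to the properness you are trying to prove. \cref{CorollarySilting} cannot supply it, because it assumes $t=t_{\DP}$ from the outset.

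\textbf{How to close it.} Run the argument of \cref{CorollarySilting} in $\D(A')$, where $A'$ is derived projective even though it need not lie in $\fd(A')$. Grant $t$-exactness of $\C\simeq\fd_{dg}(A')$, which you still have to extract from \cite{F25}.
\begin{enumerate}
\item Fix $n\geq d$ and apply $(d-1)$-compliciality to $X=\trunc^{>-n}A'\in\fd(A')^{\leq 0}$. This gives $f\colon X'\to X$ with $X'\in\fd(A')^{(-d,0]}$ and $H^0(f)$ an epimorphism onto the free module $H^0(A')$.
\item Split $H^0(f)$ and use $\Hom_{\D(A')}(A',-)\cong H^0(-)$. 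This gives $g\colon A'\to X'$ with $fg$ equal to the truncation map $A'\to\trunc^{>-n}A'$.
\item Since $g$ factors through $\trunc^{>-d}A'$, you get $H^i(A')=0$ for $-n<i\leq -d$.
\end{enumerate}
Hence $A'$ is proper with cohomology in $(-d,0]$. Only now does $A'\in\fd(A')$ give your $P$, and \cref{CorollarySilting} is no longer needed.

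With this inserted, the rest goes through: identifying $\KoszulDual{E}$ with the basic version of $A'$, and identifying the composite with $(\Phi_A)^{-1}\circ\C(-,L)$. Reading the hypothesis as $(d-1)$-compliciality is also right; that is how the corollary is applied in \cref{prop:EnoughProjHeart}.

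\textbf{Comparison with the paper.} The paper avoids \cite[Thm.~F]{F25} and the identification $E\simeq\KoszulDual{(A')}$. It takes the equivalence $\C(-,L)\colon\C\simeq\per_{dg}(E^{op})^{op}$ from the proof of \cite[Thm.~5.2]{F25}, notes it is $t$-exact because it sends $\DL$ to the summands of $E^{op}$, and applies \cref{theorem:KoszulD} to $E$. That theorem presupposes $E$ Hfd-closed, i.e.\ $\KoszulDual{E}$ proper (\cref{KoszulDualityProperHfdClosed}). So the paper's route tacitly rests on the same properness argument as above.
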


	\begin{proof}
		The implication \ref{item:d-simple-minded1}$\Rightarrow$\ref{item:d-simple-minded2} holds since the heart of the standard $t$-structure is equivalent to $\Mod(H^0(A))$. For the reverse implication we recall that it is shown in the proof of \cite[Thm.~5.2]{F25} that there is an equivalence of pretriangulated dg categories
		\[\C(-,\DL):\C\stackrel{\simeq}{\longrightarrow}\per_{dg}(E^{op})^{op} \]
		The above exact functor sends the simple-minded collection $\DL$ to the simple-minded collection given by the direct summands of $E^{op}$, and thus is a $t$-exact equivalence. Hence, $E$ satisfies the assumptions of \cref{theorem:KoszulD} and, for $A=\KoszulDual{E}$, we obtain a $t$-exact quasi-equivalence
		\[(\Phi_A)^{-1}:\per_{dg}(E^{op})^{op}\stackrel{\simeq}{\longrightarrow}\fd_{dg}(A). \qedhere\]
	\end{proof}
	
	Following \cite[\S7.2.2]{Lurie17}, for a triangulated category $\KST$ and a $t$-structure $t$ on $\KST$, we call an object $P\in\KST$ \definef{derived projective} (with respect to $t$), if $P\in\KST^{\leq 0}$ and $\Hom_{\KST}(P,X[1])=0$ for all $X\in\KST^{\leq 0}$. We denote by \definef{$\Dproj_t(\KST)$} the full subcategory of derived projective objects with respect to $t$.
	
	\begin{lemma}\label{lemma:DerivedProj}
		Let $P\in\Proj(\A)$. Then $P\in\Dproj_{t_{\Simples}}(\D^b(\A))$.
	\end{lemma}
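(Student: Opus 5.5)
We need to show that $P\in\D^b(\A)^{\leq 0}$ and that $\Hom_{\D^b(\A)}(P,X[1])=0$ for all $X\in\D^b(\A)^{\leq 0}$, where the $t$-structure is $t_{\Simples}$. The first part is immediate: by \cref{AbelianEmbedding} we identify $\A$ with $\D^b_{dg}(\A)^{(-d,0]}$, so $P\in\D^b(\A)^{(-d,0]}\subseteq\D^b(\A)^{\leq 0}$.

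For the vanishing, the plan has three steps. First, for $X\in H^0(\A)$ we have $\Hom_{\D^b(\A)}(P,X[1])\cong\EE(P,X)=0$ by \cref{EmbeddingTheorem}(2), since $P$ is projective. Second, we extend this to all $X\in\D^b(\A)^{\leq 0}$. The $t$-structure is bounded and its aisle is built from $\Simples[m]$ with $m\geq 0$ under extensions. Using the description of $\D^b(\A)^{\leq 0}$ via $(d_1,\ldots,d_m)$-$\DL$-filtrations with $d_i\geq 0$ (see \cref{t-str-ch}), every $X\in\D^b(\A)^{\leq 0}$ lies in an iterated extension $\Filt(\Simples)[m_1]\ast\cdots\ast\Filt(\Simples)[m_r]$ with $m_j\geq 0$. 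The class of $X$ with $\Hom(P,X[1])=0$ is extension-closed by the long exact sequence, so it suffices to treat $X=S[m]$ with $S\in\heart$ and $m\geq 0$.

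Third, the case $m\leq d-1$ is covered by the first step, since $S[m]\in\A$. For $m\geq d$ we must show $\Hom(P,S[m+1])=0$. We argue by induction on $m$, using a grouping trick: write $S[m]$ as a part of an object of $\A$ in a triangle. Concretely, $S[m-d+1]\ldots$ does not immediately work, so the strategy is instead to use that $\Hom(P,-[n])$ for $n\geq 2$ on objects of $\A$ is controlled by 1-generation. Since $\A$ has enough projectives and $\Hom(P,\A[1])=0$, we prove by induction on $n\geq 1$ that $\Hom(P,Y[n])=0$ for all $Y\in\A$. By \cref{GeneralisedRealFunctor}, or directly because $H^0(\A)$ is 1-generated in $\D^b(\A)$ (see the proof of \cref{GeneralisedRealFunctor} via \cref{HigherExt}), any $f:P\to Y[n]$ factors as $P\to X_1[1]\to Y[n]$ with $X_1\in\A$, and the first map vanishes by the first step. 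Hence $\Hom(P,Y[n])=0$ for all $n\geq 1$ and $Y\in\A$. In particular $\Hom(P,S[m+1])=0$ for all $m\geq 0$, since $S\in\A$. This settles the third step uniformly, without needing the induction on $m$.

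The main subtlety is the second step: we must confirm that the aisle is generated under extensions by nonnegative shifts of objects of the heart. This is exactly Schnürer's filtration description from \cref{t-str-ch}, applied to the simple-minded collection $\Simples$ of \cref{AbelianEmbedding}. Once that reduction is in place, the 1-generation argument via \cref{HigherExt} finishes the proof.
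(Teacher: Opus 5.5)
Your argument is correct and is essentially the paper's. Both proofs use the filtration description of the aisle of $t_{\Simples}$ to reduce to showing $\Hom_{\D^b(\A)}(P,L[n])=0$ for $L\in\Simples$ and $n\geq 1$. Both then get this vanishing from \cref{HigherExt}: the paper reads off $\EE^n(P,L)=0$ directly from the definition of $\EE^n$, which is the same fact as your 1-generation factorization through $\EE(P,X_1)=0$.

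Two small clean-ups. Your remark that ``$\A$ has enough projectives'' is neither assumed in the lemma nor used in your argument, so drop it. Also delete the abandoned sentence about $S[m-d+1]$.
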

	
	\begin{proof}
		Consider $L\in\Simples$ and $n\in\mathbb{N}_{>0}$. Then, since $P$ is projective, it holds by \cref{HigherExt} and the definition of $\EE^n$ that
		\[\Hom_{\D^b(\A)}(P,L[n])\cong \EE^n_{\A}(P,L)=0.\]
		As $\D^b(A)^{<0}=\Filt(\{L[n]:L\in\Simples,n\in\mathbb{N}_{>0}\})$ the claim follows.
	\end{proof}
	
	\begin{coro}\label{prop:EnoughProjHeart}
		Let $\A$ be a length abelian $d$-truncated dg category with finitely many simples that is locally-finite and $\Ext$-finite. Then, $\A$ has a projective generator if and only if $\dis{\A}$ has a projective generator.
	\end{coro}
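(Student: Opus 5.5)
We identify $\A$ with $\D^b_{dg}(\A)^{(-d,0]}$ for $t\coloneqq t_{\Simples}$ via \cref{AbelianEmbedding}. Then $\dis{\A}=\heart_t$ and $H^0$ denotes the cohomology functor of $t$. The plan is to compare projectives in $\A$ and in $\heart_t$ through the functor $H^0\colon H^0(\A)\to\heart_t$. We prove the two directions separately.

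($\Rightarrow$) Let $P\in\A$ be a projective generator. For each $L\in\Simples$ there is a deflation $P^n\defl L$, hence a conflation $K\infl P^n\defl L$ in $\A$ with $K\in\A\subseteq\D^b(\A)^{\leq 0}$. The long exact sequence in $t$-cohomology gives an epimorphism $H^0(P)^n\to L$. Next we show that $H^0(P)$ is projective in $\heart_t$. By \cref{lemma:DerivedProj}, $P$ is derived projective, so $\Hom(P,X[1])=0$ for all $X\in\D^b(\A)^{\leq0}$. Together with the truncation triangle $\trunc^{<0}P\to P\to H^0(P)$, this gives
\[\Hom_{\heart}(H^0P,Y)\cong\Hom(P,Y)\quad\text{for } Y\in\heart_t,\]
because $\Hom(\trunc^{<0}P,Y)=0$ and $\Hom(\trunc^{<0}P[1],Y)=0$. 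Hence $\Hom_{\heart}(H^0P,-)$ is exact on short exact sequences in $\heart_t$, since $\Hom(P,-)$ kills $Y[1]$ for $Y\in\heart_t$. So $H^0(P)$ is projective. It maps onto every simple, so it is a projective generator of the length category $\heart_t$: a map onto the top of $X$ lifts to a map to $X$ that is surjective by Nakayama and the finite length of $X$.

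($\Leftarrow$) This is the substantial direction. Suppose $\heart_t$ has a projective generator, so \cref{coro:d-simple-minded} should apply. Its hypotheses are that $\DL=\Simples$ is a finite simple-minded collection in the locally-finite pretriangulated category $\D^b_{dg}(\A)$, and that $\DL$ is $(d-1)$-complicial in the indexing of \cref{compliciality}. The latter holds by \cref{compliciality}, because the realization $\D^b_{dg}(\A^{(-d,0]})\to\D^b_{dg}(\A)$ is the identity up to the identification above. Local finiteness of $\D^b_{dg}(\A)$ is where the $\Ext$-finiteness hypothesis enters: $\Hom_{\D^b(\A)}(L,L'[n])$ is finite-dimensional for all $n$, and dévissage from the simples gives local finiteness. (I expect some care is needed to match the paper's "$d$-complicial" convention in \cref{coro:d-simple-minded} with $(d-1)$-compliciality here.) The corollary then yields a proper connective $A$ with $H^*(A)$ in degrees $(-d,0]$ and a $t$-exact quasi-equivalence $\D^b_{dg}(\A)\simeq\fd_{dg}(A)$. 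This restricts to $\A\simeq\fd_{dg}(A)^{(-d,0]}$.

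It remains to see that $A$ is a projective generator of $\fd(A)^{(-d,0]}$, the image of $\A$. First, $A$ lies in the extended heart. It is projective because $\Hom(A,X[1])=H^1(X)=0$ for $X$ in the extended heart. For any $X$ there is a morphism $A^n\to X$ inducing a surjection on $H^0$, since $H^0(X)$ is a finitely generated $H^0(A)$-module. Its cocone then lies in $\D^{\leq 0}$, and it lies in $\D^{>-d}$ because $H^{1-d}$ of the cocone injects into $H^{1-d}(A^n)$. Hence $A^n\to X$ is a deflation in $\A$.

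The main obstacle I foresee is verifying the hypotheses of \cref{coro:d-simple-minded}, namely local finiteness and the compliciality index, rather than the projectivity arguments themselves.
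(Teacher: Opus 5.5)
Your proposal is correct and follows the paper's proof. For ($\Rightarrow$) you use \cref{lemma:DerivedProj} to get projectivity of $H^0_t(P)$ in $\heart_t$; you reprove inline what the paper cites as \cite[Lem.~3.3]{Bon24}, and you check generation on simples plus Nakayama rather than on arbitrary $X\in\heart_t$. For ($\Leftarrow$) you apply \cref{coro:d-simple-minded} to the $(d-1)$-complicial collection $\Simples$ and show that $A$ is a projective generator of $\fd(A)^{(-d,0]}$, and you spell out the deflation check that the paper leaves implicit. One small slip there: the cocone of $A^n\to X$ lies in $\D^{>-d}$ because $H^{i-1}(X)=0=H^{i}(A^n)$ for $i\leq -d$, not because of the injectivity in degree $1-d$ that you mention.
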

	
	\begin{proof}
		Assume first that $P\in\A$ is a projective generator. Let $t\coloneqq t_{\Simples}$ and recall from \cref{example:D-ComplicialExact} that we can identify $\A\simeq \Hc{\A}{d}$ and $\dis{\A}\simeq \heart_{t}$. By \cref{lemma:DerivedProj} it follows that $P\in\Dproj_t(\D^b(\A))$ and hence, by \cite[Lem.~3.3]{Bon24}, $H^0_t(P)\in\heart_{t}$ is projective. Let now $X\in\heart_t$. By assumption, there exists a deflation $P^n\defl X$ in $\Hc{\A}{d}$. Thus $H^0_t(f):H^0_t(P)^n\to H^0_t(X)\cong X$ is an epimorphism. This shows that $H^0_t(P)$ is a projective generator of $\heart_t$. 
		
		Assume now $\heart_{t}$ has a projective generator. By \cref{AbelianEmbedding}, $\Simples$ is a $(d-1)$-complicial simple-minded collection. Thus \cref{coro:d-simple-minded} implies that there exists a $t$-exact quasi-equivalence $\D^b_{dg}(\A)\simeq\fd_{dg}(A)$. In particular, it holds $\A\simeq\fd_{dg}(A)^{(-d,0]}$. Moreover, $A\in\fd(A)^{(-d,0]}$ is projective and, by \cite[Lem.~3.2]{Bon24}, for $X\in\fd(A)^{(-d,0]}$,
		\begin{equation}\label{eq:H0iso1}
			H^0:\Hom_{\fd(A)}(A,X)\stackrel{\cong}{\longrightarrow} \Hom_{\fd(A)}(H^0(A),H^0(X)).
		\end{equation}
		Since in $\Mod(H^0(A))\simeq\heart\subseteq\fd(A)$, there exists an epimorphism $H^0(A)^n\defl H^0(X)$ for some $n\in\mathbb{N}$, the isomorphism \eqref{eq:H0iso1} yields that $A\in\fd(A)^{(-d,0]}$ is a projective generator. Hence the preimage of $A$ under the above equivalence is a projective generator of $\A$. 
	\end{proof}
	
	\begin{coro}\label{prop:EnoughProjModA}
		Let $\A$ be a length abelian $d$-truncated dg category with finitely many simples that is locally-finite and $\Ext$-finite. Let $P\in\A$ be an object and $A\coloneqq \A(P,P)$. Then, the following statements are equivalent:
		\begin{enumerate}
			\item\label{item:EnoughProj1} $P$ is a projective generator of $\A$.
			\item\label{item:EnoughProj3} There exists a $t$-exact quasi-equivalence
			\[\D^b_{dg}(\A)\stackrel{\simeq}{\longrightarrow}\fd_{dg}(A),\]
			with respect to $t_{\Simples}$ and the standard $t$-structure respectively. Moreover, this equivalence sends $P$ to $A$ and restricts to a quasi-equivalence $\A \stackrel{\simeq}{\to}\fd_{dg}(A)^{(-d,0]}$
			\item\label{item:EnoughProj4} There exists a quasi-equivalence
			\[\A\stackrel{\simeq}{\longrightarrow}\fd_{dg}(A)^{(-d,0]}\]
			that sends $P$ to $A$.
		\end{enumerate}
	\end{coro}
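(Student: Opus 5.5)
I will prove the cycle \ref{item:EnoughProj1}$\Rightarrow$\ref{item:EnoughProj3}$\Rightarrow$\ref{item:EnoughProj4}$\Rightarrow$\ref{item:EnoughProj1}. The middle implication is trivial, because \ref{item:EnoughProj4} is part of the conclusion of \ref{item:EnoughProj3}. Throughout I identify $\A$ with $\Hc{\A}{d}=\D^b_{dg}(\A)^{(-d,0]}$ for $t\coloneqq t_{\Simples}$, as in \cref{AbelianEmbedding}.

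For \ref{item:EnoughProj4}$\Rightarrow$\ref{item:EnoughProj1}, fix the quasi-equivalence $\A\simeq\fd_{dg}(A)^{(-d,0]}$ sending $P$ to $A$. It is exact for the induced exact structures, because both sides are abelian $d$-truncated and conflations are detected by homotopy bicartesian squares, which are preserved under quasi-equivalence. Being a projective generator is a statement about the extriangulated structure, so it suffices to check it for $A\in\fd(A)^{(-d,0]}$. The last part of the proof of \cref{prop:EnoughProjHeart} does exactly this. By \cite[Lem.~3.2]{Bon24} we have $\Hom(A,X[1])=0$ for $X\in\fd(A)^{\leq 0}$, so $A$ is projective. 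Lifting epimorphisms $H^0(A)^n\twoheadrightarrow H^0(X)$ along \eqref{eq:H0iso1} gives maps $A^n\to X$ whose cocone lies in $\fd(A)^{(-d,0]}$, and these are therefore deflations.

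For \ref{item:EnoughProj1}$\Rightarrow$\ref{item:EnoughProj3}, first note that $P$ is derived projective by \cref{lemma:DerivedProj}. Hence $H^0_t(P)$ is a projective generator of $\heart_t$, by the first half of the proof of \cref{prop:EnoughProjHeart}. By \cref{AbelianEmbedding} and \cref{compliciality}, $\Simples$ is a finite $(d-1)$-complicial simple-minded collection in the locally-finite pretriangulated dg category $\D^b_{dg}(\A)$. Therefore \cref{coro:d-simple-minded} gives a $t$-exact quasi-equivalence $\D^b_{dg}(\A)\simeq\fd_{dg}(A')$ with $A'=\KoszulDual{E}$ proper connective. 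This restricts to $\A\simeq\fd_{dg}(A')^{(-d,0]}$.

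The main obstacle is to replace $A'$ by $A=\A(P,P)$ and to arrange that $P\mapsto A$. My plan is to show that, under the equivalence, $P$ corresponds to a projective generator of $\fd(A')^{(-d,0]}$, i.e.\ to an object of $\add(A')$ containing every indecomposable summand of $A'$. The projectives of $\fd(A')^{(-d,0]}$ are the derived projectives lying in $(-d,0]$, namely $\add(A')$; this uses \cite[Lem.~3.3]{Bon24} together with the argument in the proof of \cref{CorollarySilting}, which splits any derived projective off a cover by $A'^n$. Such an object $P'$ is a silting generator of $\per(A')$, and $\fd(A')$ coincides with $\fd$ of $\REnd(P')$. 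Concretely, $\Hom(P',-)$ gives a $t$-exact equivalence $\fd_{dg}(A')\simeq\fd_{dg}(\REnd(P'))$ by Morita theory, since $P'$ is a compact generator of $\D(A')$. Finally $\REnd(P')\simeq\A(P,P)=A$, because $\A\to\D^b_{dg}(\A)$ is quasi-fully faithful on $\tau^{\leq 0}$ and $A$ is connective, as $\A$ is $d$-truncated. Composing the two equivalences sends $P\mapsto A$ and is $t$-exact. I expect the only delicate points to be that this Morita equivalence identifies the standard $t$-structures, which holds because heart objects are detected by $H^*\Hom(P',-)$ being concentrated in degree $0$, and the bookkeeping of op's coming from \cref{coro:d-simple-minded}.
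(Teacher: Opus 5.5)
Your proposal is correct. The implications (2)$\Rightarrow$(3)$\Rightarrow$(1) are essentially the paper's.

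For (1)$\Rightarrow$(2) you take a genuinely different route at the key step of arranging $P\mapsto A$.

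\emph{The paper's route.} It builds $P\mapsto A$ into the Koszul duality itself. It replaces $\Simples$ by the summands of $\topf(H^0_t(P))$, which does not change $t_{\Simples}$, so that $L\cong\topf(H^0_t(P))$. It then uses derived projectivity of $P$ to compute $\C(P,L)\simeq S_{E^{op}}$. From $(\Phi_{\KoszulDual{E}})^{-1}(S_{E^{op}})\simeq\KoszulDual{E}$ it reads off $\varphi(P)\simeq\KoszulDual{E}$, hence $A\simeq\KoszulDual{E}$ directly.

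\emph{Your route.} You keep the basic $L$ and use the output $\fd_{dg}(A')$ of \cref{coro:d-simple-minded} as a black box. You then change generators afterwards. The image $P'$ of $P$ satisfies $\add(P')=\add(A')$: projectives of $\fd(A')^{(-d,0]}$ split off covers by $A'^n$, and $A'$ splits off a cover by $P'^n$. Hence $\Hom(P',-)$ is a Morita equivalence that preserves $\fd$ and the standard $t$-structures.

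\emph{Comparison.} Your route is more modular. It does not touch the Koszul duality functors a second time, and it avoids the paper's somewhat informal use of a simple-minded ``collection'' with repeated objects. The price is a separate Morita and $t$-exactness check, which you do address. The paper's route is shorter and identifies $A$ explicitly as the Koszul dual of $E=\D^b_{dg}(\A)(L,L)$ with $L=\topf(H^0_t(P))$. That explicit identification is what the commutative diagram in \cref{theorem:SummaryAbelian} relies on.

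One small correction concerns your justification of $\REnd(P')\simeq\A(P,P)$. \cref{EmbeddingTheorem} only gives $\A(P,P)\simeq\tau^{\leq 0}\D^b_{dg}(\A)(P,P)$. So what you need is that $\D^b_{dg}(\A)(P,P)\simeq\REnd(P')$ is connective; connectivity of $\A(P,P)$ is automatic from $d$-truncatedness and does not close this step. The needed connectivity follows from facts you already have: either $P'\in\add(A')$, or \cref{lemma:DerivedProj}, i.e.\ $\Hom_{\D^b(\A)}(P,P[n])\cong\EE^n(P,P)=0$ for $n>0$. The paper's identification $A\simeq\D^b_{dg}(\A)(P,P)$ implicitly uses the same fact.
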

	
	\begin{proof}
		\ref{item:EnoughProj1}$\Rightarrow$\ref{item:EnoughProj3}: As before, we define $t\coloneqq t_{\Simples}$ and identify $\A\simeq \Hc{\A}{d}$ and $\dis{\A}\simeq \heart_{t}$. By (the proof of) \cref{prop:EnoughProjHeart} $H^0_t(P)$ is a projective generator of $\heart_t$. Define $L\coloneqq \oplus_{S\in\Simples}S$. We might assume that $\topf(H^0_t(P))=L$, otherwise we can replace $\Simples$ by the direct summands of $\topf(H^0_t(P))$, which does not change the $t$-structure $t_{\Simples}$ since $H^0_t(P)$ is a projective generator of $\heart_t$. By \cref{AbelianEmbedding}, $\Simples$ is a $(d-1)$-complicial simple-minded collection. Thus, since $\heart_t$ has a projective generator, applying \cref{coro:d-simple-minded} to $\C\coloneqq\D^b_{dg}(\A)$ yields the $t$-exact quasi-equivalence 
		\[\varphi\coloneqq (\Phi_{\KoszulDual{E}})^{-1}\circ \C(-,L)\colon\D^b_{dg}(\A) \stackrel{\simeq}{\longrightarrow}\per_{dg}(E^{op})^{op}\stackrel{\simeq}{\longrightarrow}\fd_{dg}(\KoszulDual{E}), \quad E\coloneqq \C(L,L).\]
		It holds $\Hom_{\D^b(\A)}(P,L[n])=0$ if $n\neq 0$. For $n<0$ this holds because $P\in\D^b(\A)^{\leq 0}$ and $L\in\heart_t$, for $n>0$ this follows by \cref{lemma:DerivedProj}. Hence it holds 
		\begin{align*}
			\C(P,L) & \simeq H^0(\C)(P,L)=\Hom_{\D^b(\A)}(P,L)\cong \Hom_{\heart_t}(H^0_t(P),L) \\ 
			& \cong \Hom_{\heart_t}(\topf(H^0_t(P)),L)\cong \Hom_{\heart_t}(L,L)= H^0(\C)(L,L)\cong S_{E^{op}}.
		\end{align*}
		Thus $\C(P,L)\simeq S_{E^{op}}$ and since $(\Phi_{\KoszulDual{E}})^{-1}(S_{E^{op}})\simeq \KoszulDual{E}$ by \cref{Thm:Fushimi_4.4}, it follows that $\varphi(P)\simeq \KoszulDual{E}$. Hence, we compute 
		\[A=\A(P,P)\simeq\D^b_{dg}(\A)(P,P)\simeq \fd_{dg}(\KoszulDual{E})(\KoszulDual{E},\KoszulDual{E}) \simeq \KoszulDual{E}.\]
		Hence $A\simeq \KoszulDual{E}$ and \ref{item:EnoughProj3} holds.
		 
		\ref{item:EnoughProj3}$\Rightarrow$\ref{item:EnoughProj4}: This is clear.
		 
		\ref{item:EnoughProj4}$\Rightarrow$\ref{item:EnoughProj1}: As we have seen in the proof of \cref{prop:EnoughProjHeart}, $A\in\fd(A)^{(-d,0]}$ is a projective generator, hence the same holds for $P$.
	\end{proof}
	
	The following recognition theorem gives a characterization of algebraic triangulated categories of the form $\fd(A)$, for $A$ a proper connective dg algebra with cohomology concentrated in degrees $1-d$ to $0$. In particular, for $d=1$, this gives a characterization of algebraic triangulated categories of the form $\D^b(\Lambda)$ for $\Lambda$ a finite-dimensional algebra.  
	
	\begin{theorem}\label{theorem:RecognitionTheorem}
		Let $\C$ be a locally-finite pretriangulated dg category such that the following conditions are satisfied:
		\begin{enumerate}
			\item There exists a collection $\DL\coloneqq \{L_1,\ldots,L_n\}\subseteq \C$ which is a simple-minded collection in $\thick(\DL)$. In particular, $\DL$ yields a bounded $t$-structure $t_{\DL}$ on $\thick(\DL)$ which we denote by $(\thick(\DL)^{\leq 0},\thick(\DL)^{\geq 0})$.
			\item\label{item:assumption2} There exists $d\in\NN_{\geq 1}$ and indecomposable objects $\{P_1,\ldots,P_n\}\subseteq H^0(\C)$
			such that for all $1\leq i\leq n$ there exists a triangle
			\[Y_i\longrightarrow P_i \longrightarrow L_i \longrightarrow Y_i[1]\]
			in $H^0(\C)$ such that 
			\[Y_i\in\thick(\DL)^{(-d,0]}=\Filt(\DL)[d-1]\ast\ldots\ast\Filt(\DL)[1]\ast\Filt(\DL).\]
			\item\label{item:assumption3} For $P\coloneqq \oplus_{i=1}^n P_i$ and $L\coloneqq\oplus_{i=1}^n L_i$ it holds $\Hom_{H^0(\C)}(P,L[k])=0$ for all $k\geq 1$.
		\end{enumerate} 
		Then, the realization functor 
		\[\real:\D^b_{dg}(\thick(\DL)^{(-d,0]})\stackrel{\simeq}{\longrightarrow} \thick(\DL)\subseteq \C\]
		is a $t$-exact quasi-equivalence. Moreover, composing this equivalence with the equivalence from \cref{prop:EnoughProjModA}, for $A\coloneqq\C(P,P)$ we obtain a $t$-exact quasi-equivalence
		\[\begin{tikzcd}
			& {\D^b_{dg}(\thick(\DL)^{(-d,0]})} & \\
			{\fd_{dg}(A)} && {\thick(\DL)\subseteq\C} \\
			{\per_{dg}(A)} && {\thick(P)}
			\arrow["\simeq"', from=1-2, to=2-1]
			\arrow["\real", from=1-2, to=2-3]
			\arrow["\simeq", dashed, from=2-1, to=2-3]
			\arrow[hook, from=3-1, to=2-1]
			\arrow["\simeq", dashed, from=3-1, to=3-3]
			\arrow[hook,from=3-3, to=2-3]
		\end{tikzcd}\]
		which maps $A$ to $P$. In particular, $A\in\fd(A)^{(-d,0]}$ is proper connective and $t_{\DL}$ agrees with the $t$-structure defined by the silting collection $\add(P)$. Moreover, if $\DL\subseteq \C$ is full, that is $\thick(\DL)=\C$, $\fd_{dg}(A)$ and $\C$ are equivalent as pretriangulated dg categories equipped with $t$-structures.
	\end{theorem}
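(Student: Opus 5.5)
The plan is to verify the hypotheses of \cref{coro:RealisationEnoughProj} for $\C':=\thick(\DL)$ (with its dg enhancement inside $\C$) and $\A:=\thick(\DL)^{(-d,0]}$, and then to invoke \cref{prop:EnoughProjModA}.

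\textbf{Step 1: the $P_i$ are projective objects of $\A$.} Since $L_i\in\heart$ and $Y_i\in\A$, the triangle of assumption \ref{item:assumption2} together with extension-closedness gives $P_i\in\thick(\DL)$ and $P_i\in\A$. Every object $X\in\thick(\DL)^{\leq 0}$ lies in $\Filt(\{L[m]:m\geq 0\})$. Assumption \ref{item:assumption3} and the long exact sequences along such a filtration then give
\[\Hom(P,X[k])=0 \quad\text{for all } k\geq 1 \text{ and } X\in\thick(\DL)^{\leq 0}.\]
Thus $P$ is derived projective for $t_{\DL}$. In particular $\EE(P,-)=\Hom(P,-[1])=0$ on $\A$, so $P\in\Proj(\A)$. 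Moreover, $\Hom(P,X[n])=0$ for all $n\geq 2$ and $X\in\A$.

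\textbf{Step 2: $\A$ has enough projectives.} I expect this to be the main obstacle. Derived projectivity gives, via \cite[Lem.~3.2, 3.3]{Bon24}, that $H^0(P_i)$ is projective in $\heart$ and that $\Hom(P,X)\cong\Hom_{\heart}(H^0P,H^0X)$ for $X\in\A$. The triangle $Y_i\to P_i\to L_i$ with $Y_i\in\A$ shows that $H^0(P_i)\to L_i$ is an epimorphism. Hence $\bigoplus_i H^0(P_i)$ is a projective generator of the length category $\heart$: every object has a finite composition series, and projective covers of the simples can be lifted along it.

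Given $X\in\A$, I choose an epimorphism $H^0(P^m)\to H^0(X)$ and lift it to $f\colon P^m\to X$. Then I check that $\cocone(f)$ lies in $\A$. The cocone lies in $\thick(\DL)^{\leq 0}$ because $H^0(f)$ is epi. It lies in $\thick(\DL)^{>-d}$ because $H^{1-d}$ of the cocone injects into $H^{1-d}(P^m)$. Hence $f$ is a deflation in $\A$.

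For projectives that are not summands of some $P^m$, I avoid them, since only deflations from projectives are needed. The missing input is that every projective of $\A$ satisfies the vanishing of \cref{coro:RealisationEnoughProj}\ref{item:RealisationEnoughProj2}. This is handled by noting that any projective $Q$ admits a deflation $P^m\defl Q$, which splits; so $Q\in\add(P)$ and Step 1 applies to it.

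\textbf{Step 3: assembling the equivalence.} By \cref{coro:RealisationEnoughProj}, $\real$ is fully faithful with essential image $\mathrm{tria}(\A)=\thick(\DL)$. This uses boundedness and that $\DL\subseteq\A$; idempotent completeness is inherited from $\thick$. The functor $\real$ is $t$-exact because it restricts to the identity on $\A\supseteq\heart$ and both $t$-structures are generated by $\DL$.

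Hence $\A$ is a length abelian $d$-truncated dg category by \cite[Thm.~3.44]{Moc25}. It is locally finite and $\Ext$-finite because $\C$ is locally finite, and by Steps 1–2 $P$ is a projective generator. Applying \cref{prop:EnoughProjModA} with $A=\A(P,P)\simeq\C(P,P)$ yields the $t$-exact quasi-equivalence $\fd_{dg}(A)\simeq\D^b_{dg}(\A)\simeq\thick(\DL)$ sending $A\mapsto P$. This restricts to $\per(A)\simeq\thick(P)$, and shows that $A\in\fd(A)^{(-d,0]}$ is proper connective.

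Finally, the identification $t_{\DL}=t_{\add(P)}$ is transported from the standard $t$-structure, which is the silting $t$-structure of $\add(A)$. The case $\thick(\DL)=\C$ is then immediate.
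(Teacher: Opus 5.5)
Your proposal is correct and follows the same route as the paper. You show $P\in\Proj(\A)$, show that $\A=\thick(\DL)^{(-d,0]}$ has enough projectives with $\Proj(\A)=\add(P)$, conclude full faithfulness from \cref{coro:RealisationEnoughProj}, and then apply \cref{prop:EnoughProjModA}. The only variation is in Step 2: the paper reduces to the objects $L_i[j]$ via the extriangulated Horseshoe Lemma \cite[Lem.~3.5]{H25}, using $0\defl L_i[j]$ for $j>0$ and assumption (2) for $j=0$, whereas you lift an epimorphism $H^0(P^m)\to H^0(X)$ via derived projectivity and check the cocone by hand. One small slip there: $\cocone(f)\in\thick(\DL)^{>-d}$ holds because $H^{j-1}(X)=0=H^{j}(P^m)$ for $j\leq -d$, not because of injectivity in degree $1-d$.
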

	
	\begin{proof}
		Since $\DL$ is a simple-minded collection in $\thick(\DL)$, it holds  
		\[\thick(\DL)^{(-d,0]}=\Filt(\DL)[d-1]\ast\ldots\ast\Filt(\DL)[1]\ast\Filt(\DL),\]
		which is an extension-closed. Thus, assumption \ref{item:assumption2} implies that $P_i\in\thick(\DL)^{(-d,0]}$ and assumption \ref{item:assumption3} that it is projective for all $1\leq i\leq n$.
		By the Horseshoe Lemma for extriangulated categories \cite[Lem.~3.5]{H25}, $\thick(\DL)^{(-d,0]}$ has enough projectives if and only if for all $L_i$ and $0\leq j\leq d-1$ there exists $P'\in\Proj(\thick(\DL)^{(-d,0]})$ and a deflation $P'\defl L[j]$. If $j>0$ then $0\defl L[j]$ is such a deflation, for $j=0$ this is assumption \ref{item:assumption2}. Moreover, this shows that for every object $X$ there exists a triangle of the form
		\[K\longrightarrow \bigoplus_{i=1}^nP_i^{m_i}\stackrel{g}{\longrightarrow} X \longrightarrow K[1]\]
		for some $m_i\in\NN$ and $K\in \thick(\DL)^{(-d,0]}$. In particular, for $X\in\Proj(\thick(\DL)^{(-d,0]})$, the long exact sequence obtained from the above triangle by applying $\Hom_{H^0(\C)}(X,-)$ 
		\[\ldots \to \Hom_{H^0(\C)}(X,\bigoplus_{i=1}^nP_i^{m_i})\stackrel{g\circ-}{\longrightarrow} \Hom_{H^0(\C)}(X,X) \to \Hom_{H^0(\C)}(X,K[1])\cong \EE(X,K)=0 \]
		implies that there exists $f:X\to \bigoplus_{i=1}^nP_i^{m_i}$ such that $gf=\id_X$. That is, $X$ is a summand of $\bigoplus_{i=1}^nP_i^{m_i}$ and thus $\Proj(\thick(\DL)^{(-d,0]})=\add(P)$.
		
		Thus, by \cref{coro:RealisationEnoughProj}, it is enough to show that $\Hom_{H^0(\C)}(P,X[k])=0$ for all $k\geq 2$ and $X\in \thick(\DL)^{(-d,0]}$. Again, since $\thick(\DL)^{(-d,0]}=\Filt(\DL)[d-1]\ast\ldots\ast\Filt(\DL)$, it is enough to show $\Hom_{H^0(\C)}(P,L[k])=0$ for $k\geq 2$, which is assumption \ref{item:assumption3}.
		
		That the equivalence is $t$-exact follows from the fact that the $t$-structure on $\D^b_{dg}(\thick(\DL)^{(-d,0]})$ is defined by the simple-minded collection $\DL$. Finally, applying \cref{prop:EnoughProjModA} to $\A=\thick(\DL)^{(-d,0]}$ yields a $t$-exact equivalence $\D^b_{dg}(\A)\to\fd_{dg}(A)$ which sends $P$ to $A$. Hence, the final claims follow immediately from the fact that $P\in\thick(\DL)^{(-d,0]}$ and the standard $t$-structure on $\fd(A)$ can be described as the $t$-structure for the silting collection $\add(A)$.
	\end{proof}
	
	\begin{rema}\label{rema:ExplanationAMY}
		Since $(\per(A),\fd(A),A)$ is an ST-triple by \cite[Lem.~4.13]{AMY19} (see \cite[Def.~4.3]{AMY19} for the definition), \cref{theorem:RecognitionTheorem} proves that $(\thick(P),\thick(\DL),P)$ defines an ST-triple. 
		
		We also remark that, for $d=1$, it is enough to know that $(\thick(P),\thick(\DL),P)$ defines an ST-triple to deduce the above theorem from \cite[Prop.~6.3]{AMY19}. Indeed, in that case $P$ is a tilting object in $\thick(P)$, and hence one can apply \cite[Prop.~6.3]{AMY19} to deduce that the ST-triple $(\thick(P),\thick(\DL),P)$ is equivalent to $(\mathrm{K}^b(\mathrm{proj}(\Lambda)),\D^b(\Lambda),\Lambda)$ for $\Lambda=\End_{H^0(\C)}(P)$.
	\end{rema}
	
	\begin{coro}\label{coro:truncatedHearts}
		Let $A$ be a locally-finite connective dg algebra. Define the dg algebra 
		\[A_d=\End_{\fd_{dg}(A)}\left(\trunc^{>-d}\circ\trunc^{\leq 0}A\right).\] 
		Then it holds
		\[\fd_{dg}(A)^{(-d,0]}\xrightarrow{\simeq}\fd_{dg}(A_d)^{(-d,0]}.\]
	\end{coro}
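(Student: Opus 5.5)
Write $P\coloneqq\trunc^{>-d}\trunc^{\leq 0}A\in\fd(A)^{(-d,0]}$, so that $A_d=\End_{\fd_{dg}(A)}(P)$. We would obtain the equivalence from \cref{theorem:RecognitionTheorem} applied to $\C\coloneqq\fd_{dg}(A)$ and the projective $P$. Since $A$ is only locally-finite and not proper, we cannot use $A$ itself as the projective generator. The truncation $P$ is the natural replacement, and $A_d$ is by construction the dg endomorphism algebra that appears in that theorem.

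\textbf{Step 1: the simple-minded collection and the covers.} Take $\DL=\{L_1,\ldots,L_n\}$ to be the simple summands of $S_A=\topf(H^0(A))$. This is finite because $H^0(A)$ is finite-dimensional. It is a simple-minded collection in $\thick(\DL)=\fd(A)$ whose $t$-structure is the standard one. After passing to a basic version of $A$, write $e_iA$ for the indecomposable summands of $A$ and set $P_i\coloneqq\trunc^{>-d}\trunc^{\leq 0}(e_iA)$. The map $e_iA\to L_i$ factors through $P_i$ because $L_i\in\fd(A)^{(-d,0]}$. Its cocone $Y_i$ lies in $\fd(A)^{(-d,0]}$: on $H^0$ the map $H^0(P_i)\to L_i$ is surjective, and $P_i$ has cohomology in $(-d,0]$. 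Each $P_i$ is indecomposable since $H^0(P_i)=e_iH^0(A)$ is local. So assumptions (1) and (2) of \cref{theorem:RecognitionTheorem} hold.

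\textbf{Step 2: the vanishing $\Hom(P,L[k])=0$ for $k\geq 1$.} This is the main obstacle. Apply $\Hom(-,L[k])$ to the triangle
\[\trunc^{\leq -d}A\longrightarrow A\longrightarrow P\longrightarrow (\trunc^{\leq -d}A)[1].\]
The term $\Hom(A,L[k])=H^k(L)=0$ vanishes for $k\neq 0$. The term $\Hom(\trunc^{\leq-d}A,L[k-1])$ vanishes for $k-1<d$ by the $t$-structure axioms, since $L[k-1]\in\fd(A)^{\geq 1-k}$. Together these give $\Hom(P,L[k])=0$ for $1\leq k\leq d$.

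For $k>d$ this argument fails in general. We therefore expect that the literal condition (3) cannot be verified, and instead plan to run the proof of \cref{theorem:RecognitionTheorem} with only what it really uses. That proof needs $P$ to be projective in $\fd(A)^{(-d,0]}$, which is the case $k=1$, and needs the realization functor from $\D^b_{dg}(\fd_{dg}(A)^{(-d,0]})$ to be fully faithful on its image. The latter follows from \cref{CorollarySilting}, since $\DP=\add(A)$ is not in $\fd(A)^{(-d,0]}$ but compliciality can be checked directly: for $X\in\fd(A)^{\leq 0}$, a map $A^m\to X$ surjective on $H^0$ factors through $P^m$ whenever $X\in\fd(A)^{(-d,0]}$. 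We do not in fact need the full category: by \cref{prop:EnoughProjModA} it suffices to know that the abelian $d$-truncated dg category $\A\coloneqq\fd_{dg}(A)^{(-d,0]}$ (\cite[Thm.~3.44]{Moc25}) has $P$ as a projective generator.

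\textbf{Step 3: conclusion.} Projectivity of $P$ is the case $k=1$ of Step 2. Generation holds because every $X\in\A$ receives a map $A^m\to X$ that is surjective on $H^0$; this map factors through $P^m$ by the truncation adjunction, and its cocone lies in $\A$ by the long exact sequence in cohomology, so $P^m\to X$ is a deflation. The category $\A$ is length with finitely many simples, and it is locally-finite and $\Ext$-finite because $\fd(A)$ is Hom-finite. Then \cref{prop:EnoughProjModA}, (1)$\Rightarrow$(3), yields the quasi-equivalence
\[\A\stackrel{\simeq}{\longrightarrow}\fd_{dg}(A_d)^{(-d,0]},\]
sending $P$ to $A_d$, where $A_d=\A(P,P)=\End_{\fd_{dg}(A)}(P)$.
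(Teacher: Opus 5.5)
Your Step~3 is exactly the paper's proof: apply \cref{prop:EnoughProjModA} to $\A=\fd_{dg}(A)^{(-d,0]}$ and $P=\trunc^{>-d}\trunc^{\leq 0}A$, and your check that $P$ is a projective generator (the vanishing $\Hom(P,L[k])=0$ for $1\leq k\leq d$, and $H^0$-surjective maps $P^m\to X$ whose cocone lies in $\A$) fills in what the paper leaves implicit. Two caveats, neither affecting that argument: the compliciality claim in Step~2 is false in general (you only treat $X\in\fd(A)^{(-d,0]}$, and for $A=k\langle x\rangle$ with $|x|=-1$ and $d=1$ no map from the heart to $\trunc^{>-2}A$ is nonzero on $H^0$), so the realization functor need not be fully faithful, and $\Ext$-finiteness of $\A$ should come from dimension shifting along the deflations $P^m\defl X$ rather than from Hom-finiteness of $\fd(A)$; and the algebra produced is $\A(P,P)=\tau^{\leq 0}\End_{\fd_{dg}(A)}(P)$, not $\End_{\fd_{dg}(A)}(P)$ itself (in that example $\End_{\fd_{dg}(A)}(k)$ has $H^2\neq 0$), so $A_d$ must be read as this connective cover, as the paper implicitly does.
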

	
	\begin{proof}
		This follows by applying \cref{prop:EnoughProjModA} to $\A=\fd_{dg}(A)^{(-d,0]}$ and $P=\trunc^{>-d}\circ\trunc^{\leq 0}A$.
	\end{proof}
	
	We now define the Loewy length of an object in $\A$ in analogy with the definition of levels in triangulated categories introduced in \cite[\S2.3]{ABIM10}. We denote by $\define{\add^{\Sigma}_d(\Simples)}\subseteq \A$ the subcategory containing all direct sums of objects of the form $L[i]$ with $L\in\Simples$ and $0\leq i\leq d-1$. Moreover for a subcategory $\C\subset H^0(\A)$ we denote by $\define{\smd(C)}$ the intersection of all full subcategories of $H^0(\A)$ which contain $C$ and are closed under isomorphisms and direct summands. We define
	\[\define{\thick^n_d(\Simples)}\coloneqq \smd(\add^{\Sigma}_d(\Simples)^{\ast n}).\]
	
	We define the \definef{Loewy length} of $M\in\A$ as
	\[\Loewy_{\A}(M)\coloneqq \inf\{n\in\mathbb{N}:M\in\thick^n_{d}(\Simples)\}.\]
	Moreover, we define the \definef{height} of $\A$ as
	\[\define{\height(\A)}\coloneqq \sup\{\Loewy_{\A}(M):M\in\A\}.\]
	It is clear from the definition that the above is the correct notion to measure the height of $\A$ in the sense that it is the minimal amount of steps it takes to build all objects by shifts of objects in $\Simples$. There is, however, an alternative, more refined definition of Loewy length that we primarily work with. We shall see that the two notion agree in the case $d=1$ and, moreover for arbitrary $d$, they are equivalent in the sense of \cref{lemma:BasicPropLoewy} \ref{item:EquivalentLoewy}.
	
	We define the \definef{top} of $X\in\A$ by
	\[\define{\topd(X)}\coloneqq \max\{-d+1\leq i\leq 0:H^{i}(X)\neq 0\}, \quad \define{\topf(X)}\coloneqq \topf(H^{\topd(X)}(X)),\]
	where $\topf(H^{\topd(X)}(X))$ is the top of $H^{\topd(X)}(X)$ in the abelian category $\dis{\A}$. It follows that there exists a conflation
	\[\rad(X)\stackrel{\iota_X}{\infl} X \stackrel{\pi_X}{\defl}\topf(X)\ext,\]
	which determines $\rad(X)$ uniquely up to isomorphism. We call $\rad(X)$ the \definef{radical} of $X$. Moreover, we define 
	\[\define{\rad^0(X)}\coloneqq X \text{ and } \define{\rad^n(X)}\coloneqq \rad(\rad^{n-1}(X)) \text{ for }n\geq 1.\] 
	
	Finally, we define the \definef{big Loewy length} of an object $X\in\A$ by
	\[\define{\BigLoewy_{\A}(X)}\coloneqq\inf\left\{n\in\mathbb{N}:\rad^n(X)=0\right\}.\]

	The following lemma computes the cohomology of $\rad(X)$.
	
	\begin{lemma}\label{lemma:RadComputation}
		Let $X\in\A$. Then the morphism $\iota_X$ induces the following isomorphisms 
		\[ 
		H^{i}(\rad(X))\cong 
		\begin{cases} 0 & \text{for } i>\topd(X), \\ 
			\rad(H^{i}(X)) & \text{for } i=\topd(X), \\ 
			H^{i}(X) & \text{for } i<\topd(X) 
		\end{cases}
		\]
	\end{lemma}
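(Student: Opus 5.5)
We work inside $\D^b(\A)$ with the $t$-structure $t=t_{\Simples}$, identifying $\A\simeq\D^b_{dg}(\A)^{(-d,0]}$ and $\dis{\A}\simeq\heart_t$ via \cref{AbelianEmbedding}. Cohomology $H^i$ means $H^i_t$. Set $m\coloneqq\topd(X)$ and $T\coloneqq\topf(X)=\topf(H^m(X))\in\heart_t$. The object $T[-m]$ has cohomology concentrated in degree $m$, and $0\le -m\le d-1$, so $T[-m]\in\A$. The conflation $\rad(X)\infl X\defl T[-m]$ yields a triangle
\[\rad(X)\xrightarrow{\iota_X}X\xrightarrow{\pi_X}T[-m]\to\rad(X)[1]\]
in $\D^b(\A)$. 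The plan is to read off the cohomology of $\rad(X)$ from the long exact sequence of $H^\ast_t$ applied to this triangle.

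First we pin down the morphism $\pi_X$. By construction it comes from $X\to\trunc^{\geq m}X=H^m(X)[-m]\to T[-m]$, where the second map is the projection onto the top in $\heart_t$. Since $X\in\D^{\le m}$ and $T[-m]\in\D^{\ge m}$, any morphism $X\to T[-m]$ factors through $\trunc^{\geq m}X$. Hence $H^m(\pi_X)$ is exactly the canonical epimorphism $H^m(X)\defl\topf(H^m(X))$, and in particular it is surjective. The uniqueness of $\rad(X)$ up to isomorphism means we may take this $\pi_X$.

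The long exact sequence is
\[\cdots\to H^{i-1}(T[-m])\to H^i(\rad X)\to H^i(X)\to H^i(T[-m])\to\cdots,\]
and $H^i(T[-m])=0$ for $i\neq m$. We treat the degrees in three ranges.
\begin{itemize}
\item For $i>m$: we have $H^i(X)=0$, and the term $H^{i-1}(T[-m])$ vanishes unless $i=m+1$. At $i=m+1$ the relevant map is the connecting map $H^m(T[-m])\to H^{m+1}(\rad X)$, whose kernel is the image of $H^m(\pi_X)$. Since $H^m(\pi_X)$ is surjective, that image is everything, so $H^{m+1}(\rad X)=0$.
\item For $i=m$: the sequence $0=H^{m-1}(T[-m])\to H^m(\rad X)\to H^m(X)\to T$ is exact, so $\iota_X$ identifies $H^m(\rad X)$ with $\ker(H^m(X)\to\topf H^m(X))=\rad(H^m(X))$.
\item For $i<m$: both neighbouring terms $H^{i}(T[-m])$ and $H^{i-1}(T[-m])$ vanish, so $H^i(\iota_X)$ is an isomorphism.
\end{itemize}

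The only delicate point is the identification of $H^m(\pi_X)$ with the canonical top projection. Equivalently, one must check that $\rad(X)$ in the definition really is determined by the map $X\to T[-m]$ induced from $H^m$. This is handled by the truncation adjunction $\trunc^{\geq m}\dashv\iota^{\geq m}$, which gives $\Hom(X,T[-m])\cong\Hom_{\heart_t}(H^m(X),T)$; everything else is a routine diagram chase.
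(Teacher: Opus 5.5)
Your proof is correct and is the paper's argument: both read off $H^i(\rad(X))$ degree by degree from the long exact cohomology sequence of the triangle $\rad(X)\to X\to\topf(X)\to\rad(X)[1]$. Your extra step, identifying $H^m(\pi_X)$ with the canonical projection $H^m(X)\to\topf(H^m(X))$ via $\Hom(X,T[-m])\cong\Hom_{\heart_t}(H^m(X),T)$, makes explicit something the paper uses silently when it labels the map $\topf(H^c(X))\to H^{c+1}(\rad(X))$ as $0$, and it is genuinely needed, because for $m<0$ an arbitrary deflation $X\to T[-m]$ in $\A$ (for instance the zero map, whose cocone $T[-m-1]\oplus X$ lies in $\A$) need not be surjective on $H^m$.
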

	
	\begin{proof}
		Let $c\coloneqq\topd(X)$ and consider the triangle
		\[\rad(X)\stackrel{\iota_X}{\longrightarrow} X \stackrel{\pi_X}{\longrightarrow} \topf(X)\to \rad(X)[1]\]
		to obtain the following long exact sequence in cohomology:
		\[\begin{tikzcd}[column sep=small,row sep=small]
 			\cdots\rar&0\rar& H^{i}(\rad(X))\rar[hook, two heads] & H^{i}(X)\rar & 0 \rar & \cdots \ar[out=-10,in=170,overlay]{dlllll}\\
			 0\rar & H^{c}(\rad(X))\rar[hook] & H^{c}(X)\rar[two heads] &\topf(H^{c}(X))\rar{0} \rar & H^{c+1}(\rad(X)) \rar & 0 \ar[out=-10,in=170,overlay]{dlllll} \\
			 \cdots\rar & 0\rar & H^{j}(\rad(X))\rar & 0 \rar & \cdots
		\end{tikzcd}\]
		The claim follows.
	\end{proof}
	
	The following lemma explains the connection between $\BigLoewy_{\A}$ and $\Loewy_A$ as well as how to compute $\BigLoewy_{\A}$ in terms of the Loewy length in $\dis{\A}$.
	
	\begin{lemma}\label{lemma:BasicPropLoewy}
		The following statements hold.
		\begin{enumerate}
			\item\label{item:DiscreteLoewy} Let $0\leq i\leq d-1$ and $M\in\dis{\A}$. Then $\BigLoewy_{\A}(M[i])=\Loewy_{\A}(M[i])=\Loewy_{\heart}(M)$.
			\item\label{item:LoewyIneq} $\Loewy_{\A}(X)\geq \max\{\Loewy_{\heart}(H^{i}(X)):-d+1\leq i\leq 0\}$.
			\item\label{item:BigLoewyComp} $\BigLoewy_{\A}(X)=\sum_{i=-d+1}^0\Loewy_{\heart}(H^{i}(X))$.
			\item\label{item:EquivalentLoewy} For all $X\in\A$, it holds $\Loewy_{\A}(X)\leq \BigLoewy_{\A}(X)\leq d\cdot \Loewy_{\A}(X)$.
		\end{enumerate}
	\end{lemma}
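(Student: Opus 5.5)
We work throughout with the identification $\A\simeq\D^b_{dg}(\A)^{(-d,0]}$ for $t=t_{\Simples}$ from \cref{AbelianEmbedding}, so that $\dis{\A}=\heart$, and we take cohomology $H^i=H^i_t$. The plan is to prove \ref{item:BigLoewyComp} first, by induction along the radical filtration, and then deduce the remaining items from it together with elementary properties of $\ast$.

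\emph{Proof of \ref{item:BigLoewyComp}.} We argue by induction on $N(X)\coloneqq\sum_{i}\Loewy_{\heart}(H^{i}(X))$. If $X=0$ there is nothing to prove. If $X\neq 0$, set $c=\topd(X)$. By \cref{lemma:RadComputation}, the passage from $X$ to $\rad(X)$ replaces $H^c(X)$ by $\rad(H^c(X))$ and leaves all other cohomologies unchanged; in particular $H^i(\rad(X))=0$ for $i>c$. Since $\Loewy_{\heart}(\rad M)=\Loewy_{\heart}(M)-1$ for $M\neq 0$ in the length category $\heart$, we get $N(\rad X)=N(X)-1$. Iterating, $\rad^n(X)=0$ holds exactly when all cohomology has been exhausted, that is, when $n=N(X)$. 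Hence $\BigLoewy_{\A}(X)=N(X)$.

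\emph{Proof of \ref{item:DiscreteLoewy}.} The equality $\BigLoewy_{\A}(M[i])=\Loewy_{\heart}(M)$ follows from \ref{item:BigLoewyComp}, since $M[i]$ has a single nonzero cohomology group. For $\Loewy_{\A}$, the radical series of $M$ in $\heart$ gives conflations with semisimple factors, so $M[i]\in\add^{\Sigma}_d(\Simples)^{\ast\ell}$ with $\ell=\Loewy_{\heart}(M)$. Conversely, this bound is attained by \ref{item:LoewyIneq}.

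\emph{Proof of \ref{item:LoewyIneq}.} We show by induction on $n$ that $X\in\add^{\Sigma}_d(\Simples)^{\ast n}$ implies $\Loewy_{\heart}(H^i(X))\le n$ for all $i$. Given a triangle $Y\to X\to Z\to$ with $Y\in\add^{\Sigma}_d(\Simples)^{\ast (n-1)}$ and $Z\in\add^{\Sigma}_d(\Simples)$, consider the exact sequence
\[H^i(Y)\to H^i(X)\to H^i(Z)\]
in $\heart$. Here $H^i(Z)$ is semisimple, so $H^i(X)$ is an extension of a semisimple object by a quotient of $H^i(Y)$. Loewy length is subadditive on extensions and non-increasing on quotients, so $\Loewy_{\heart}(H^i(X))\le n$. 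The statement is then stable under direct summands, because $H^i$ is additive and Loewy length does not increase when passing to summands.

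\emph{Proof of \ref{item:EquivalentLoewy}.} For the first inequality, the conflations $\rad^{k+1}(X)\infl\rad^k(X)\defl\topf(\rad^k X)$ have their third terms in $\add^{\Sigma}_d(\Simples)$. This shows $X\in\add^{\Sigma}_d(\Simples)^{\ast\BigLoewy(X)}$, and hence $\Loewy_{\A}(X)\le\BigLoewy_{\A}(X)$. For the second inequality, combine \ref{item:BigLoewyComp} and \ref{item:LoewyIneq}: the sum defining $\BigLoewy_{\A}(X)$ has $d$ terms, each bounded by $\Loewy_{\A}(X)$.

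The main obstacle is the order of the triangle in \ref{item:LoewyIneq}: depending on the side on which the $\ast$-product is taken, the semisimple piece may sit as a sub- rather than a quotient object. In that case we instead use the exact sequence $H^{i}(Z[-1])\to H^i(Y)\to H^i(X)\to\cdots$ with the roles reversed, which is handled symmetrically since subobjects also do not increase Loewy length.
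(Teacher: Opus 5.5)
Your proposal is correct and follows essentially the paper's argument. The ingredients match: \cref{lemma:RadComputation} for the sum formula, the ABIM-style induction with subadditivity of $\Loewy_{\heart}$ for the lower bound, and the radical filtration together with the two previous items for the comparison inequalities. The only difference is that you prove the sum formula first and deduce the discrete case from it, which is harmless.

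Your worry about the order of the $\ast$-product is unnecessary. The operation $\ast$ is associative (by the octahedral axiom), so $\add^{\Sigma}_d(\Simples)^{\ast n}=\add^{\Sigma}_d(\Simples)^{\ast(n-1)}\ast\add^{\Sigma}_d(\Simples)$ in any case.
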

	
	\begin{proof}
		\ref{item:DiscreteLoewy} follows since $\dis{\A}\simeq \dis{\A}[1]$ and $\topf(M)$ is the largest semisimple quotient of $M$.
		
		\ref{item:LoewyIneq} can be proven exactly as \cite[Thm.~6.2.(2)]{ABIM10}. The idea is to perform an induction over $\Loewy_{\A}(X)$, write $X$ as an extension of objects of smaller Loewy length, pass to cohomology and use induction as well as subadditivity for $\Loewy_{\heart}$.
		
		\ref{item:BigLoewyComp} follows from \cref{lemma:RadComputation} and \ref{item:DiscreteLoewy}.
		
		The first inequality in \ref{item:EquivalentLoewy} follows from the fact that $\topf(X)\in\add(\Simples[i])$ for some $0\leq i< d$. The second inequality holds by combining \ref{item:BigLoewyComp} and \ref{item:LoewyIneq}.
	\end{proof}
	
	\begin{rema}
		The bounds in \ref{item:EquivalentLoewy} are sharp and $\Loewy_{\A}(X)=\BigLoewy_{\A}(X)$ holds if and only if $X\cong H^{i}(X)$ for some $-d+1\leq i\leq 0$.
	\end{rema}

	\begin{coro}\label{coro:EnoughProjLoewy}
		Let $\A$ be a length abelian $d$-truncated dg category with finitely many simples, and assume that $\A$ is locally-finite and $\Ext$-finite. Then, the following statements are equivalent: 
		\begin{enumerate}
			\item\label{item:EnoughProjLoewy1} $H^0(\A)$ has enough projectives,
			\item\label{item:EnoughProjLoewy2} $\height(\A)<\infty$,
			\item\label{item:EnoughProjLoewy3} $\height(\dis{\A})<\infty$.
		\end{enumerate}
	\end{coro}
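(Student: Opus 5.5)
The plan is to prove the cycle \ref{item:EnoughProjLoewy1}$\Rightarrow$\ref{item:EnoughProjLoewy2}$\Rightarrow$\ref{item:EnoughProjLoewy3}$\Rightarrow$\ref{item:EnoughProjLoewy1}, with \cref{prop:EnoughProjHeart} and \cref{prop:EnoughProjModA} doing the main work. Here $\height(\dis{\A})$ is read as the supremum of the Loewy lengths $\Loewy_{\heart}$ in the length abelian category $\dis{\A}\simeq\heart_{t_{\Simples}}$.

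\emph{\ref{item:EnoughProjLoewy1}$\Rightarrow$\ref{item:EnoughProjLoewy2}.} Since there are finitely many simples, enough projectives gives a projective cover $P_L\defl L$ for each $L\in\Simples$. Then $P\coloneqq\bigoplus_L P_L$ is a projective generator, via the Horseshoe argument as in the proof of \cref{theorem:RecognitionTheorem}. By \cref{prop:EnoughProjModA} we get $\A\simeq\fd_{dg}(A)^{(-d,0]}$ for $A=\A(P,P)$. Local finiteness together with $d$-truncatedness makes $A$ proper connective. Every $X\in\fd(A)^{(-d,0]}$ has $H^i(X)\in\Mod H^0(A)$, and these modules have Loewy length at most $\Loewy(H^0(A))<\infty$. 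By \cref{lemma:BasicPropLoewy}\ref{item:BigLoewyComp} and \ref{item:EquivalentLoewy},
\[\Loewy_{\A}(X)\le\BigLoewy_{\A}(X)=\sum_{i}\Loewy_{\heart}(H^i(X))\le d\cdot\Loewy(H^0(A)),\]
so $\height(\A)<\infty$.

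\emph{\ref{item:EnoughProjLoewy2}$\Rightarrow$\ref{item:EnoughProjLoewy3}.} This follows at once from \cref{lemma:BasicPropLoewy}\ref{item:DiscreteLoewy}, since for $M\in\dis{\A}$ we have $\Loewy_{\heart}(M)=\Loewy_{\A}(M)\le\height(\A)$.

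\emph{\ref{item:EnoughProjLoewy3}$\Rightarrow$\ref{item:EnoughProjLoewy1}.} First apply the classical result of Gabriel \cite{Ga62,Ga73} to $H^0(\dis{\A})$. This is a length abelian category by \cref{lemma:Exact1}; it is Hom-finite because $\A$ is locally-finite, it has finitely many simples, and $\Ext^1$ between simples is finite-dimensional by $\Ext$-finiteness. Under these hypotheses, bounded Loewy length gives a projective generator: the projective cover of $L$ is the largest object of Loewy length at most $\height(\dis{\A})$ with simple top $L$, built inductively as universal extensions. This step is the main obstacle. One has to make sure the finiteness hypotheses really yield an equivalence $\dis{\A}\simeq\Mod\Lambda$ for a finite-dimensional algebra $\Lambda$, and I would cite Gabriel's theorem for this rather than reprove it. Then \cref{prop:EnoughProjHeart} shows that $\A$ itself has a projective generator $P$.

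It remains to see that a projective generator gives enough projectives in the extriangulated sense. For each $X$ there is a deflation $P^n\defl X$ with $P^n$ projective, because $\Proj(\A)$ is closed under finite sums ($\EE$ is additive). This completes the cycle.
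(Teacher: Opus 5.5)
Your proof is correct and follows the same cycle as the paper. For (1)$\Rightarrow$(2) it uses \cref{prop:EnoughProjModA} together with the bound $\Loewy_{\A}(X)\le\sum_i\Loewy_{\heart}(H^i(X))\le d\cdot b$; for (2)$\Rightarrow$(3) it uses \cref{lemma:BasicPropLoewy}; and for (3)$\Rightarrow$(1) it uses Gabriel's theorem followed by \cref{prop:EnoughProjHeart}. Your only additions fill in details the paper leaves implicit: building a projective generator from enough projectives via the Horseshoe argument, and noting that a projective generator yields enough projectives.
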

	
	\begin{proof}
		\ref{item:EnoughProjLoewy1}$\Rightarrow$\ref{item:EnoughProjLoewy2}: Let $M\in\A$ arbitrary. If $\A=\Hc{\A}{d}$ has enough projectives, then $\A\cong\Hc{A}{d}$ for the proper connective dg algebra $A$ constructed in \cref{prop:EnoughProjModA}. Hence $\heart=\Mod(H^0(A))$ has enough projectives as well and thus there exists $b\in\mathbb{N}$ such that $\height(\heart)\leq b$. We obtain 
		\[\Loewy_{\A}(M)\leq \sum_{i=-d+1}^0 \Loewy_{\heart}(H^{i}(M))\leq d\cdot b.\]
		
		\ref{item:EnoughProjLoewy2}$\Rightarrow$\ref{item:EnoughProjLoewy3}: This follows immediately from statement \ref{item:DiscreteLoewy} in \cref{lemma:BasicPropLoewy}.
		
		\ref{item:EnoughProjLoewy3}$\Rightarrow$\ref{item:EnoughProjLoewy1}: $\dis{\A}\simeq \heart_{t_{\Simples}}$ is an abelian category. Hence $\height(\dis{\A})<\infty$ implies that $\dis{\A}$ has enough projectives by \cite{Ga62,Ga73} (see also \cite[Thm.~4.6]{BB25}). Now the claim follows from \cref{prop:EnoughProjHeart}.
	\end{proof}
	
	The following theorem summarises our findings from this section. 
	\begin{theorem}\label{prop:EnoughProj}
		Let $\A$ be a length abelian $d$-truncated dg category with finitely many simples, and assume that $\A$ is locally-finite and $\Ext$-finite. Then, the following are equivalent:
		\begin{enumerate}
			\item\label{item:EnoughProjectives} $\A$ has enough projectives. 
			\item\label{item:ModA} $\A$ is quasi-equivalent to $\fd_{dg}(A)^{(-d,0]}$ for $A:=\End_{\A}(\bigoplus_{P_i\in\Proj(\A) \text{ indec.}}P_i)$.
			\item\label{item:FiniteLoewy} $\height(\A)$ is finite.
		\end{enumerate}
	\end{theorem}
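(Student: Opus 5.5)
The plan is to assemble \cref{prop:EnoughProj} from \cref{coro:EnoughProjLoewy} and \cref{prop:EnoughProjModA}, with one extra step identifying the projective generator with the direct sum of the indecomposable projectives. The equivalence \ref{item:EnoughProjectives}$\Leftrightarrow$\ref{item:FiniteLoewy} is exactly \cref{coro:EnoughProjLoewy} (items \ref{item:EnoughProjLoewy1} and \ref{item:EnoughProjLoewy2}), since the hypotheses coincide. So it remains to show \ref{item:EnoughProjectives}$\Leftrightarrow$\ref{item:ModA}.

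For \ref{item:ModA}$\Rightarrow$\ref{item:EnoughProjectives}, I would transport the structure along the quasi-equivalence. The standard $t$-structure on $\fd(A)$ is the silting $t$-structure of $\add(A)$, and $A\in\fd(A)^{(-d,0]}$ because $\A$ is $d$-truncated. By the proof of \cref{prop:EnoughProjHeart}, $A$ is then a projective generator of $\fd_{dg}(A)^{(-d,0]}$, so this category has enough projectives. Since the quasi-equivalence is exact (both sides are extended hearts and the exact structures are determined intrinsically by homotopy short exact sequences), $\A$ has enough projectives as well. Equivalently, I could apply \cref{prop:EnoughProjModA} \ref{item:EnoughProj4}$\Rightarrow$\ref{item:EnoughProj1} directly to $P:=\bigoplus P_i$.

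For \ref{item:EnoughProjectives}$\Rightarrow$\ref{item:ModA}, the key step is to show that $P:=\bigoplus_{P_i}P_i$, taken over representatives of the indecomposable projectives, is a finite sum and is a projective generator.
\begin{enumerate}
\item Using \cref{prop:EnoughProjHeart} and its proof, $\heart_t\simeq\dis{\A}$ has a projective generator $Q$. Because $\heart_t$ has finitely many simples and is locally finite, it has only finitely many indecomposable projectives: the projective covers $Q_1,\dots,Q_n$ of the simples.
\item Each indecomposable projective of $\A$ is derived projective by \cref{lemma:DerivedProj}. By \cite[Lem.~3.2, Lem.~3.3]{Bon24}, $H^0_t$ then induces a bijection between the indecomposable projectives of $\A$ and those of $\heart_t$, with lifts $P_i$ of the $Q_i$.
\item The first two points give the finiteness of the sum and identify the indecomposable projectives of $\A$.
\item Since $H^0_t(P)=\bigoplus Q_i$ is a projective generator of $\heart_t$, the argument in the proof of \cref{prop:EnoughProjHeart} (via the isomorphism \eqref{eq:H0iso1} after identifying $\A\simeq\fd(A')^{(-d,0]}$) shows that $P$ is a projective generator of $\A$.
\end{enumerate}
Then \cref{prop:EnoughProjModA} \ref{item:EnoughProj1}$\Rightarrow$\ref{item:EnoughProj4} gives the quasi-equivalence $\A\simeq\fd_{dg}(A)^{(-d,0]}$ with $A=\A(P,P)=\End_{\A}(P)$.

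The main obstacle I expect is the lifting bijection in the second point. It requires Krull--Schmidt for $H^0(\A)$, which follows from $\Ext$-/Hom-finiteness and idempotent completeness of the extended heart. The cleanest route is to first pass to $\fd(A')^{(-d,0]}$ via \cref{prop:EnoughProjModA} for some projective generator. There the projectives are exactly $\add(A')$, by the same splitting argument used in the proof of \cref{theorem:RecognitionTheorem}, and basic reduction identifies $\add(A')$ with $\add(P)$.
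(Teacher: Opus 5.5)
Your proposal is correct and follows the paper's proof, which consists only of citing \cref{prop:EnoughProjModA} for (1)$\Leftrightarrow$(2) and \cref{coro:EnoughProjLoewy} for (1)$\Leftrightarrow$(3). Your additional step fills in a point the paper leaves implicit: \cref{prop:EnoughProjHeart} and \cref{prop:EnoughProjModA} assume a projective generator rather than \emph{enough projectives}, and you produce one, namely the finite sum of the indecomposable projectives, via \cref{lemma:DerivedProj} and $H^0_t$-lifting of projective covers of the finitely many simples; the Krull--Schmidt property this needs does hold, because $H^0(\A)$ is Hom-finite and closed under summands in $\D^b(\A)$, which is idempotent complete since it carries a bounded $t$-structure.
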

	
	\begin{proof}
		The equivalence \ref{item:EnoughProjectives}$\Leftrightarrow$\ref{item:ModA} is \cref{prop:EnoughProjModA}. The equivalence \ref{item:EnoughProjectives}$\Leftrightarrow$\ref{item:FiniteLoewy} is \cref{coro:EnoughProjLoewy}. 
	\end{proof}
	
	Together with \cref{theorem:KoszulD} this yields the following theorem.
	
	\begin{theorem}\label{theorem:SummaryAbelian}
		Let $k$ be a perfect field. The following are in bijection:
		\begin{enumerate}
			\item\label{item:SummaryAbelian1} Smooth locally-finite pretriangulated dg categories $\C$ equipped with a strictly $(d-1)$-complicial bounded $t$-structure $t$ such that $\heart_t$ has enough projectives, up to $t$-exact quasi-equivalence.  
			\item\label{item:SummaryAbelian2} Length abelian $d$-truncated dg categories $\A$ with finitely many simples which are locally finite and $\Ext$-finite such that ${\height(\A)<\infty}$, up to exact quasi-equivalence.  
			\item\label{item:SummaryAbelian3} Proper basic dg algebras, that is $H^0(A)$ is basic, with cohomology strictly concentrated in degrees $(-d,0]$, up to quasi-isomorphism. 
			\item\label{item:SummaryAbelian4} Smooth locally-finite coconnective basic dg algebras, that is $H^0(E)$ is basic, such that $H^0(E)$ is semisimple and $\per(E)^{(-d,0]}\subseteq \per(E)$ is $1$-generated, up to quasi-isomorphism.
		\end{enumerate}
		Denoting by $L$ the direct sum of simple objects in $\C^{(-d,0]}$ or $\A$ and by $P$ the minimal right approximation of $L$ by projective objects in $\C^{(-d,0]}$ or $\A$, respectively, the bijections are given as follows (notice that the diagram commutes):
		\[\begin{tikzcd}
			&&& {\ref{item:SummaryAbelian1}} \\
			\\
			\\
			{\ref{item:SummaryAbelian4}} &&&&&& {\ref{item:SummaryAbelian2}} \\
			\\
			\\
			&&& {\ref{item:SummaryAbelian3}}
			\arrow["{(\C,t)\mapsto \C(L,L)}"', sloped, shift left, from=1-4, to=4-1]
			\arrow["{(\C,t)\mapsto \C^{(-d,0]_t}}", sloped, shift left, from=1-4, to=4-7]
			\arrow["{(\C,t)\mapsto\C^{(-d,0]}(P,P)}", shift left=5, curve={height=-140pt}, from=1-4, to=7-4]
			\arrow["{E\mapsto(\per(E),t_E)}", sloped, shift left, from=4-1, to=1-4]
			\arrow["{E \mapsto \per(E)^{(-d,0]}}", shift left, from=4-1, to=4-7]
			\arrow["{E \mapsto \KoszulDual{E}}", sloped, shift left=1, from=4-1, to=7-4]
			\arrow["{\small{(\D^b_{dg}(\A),t_{\Simples})\mapsfrom \A}}"', sloped, shift left=1, from=4-7, to=1-4]
			\arrow["{\A\mapsto \A(L,L)}", shift left, from=4-7, to=4-1]
			\arrow["{\A\mapsfrom\A(P,P)}"', sloped, shift left, from=4-7, to=7-4]
			\arrow["{A\mapsto (\fd(A),t_{\mathrm{st}})}", shift left=5, curve={height=-140pt}, from=7-4, to=1-4]
			\arrow["{A\mapsfrom \KoszulDual{A}}"', sloped, shift left, from=7-4, to=4-1]
			\arrow["{E\mapsto\per(E)^{(-d,0]}}", sloped, shift left=1, from=7-4, to=4-7]
		\end{tikzcd}\]
	\end{theorem}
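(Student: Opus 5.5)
The plan is to prove that each arrow in the diagram lands in the stated class. Then I will check that the composites around the triangles are identities up to the relevant equivalence, so that all four classes are in bijection. The backbone is the bijection (3)$\leftrightarrow$(4), which is \cref{theorem:KoszulDperfect} restricted to basic algebras. Here one notes that $H^0(E)=H^0(A^!)\cong\End(S_A)$ is basic exactly when $H^0(A)$ is basic. One also notes that ``strictly concentrated in $(-d,0]$'' on the $A$-side corresponds on the $E$-side to $1$-generation for $d$ but not for $d-1$; this follows by applying \cref{theorem:KoszulD} with both $d$ and $d-1$.

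For (3)$\to$(1), I send $A$ to $(\fd_{dg}(A),t_{\mathrm{st}})$. Smoothness holds because $\fd_{dg}(A)\simeq\per_{dg}(E)$ and $E$ is smooth by \cref{KoszulDualityProperSmooth}. The heart $\Mod H^0(A)$ has enough projectives. By \cref{CorollarySilting} applied to $\DP=\add(A)$, the $t$-structure is $(d-1)$-complicial exactly when $A\in\fd(A)^{(-d,0]}$, so strictness matches strict concentration. For (1)$\to$(3), I take $P$ to be the minimal projective cover of $L$ in $\C^{(-d,0]}$. \cref{compliciality} identifies $\C\simeq\D^b_{dg}(\C^{(-d,0]})$, and then \cref{prop:EnoughProjHeart} and \cref{prop:EnoughProjModA} give a $t$-exact quasi-equivalence $\C\simeq\fd_{dg}(A)$ with $A=\C^{(-d,0]}(P,P)$. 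Minimality of $P$ makes $A$ basic, and strictness follows again from \cref{CorollarySilting}.

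For (2)$\leftrightarrow$(3), I use \cref{prop:EnoughProj} together with \cref{coro:EnoughProjLoewy}: finite height is equivalent to enough projectives, which gives $\A\simeq\fd_{dg}(A)^{(-d,0]}$ with $A=\A(P,P)$. Conversely, $\fd_{dg}(A)^{(-d,0]}$ is length abelian $d$-truncated by \cite[Thm.~3.44]{Moc25}, and it is locally finite and $\Ext$-finite since $A$ is proper. For (2)$\to$(1), I take $(\D^b_{dg}(\A),t_{\Simples})$ from \cref{AbelianEmbedding}. For (1)$\to$(2), I take the extended heart. These are mutually inverse by \cref{compliciality}, which applies because $t$ is $(d-1)$-complicial. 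The arrows into (4) send an object to $\C(L,L)$ or $\A(L,L)$, and these agree with $A^!$ by \cref{Thm:Fushimi_4.4}, since $L$ corresponds to $S_A$.

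The main obstacle I expect is making ``strictly'' and ``basic'' match coherently across all four classes, and verifying that the diagram commutes up to the correct notion of equivalence. The key cases are that $t$-exact quasi-equivalences of (1) correspond to exact quasi-equivalences of extended hearts, and that minimal projective covers yield well-defined quasi-isomorphism classes of $A$. I would handle this by reducing every arrow to the fixed model $\fd_{dg}(A)$ with its standard $t$-structure, and then checking each claim there directly.
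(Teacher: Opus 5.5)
Your route is the paper's: its proof is the one-line remark that the theorem follows by combining \cref{prop:EnoughProj} with \cref{theorem:KoszulD} (in the form \cref{theorem:KoszulDperfect}). The bookkeeping you sketch via \cref{compliciality}, \cref{CorollarySilting}, \cref{AbelianEmbedding} and \cref{Thm:Fushimi_4.4} is what that combination amounts to. However, the issue you flag as ``the main obstacle'' is a genuine gap, and it cannot be closed by checking things on $\fd_{dg}(A)$.

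Classes (1) and (3) carry a strictness condition, classes (2) and (4) as written do not, and the arrows out of (2) and (4) need not produce strict objects. Your own remark in the (3)--(4) paragraph already shows that (4) is too large. Strict concentration of $A$ corresponds to $1$-generation of $\per(E)^{(-d,0]}$ \emph{and} failure of $1$-generation of $\per(E)^{(-d+1,0]}$, whereas (4) only asks for the first. Likewise, in (2)$\to$(3) you get $\A\simeq\fd_{dg}(A)^{(-d,0]}$ with $H^*(A)$ in $(-d,0]$, but nothing forces $H^{1-d}(A)\neq 0$.

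Concretely, take $d=2$ and $\A=\fd_{dg}(k)^{(-2,0]}$. It is a length abelian $2$-truncated dg category with one simple, locally finite, $\Ext$-finite and of finite height, so it lies in (2). Yet $P=k$ and $\A(P,P)\simeq k$ is concentrated in degree $0$. Also $(\D^b_{dg}(\A),t_{\Simples})\simeq(\D^b_{dg}(k),t_{\mathrm{st}})$ is $0$-complicial, not strictly $1$-complicial. Finally $E=k$ lies in (4), since all morphisms of degree $\geq 2$ between objects of $\per(k)^{(-2,0]}$ vanish. So, as written, the four classes are not in bijection; the paper's one-line proof passes over this too.

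The fix is to make the classes consistent. One option is to delete ``strictly'' from (1) and (3); your argument then goes through as written, using the non-strict forms of \cref{CorollarySilting}, \cref{compliciality} and \cref{theorem:KoszulDperfect}. The other option, for $d\geq 2$, is to add strictness to the other two classes:
\begin{itemize}
\item in (4), require that $\per(E)^{(-d+1,0]}\subseteq\per(E)$ is not $1$-generated;
\item in (2), require $H^{1-d}(P)\neq 0$ for the projective generator $P$. Equivalently, $t_{\Simples}$ is strictly $(d-1)$-complicial, since $H^{1-d}(\A(P,P))\cong\Hom_{\heart}(H^0(P),H^{1-d}(P))$ by derived projectivity of $P$.
\end{itemize}
With either fix, your device of applying \cref{theorem:KoszulD} and \cref{CorollarySilting} at both $d$ and $d-1$ transports strictness along every arrow.

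Two smaller points. First, your step (1)$\to$(3) invokes \cref{prop:EnoughProjHeart} and \cref{prop:EnoughProjModA}, which require the heart to be length with finitely many simples. Class (1) only says ``enough projectives'', so you must either derive this or assume it, as the statement tacitly does when it speaks of $L$. Second, the arrow $\A\mapsto\A(L,L)$ must be read as $\D^b_{dg}(\A)(L,L)$. Inside the connective category $\A$, the endomorphism complex of the discrete object $L$ only sees $\End_{\heart}(L)$, not $\KoszulDual{A}$.
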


	We obtain the following corollaries for $\C=\D^b_{dg}(\Lambda)$.
	
	\begin{coro}
		Let $\Lambda$ a finite-dimensional algebra. Let $t_{\DP}$ be a $t$-structure associated to a finite silting collection $\DP$ and denote by $t_{\mathrm{st}}$ the standard $t$-structure on $\D^b(\Lambda)$. We denote by $\DL$ the simple objects in $\heart_t$ and define
		\[A\coloneqq\End_{\D^b_{dg}(\Lambda)}(\bigoplus_{P\in\DP}P), \quad E\coloneqq  \End_{\D^b_{dg}(\Lambda)}(\bigoplus_{L\in\DL}L)\]
		The following are equivalent: 
		\begin{enumerate}
			\item $\DP$ is a $d$-term silting collection, that is $\DP\subseteq\D^b(\Lambda)^{(-d,0]_{t_{\mathrm{st}}}}$,
			\item $t_{\DP}$ is  $(d-1)$-complicial.
			\item $\DL$ is a $d$-term simple-minded collection, that is $\DL\subseteq \D^b(\Lambda)^{(-d,0]_{t_{\mathrm{st}}}}$
		\end{enumerate} 
		Moreover, we have a $t$-exact equivalence $\D^b_{dg}(\Lambda)\simeq\fd_{dg}(A)$ with respect to the silting $t$-structure $t_{\DP}$ on the left and the standard $t$-structure on the right. 
	\end{coro}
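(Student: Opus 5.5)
The plan is to deduce the corollary from \cref{CorollarySilting}, \cref{prop:EnoughProjModA} and the Koszul duality results of \cref{section:KoszulDualityProper}, applied to $\C\coloneqq\D^b_{dg}(\Lambda)$ with the bounded $t$-structure $t\coloneqq t_{\DP}$. Since $\Lambda$ is finite-dimensional, $H^0(\C)=\D^b(\Lambda)$ is Krull--Schmidt and $\Hom$-finite, and every bounded silting $t$-structure from a finite silting collection has a length heart. Hence $t_{\DP}=t_{\DL}$, and we are in the setting preceding \cref{CorollarySilting}.

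\textbf{(1)$\Leftrightarrow$(2).} The subtlety is that in (1) the extended heart is taken for $t_{\mathrm{st}}$, whereas \cref{CorollarySilting} involves $\D^b(\Lambda)^{(-d,0]_{t_{\DP}}}$. I would bridge this with silting theory. For $P,P'\in\DP$ we have $\Hom(P,P'[k])=0$ for $k>0$. The silting object $\Lambda$ lies in $\thick(\DP)$, so it is a finite extension of shifts of objects of $\DP$, and dually $\DP\subseteq\thick(\Lambda)$. Then $\DP\subseteq\D^b(\Lambda)^{(-d,0]_{t_{\mathrm{st}}}}$, i.e.\ $\DP$ is $d$-term with respect to $\add\Lambda$, is equivalent by the standard symmetry of two-sided term length for silting pairs to $\Lambda\in\D^b(\Lambda)^{(-d,0]_{t_{\DP}}}$. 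Note that the projective generator of $\heart_{t_{\DP}}$ is $H^0_{t_{\DP}}(\bigoplus P)$ and $\D^b(\Lambda)^{\leq 0}_{t_{\DP}}=\DP^{\perp_{>0}}$.

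I would then argue through the realization functor instead. By \cref{CorollarySilting}, (2) is equivalent to $\DP\subseteq\D^b(\Lambda)^{(-d,0]_{t_{\DP}}}$, and to the realization $\D^b_{dg}(\Hc{\C}{d})\to\C$ being a quasi-equivalence. By \cref{prop:EnoughProjModA} applied to $\Hc{\C}{d}$ with projective generator $\bigoplus_{P\in\DP}P$, this gives a $t$-exact equivalence $(\D^b_{dg}(\Lambda),t_{\DP})\simeq(\fd_{dg}(A),t_{\mathrm{st}})$ sending $\DP$ to $\add A$. Under this equivalence, (1) translates into the statement that the image $M$ of $\Lambda$, which is a silting object of $\per(A)$ with $\End(M)\simeq\Lambda$ concentrated in degree $0$, has the property that $A$ is $d$-term relative to $M$. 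I expect this translation to be the main obstacle.

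To handle it, I would try to show directly that $\Lambda\in\D^b(\Lambda)^{\leq 0}_{t_{\DP}}$ always holds when $\DP\subseteq\D^b(\Lambda)^{\leq0}_{t_{\mathrm{st}}}$. Using $\Hom(P,\Lambda[k])=H^k(P^\vee)$, the bound $\Lambda\in\D^b(\Lambda)^{>-d}_{t_{\DP}}$ should then come from the cohomological amplitude of the objects of $\DP$.

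\textbf{(1)$\Leftrightarrow$(3).} I would use the Koszul dual side. By \cref{theorem:KoszulD} applied to the $t$-exact equivalence with $\fd(A)$, the simple-minded collection $\DL$ corresponds to the simples of $H^0(A)$, and $E=\KoszulDual{A}$. The known König--Yang bijection between silting and simple-minded collections preserves term length, which can be checked via $\Hom(P_i,L_j[k])=\delta_{ij}\delta_{k0}$ and the $t_{\mathrm{st}}$-amplitude. The final $t$-exact equivalence $\D^b_{dg}(\Lambda)\simeq\fd_{dg}(A)$ holds unconditionally, by \cref{prop:EnoughProjModA} for $d$ large; such a $d$ exists since $\DP$ is finite and $t$ is bounded.
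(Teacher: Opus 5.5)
Your argument for (1)$\Leftrightarrow$(2) never closes, and the two claims you propose to close it with are false. Take $\DP=\add(\Lambda[1])$. Then $\Lambda[1]\in\D^b(\Lambda)^{(-2,0]_{t_{\mathrm{st}}}}$, but $\D^b(\Lambda)^{\leq 0}_{t_{\DP}}=\DP^{\perp_{>0}}=\D^b(\Lambda)^{\leq -1}_{t_{\mathrm{st}}}$ does not contain $\Lambda$. So neither ``$\Lambda\in\D^b(\Lambda)^{\leq 0}_{t_{\DP}}$ whenever $\DP\subseteq\D^b(\Lambda)^{\leq 0}_{t_{\mathrm{st}}}$'' nor your ``symmetry'' $\DP\subseteq\D^b(\Lambda)^{(-d,0]_{t_{\mathrm{st}}}}\Leftrightarrow\Lambda\in\D^b(\Lambda)^{(-d,0]_{t_{\DP}}}$ (take $d=2$) holds.

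More seriously, no argument can close this gap. \cref{CorollarySilting} takes the extended heart with respect to $t_{\DP}$, not $t_{\mathrm{st}}$. By it, condition (2) is equivalent to $\DP\subseteq\D^b(\Lambda)^{(-d,0]_{t_{\DP}}}$, i.e.\ to $H^*(A)$ being concentrated in $(-d,0]$. This depends only on $A$ and is invariant under shifting $\DP$, whereas (1) and (3) are not. Concretely, for $\DP=\add(\Lambda[1])$ and $d=1$ we have $\DP\subseteq\heart_{t_{\DP}}=\Mod(\Lambda)[1]$, so $t_{\DP}$ is $0$-complicial. Yet $\Lambda[1]\notin\Mod(\Lambda)$, and $\DL$ consists of the shifted simples $S[1]\notin\Mod(\Lambda)$. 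Normalizing $\DP$ does not help either. For the path algebra of $A_2$ with simple projective $P_1$, the tilting complex $S_2\oplus P_1[1]$ lies in $\add\Lambda[1]\ast\add\Lambda$ and gives a $0$-complicial $t$-structure, but it is not a module. So (2)$\Rightarrow$(1) and (2)$\Rightarrow$(3) are false as stated. The paper gives no separate proof; what its results (\cref{CorollarySilting}) actually deliver is the version of (1) with $t_{\DP}$ in place of $t_{\mathrm{st}}$.

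What does hold with $t_{\mathrm{st}}$ is (1)$\Rightarrow$(2), by a one-line argument you did not give. For $P,P'\in\D^b(\Lambda)^{(-d,0]_{t_{\mathrm{st}}}}$ and $k\leq -d$ one has $P'[k]\in\D^b(\Lambda)^{\geq 1}_{t_{\mathrm{st}}}$, so $\Hom(P,P'[k])=0$. Hence $H^*(A)$ is concentrated in $(-d,0]$ and \cref{CorollarySilting} applies.

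For (1)$\Leftrightarrow$(3), the Koenig--Yang compatibility with term length that you invoke concerns $d$-term silting in the usual sense, $\DP\subseteq\add\Lambda[d-1]\ast\cdots\ast\add\Lambda$. That is strictly stronger than the amplitude condition written in (1). A non-projective tilting module satisfies (1) for $d=1$, but its simple-minded collection cannot lie in $\Mod(\Lambda)$, since bounded $t$-structures with nested hearts coincide. Thus (3)$\Rightarrow$(1)$\Rightarrow$(2) hold, but the converses fail.

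Your treatment of the final $t$-exact equivalence is correct and is the intended route: choose $d$ with $H^*(A)$ concentrated in $(-d,0]$, then apply \cref{CorollarySilting} and \cref{prop:EnoughProjModA} with projective generator $\bigoplus_{P\in\DP}P$ (or use \cref{theorem:RecognitionTheorem} directly).
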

	
	\begin{coro}
		Let $\Lambda$ be a finite-dimensional algebra and $\DL$ be a $d$-term simple-minded collection. Then $\per(\REnd_{\Lambda}(\DL))^{(-d,0]}\subseteq \per(\REnd_{\Lambda}(\DL))$ is $1$-generated.
	\end{coro}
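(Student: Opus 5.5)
The plan is to reduce the statement to \cref{theorem:KoszulD}, applied to $E\coloneqq\REnd_{\Lambda}(L)$ with $L\coloneqq\bigoplus_{L'\in\DL}L'$. Concretely, we first identify $\per(E)$, together with the $t$-structure given by the summands of $E$, with $(\D^b(\Lambda),t_{\DL})$ in a $t$-exact way. We then check that the Koszul dual $\KoszulDual{E}$ is a proper connective dg algebra with cohomology in $(-d,0]$. Once this is done, the implication (1)$\Rightarrow$(2) of \cref{theorem:KoszulD} gives directly that $\per(E)^{(-d,0]}\subseteq\per(E)$ is $1$-generated.

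\textbf{Step 1.} The simple-minded collection $\DL$ is finite and generates $\D^b(\Lambda)$, so we may take the dg enhancement $\C=\D^b_{dg}(\Lambda)$. As in the proof of \cref{coro:d-simple-minded}, via \cite[Thm.~5.2]{F25}, the functor $\C(-,L)$ is an equivalence $\C\simeq\per_{dg}(E^{op})^{op}$. This equivalence sends $\DL$ to the summands of $E^{op}$, and is therefore $t$-exact for $t_{\DL}$. Passing to opposites and using the analogous statement with $\Hom(L,-)$, we get that the extended heart $\per(E)^{(-d,0]}$ corresponds to $\D^b(\Lambda)^{(-d,0]_{t_{\DL}}}$. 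Consequently, $1$-generation may be checked on the $\Lambda$-side.

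\textbf{Step 2.} This is the main step, and it can be taken from the preceding corollary. Let $\DP$ be the silting collection corresponding to $\DL$ (Koszul duality between simple-minded and silting collections, \cite{KY14}), so that $t_{\DP}=t_{\DL}$. The preceding corollary says that $\DL$ being $d$-term is equivalent to $\DP$ being $d$-term, and equivalent to $t_{\DP}$ being $(d-1)$-complicial. We then apply \cref{CorollarySilting} to the bounded silting $t$-structure $t_{\DP}$ with length heart. It suffices to verify condition \ref{item:Silting} there, namely $\DP\subseteq\D^b(\Lambda)^{(-d,0]_{t_{\DP}}}$; condition (4) then gives that $\D^b(\Lambda)^{(-d,0]_{t_{\DL}}}$ is $1$-generated, and Step 1 transports this to $\per(E)$.

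The obstacle is exactly that verification: we know that $\DP$ is $d$-term with respect to $t_{\mathrm{st}}$, and we need the version with respect to $t_{\DP}$ itself. For this we use $\Hom(P,P'[k])=0$ for $k>0$, which holds because $\DP$ is silting, and $\Hom(P,P'[k])=0$ for $k\leq -d$, which holds since both objects are $d$-term for $t_{\mathrm{st}}$. These vanishings mean that $H^{k}_{t_{\DP}}(P')$, computed via the projective generator $H^0(\bigoplus\DP)$ of the heart, vanishes outside $(-d,0]$. This puts $\DP$ inside the $t_{\DP}$-extended heart.

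Alternatively, the same conclusion follows directly from \cref{theorem:KoszulD}. Here $\KoszulDual{E}\simeq\End(\bigoplus\DP)$ by \cref{prop:EnoughProjModA}, and this algebra has cohomology in $(-d,0]$ by the vanishing just described.
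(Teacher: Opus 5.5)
Your proposal is correct and follows the route the paper intends. The corollary is stated there without proof, as a consequence of the preceding corollary: one shows that $\DP$ lies in the $d$-extended heart of $t_{\DL}=t_{\DP}$, equivalently that this $t$-structure is $(d-1)$-complicial, then gets $1$-generation from \cref{CorollarySilting} (or from \cref{theorem:KoszulD} with $A=\End(\bigoplus\DP)$), and transports it along the $t$-exact equivalence $\D^b(\Lambda)\simeq\per(\REnd_\Lambda(\DL))$. The only input you import is the implication ``$\DL$ $d$-term $\Rightarrow$ $\DP$ $d$-term'' from the preceding corollary, and your orthogonality check correctly supplies the passage from $t_{\mathrm{st}}$ to $t_{\DP}$: with respect to $t_{\mathrm{st}}$ one has $P\in\D^b(\Lambda)^{\leq 0}$ and $P'[k]\in\D^b(\Lambda)^{\geq 1}$ for $k\leq -d$.
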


	\section{Koszul Duality for length exact $d$-categories}\label{sec:KDExact}
	
	In this section we want to leverage our findings to prove a Koszul duality statement between certain length exact $d$-categories and certain coconnective dg categories. We fix $1\leq d\in\mathbb{N}$ and define for a coconnective dg category $E$
	\[\define{\tw(E)^{(-d,0]}}:=\Filt\left(\left\{L[k]:L\in E,0\leq k\leq d-1\right\}\right)\subseteq \tw(E).\] 
	We say that $E$ is \definef{admissible Schur} if the following condition holds: Let $x,y\in E$ and $f\in H^0(E)(x,y)$ an \definef{admissible morphism}, that is $f$ can be factored as $f=\iota\circ\pi$ with $\pi$ a deflation and $\iota$ an inflation in $\dis{\tw(E)^{(-d,0]}}$, then $f$ is an isomorphism or $0$. \leaveout{Finally, we define
	\begin{align*}
		\define{\mathrm{Ex}_d(\text{length})}&\coloneqq \{\E:\text{length exact } d\text{-category}\},\\
		\define{\mathrm{Ab}_d(\text{length})}&\coloneqq \{\A:\text{length abelian } d\text{-truncated dg category}\},\\
		\define{\mathrm{Ab}_d(\text{length},\text{proj.},\text{fin.})}&\coloneqq \{\A\in\mathrm{Ab}_d(\text{length}):\A \text{ has enough projectives, locally-finite}\},\\
		\define{\mathrm{Cat}^{cc}_{dg}}&\coloneqq \{E:\text{coconnective dg category}\},\\
		\define{\mathrm{Cat}^{cc}_{dg}(d,\text{ad.Schur})}&\coloneqq \{E\in \mathrm{Cat}^{cc}_{dg}: \text{generated in degrees } [0,d], \text{ admissible Schur}\},\\
		\define{\mathrm{Cat}^{cc}_{dg}(d,\text{Schur})}&\coloneqq \{E \in \mathrm{Cat}^{cc}_{dg}(d): H^0(E)(x,y) \text{ is a division-ring } \forall x,y\in E\},\\
		\define{\mathrm{Cat}^{cc}_{dg}(d,\text{Schur}, \text{smooth}, \text{fin.})}&\coloneqq \{E \in \mathrm{Cat}^{cc}_{dg}(d,\text{Schur}): \text{homologically smooth, locally-finite}\}.
	\end{align*}}
	
	Recall that, for every length exact $d$-category $\E$, the set $\define{\DL_{\E}}\coloneqq\Simples(\dis{\E})\subseteq \dis{\E}$ has the property that for all $L,L'\in\DL_{\E}$ and $n<0$ it holds 
	\[H^{n}(\D^b_{dg}(\E))(L,L')\cong H^n(\E)(L,L')=0.\]
	Hence, we can view \definef{$E\coloneqq \D^b_{dg}(\E)(\DL_{\E},\DL_{\E})$} as a coconnective dg category with one object for each object of $\DL_{\E}$.
	
	Conversely, for a coconnective dg category $E$, $\tw(E)^{(-d,0]}$ is an exact $d$-truncated dg category. Observe that $\tw(E)^{(-d,0]}$ will not be an interval in a $t$-structure in general (for example, if $d=1$ then $\tw(E)^{(-d,0]}$ is exact but not abelian in general). However, under additional assumptions, this is the prototypical example of a length exact $d$-category. The following lemma gives such conditions.

	\begin{lemma}\label{lemma:WellDef}
		Let $E$ be a coconnective dg category such that $\E\coloneqq\tw(E)^{(-d,0]}\subseteq \tw(E)$ is $1$-generated. Moreover, assume that $\dis{\E}=\Filt(\ob(E))$ and $E$ is admissible Schur. Then $\E$ is a length exact $d$-category.
	\end{lemma}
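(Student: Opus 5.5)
The plan is to check the axioms of a length exact $d$-category for $\E\coloneqq\tw(E)^{(-d,0]}$ one at a time. These are: $\E$ is an exact $d$-truncated dg category, $\E$ is $d$-filtered, $\Simples(\dis{\E})$ is a set, and $\E=\Filt_d(\Simples(\dis{\E}))$. The hypotheses enter in three places. $1$-generation lets us compute everything in $\tw(E)$ instead of $\D^b_{dg}(\E)$. The hypothesis $\dis{\E}=\Filt(\ob(E))$ controls the discrete part. Admissible Schur identifies the simples.

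\emph{Step 1 (reduction to $\tw(E)$).} Since $\E\subseteq\tw(E)$ is extension-closed and $1$-generated, \cref{GeneralisedRealFunctor} shows that the realization functor $\D^b_{dg}(\E)\to\tw(E)$ is quasi fully faithful. Its essential image is $\mathrm{tria}(\E)=\tw(E)$. Hence all Hom-computations in $\D^b(\E)$, including negative extensions, shifts and triangles, may be carried out in $H^0(\tw(E))$. The conditions in the definitions, such as "$\dis{\E}[0,\ldots,d-1]\subseteq\E$ in $\D^b_{dg}(\E)$", then become statements about $\tw(E)$.

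\emph{Step 2 ($d$-truncation and $d$-filtration).} For $L,L'\in\ob(E)$ and $0\leq j,k\leq d-1$ we have
\[H^i\tw(E)(L[k],L'[j])=H^{i+j-k}(E)(L,L').\]
This vanishes for $i\leq -d$ by coconnectivity, because then $i+j-k\leq -d+(d-1)<0$. A five-lemma induction along filtrations extends the vanishing to all of $\E$. Together with the connective cover implicit in regarding $\E$ as an exact dg category, this gives $d$-truncatedness. For the $d$-filtration, note that $\dis{\E}=\Filt(\ob(E))$. Shifting the triangles of a filtration, as in \cref{rema:Filt_n}, gives $F[i]\in\Filt(\{L[i]\})\subseteq\E$ for all $F\in\dis{\E}$ and $0\leq i\leq d-1$. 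Moreover $\E=\Filt(\{L[k]\})$ is contained in $\Filt(\{X[i]:X\in\dis{\E},\,0\leq i\leq d-1\})\subseteq\E$, so equality holds.

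\emph{Step 3 (simples).} We show that the simple objects of the exact category $\dis{\E}$ are, up to isomorphism, exactly the nonzero objects of $E$. Once this is established, $\Simples(\dis{\E})$ is a set because $E$ is small. Then $\E=\Filt_d(\ob(E))=\Filt_d(\Simples(\dis{\E}))$, using \cref{rema:Filt_n} to absorb isomorphic copies.

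First, let $L\in\ob(E)$ be nonzero and let $X\infl L$ be a nonzero inflation in $\dis{\E}$. Since $X\in\Filt(\ob(E))$, the first filtration step gives an inflation $L'\infl X$ with $L'\in\ob(E)$. The composite $L'\infl X\infl L$ is an inflation, hence admissible, with trivial deflation part. By admissible Schur it is therefore an isomorphism or zero, and being a nonzero inflation it is an isomorphism. Consequently $X\infl L$ is a split epimorphism and an inflation, and thus an isomorphism. So $L$ is simple.

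Conversely, a simple object $S\in\dis{\E}$ is filtered by $\ob(E)$. The first step of its filtration is a nonzero inflation $L\infl S$, and simplicity forces $L\cong S$.

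I expect the main obstacle to be Step 3. The difficulty is making sure that the exact structure on $\dis{\E}$, and hence the inflations and deflations, is the one inherited from $\tw(E)$. This is exactly where Step 1 is needed. A second point of care is excluding zero objects of $E$: an object whose identity vanishes in $H^0$ is a zero object, and such objects are simply discarded.
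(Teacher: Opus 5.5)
Your proof is correct and follows essentially the same route as the paper. You use d\'evissage for $d$-truncatedness and the hypothesis $\dis{\E}=\Filt(\ob(E))$ for $d$-filtration. For simplicity of each nonzero $L\in\ob(E)$ in $\dis{\E}$ you use the same admissible Schur argument: compose a nonzero inflation $L'\infl X$ with $X\infl L$ and apply the hypothesis to the composite.

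You go further than the paper in one respect. You make explicit the converse (every simple object of $\dis{\E}$ is isomorphic to some $L\in\ob(E)$), that $\Simples(\dis{\E})$ is a set, and that $\E=\Filt_d(\Simples(\dis{\E}))$. The paper leaves these implicit.

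Your Step 1 is valid but unnecessary, and the paper's proof never uses $1$-generation here. The exact structure on $\E$, and hence on $\dis{\E}$, is inherited from $\tw(E)$ by definition, so nothing needs to be checked there. For shifts: whenever $F[i-1]$ and $F[i]$, computed in $\tw(E)$, lie in $\E$, the conflation $F[i-1]\infl 0\defl F[i]$ is sent to a triangle in $\D^b(\E)$. So the two shifts agree without invoking \cref{GeneralisedRealFunctor}.
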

	
	\begin{proof}
		Firstly, $\E\coloneqq\tw(E)^{(-d,0]}$  is an extension closed subcategory of a pretriangulated dg category and thus an exact dg category. Moreover, it is $d$-truncated by a devissage argument that we include for the convenience of the reader. Consider  
		\[S_l\coloneqq \left\{M:H^n(\tw(E))(M,L[i])=0 \text{ for all } L\in E, 0\leq i\leq d-1, n\notin(-d,0] \right\}\subseteq \tw(E).\]
		Then $S_l$ is extension closed by the long exact sequence in cohomology we obtain when applying $\tw(E)(-,L[i])$ to a triangle. Moreover it holds $L[i]\in S_l$ for $0\leq i\leq d-1$ since $E$ is coconnective. It follows that $\E\subseteq S_l$. 
		
		Consider now 
		\[S_r\coloneqq \left\{N:H^n(\tw(E))(M,N)=0 \text{ for all } M\in S_l, n\notin(-d,0] \right\}\subseteq \tw(E).\]
		Analogously to the above one argues that $S_r$ is extension-closed. Moreover, by definition of $S_l$, $L[i]\in S_r$ for all $L\in E$ and $0\leq i\leq d-1$. Thus $\E\subseteq S_r$ and since $\E\subseteq S_l$, it follows from the definition of $S_r$ that $\E$ is $d$-truncated.
		
		By assumption it holds
		\[\dis{\E}=\Filt(\{L:L\in\ob(E)\}),\] 
		and hence it follows that $\E$ is $d$-filtered.
		\leaveout{The inclusion ``$\supseteq$'' clearly holds since $\ob(E)\subseteq\dis{\E}$ by coconnectivity. For the reverse inclusion consider $X\in\dis{\E}$. By definition of $\E$ there exists $X_1,\ldots,X_n=X\in\E$ and $L_1,\ldots,L_n\in\ob(E)$ as well as $0\leq r_1,\ldots,r_n <d$ and conflations
		\[0\infl X_1\defl L_1[r_1]\ext, \quad X_i\infl X_{i+1}\defl L_{i+1}[r_{i+1}]\ext \text{ for } i>1.\]
		In particular $X_1=L_1[r_1]$ and we obtain an inflation $L_1[r_1]\infl X$ by composition. By the Yoneda Lemma this inflation is non-zero (since $L_1[r_1]$ is non-zero). Since $X\in\dis{\E}$ it has to hold $r_1=0$. Assume now we have shown that $r_i=0$ for all $i<k$ for some fixed $k\leq n$. Then it holds $X_i\in\dis{\E}$ for $i<k$ as well. We wish to show that $r_{k}=0$ as well. Assume $r_k>0$. Since the conflations are obtained by triangles in $\tw(E)$ rotation yields a conflation 
		\[L_k[r_k-1]\infl X_k\defl X_{k+1}\ext.\]
		By the same argument as above it has to hold $r_k=1$ and}
		
		Finally, we show that $L\in\ob(E)$ is simple in $\dis{\E}$. Assume there was a conflation in $H^0(\dis{\E})$ of the form
		\[X\stackrel{\iota_1}{\infl} L {\defl} M \quad 0\neq X,M\in H^0(\dis{\E}).\]
		Since it holds $H^0(\dis{\E})=\Filt(\ob(E))$ and $H^0(\dis{\E})$ is exact, there exists a non-zero inflation $\iota_2:L'\infl X$ for some $L'\in\ob(E)$. But then $\iota\coloneqq \iota_1\iota_2:L'\to L$ is an admissible morphism (it is an inflation), hence by assumption $\iota=0$ or $\iota$ is an isomorphism. If $\iota$ was $0$, then $\iota_1$ is $0$ since it is a monomorphism (and $\iota_2$ is non-zero), which yields $X=0$, a contradiction to the assumption. If $\iota$ was an isomorphism, $\iota_1$ would be an isomorphism as well and hence $M=0$, again a contradiction to the assumption.
	\end{proof}

	\begin{theorem}\label{KoszulExact}
		There are mutually inverse bijections between the following: 
		\begin{enumerate}
			\item\label{item:KoszulExact1} Coconnective dg categories $E$ such that $\E\coloneqq\tw(E)^{(-d,0]}\subseteq \tw(E)$ is $1$-generated,  $\dis{\E}=\Filt(\ob(E))$ and $E$ is admissible Schur, up to quasi-equivalence.
			\item\label{item:KoszulExact2} Length exact $d$-categories, up to exact quasi-equivalence.
		\end{enumerate}
		They restrict to bijections between the following:
		\begin{enumerate}
			\item[(1')] Coconnective dg categories $E$ such that $\tw(E)^{(-d,0]}\subseteq \tw(E)$ is $1$-generated, and $H^0(E)(x,y)$ is a division algebra for all $x,y\in\ob(E)$, up to quasi-equivalence.
			\item[(2')] Length abelian $d$-truncated dg categories, up to exact quasi-equivalence.
		\end{enumerate}
		If $k$ is a perfect field, they restrict further to bijections between the following:
		\begin{enumerate}
			\item[(1'')] Coconnective locally-finite smooth dg categories $E$ with $|\ob(E)|<\infty$ such that $\tw(E)^{(-d,0]}\subseteq \tw(E)$ is $1$-generated, and $H^0(E)(x,y)$ is a division algebra for all $x,y\in\ob(E)$, up to quasi-equivalence.
			\item[(2'')] Locally-finite, $\Ext$-finite length abelian $d$-truncated dg categories with finitely many simples and enough projectives, up to exact quasi-equivalence.
		\end{enumerate}
		
		The bijections are given as follows:
		\begin{align*}
			F:\ref{item:KoszulExact1} &\longrightarrow \ref{item:KoszulExact2} & G:\ref{item:KoszulExact2} &\longrightarrow \ref{item:KoszulExact1}\\
			E &\longmapsto \tw(E)^{(-d,0]} & \E &\longmapsto \D^b_{dg}(\E)(\DL_{\E},\DL_{\E})
		\end{align*}
		Moreover, if $\E$ and $E$ correspond to each other under the above bijections, there is a quasi-equivalence \[\tw(E)\stackrel{\simeq}{\longrightarrow}\D^b_{dg}(\E).\]
	\end{theorem}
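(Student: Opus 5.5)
We first check that $F$ and $G$ are well defined, then show that they are mutually inverse, and finally treat the restrictions. That $F$ lands in length exact $d$-categories is exactly \cref{lemma:WellDef}; note that $\Simples(\dis{\E})=\ob(E)$ by its proof.

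For $G$, let $\E$ be a length exact $d$-category and put $E\coloneqq\D^b_{dg}(\E)(\DL_{\E},\DL_{\E})$. This $E$ is coconnective, as noted before \cref{lemma:WellDef}. The universal property of $\tw(E)$ (\cref{theorem:UnivPropTw}) gives an exact dg functor $\Psi:\tw(E)\to\D^b_{dg}(\E)$. It is quasi fully faithful on $E$ by construction, hence quasi fully faithful by d\'evissage. It is dense because $\D^b(\E)=\mathrm{tria}(H^0(\E))$ and $\E=\Filt_d(\DL_{\E})$ lies in $\thick(\DL_{\E})$. This yields the asserted quasi-equivalence $\tw(E)\simeq\D^b_{dg}(\E)$.

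Under $\Psi$, the subcategory $\tw(E)^{(-d,0]}=\Filt(\{L[k]\})$ corresponds to $\Filt_d(\DL_{\E})\subseteq\D^b(\E)$. By \cref{EmbeddingTheorem}, the latter is identified with $\E$, because $\E$ is extension-closed in $\D^b_{dg}(\E)$ up to $\tau^{\leq0}$ and both sides are the extension closure of the $L[k]$. Hence $F G(\E)\simeq\E$. Moreover, $1$-generation of $\tw(E)^{(-d,0]}\subseteq\tw(E)$ follows from \cref{GeneralisedRealFunctor}\ref{itemf}, since the realization functor $\D^b_{dg}(\E)\to\D^b_{dg}(\E)$ is the identity. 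The identity $\dis{\E}=\Filt(\DL_{\E})$ holds because $\dis{\E}$ is a length exact category with simples $\DL_{\E}$. Admissible Schur holds because an admissible morphism between simples of the exact category $\dis{\E}$ is an inflation composed with a deflation between simple objects, and is therefore $0$ or an isomorphism.

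For the converse direction $GF(E)\simeq E$, start from $E$ in class \ref{item:KoszulExact1} and put $\E=\tw(E)^{(-d,0]}$. By $1$-generation and \cref{GeneralisedRealFunctor}, the realization functor $\D^b_{dg}(\E)\to\tw(E)$ is a quasi-equivalence onto $\mathrm{tria}(\E)=\tw(E)$. It sends $\DL_{\E}=\ob(E)$ to $\ob(E)$, so $G F(E)=\D^b_{dg}(\E)(\ob E,\ob E)\simeq\tw(E)(\ob E,\ob E)\simeq E$. Quasi-equivalence invariance of both constructions is routine.

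For the restriction to (1') and (2'), we use \cref{coro:AbelianSchur}. The category $\E$ is abelian if and only if $\DL_{\E}$ is a $d$-semibrick, i.e.\ if and only if each $H^0(E)(x,y)$ is $0$ for $x\ne y$ and a division algebra for $x=y$. Conversely, if the Hom-spaces of $H^0(E)$ are division algebras, then $\ob(E)$ is a simple-minded collection in $\tw(E)$. Its $t$-structure has heart $\Filt(\ob E)$, and $\tw(E)^{(-d,0]}$ is the extended heart. Then $\dis{\E}=\heart=\Filt(\ob E)$ and the admissible Schur condition is automatic, so (1') is indeed contained in (1).

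For (1'') and (2''), combine \cref{theorem:SummaryAbelian} and \cref{prop:EnoughProj} (basic/Morita issues aside). Smoothness, local finiteness and finiteness of $\ob(E)$ correspond to $\Ext$-finiteness, local finiteness, finitely many simples and enough projectives. This goes through \cref{KoszulDualityProperSmooth} applied to $A=\KoszulDual{E}$ and \cref{coro:d-simple-minded}.

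The main obstacle I expect is the identification $\Psi(\tw(E)^{(-d,0]})\simeq\E$ at the level of dg categories, i.e.\ after $\tau^{\leq0}$, rather than just on objects. This needs the precise form of \cref{EmbeddingTheorem}(1). A secondary obstacle is checking that $\dis{\E}$ really has simples exactly $\DL_{\E}$ in the length case.
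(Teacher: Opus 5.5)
Your proposal is correct and is essentially the paper's proof. The universal property of $\tw(E)$ gives $\tw(E)\simeq\D^b_{dg}(\E)$ with $\tw(E)^{(-d,0]}$ matching $\Filt_d(\DL_{\E})=\E$, which yields $F\circ G\simeq\id$ and the $1$-generation. Schur's lemma in the exact category $H^0(\dis{\E})$ gives admissible Schur; the paper cites a reference for this, while you prove it directly. \cref{GeneralisedRealFunctor} gives $G\circ F\simeq\id$, and the restricted bijections come from \cref{coro:AbelianSchur}, \cref{prop:EnoughProj} and the Koszul duality results. There you are more explicit than the paper in checking that class (1') lies inside class (1).

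The one step you only assert is that $\dis{\E}=\Filt(\DL_{\E})$ for a length exact $d$-category. This needs an argument, since the definition only gives $\E=\Filt_d(\DL_{\E})$ and filtrations of discrete objects may a priori use shifted simples. The paper's proof does not verify this condition for $G(\E)$ either, so it is a shared omission rather than a divergence.
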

	
	\begin{proof}
		We first observe that once we show that we get a well-defined bijection of \ref{item:KoszulExact1} and \ref{item:KoszulExact2}, the other bijections follow from \cref{coro:AbelianSchur} and \cref{prop:EnoughProj} and \cref{theorem:KoszulD} respectively. 
		
		\cref{lemma:WellDef} proves that $F$ is well defined. In order to prove that $G$ is well-defined we consider $\E$ as in \ref{item:KoszulExact2} and $E=\D^b_{dg}(\E)(\DL_{\E},\DL_{\E})$. Since $\D^b_{dg}(\E)$ is a pretriangulated dg category and generated by $\DL_{\E}$, the universal property of $\tw(E)$ yields the following commutative diagram.
		\[\begin{tikzcd}
			\DL_{\E} \\
			\tw(E) && {\D^b_{dg}(\E)} \\
			{\tw(E)^{(-d,0]}} && {\Filt_d(\DL_{\E})=\E}
			\arrow[hook, from=1-1, to=2-1]
			\arrow[hook, from=1-1, to=2-3]
			\arrow["{\simeq}", dashed, from=2-1, to=2-3]
			\arrow[hook, from=3-1, to=2-1]
			\arrow["{\simeq}", dashed, from=3-1, to=3-3]
			\arrow[hook, from=3-3, to=2-3]
		\end{tikzcd}\]
		It follows that $H^0\tw(E)^{(-d,0]}\subseteq H^0\tw(E)$ is $1$-generated, since $H^0(\E)\subseteq \D^b(\E)$ is $1$-generated. Moreover, by applying \cite[Lem.~3.5]{HR20} to the exact category $H^0(\dis{\E})$ we see that $E$ is admissible Schur. This proves that $G$ is well defined.  The diagram above also proves that it holds $F\circ G=\id$ (up to exact quasi-equivalence).
		
		Consider now $E$ as in \ref{item:KoszulExact1}. Then, by \cref{GeneralisedRealFunctor}, the realization functor 
		\[\real:\D^b_{dg}(\tw(E)^{(-d,0]})\longrightarrow \tw(E)\] 
		is a quasi-equivalence that can be chosen to be the identity on $\mathrm{ob}(E)$ and therefore induces for all $L,L'\in\mathrm{ob}(E)$ a quasi-isomorphism
		\[\D^b_{dg}\left(\tw(E)^{(-d,0]}\right)(L,L')\stackrel{\real}{\simeq} \tw(E)(L,L')=E(L,L')\]
		which proves that $G\circ F=\id$ (up to quasi-equivalence).
	\end{proof}
	
	\leaveout{\begin{rema}
		It follows from the above that a $d$-filtered exact dg category $\E$ is a length exact $d$-category if and only if 
		\[\E=\Filt_d\left(\Simples(\dis{\E})\right).\]
	\end{rema}}
	
	\begin{rema}
		In the context of \cref{KoszulExact}, it would be interesting to replace the ``admissible Schur'' condition by a condition that is intrinsic to the dg category $E$. One difficulty arises as follows: for any finite-dimensional algebra $\Lambda$, $\add(\Lambda)\subseteq\Mod(\Lambda)$ is a (split-)exact category with simple objects the indecomposable direct summands of $\Lambda$ and it holds $\D^b(\add(\Lambda))(\DL,\DL)\cong \Lambda$. This means there are no restrictions on $H^0(E)$ (in this case no morphism is admissible). Thus, any such condition on $E$ could not only refer to $H^0(E)$. 
	\end{rema}
	
	\subsection*{Acknowledgments}
	The author thanks Prof. Gustavo Jasso for his guidance and many helpful discussions during the process of writing this article and moreover, for making him aware of the talk given at MPIM in Bonn where the proof of \cref{theorem:Enomoto} for the case $d=1$ was explained. The author would like to thank Isambard Goodbody for very helpful discussions about smoothness of dg algebras which led in particular to the comparison of the different homological finiteness conditions in \cref{section:KoszulDualityProper}.
	
	\subsection*{Financial support}
	The author's research was partly supported by the Swedish Research Council (Vetenskapsrådet) Research Project Grant 2022-03748 `Higher structures in higher-dimensional homological algebra.'
	
	\printbibliography
	
\end{document}